\documentclass[fontsize=10]{scrartcl}
\usepackage[utf8]{inputenc}
\usepackage{mathtools}
\usepackage{amsmath}
\usepackage{amsthm}
\usepackage{amssymb}
\usepackage[a4paper]{geometry}
\usepackage[pdfusetitle,
 bookmarks=true,bookmarksnumbered=false,bookmarksopen=false,
 breaklinks=false,pdfborder={0 0 1},backref=false,colorlinks=false]
 {hyperref}

\makeatletter
\usepackage{tikz}

\makeatother

\theoremstyle{remark}
\newtheorem{rem}{\protect\remarkname}
\theoremstyle{plain}
\newtheorem{thm}{\protect\theoremname}
\newtheorem{question}{\protect\questionname}
\theoremstyle{definition}
\newtheorem{defn}{\protect\definitionname}
\theoremstyle{plain}
\newtheorem{choice}{\protect\choicename}
\newtheorem{lem}{\protect\lemmaname}
\newtheorem{prop}{\protect\propositionname}
\newtheorem{cor}{\protect\corollaryname}
\theoremstyle{remark}
\newtheorem*{rem*}{\protect\remarkname}
\providecommand{\choicename}{Choice}
\providecommand{\corollaryname}{Corollary}
\providecommand{\definitionname}{Definition}
\providecommand{\lemmaname}{Lemma}
\providecommand{\propositionname}{Proposition}
\providecommand{\questionname}{Question}
\providecommand{\remarkname}{Remark}
\providecommand{\theoremname}{Theorem}

\begin{document}
\title{Vorticity blow-up for the 2D incompressible non-homogeneous Euler
equations with uniform $C^{1,\sqrt{\frac{4}{3}}-1-\varepsilon}$ force}
\author{Diego Córdoba\thanks{Instituto de Ciencias Matemáticas CSIC-UAM-UCM-UC3M, Spain. E-mail:
dcg@icmat.es}, Andrés Laín-Sanclemente\thanks{Instituto de Ciencias Matemáticas CSIC-UAM-UCM-UC3M, Spain. E-mail:
andres.lain@icmat.es} and Luis Martínez-Zoroa\thanks{CUNEF Universidad, Spain. E-mail: luis.martinezzoroa@cunef.edu}}
\maketitle
\begin{abstract}
We establish the existence of solutions of the 2D incompressible non-homogeneous
Euler equations with $C^{0}_{t}C^{1,\,\sqrt{\frac{4}{3}}-1-\varepsilon}_{x}\cap C^{0}_{t}L^{2}_{x}$
source terms that develop a singularity in finite time. In order to
achieve this, we adapt the Boussinesq blow-up we set up in \cite{Articulo Boussinesq}
to the non-homogeneous Euler setting. Furthermore, we bring the potential
existence of two different types of singularities of the forced system
to light.
\end{abstract}
\tableofcontents{}

\section{Introduction}

\subsection{Physical model}

We will consider the 2D non-homogeneous Euler equations
\begin{equation}
\begin{aligned}\frac{\partial\rho}{\partial t}+u\cdot\nabla\rho & =f_{\rho}\quad\left(\text{mass equation}\right),\\
\frac{\partial u}{\partial t}+u\cdot\nabla u & =-\frac{\nabla P}{\rho}+f_{u}\quad\left(\text{momentum equation}\right),\\
\nabla\cdot u & =0\quad\left(\text{incompressibility constraint}\right),
\end{aligned}
\label{eq:inhomogeneous 2D Euler}
\end{equation}
in the domain $\left[0,T\right]\times\mathbb{R}^{2}$. In the above,
$\rho:\left[0,T\right]\times\mathbb{R}^{2}\to\mathbb{R}^{+}$ is the
density of the fluid, $u:\left[0,T\right]\times\mathbb{R}^{2}\to\mathbb{R}^{2}$
denotes its velocity field, $P:\left[0,T\right]\times\mathbb{R}^{2}\to\mathbb{R}$
represents its pressure, $f_{\rho}:\left[0,T\right]\times\mathbb{R}^{2}\to\mathbb{R}$
is a mass source term and $f_{u}:\left[0,T\right]\times\mathbb{R}^{2}\to\mathbb{R}^{2}$
depicts an external acceleration. Notice that, although we will still
denote this acceleration by $f_{u}$ (following the notation of, for
instance, \cite{Danchin-Fanelli}), this $f_{u}$ is really an acceleration
and not a force. Bear in mind that, since $\rho$ denotes a physical
density, it must be a non-negative function. In fact, in the present
paper we will work in the absence of vacuum, i.e., with $\rho$ strictly
positive. The first equation encodes the conservation of mass, the
second can be derived from Newton's second law (and, thus, represents
the conservation of momentum) and the third embodies the incompressibility
of the fluid.

\subsection{\label{subsec:Boussinesq is first order model of non-homogeneous Euler}Boussinesq
as a formal first-order model of non-homogeneous Euler}

As we will see in section \ref{sec:ideas of the proof}, the finite-time
singularity for the Boussinesq equation that we constructed in \cite{Articulo Boussinesq}
will play a crucial role for the present result. Hence, we wish to
briefly discuss the relationship that exists between the non-homogeneous
2D Euler equations and the 2D Boussinesq equations, as it will help
us understand why a solution of Boussinesq can be ``transformed''
into a solution of non-homogeneous Euler. If one departs from system
\eqref{eq:inhomogeneous 2D Euler} and expands $\rho$, $P$, $u$,
$f_{\rho}$, $f_{u}$ in a formal power series of a small parameter
$\varepsilon$ as follows
\[
\begin{aligned}\rho & =\rho^{\left(0\right)}+\rho^{\left(1\right)}\varepsilon+\dots, & u & =u^{\left(0\right)}+u^{\left(1\right)}\varepsilon+\dots,\\
f_{\rho} & =f^{\left(0\right)}_{\rho}+f^{\left(1\right)}_{\rho}\varepsilon+\dots, & P & =P^{\left(-1\right)}\frac{1}{\varepsilon}+P^{\left(0\right)}+P^{\left(1\right)}\varepsilon+\dots,\\
f_{u} & =f^{\left(-1\right)}_{u}\frac{1}{\varepsilon}+f^{\left(0\right)}_{u}+f^{\left(1\right)}_{u}\varepsilon+\dots,
\end{aligned}
\]
along with the additional assumptions $\rho^{\left(0\right)}\in\mathbb{R}_{+}$
constant in space and time, $f^{\left(-1\right)}_{u}=\nabla G$ for
some $G$, $f^{\left(0\right)}_{u}=0$ and $f^{\left(0\right)}_{\rho}=0$,
one can easily deduce that $\left(\rho_{B},u_{B},P_{B},f_{\rho_{B}},f_{u_{B}}\right)=\left(\frac{\rho^{\left(1\right)}}{\rho^{\left(0\right)}},u^{\left(0\right)},\frac{P^{\left(0\right)}}{\rho^{\left(0\right)}},\frac{f^{\left(1\right)}_{\rho}}{\rho^{\left(0\right)}},f^{\left(-1\right)}_{u}\right)$
is a solution of the system
\begin{equation}
\begin{aligned}\frac{\partial\rho_{B}}{\partial t}+u_{B}\cdot\nabla\rho_{B} & =f_{\rho_{B}},\\
\frac{\partial u_{B}}{\partial t}+u_{B}\cdot\nabla u_{B} & =-\nabla P_{B}+\rho_{B}f_{u_{B}},\\
\nabla\cdot u_{B} & =0.
\end{aligned}
\label{eq:Boussinesq-like}
\end{equation}
This corresponds to the regime of a very strong conservative force
that only depends on variations of density, but not on the value of
the density itself (like the buoyancy force). Finally, notice that
\eqref{eq:Boussinesq-like} coincides with the actual Boussinesq equations
when $f_{u_{B}}=f^{\left(-1\right)}_{u}=-\hat{e}_{1}$.

\subsection{Context}

The issue of local existence of solutions for the non-homogeneous
incompressible Euler equations (see \eqref{eq:inhomogeneous 2D Euler})
has been studied since the 1970s. As far as the authors are aware,
the first result concerning local well-posedness is due to Marsden
(1976) \cite{Marsden}, who studies the equations in the absence of
external source terms on smooth compact manifolds of any dimension
while measuring the solution in Sobolev and Hölder spaces. In 1980,
Beirão da Veiga and Valli incorporate an external acceleration and
show local well-posedness in Hölder spaces in bounded connected open
subsets of $\mathbb{R}^{2}$ and $\mathbb{R}^{3}$ (see \cite{Beirao-Valli1,Beirao-Valli2}).
Furthermore, they are able to prove local-in-time existence for smooth
initial data in bounded connected open subsets of $\mathbb{R}^{3}$
(see \cite{Beirao-Valli3}). In 1985, Valli and Zajaczkowski give
in \cite{Valli-Zajaczkowski} an alternative technique that proves
local well-posedness in $W^{2,p}$-spaces in bounded domains of $\mathbb{R}^{d}$
($d\ge2$). In 1993, Itoh is the first one to show local-in-time well-posedness
in Sobolev spaces when the domain is the whole space $\mathbb{R}^{3}$
(see \cite{Itoh 1}). Some years later, in 2010, Danchin \cite{Danchin}
shows local-in-time well-posedness in Besov spaces $B^{s}_{p,r}\left(\mathbb{R}^{d}\right)$
($d\geq2$) with $1<p<\infty$ and gives the first blow-up criterion.
In addition, Zhou, Xin and Fan present similar results for the periodic
case in \cite{Zhou-Xin-Fan} (2010). Just a year later, Danchin and
Fanelli extend Danchin's result to the case $p=\infty$ in \cite{Danchin-Fanelli},
which includes Hölder spaces. Importantly, these results are proven
in the presence of an external acceleration. Lastly, further blow-up
criteria were developed by Bae, Lee and Shin in \cite{Bae-Lee-Shin}
(2020) and Fanelli in \cite{Fanelli Blow-up} (2025).

In contrast with the homogeneous 2D Euler case, where global-in-time
existence of smooth solutions has been known since the work of Wolibner
(1933) \cite{Wolibner}, no analogous result for the non-homogeneous
2D case is available. Moreover, the present paper proves singularities
do form, so such a result would in fact be impossible. As far as the
authors are aware, this is the first article where singularity formation
for the incompressible non-homogeneous Euler equations is established.

For the sake of completeness and to place our current work in the
zoo of singularities in fluid dynamics, we will briefly mention the
known singularities of the 3D incompressible homogeneous Euler and
2D Boussinesq equations, restricting ourselves to the ones achieved
in the absence of boundaries, i.e., where the spatial domain is $\mathbb{R}^{2}$
or $\mathbb{R}^{3}$. We can distinguish two important families: self-similar
blow-ups and non-self-similar ones.
\begin{itemize}
\item On the one hand, in the first family, we have Elgindi's result \cite{Elgindi C1alpha I}
(2021) about finite-time singularity formation for the forced homogeneous
axisymmetric 3D Euler equations without swirl departing from $C^{1,\alpha}\left(\mathbb{R}^{3}\right)$
initial datum with very small $\alpha$ ---later adapted for the
unforced equations by Elgindi, Ghoul and Masmoudi (see \cite{Elgindi C1alpha II})---
and Elgindi's and Pasqualotto's construction \cite{Elgindi Boussinesq I,Elgindi Boussinesq II}
(2023) for the unforced 2D Boussinesq and unforced homogeneous axisymmetric
3D Euler equations with swirl, also departing from $C^{1,\alpha}$
initial data with very small $\alpha$. 
\item On the other hand, in the family of non-self-similar finite-time singularities,
we have Córdoba's, Martínez-Zoroa's and Zheng's construction \cite{Euler no autosimilar}
(2025) for the unforced homogeneous axisymmetric 3D Euler equations
without swirl starting from $C^{1,\alpha}\left(\mathbb{R}^{3}\right)$
initial datum with very small $\alpha$, Córdoba's and Martínez-Zoroa's
``rotating'' singularity \cite{Fuerza Euler} (2023) for the forced
homogeneous 3D Euler equations with uniform $C^{1,\frac{1}{2}-}\cap L^{2}$
force --later adapted by the same authors and Zheng to prove finite-time
blow-up for the hypodissipative Navier-Stokes equations with a force
in $L^{1}_{t}C^{1,\varepsilon}_{x}\cap L^{\infty}_{t}L^{2}_{x}$ in
\cite{Fuerza Navier Stokes} (2024)--, Córdoba's and Martínez-Zoroa's
finite-time singularity for the incompressible porous media equation
(IPM) with a smooth source \cite{Fuerza IPM} (2024) and the result
\cite{Articulo Boussinesq} (2025) from the same authors of this paper
about singularity formation for the 2D Boussinesq equation with $C^{0}_{t}C^{1,\,\sqrt{\frac{4}{3}}-1-\varepsilon}_{x}\cap C^{0}_{t}L^{2}_{x}$
source terms. This article builds heavily on the work done in \cite{Articulo Boussinesq}. 
\item Moreover, very recently, the global existence of solutions for the
incompressible homogeneous axisymmetric 3D Euler equations without
swirl for $C^{1,\frac{1}{3}+}$ initial data has been proved (almost)
sharp by counterexamples provided by three different methods: a Lagrangian
clock-and-strain framework by Shkoller \cite{Shkoller}, a computer-assisted
proof of an asymptotically self-similar blow-up by Chen \cite{Chen1,Chen2}
and an exact self-similar blow-up solution (necessarily with infinite
energy, but with finite-codimension-stability) by Shao, Wei, Zhang
and Zhang \cite{ShaoWeiZhangZhang}.
\end{itemize}
We conclude this subsection by mentioning further works that concern
themselves with other properties of the non-homogeneous Euler equations.
Lower bounds for the time of existence of solutions with small initial
data (in a certain sense) are provided by Danchin in \cite{Danchin}
(2010), by Danchin and Fanelli in \cite{Danchin-Fanelli} (2011) and
by Bae, Lopes Filho, Mazzucato and Nussenzveig Lopes in \cite{Bae-Lopes-Mazzucato-Nussenzveig}
(2025). Moreover, the propagation of striated and conormal regularity
of initial data (a way of generalizing 2D vortex patches) is studied
by Fanelli in \cite{Fanelli Geometric Structures}. Besides, asymptotic
stability of shear flows close to the Couette flow on $\mathbb{T}\times\mathbb{R}$
in Gevrey spaces is proved by Chen, Wei, Zhang and Zhang in \cite{Chen-Wei-Zhang-Zhang}
(2025). Furthermore, the existence and nonexistence in Sobolev spaces
of traveling waves near monotonic shear flows in a periodic channel
are studied by Zhao and Zhao in \cite{Zhao-Zhao} (2026). On the other
hand, local-in-time existence in Sobolev spaces over bounded and unbounded
domains of $\mathbb{R}^{3}$ of the incompressible non-homogeneous
Navier-Stokes equations (the incompressible non-homogeneous Euler
equations with viscosity) was shown by Itoh and Tani in \cite{Itoh-Tani}
(1999), where the vanishing viscosity limit is also investigated.
This vanishing viscosity limit has also been researched in the context
of weak solutions; for instance, see the recent paper \cite{Schr=0000F6der-Wiedemann}
(2025) of Schröder and Wiedemann. Continuing in the weak context,
non-uniqueness of $C^{\alpha}$ solutions with $\alpha<\frac{1}{7}$
was established by Giri and Koley in \cite{Giri-Koley} (2025). In
addition, the distribution describing the local energy flux of merely
bounded solutions was studied by Inversi and Violini in \cite{Inversi-Violini}
(2024). Leaving the topic of weak solutions, the existence of infinitely
many smooth steady solutions is shown by Fanelli and Feireisl in \cite{Fanelli-Feireisl}
(2020). Moreover, a derived model (with applications for the study
of the mean dynamics in the ocean) and the global existence of its
solutions under a smallness condition was researched by Bravin and
Fanelli in \cite{Bravin-Fanelli} (2025). Lastly, a Lagrangian formulation
of the non-homogeneous Euler equations has recently been introduced
in \cite{PanLagrangian} by Pan (2025).

\subsection{\label{subsec:Canonical-force-decomposition}Canonical force decomposition
and blow-up classes}

As occurs in the homogeneous Euler equations, the incompressibility
constraint allows for the existence of multiple forces that trigger
the same dynamics. Indeed, given a solution $\left(\rho,u,P,f_{\rho},f_{u}\right)$
of \eqref{eq:inhomogeneous 2D Euler}, $\left(\rho,u,P+\phi,f_{\rho},f_{u}+\frac{\nabla\phi}{\rho}\right)$
is another solution of \eqref{eq:inhomogeneous 2D Euler} with the
same density and velocity (this is what we mean when we say ``the
same dynamics''). Furthermore, if no singularities occur, we can
guarantee that, if $f_{u}$ is in the well-posedness regime, so will
$f_{u}+\frac{\nabla\phi}{\rho}$. However, and contrary to the 3D
homogeneous Euler equations or the 2D Boussinesq system, this universal
regularity breaks down in the case of formation of singularities.

\subsubsection{Canonical decomposition in the homogeneous case}

To justify the claims above, we have to introduce the canonical decomposition
of $f_{u}$ in \eqref{eq:inhomogeneous 2D Euler} and, to undertake
this last task, it will prove helpful to review the decomposition
in the homogeneous case first. To this end, consider the homogeneous
Euler equations
\begin{equation}
\frac{\partial u}{\partial t}+u\cdot\nabla u=-\nabla P+f,\label{eq:homogeneous Euler}
\end{equation}
in a bounded domain $\Omega\subseteq\mathbb{R}^{2}$ (we will study
the natural extension to unbounded domains later). On the one hand,
taking the divergence at both sides of \eqref{eq:homogeneous Euler},
we arrive at the equation of the pressure
\[
\nabla\cdot\left(u\cdot\nabla u\right)=-\Delta P+\nabla\cdot f.
\]
On the other hand, taking the curl at both sides of \eqref{eq:homogeneous Euler},
we deduce the vorticity equation
\[
\frac{\partial\omega}{\partial t}+u\cdot\nabla\omega=\nabla^{\perp}\cdot f,
\]
where $\omega=\nabla^{\perp}\cdot u$. In view of these equations,
if we wish to write $f$ as the sum of two functions, one that only
appears in the pressure equation and another that only appears in
the vorticity equation, we quickly arrive at the Helmholtz decomposition
\begin{equation}
f=\nabla\phi+\nabla^{\perp}\psi,\label{eq:canonical decomposition homogeneous}
\end{equation}
which leads to
\[
\nabla\cdot\left(u\cdot\nabla u\right)=-\Delta P+\Delta\phi,\quad\frac{\partial\omega}{\partial t}+u\cdot\nabla\omega=\Delta\psi.
\]
To compute $\phi$ and $\psi$ from $f$, we take the divergence and
the curl at both sides of \eqref{eq:canonical decomposition homogeneous},
obtaining
\[
\nabla\cdot f=\Delta\phi,\quad\nabla^{\perp}\cdot f=\Delta\psi.
\]
Nonetheless, a key question remains: what are the boundary conditions
for $\phi$ and $\psi$? Multiplying \eqref{eq:homogeneous Euler}
by the normal vector $\hat{n}$ at the boundary, we see that some
Neumann boundary conditions for the pressure follow. If we wish for
$\nabla^{\perp}\psi$ to have no influence over $P$, we should impose
$\left.\nabla^{\perp}\psi\cdot\hat{n}\right|_{\partial\Omega}=0$.
One can also arrive at this conclusion by enforcing $\left\langle \nabla\phi,\nabla^{\perp}\psi\right\rangle _{L^{2}\left(\Omega\right)}=0$.
Indeed, by integration by parts, we have
\[
\left\langle \nabla\phi,\nabla^{\perp}\psi\right\rangle _{L^{2}\left(\Omega\right)}=\int_{\partial\Omega}\phi\nabla^{\perp}\psi\cdot\hat{n}\mathrm{d}\Gamma.
\]
Since the value of $\phi$ has no physical meaning (only its gradient
has), the only natural condition is to require $\left.\nabla^{\perp}\psi\cdot\hat{n}\right|_{\partial\Omega}=0$.
Thereby, $\phi$ and $\psi$ solve the following problems
\[
\left\{ \begin{aligned}\Delta\phi & =\nabla\cdot f & \text{in }\Omega,\\
\nabla\phi\cdot\hat{n} & =f\cdot\hat{n} & \text{on }\partial\Omega,
\end{aligned}
\right.\quad\left\{ \begin{aligned}\Delta\psi & =\nabla^{\perp}\cdot f & \text{in }\Omega,\\
\nabla^{\perp}\psi\cdot\hat{n} & =0 & \text{on }\partial\Omega.
\end{aligned}
\right.
\]
The natural extension of these problems to $\mathbb{R}^{2}$ is
\[
\left\{ \begin{aligned}\Delta\phi & =\nabla\cdot f & \text{in }\mathbb{R}^{2},\\
\nabla\phi\cdot\hat{x} & =f\cdot\hat{x} & \text{at }\infty,
\end{aligned}
\right.\quad\left\{ \begin{aligned}\Delta\psi & =\nabla^{\perp}\cdot f & \text{in }\mathbb{R}^{2},\\
\nabla\psi & \in L^{2}\left(\mathbb{R}^{2}\right).
\end{aligned}
\right.
\]

\subsubsection{Canonical decomposition in the non-homogeneous case}

Now, let us follow the same procedure in the case of the non-homogeneous
Euler equations. Recall (see equation \eqref{eq:inhomogeneous 2D Euler})
that
\begin{equation}
\frac{\partial u}{\partial t}+u\cdot\nabla u=-\frac{\nabla P}{\rho}+f_{u}.\label{eq:inhomogeneous Euler momentum}
\end{equation}
Again, we will consider the equations in a bounded domain $\Omega\subseteq\mathbb{R}^{2}$.
On the one hand, taking the divergence at both sides of \eqref{eq:inhomogeneous Euler momentum},
we arrive at the equation of the pressure
\[
\nabla\cdot\left(u\cdot\nabla u\right)=-\nabla\cdot\left(\frac{\nabla P}{\rho}\right)+\nabla\cdot f_{u}.
\]
On the other hand, taking the curl at both sides of \eqref{eq:inhomogeneous Euler momentum},
we deduce the vorticity equation
\[
\frac{\partial\omega}{\partial t}+u\cdot\nabla\omega=\frac{\nabla P}{\rho}\cdot\frac{\nabla^{\perp}\rho}{\rho}+\nabla^{\perp}\cdot f_{u},
\]
where $\omega=\nabla^{\perp}\cdot u$. This time, the pressure does
not disappear but we may still use the momentum equation \eqref{eq:inhomogeneous Euler momentum}
to eliminate $\frac{\nabla P}{\rho}$, which leads to
\begin{equation}
\frac{\partial\omega}{\partial t}+u\cdot\nabla\omega=\left[f_{u}-\frac{\partial u}{\partial t}-u\cdot\nabla u\right]\cdot\frac{\nabla^{\perp}\rho}{\rho}+\nabla^{\perp}\cdot f_{u}.\label{eq:vorticity equation}
\end{equation}
Again, if we wish to write $f_{u}$ as the sum of two functions, one
that only appears in the pressure equation and another that only appears
in the vorticity equation, we are obliged to choose
\begin{equation}
f_{u}=\frac{\nabla\phi}{\rho}+\nabla^{\perp}\psi.\label{eq:inhomogeneous canonical decomposition fu}
\end{equation}
Indeed, the first summand does not appear in the vorticity equation
because
\[
\frac{\nabla\phi}{\rho}\cdot\frac{\nabla^{\perp}\rho}{\rho}+\nabla^{\perp}\cdot\left(\frac{\nabla\phi}{\rho}\right)=\frac{\nabla\phi}{\rho}\cdot\frac{\nabla^{\perp}\rho}{\rho}-\frac{1}{\rho^{2}}\nabla\phi\cdot\nabla^{\perp}\rho=0
\]
and the second summand does not appear in the pressure equation. How
do we compute $\phi$ and $\psi$ from the knowledge of $f_{u}$?
Taking the divergence and the curl at both sides of \eqref{eq:inhomogeneous canonical decomposition fu},
we obtain
\[
\nabla\cdot f_{u}=\nabla\cdot\left(\frac{\nabla\phi}{\rho}\right),\quad\nabla^{\perp}\cdot f_{u}=-\frac{\nabla\phi}{\rho}\cdot\frac{\nabla^{\perp}\rho}{\rho}+\Delta\psi.
\]
Multiplying \eqref{eq:inhomogeneous Euler momentum} by the normal
vector $\hat{n}$ at the boundary and following the same argument
as in the homogeneous case, we deduce that $\left.\nabla^{\perp}\psi\cdot\hat{n}\right|_{\partial\Omega}=0$.
In this way, $\phi$ and $\psi$ solve the following problems
\[
\left\{ \begin{aligned}\nabla\cdot\left(\frac{\nabla\phi}{\rho}\right) & =\nabla\cdot f_{u} & \text{in }\Omega,\\
\nabla\phi\cdot\hat{n} & =\rho f_{u}\cdot\hat{n} & \text{on }\partial\Omega,
\end{aligned}
\right.\quad\left\{ \begin{aligned}\Delta\psi & =\nabla^{\perp}\cdot f_{u}+\frac{\nabla\phi}{\rho}\cdot\frac{\nabla^{\perp}\rho}{\rho} & \text{in }\Omega,\\
\nabla^{\perp}\psi\cdot\hat{n} & =0 & \text{on }\partial\Omega.
\end{aligned}
\right.
\]
The natural extension of these problems to $\mathbb{R}^{2}$ is
\begin{equation}
\left\{ \begin{aligned}\nabla\cdot\left(\frac{\nabla\phi}{\rho}\right) & =\nabla\cdot f_{u} & \text{in }\mathbb{R}^{2},\\
\nabla\phi\cdot\hat{x} & =\rho f_{u}\cdot\hat{x} & \text{at }\infty,
\end{aligned}
\right.\quad\left\{ \begin{aligned}\Delta\psi & =\nabla^{\perp}\cdot f_{u}+\frac{\nabla\phi}{\rho}\cdot\frac{\nabla^{\perp}\rho}{\rho} & \text{in }\mathbb{R}^{2},\\
\nabla\psi & \in L^{2}\left(\mathbb{R}^{2}\right).
\end{aligned}
\right.\label{eq:decomposition force inhomogeneous}
\end{equation}

\begin{rem}
In view of equation \eqref{eq:decomposition force inhomogeneous},
if we were to consider the non-homogeneous Euler equations with gravity
(which would imply $f_{u}\notin L^{2}_{x}$), the term that absorbs
this unboundedness in $L^{2}_{x}$ is $\phi$ and never $\psi$; in
other words, the force that affects the dynamics always belongs to
$L^{2}_{x}$.
\end{rem}

\subsubsection{The regularity question}

Now that we have studied the canonical decomposition, we can proceed
to the central question of this section.
\begin{question}
\label{que:central question nonhomogeneous}If $f_{u}=\frac{\nabla\phi}{\rho}+\nabla^{\perp}\psi$
is in the well-posedness regime, must $\nabla^{\perp}\psi$ also be
in the well-posedness regime?
\end{question}
Before actually answering question \ref{que:central question nonhomogeneous},
we will consider its analog in the homogeneous case.
\begin{question}
\label{que:central question homogeneous}If $f_{u}=\nabla\phi+\nabla^{\perp}\psi$
is in the well-posedness regime, must $\nabla^{\perp}\psi$ also be
in the well-posedness regime?
\end{question}
The orthogonality of $\nabla\phi$ and $\nabla^{\perp}\psi$ in the
homogeneous case ensures that the answer to question \ref{que:central question homogeneous}
is affirmative. Actually, $f_{u}$ and $\nabla^{\perp}\psi$ must
have the same regularity. Indeed, suppose that $\nabla^{\perp}\psi$
were less regular than $f_{u}$. Then, the ``singular'' part of
$\nabla^{\perp}\psi$ would have to cancel some part of $\nabla\phi$,
i.e., there should be a common ``singular'' part to $\nabla\phi$
and $\nabla^{\perp}\psi$, but this is impossible because $\left\langle \nabla\phi,\nabla^{\perp}\psi\right\rangle _{L^{2}\left(\mathbb{R}^{2}\right)}=0$.
However, in the non-homogeneous case we no longer have $\left\langle \frac{\nabla\phi}{\rho},\nabla^{\perp}\psi\right\rangle _{L^{2}\left(\mathbb{R}^{2}\right)}=0$.
Actually, the answer to question \ref{que:central question nonhomogeneous}
will depend on whether $\rho$ develops a singularity or not.
\begin{itemize}
\item If $\rho$ does not become singular in finite time, Theorem 1 of \cite{Danchin-Fanelli}
ensures that $\rho$, $u$ and $\nabla\left(P+\phi\right)$ remain
as regular as $f_{u}$. Moreover, as $P$ only depends on $\rho$
and $u$, $\nabla P$ must be as regular as $f_{u}$, too. Consequently,
$\nabla\phi=\nabla\left(P+\phi\right)-\nabla P$ is as regular as
$f_{u}$ and, since $\rho$ is bounded away from zero, so is $\frac{\nabla\phi}{\rho}$.
Hence, $\frac{\nabla\phi}{\rho}$ is as regular as $f_{u}$ and, consequently,
so is $\nabla^{\perp}\psi$.
\item If $\rho$ does become singular in finite time, maybe we can absorb
the singular part of $\nabla^{\perp}\psi$ at the blow-up time with
$\frac{\nabla\phi}{\rho}$. Indeed, suppose that we can write $\nabla^{\perp}\psi=\frac{h}{\rho}$.
In view of \eqref{eq:decomposition force inhomogeneous}, such a $\nabla^{\perp}\psi$
is definitely possible under some conditions on $h$ ($\nabla\cdot\left(\frac{h}{\rho}\right)=0$).
Assume, for simplicity, that the singularity happens at the origin
at $t=1$. By taking $\phi\left(t,x\right)=-\left(h\left(t,0\right)\cdot x\right)\varphi\left(x\right)$,
where $\varphi\in C^{\infty}_{c}\left(\mathbb{R}^{2}\right)$ with
$\left.\varphi\right|_{\overline{B\left(0;1\right)}}\equiv1$, near
$x=0$ we have
\[
f_{u}\left(t,x\right)=\frac{\nabla\phi\left(t,x\right)}{\rho\left(t,x\right)}+\nabla^{\perp}\psi\left(t,x\right)=\frac{h\left(t,x\right)-h\left(t,0\right)}{\rho\left(t,x\right)}.
\]
Then, taking the gradient at both sides, we arrive at
\[
\nabla f_{u}\left(t,x\right)=\frac{\nabla h\left(t,x\right)\rho\left(t,x\right)-\left(h\left(t,x\right)-h\left(t,0\right)\right)\otimes\nabla\rho\left(t,x\right)}{\rho\left(t,x\right)^{2}}.
\]
Thus, by using this strategy, we can prevent the singularity of $\rho$
at the origin from altering the regularity of $f_{u}$, making $f_{u}$
more regular than $\nabla^{\perp}\psi$. 
\end{itemize}

\subsubsection{Two types of singularities}

In view of the preceding subsection, it makes sense to classify the
possible singularities of the forced non-homogeneous Euler equations
in two types.
\begin{defn}
\label{def:singularity classes nonhomogeneous}Assume that $\left(\rho,u,f_{\rho},f_{u}\right)$
is a singular solution of \eqref{eq:inhomogeneous 2D Euler}, where
$f_{\rho}$ and $f_{u}$ remain in the well-posedness regime. Let
$f_{u}=\frac{\nabla\phi}{\rho}+\nabla^{\perp}\psi$ be the canonical
decomposition of $f_{u}$ given in equation \eqref{eq:inhomogeneous canonical decomposition fu}.
\begin{itemize}
\item We say $\left(\rho,u,f_{\rho},f_{u}\right)$ is an \textbf{unscreened}
singularity if $f_{u}$ and $\nabla^{\perp}\psi$ have the same regularity.
\item We say $\left(\rho,u,f_{\rho},f_{u}\right)$ is a \textbf{screened}
singularity otherwise.
\end{itemize}
\end{defn}
\begin{rem}
In a screened singularity, the part of the force that drives the dynamics
becomes singular at the blow-up time, but the pressure is able to
absorb (screen) this singular part in such a way that it remains invisible
in the total force; whereas in an unscreened singularity, the part
of the force that drives the dynamics does not become singular at
the blow-up time.
\end{rem}
\begin{rem}
\label{rem:first-class singularity}If $\left(\rho,u,f_{\rho},f_{u}\right)$
is an unscreened singularity, so is $\left(\rho,u,f_{\rho},\nabla^{\perp}\psi\right)$.
Consequently, the search for an unscreened singularity can be reduced
to the case $\phi=0$.
\end{rem}

\subsection{Main result}
\begin{thm}
\label{thm:MAIN THM}Let $\alpha\in\left(0,\alpha_{*}\right)$, where
$\alpha_{*}=\sqrt{\frac{4}{3}}-1$. Let $\overline{\rho}\in\mathbb{R}_{+}$
be sufficiently large. Then, there is a solution $\left(\rho,u,f_{\rho},f_{u}\right)$
in $\left[0,1\right)\times\mathbb{R}^{2}$ of the forced incompressible
non-homogeneous Euler equations \eqref{eq:inhomogeneous 2D Euler}
that satisfies:
\begin{enumerate}
\item There is a finite time singularity at time $t=1$, i.e.,
\[
\lim_{T\to1^{-}}\int^{T}_{0}\left|\left|\omega\left(s,\cdot\right)\right|\right|_{L^{\infty}\left(\mathbb{R}^{2}\right)}\mathrm{d}s=\infty.
\]
(See Theorem 2 of \cite{Danchin-Fanelli}).
\item ~
\[
f_{\rho}\in C^{0}_{t}C^{1,\alpha}_{x,c}\left(\left[0,1\right]\times\mathbb{R}^{2}\right),\quad f_{u}\in C^{0}_{t}C^{1,\alpha}_{x}\left(\left[0,1\right]\times\mathbb{R}^{2};\mathbb{R}^{2}\right)\cap C^{0}_{t}L^{2}_{x}\left(\left[0,1\right]\times\mathbb{R}^{2};\mathbb{R}^{2}\right).
\]
\item The solution has the following structure:
\[
\rho=\overline{\rho}+\rho_{B},\quad u=u_{B},
\]
where $\left(\rho_{B},u_{B}\right)$ is one of the solutions of the
forced Boussinesq system constructed in \cite{Articulo Boussinesq},
which also develops a singularity at time $t=1$\@.
\item $\forall\varepsilon>0$, $\rho,u\in C^{\infty}_{t}C^{\infty}_{x}\left(\left[0,1-\varepsilon\right]\times\mathbb{R}^{2}\right)$
and $\nabla\rho$ and $u$ are compactly supported uniformly in time.
\item $\left(\rho,u,f_{\rho},f_{u}\right)$ is a screened singularity (see
definition \ref{def:singularity classes nonhomogeneous}).
\end{enumerate}
\end{thm}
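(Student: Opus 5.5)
We take the Boussinesq solution $(\rho_B,u_B,P_B,f_{\rho_B},f_{u_B})$ of \cite{Articulo Boussinesq}, which blows up at $t=1$ with $f_{\rho_B}\in C^0_tC^{1,\alpha}_{x,c}$ and $f_{u_B}\in C^0_tC^{1,\alpha}_x\cap C^0_tL^2_x$. We set $\rho=\overline{\rho}+\rho_B$ and $u=u_B$, and read off the forces that this ansatz requires. Since $\overline{\rho}$ is a constant, the mass equation holds with $f_\rho=f_{\rho_B}$. Incompressibility is inherited from $u_B$. Because $\rho_B$ is bounded uniformly on $[0,1)$ (it is transported with a bounded source), for $\overline{\rho}$ large $\rho$ stays bounded away from zero, which explains the hypothesis on $\overline{\rho}$. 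Items 1, 3 and 4 are then inherited directly from the Boussinesq construction: the vorticity is the same, smoothness is the same, and $\nabla\rho=\nabla\rho_B$. The real work is to choose $f_u$ and a pressure $P$ so that
\[
\partial_t u_B+u_B\cdot\nabla u_B=-\frac{\nabla P}{\rho}+f_u ,
\]
with $f_u\in C^0_tC^{1,\alpha}_x\cap C^0_tL^2_x$ up to $t=1$.

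Subtracting the Boussinesq momentum equation, $f_u$ must equal $-\nabla P_B+\rho_Bf_{u_B}+\nabla P/\rho$. Simply taking $P=\rho\,(\cdots)$ is not obviously regular, because $\nabla\rho_B$ degenerates at $t=1$. We therefore exploit freedom in the pressure, which is the screening mechanism. Define
\[
f_u=\frac{\nabla\phi}{\rho}+\frac{h}{\rho},\qquad h:=\rho\left(\partial_tu_B+u_B\cdot\nabla u_B\right)+\nabla P .
\]
Here $P$ is chosen so that $h/\rho$ captures exactly the vorticity-driving part, $\nabla^\perp\psi=h/\rho$ in the sense of the canonical decomposition, and $\phi=-(h(t,0)\cdot x)\varphi(x)$ (or a higher-order Taylor subtraction) is a localized gradient correction at the blow-up point, as in the discussion after Question \ref{que:central question nonhomogeneous}. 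Concretely, I would take $\nabla P=\overline{\rho}\nabla P_B$ plus a correction. Then
\[
f_u=\frac{\overline{\rho}\,(f_{u_B}\rho_B-\nabla P_B)+\rho_B(\partial_tu_B+u_B\cdot\nabla u_B)+\ldots}{\rho},
\]
and the Boussinesq structure (in particular $f_{u_B}$ is essentially a gradient $\nabla G$ near the singular region, as in the formal expansion of subsection \ref{subsec:Boussinesq is first order model of non-homogeneous Euler}) lets us rewrite $\overline{\rho}\rho_B\nabla G$ as $\nabla(\overline{\rho}\rho_BG)-\overline{\rho}G\nabla\rho_B$.

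The key estimate is $C^{1,\alpha}$ control of $f_u$ up to $t=1$. Away from the singular point everything is smooth. Near it, the only singular quantities are $\nabla\rho_B$ (and $\nabla u_B$), and they enter multiplied by factors like $G-G(0)$ or $h-h(t,0)$, which vanish to sufficient order at the singular point. This is exactly the quotient-rule computation for $\nabla f_u$ displayed before Definition \ref{def:singularity classes nonhomogeneous}. I would use the scale-by-scale bounds from \cite{Articulo Boussinesq}, where the solution is a superposition of layers of increasing frequency $\lambda_n$ and amplitude, to check that (vanishing factor)$\times\nabla\rho_B$ is uniformly $C^{1,\alpha}$ for $\alpha<\alpha_*$. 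This is the main obstacle: one must verify that the same exponent $\alpha_*=\sqrt{4/3}-1$ survives, i.e. that the extra product terms $\rho_B\,Du_B/Dt$ and $G\nabla\rho_B$ obey the same layer estimates as $\rho_Bf_{u_B}$ did in the Boussinesq paper. The $L^2$ bound follows from compact support of $u_B$ and $\nabla\rho$, together with the choice of $\phi$ compactly supported and of $P$ decaying, so that the $\nabla\phi/\rho$ term is the only non-compact contribution.

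Finally, for item 5 we compute the canonical decomposition \eqref{eq:decomposition force inhomogeneous} of $f_u$. Because the vorticity equation \eqref{eq:vorticity equation} must generate the Boussinesq blow-up, $\Delta\psi$ must contain $\overline{\rho}^{-1}$-scaled terms like $\nabla^\perp\rho_B\cdot\nabla G$, with $\nabla^\perp\psi\approx\rho_B f_{u_B}\cdot$(regular)$/\rho$ up to regular terms. Since $\rho_B\notin C^{1,\alpha}$ at $t=1$, $\nabla^\perp\psi(1)$ fails to be $C^{1,\alpha}$, whereas $f_u$ is $C^{1,\alpha}$. Hence $f_u$ and $\nabla^\perp\psi$ differ in regularity, and the singularity is screened.
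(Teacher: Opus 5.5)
The step that carries the whole theorem, actually producing $f_u\in C^0_tC^{1,\alpha}_x$, is missing, and the concrete choice you sketch fails. Writing $f_u=\partial_tu_B+u_B\cdot\nabla u_B+\nabla P/\rho$ is a tautology, since every pressure $P$ gives a solution; all the content lies in choosing $P$. Set $\mathcal{A}:=\partial_tu_B+u_B\cdot\nabla u_B$ and take the curl of the momentum equation with $\rho=\overline{\rho}+\rho_B$, $u=u_B$. The constraint on $f_u$ is
\[
\nabla^{\perp}\cdot f_u=\frac{\partial\rho_B}{\partial x_2}\left(1+\frac{f_{u,1}-\mathcal{A}_1}{\overline{\rho}+\rho_B}\right)-\frac{\partial\rho_B}{\partial x_1}\,\frac{f_{u,2}-\mathcal{A}_2}{\overline{\rho}+\rho_B}+f_{\omega_B}.
\]
The left side must stay bounded, indeed $C^\alpha$, up to $t=1$. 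But $\|\partial_{x_2}\rho_B(t,\cdot)\|_{L^\infty}$ is not even integrable in time. So, by the partial converse to the dyadic Hölder lemma, the coefficient of $\partial_{x_2}\rho_B$ must vanish at the blow-up point:
\[
f_{u,1}(1,0)=\mathcal{A}_1(1,0)-\overline{\rho}-\rho_B(1,0).
\]
Hence $f_u$ must be a localized ``gravity'' of size $\overline{\rho}$ at the origin, tuned exactly, including its $O(1)$ terms.

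Your choice $\nabla P=\overline{\rho}\nabla P_B$ gives $f_u=-\rho_B\hat{e}_1-\frac{\rho_B}{\rho}\nabla P_B+(\text{Boussinesq forcing})$, which is $O(1)$. The coefficient is then $1+O(\overline{\rho}^{-1})\neq0$, so $f_u\notin L^\infty_tC^1_x$. Your unspecified ``correction'' would have to supply the entire $O(\overline{\rho})$ piece.

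The rewriting $\overline{\rho}\rho_B\nabla G=\nabla(\overline{\rho}\rho_BG)-\overline{\rho}G\nabla\rho_B$ does not help. The residual $(G-G(0))\nabla\rho_B$ vanishes at $0$, but its gradient contains $\nabla G\otimes\nabla\rho_B$, which does not. The vanishing is needed for the coefficient of $\nabla\rho_B$ in $\nabla f_u$, not in $f_u$.

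Finally, ``choose $P$ so that $h/\rho=\nabla^{\perp}\psi$'' is circular. The required value of $f_{u,1}$ at the origin involves the divergence-free part of $f_u$, and that part solves a Poisson equation whose source contains $f_u$ itself. This is why the paper proceeds as follows:
\begin{itemize}
\item it takes the ansatz $f_u=g(t)\varPhi+\nabla^{\perp}\varPsi$ with $g$ as in \eqref{eq:g};
\item it solves for $\nabla^{\perp}\varPsi$ by a contraction (Proposition \ref{prop:MAP contractive}), which needs a hand-built $\varPhi$ with $\|\nabla^{\perp}\Delta^{-1}\partial_{x_2}\varPhi\|_{L^\infty}<1$ (Lemma \ref{lem:construction varPhi}) and $\overline{\rho}$ large;
\item only then does it bootstrap to $C^{1,\alpha}$ via the dyadic Hölder lemma.
\end{itemize}
Layer estimates for $\rho_B\mathcal{A}$ or $G\nabla\rho_B$, which you call the main obstacle, cannot replace this exact cancellation. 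You also use $f_{u_B}$ both for a forcing in $C^{1,\alpha}\cap L^2$ and for the buoyancy field $\nabla G$. In the Boussinesq construction the forcing is $f_{\omega_B}$ and the buoyancy field is $-\hat{e}_1$, which is not in $L^2$.

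Item 5 is not established either. At $t=1$ every density layer is switched off ($h^{(n)}(1)=0$), so $\rho_B(1,\cdot)\equiv0$, and ``$\rho_B\notin C^{1,\alpha}$ at $t=1$'' is false. Even read as a statement about $t\to1^-$, nothing in your argument derives $\nabla^{\perp}\psi\approx\rho_Bf_{u_B}(\dots)/\rho$ from \eqref{eq:decomposition force inhomogeneous}, or rules out that the singular parts cancel inside $\nabla^{\perp}\psi$.

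The paper's argument (Proposition \ref{prop:necessary condition unscreened}, Corollary \ref{cor:SCREENED}) runs differently. If the singularity were unscreened, then $\Delta\psi\in C^0_tC^0_x$. The same necessity-of-vanishing argument, applied to the $\psi$-form of the vorticity equation, then forces $-\partial_{x_2}\psi(1,0)=\mathcal{A}_1(1,0)-\overline{\rho}-\rho_B(1,0)$, so $|\partial_{x_2}\psi(1,0)|\approx\overline{\rho}$. Yet at $t=1$ the equation collapses to $\Delta\psi(1,\cdot)=f_{\omega_B}(1,\cdot)$, which does not depend on $\overline{\rho}$. That is a contradiction once $\overline{\rho}$ is large.

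Lastly, item 1 is not inherited for free. The Boussinesq theorem records blow-up through $\int\|\nabla\rho_B\|_{L^\infty}$, and the divergence of $\int\|\omega_B\|_{L^\infty}$ needs the separate computation of Proposition \ref{prop:blow-up criterion}.
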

\begin{rem}
We are in the local existence regime. In Theorem 1 of \cite{Danchin-Fanelli},
it is proven that, provided that $\rho\left(t,\cdot\right)\in C^{1,\alpha}\left(\mathbb{R}^{2}\right)$
is bounded away from zero, $u\left(t,\cdot\right)\in C^{1,\alpha}\left(\mathbb{R}^{2};\mathbb{R}^{2}\right)\cap L^{2}\left(\mathbb{R}^{2};\mathbb{R}^{2}\right)$,
$f_{u}\in L^{1}_{t}C^{1,\alpha}_{x}\left(\left[0,1\right]\times\mathbb{R}^{2};\mathbb{R}^{2}\right)\cap C^{0}_{t}L^{2}_{x}\left(\left[0,1\right]\times\mathbb{R}^{2};\mathbb{R}^{2}\right)$
and $f_{\rho}=0$, then we are in the well-posedness regime. In our
case, $\rho$ and $u$ satisfy the required conditions because of
point 4 of Theorem \ref{thm:MAIN THM} and because $\overline{\rho}$
is taken sufficiently large. $f_{u}$ also clearly satisfies the conditions
because of point 2 of Theorem \ref{thm:MAIN THM}. The only difference
is that, in our case, $f_{\rho}\neq0$. Nevertheless, one should be
able to adapt the same techniques used in \cite{Danchin,Danchin-Fanelli}
to this situation.
\end{rem}
\begin{rem}
The blow-up rates and loss of regularity at the singularity are exactly
the same as in \cite{Articulo Boussinesq}.
\end{rem}
\begin{rem}
Spatial and temporal regularity of the source terms can be traded
through interpolation.
\end{rem}
\begin{rem}
Although we do not prove it, the source terms $f_{\rho}$ and $f_{u}$
are $C^{\infty}$ in space and time before the blow-up time.
\end{rem}
With Theorem \ref{thm:MAIN THM}, the existence of screened singularities
for the non-homogeneous Euler equations is proven, but the following
question remains open:
\begin{question}
Is there any unscreened singularity of system \eqref{eq:inhomogeneous 2D Euler}
(see definition \ref{def:singularity classes nonhomogeneous})?
\end{question}

\subsection{Functional spaces and notation}

Let $p\in\left[1,\infty\right]$, $d\in\mathbb{N}$ and $w\in\mathbb{R}^{d}$.
We will use the notation $\left|\left|w\right|\right|_{p}$ to refer
to the $p$-norm of a vector in the Euclidean space $\mathbb{R}^{d}$,
i.e., 
\[
\left|\left|w\right|\right|_{p}\coloneqq\left(\sum^{d}_{n=1}\left|w_{n}\right|^{p}\right)^{\frac{1}{p}},\quad\left|\left|w\right|\right|_{\infty}\coloneqq\max_{n=1,\dots,d}\left|w_{n}\right|.
\]
There will be no confusion with norms in the Lebesgue spaces $L^{p}\left(\mathbb{R}^{d}\right)$,
because we will write these as $\left|\left|\cdot\right|\right|_{L^{p}\left(\mathbb{R}^{d}\right)}$.

Let $d\in\mathbb{N}$ and $F\subseteq\mathbb{R}^{d}$ be closed in
$\mathbb{R}^{d}$. We recall that a function $f:F\to\mathbb{R}$ is
said to be Hölder of parameter $\alpha\in\left(0,1\right)$ (denoted
$f\in C^{\alpha}\left(F\right)$) if
\[
\left|\left|f\right|\right|_{\dot{C}^{\alpha}\left(F\right)}\coloneqq\sup_{x,y\in F}\frac{\left|f\left(x\right)-f\left(y\right)\right|}{\left|\left|x-y\right|\right|^{\alpha}_{2}}<\infty.
\]
$\left|\left|\cdot\right|\right|_{\dot{C}^{\alpha}\left(F\right)}$
is a seminorm. The set $C^{\alpha}\left(F\right)$ can be made into
a Banach space by considering the norm
\[
\left|\left|f\right|\right|_{C^{\alpha}\left(F\right)}\coloneqq\max\left\{ \left|\left|f\right|\right|_{L^{\infty}\left(F\right)},\left|\left|f\right|\right|_{\dot{C}^{\alpha}\left(F\right)}\right\} .
\]
Likewise, we will work with the Hölder spaces 
\[
C^{k,\alpha}\left(F\right)=\left\{ f:F\to\mathbb{R}\text{ of class }C^{k}\text{ and with all derivatives of order }k\text{ belonging to }C^{\alpha}\left(F\right)\right\} 
\]
with $k\in\mathbb{N}$ and $\alpha\in\left(0,1\right)$. We will equip
this space with the norm
\[
\left|\left|f\right|\right|_{C^{k,\alpha}\left(F\right)}\coloneqq\max\left\{ \left|\left|f\right|\right|_{C^{k}\left(F\right)},\left|\left|f\right|\right|_{\dot{C}^{k,\alpha}\left(F\right)}\right\} ,
\]
where
\[
\left|\left|f\right|\right|_{C^{k}\left(F\right)}=\max_{0\le m\le k}\left|\left|f\right|\right|_{\dot{C}^{m}\left(F\right)},\quad\left|\left|f\right|\right|_{\dot{C}^{k}\left(F\right)}=\max_{\left|\beta\right|=k}\left|\left|\mathrm{D}^{\beta}f\right|\right|_{L^{\infty}\left(F\right)},
\]
\[
\left|\left|f\right|\right|_{\dot{C}^{k,\alpha}\left(F\right)}=\max_{\left|\beta\right|=k}\left|\left|\mathrm{D}^{\beta}f\right|\right|_{\dot{C}^{\alpha}\left(F\right)},\quad\mathrm{D}^{\beta}f\coloneqq\frac{\partial^{\left|\beta\right|}f}{\partial x^{\beta_{1}}_{1}\dots\partial x^{\beta_{d}}_{d}},\quad\beta\in\mathbb{N}^{d}_{0},\quad\left|\beta\right|=\beta_{1}+\dots+\beta_{d}.
\]

We will make use of the following two properties of the $\left|\left|\cdot\right|\right|_{\dot{C}^{\alpha}}$-seminorm:
\begin{enumerate}
\item $\forall f,g\in C^{\alpha}\left(F\right)$, we have
\begin{equation}
\left|\left|fg\right|\right|_{\dot{C}^{\alpha}\left(F\right)}\le\left|\left|f\right|\right|_{L^{\infty}\left(F\right)}\left|\left|g\right|\right|_{\dot{C}^{\alpha}\left(F\right)}+\left|\left|f\right|\right|_{\dot{C}^{\alpha}\left(F\right)}\left|\left|g\right|\right|_{L^{\infty}\left(F\right)},\label{eq:property Calpha multiplication}
\end{equation}
\item $\forall f\in C^{\alpha}\left(F\right)$ and $\forall\varphi\in C^{1}\left(F;\varphi\left(F\right)\right)$,
we have
\begin{equation}
\left|\left|f\circ\varphi\right|\right|_{\dot{C}^{\alpha}\left(F\right)}\leq K_{0}\left(d\right)\left|\left|\varphi\right|\right|^{\alpha}_{\dot{C}^{1}\left(F;\varphi\left(F\right)\right)}\left|\left|f\right|\right|_{\dot{C}^{\alpha}\left(\varphi\left(F\right)\right)}.\label{eq:property Calpha composition}
\end{equation}
\end{enumerate}
Moreover, we will work with the spaces
\[
\begin{aligned}L^{\infty}_{t}X_{x}\left(I\times F\right) & =L^{\infty}\left(I;X\left(F\right)\right)=\left\{ f:I\to X\left(F\right)\text{ s.t. }\sup_{t\in I}\left|\left|f\left(t\right)\right|\right|_{X\left(F\right)}<\infty\right\} ,\\
C^{0}_{t}X_{x}\left(I\times F\right) & =C^{0}\left(I;X\left(F\right)\right)=\left\{ f:I\to X\left(F\right)\text{ continuous}\right\} ,
\end{aligned}
\]
where $X$ is any Banach space of functions over $F$, $I$ is any
real interval and $F$ is closed subset of $\mathbb{R}^{2}$. Both
spaces will be equipped with the norm
\[
\left|\left|f\right|\right|_{L^{\infty}_{t}X_{x}\left(I\times F\right)}=\sup_{t\in I}\left|\left|f\left(t\right)\right|\right|_{X\left(F\right)}.
\]
We will use a lower-case $c$ added as a subscript to the space $X_{x}$
to indicate that the functions of $X$ have compact support and that
their compact support is uniform in time. We will specially use the
spaces $C^{0}_{t}C^{1,\alpha}_{x}$ and $C^{0}_{t}L^{p}_{x}$.

Throughout the article, the notation $\Upsilon\left(a,b,\dots\right)$
will be used to denote a positive threshold, depending only on $a,b,\dots$,
whose value may change from one result to the next one. We will allow
$\Upsilon$ to carry numerical subscripts $\left(\Upsilon_{1},\Upsilon_{2},\dots\right)$
when several distinct thresholds appear within the same statement.

\section{\label{sec:ideas of the proof}Heuristics and outline of the proof}

\subsection{Flexibility of the Boussinesq blow-up}

The main idea is to translate the blow-up we constructed for the 2D
incompressible Boussinesq equations in \cite{Articulo Boussinesq}
to system \eqref{eq:inhomogeneous 2D Euler}. In order to achieve
this, before studying \eqref{eq:inhomogeneous 2D Euler} in detail,
it will be useful to understand how rigid the solution of \cite{Articulo Boussinesq}
really is. Recall the forced 2D incompressible Boussinesq equations
in vorticity formulation
\begin{equation}
\begin{aligned}\frac{\partial\rho}{\partial t}+u\cdot\nabla\rho & =f_{\rho},\\
\frac{\partial\omega}{\partial t}+u\cdot\nabla\omega & =\frac{\partial\rho}{\partial x_{2}}+f_{\omega},\\
u & =\nabla^{\perp}\Delta^{-1}\omega.
\end{aligned}
\label{eq:forced Boussinesq}
\end{equation}
Since we followed the layer approach, the solution presented in \cite{Articulo Boussinesq}
can be expressed as an infinite sum of building blocks whose support
diminishes as we advance in the sum. This means that, as long as we
are not close to the blow-up point, only a finite number of layers
will be non-zero. In particular, this implies that we may modify our
solution $\left(\rho,\omega\right)$ as we please ``far'' from the
blow-up point and compensate all changes with the force (i.e., choose
the force so that $\left(\rho,\omega\right)$ remains a solution of
the forced 2D Boussinesq equations) without changing the regularity
of the forces $f_{\rho}$ and $f_{\omega}$. Notice that, if we try
to change $\left(\rho,\omega\right)$ near the blow-up point and compensate
these changes with the forces $\left(f_{\rho},f_{\omega}\right)$,
we could easily destroy the summability of the series that define
$f_{\rho}$ and $f_{\omega}$, leading to a catastrophic loss of regularity.
Coming back to the changes ``far'' from the blow-up point, observe
that, instead of modifying the solution $\left(\rho,\omega\right)$,
we could vary the equations themselves and use the forces $f_{\rho}$
and $f_{\omega}$ so that $\left(\rho,\omega\right)$ remain a solution
of the new equations. In other words, this means that, if we wish
to translate the blow-up of \cite{Articulo Boussinesq} to other equations,
the only requirement is for these equations to look like the 2D Boussinesq
equations ``near'' the blow-up point.

\subsection{Translation of the blow-up}

We will work with the vorticity formulation of system \eqref{eq:inhomogeneous 2D Euler},
which we obtained in \eqref{eq:vorticity equation}:
\begin{equation}
\begin{aligned}\frac{\partial\rho}{\partial t}+u\cdot\nabla\rho & =f_{\rho}\quad\left(\text{mass equation}\right),\\
\frac{\partial\omega}{\partial t}+u\cdot\nabla\omega & =\left[f_{u}-\frac{\partial u}{\partial t}-u\cdot\nabla u\right]\cdot\frac{\nabla^{\perp}\rho}{\rho}+\nabla^{\perp}\cdot f_{u}\quad\left(\text{vorticity equation}\right),\\
u & =\nabla^{\perp}\Delta^{-1}\omega\quad\left(\text{velocity-vorticity equation}\right).
\end{aligned}
\label{eq:inhomogeneous Euler equations vorticity}
\end{equation}
Following the idea that the Boussinesq equations are a first order
approximation of the non-homogeneous Euler equations (see subsection
\ref{subsec:Boussinesq is first order model of non-homogeneous Euler}),
it makes sense to take $\rho=\overline{\rho}+\rho_{B}$ and $u=u_{B}$
(which implies $\omega=\omega_{B}$) above, where $\overline{\rho}>0$
is a constant and $\left(\rho_{B},u_{B},f_{\rho_{B}},f_{\omega_{B}}\right)$
is one of the solutions of 2D Boussinesq constructed in \cite{Articulo Boussinesq}.
Notice that, in the non-homogeneous Euler equations, $\rho$ denotes
a physical density and, consequently, must remain nonnegative at all
times, whereas the density in the Boussinesq equation can have an
alternating sign. Therefore, $\overline{\rho}$ must be chosen so
that $\rho$ remains positive for all time. These choices lead us
to
\[
\begin{aligned}\frac{\partial\rho_{B}}{\partial t}+u_{B}\cdot\nabla\rho_{B} & =f_{\rho},\\
\frac{\partial\omega_{B}}{\partial t}+u_{B}\cdot\nabla\omega_{B} & =\left[f_{u}-\frac{\partial u_{B}}{\partial t}-u_{B}\cdot\nabla u_{B}\right]\cdot\frac{\nabla^{\perp}\rho_{B}}{\overline{\rho}+\rho_{B}}+\nabla^{\perp}\cdot f_{u}.
\end{aligned}
\]
Now, making use of the fact that $\left(\rho_{B},\omega_{B},f_{\rho_{B}},f_{\omega_{B}}\right)$
solve the Boussinesq system \eqref{eq:forced Boussinesq}, we deduce
that
\begin{equation}
\begin{aligned}f_{\rho_{B}} & =f_{\rho},\\
\underbrace{\frac{\partial\rho_{B}}{\partial x_{2}}\left(1+\frac{1}{\overline{\rho}+\rho_{B}}\left(f_{u,1}-\left(\frac{\partial u_{B}}{\partial t}+u_{B}\cdot\nabla u_{B}\right)_{1}\right)\right)}_{\eqqcolon S_{1}}+\\
\underbrace{-\frac{1}{\overline{\rho}+\rho_{B}}\frac{\partial\rho_{B}}{\partial x_{1}}\left(f_{u,2}-\left(\frac{\partial u_{B}}{\partial t}+u_{B}\cdot\nabla u_{B}\right)_{2}\right)}_{\eqqcolon S_{2}}+\\
+f_{\omega_{B}} & =\nabla^{\perp}\cdot f_{u}.
\end{aligned}
\label{eq:vorticity equation first attempt}
\end{equation}
The density equation above is easy to interpret, whereas the vorticity
equation turns out to be more delicate. To analyze it, we should have
the following ideas in mind:
\begin{enumerate}
\item For $f_{u}$ to be in the well-posedness regime, we need $f_{u}\in C^{0}_{t}C^{1,\alpha}_{x}\left(\left[0,1\right]\times\mathbb{R}^{2}\right)$
for some $\alpha>0$.
\item We expect $f_{u}$ to be one derivative more regular than $\nabla^{\perp}\cdot f_{u}$.
\item Both $f_{u}$ and $\nabla^{\perp}\cdot f_{u}$ appear in the equation.
\item $f_{\omega_{B}}\in C^{0}_{t}C^{\alpha}_{x}\left(\left[0,1\right]\times\mathbb{R}^{2}\right)$
for some $\alpha>0$ small enough. (See Main Theorem of \cite{Articulo Boussinesq}).
\item $\frac{\partial\rho_{B}}{\partial x_{2}}$ blows up in $\left|\left|\cdot\right|\right|_{L^{\infty}\left(\mathbb{R}^{2}\right)}$
at $t=1$. (See Proposition \ref{prop:bad bound drhodx2}).
\item $\frac{\partial\rho_{B}}{\partial x_{1}}\in C^{0}_{t}C^{\alpha}_{x}\left(\left[0,1\right]\times\mathbb{R}^{2}\right)$
for some $\alpha>0$ small enough. (See Proposition \ref{prop:bounded drhodx1}).
\item $\rho_{B}\in C^{0}_{t}C^{1-}_{x}\left(\left[0,1\right]\times\mathbb{R}^{2}\right)$.
(See Proposition \ref{prop:bounded density}).
\item $\frac{\partial u_{B}}{\partial t}+u_{B}\cdot\nabla u_{B}\in C^{0}_{t}C^{1-}_{x}\left(\left[0,1\right]\times\mathbb{R}^{2}\right)$.
(See Proposition \ref{prop:bound material derivative u}).
\end{enumerate}
Because of point 4, it is clear that $f_{\omega_{B}}$ poses no obstacle
for $\nabla^{\perp}\cdot f_{u}$ being $C^{0}_{t}C^{\alpha}_{x}\left(\left[0,1\right]\times\mathbb{R}^{2}\right)$.
Since we expect $f_{u}\in C^{0}_{t}C^{1,\alpha}_{x}\left(\left[0,1\right]\times\mathbb{R}^{2}\right)$,
by points 6, 7 and 8, neither does the second summand $S_{2}$ (the
one where $\frac{\partial\rho_{B}}{\partial x_{1}}$ appears). However,
the same cannot be said about the first summand $S_{1}$ (the one
that contains $\frac{\partial\rho_{B}}{\partial x_{2}}$); the fact
that $\frac{\partial\rho_{B}}{\partial x_{2}}$ blows up in $\left|\left|\cdot\right|\right|_{L^{\infty}\left(\mathbb{R}^{2}\right)}$
prevents the first summand $S_{1}$ from being $C^{0}_{t}C^{\alpha}_{x}\left(\left[0,1\right]\times\mathbb{R}^{2}\right)$
unless a miraculous cancellation happens. Since we absolutely need
$\nabla^{\perp}\cdot f_{u}\in C^{0}_{t}C^{\alpha}_{x}\left(\left[0,1\right]\times\mathbb{R}^{2}\right)$,
let us think about how such a cancellation could occur. To do this,
let us resort to a one-dimensional analog. On the one hand, even if
$\frac{\partial\rho_{B}}{\partial x_{2}}$ blows up in $\left|\left|\cdot\right|\right|_{L^{\infty}\left(\mathbb{R}^{2}\right)}$
at $t=1$, we know that $\frac{\partial\rho_{B}}{\partial x_{2}}$
remains $C^{\infty}$ away from the singularity, so maybe the function
$g\left(x\right)=\frac{1}{\left|x\right|^{\delta}}$ (with $\delta>0$)
is a good analog of $\frac{\partial\rho_{B}}{\partial x_{2}}$. On
the other hand, the factor that multiplies $\frac{\partial\rho_{B}}{\partial x_{2}}$
is $C^{0}_{t}C^{1-}_{x}\left(\left[0,1\right]\times\mathbb{R}^{2}\right)$
according to points 7 and 8, so its analog should be any $C^{1-}\left(\mathbb{R}\right)$
function $f$. Is there any $f$ such that $fg\in C^{\alpha}\left(\mathbb{R}\right)$
for some $\alpha>0$? The answer to this question is clearly in the
affirmative: just choose $f\left(x\right)=\left|x\right|^{\delta+\alpha}$
and, then, $fg\in C^{\alpha}\left(\mathbb{R}\right)$. Needless to
say, there are many more functions $f$ that would be valid. What
do all these $f$'s have in common? They all vanish at the origin
and vanish sufficiently fast to compensate the singularity of $g$.
Actually, we will prove in Proposition \ref{prop:dyadic decomposition H=0000F6lder}
that, provided that $\delta<1$, the conditions $f\left(0\right)=0$
and $f\in C^{\delta+}\left(\mathbb{R}\right)$ are enough to ensure
that $fg\in C^{\alpha}\left(\mathbb{R}\right)$ for some small $\alpha>0$.
Thus, we conclude from this one-dimensional analog that we need the
factor that multiplies $\frac{\partial\rho_{B}}{\partial x_{2}}$
to vanish at the blow-up point at the blow-up time. How could we achieve
this? Using the force $f_{u}$, of course!

Here is where opting for a screened singularity instead of an unscreened
one gives us extra degrees of freedom to accomplish our goal. Indeed,
if we were looking for an unscreened singularity, in light of Remark
\ref{rem:first-class singularity}, nothing could be won by considering
$\phi\neq0$ in the force decomposition (see equation \eqref{eq:inhomogeneous canonical decomposition fu}),
so we would have to impose $\nabla\cdot f_{u}=0$, which would make
$f_{u}$ completely determined by $\nabla^{\perp}\cdot f_{u}$. Fortunately,
this is not our case, so we can choose
\begin{equation}
f_{u}\left(t,x\right)=g\left(t\right)\varPhi\left(x\right)+\nabla^{\perp}\varPsi\left(t,x\right),\label{eq:fu first attempt}
\end{equation}
where $\varPhi\in C^{\infty}_{c}\left(\mathbb{R}^{2}\right)$ is $1$
at the blow-up point and $g\left(t\right)$ is a free time-dependent
vector that we can use to make the desired factor vanish. Notice that
we should choose $\varPhi$ even in $x_{2}$ because $\left(\frac{\partial u_{B}}{\partial t}+u_{B}\cdot\nabla u_{B}\right)_{1}$
is also even in $x_{2}$ (see Proposition \ref{prop:oddness and evenness U}).
The divergence-free factor $\nabla^{\perp}\varPsi$ will capture the
rest of the force. Substituting \eqref{eq:fu first attempt} into
\eqref{eq:vorticity equation first attempt} provides
\begin{equation}
\begin{aligned}\frac{\partial\rho_{B}}{\partial x_{2}}\left(1+\frac{1}{\overline{\rho}+\rho_{B}}\left(g_{1}\left(t\right)\varPhi\left(x\right)-\frac{\partial\varPsi}{\partial x_{2}}-\left(\frac{\partial u_{B}}{\partial t}+u_{B}\cdot\nabla u_{B}\right)_{1}\right)\right)+\\
-\frac{1}{\overline{\rho}+\rho_{B}}\frac{\partial\rho_{B}}{\partial x_{1}}\left(g_{2}\left(t\right)\varPhi\left(x\right)+\frac{\partial\varPsi}{\partial x_{1}}-\left(\frac{\partial u_{B}}{\partial t}+u_{B}\cdot\nabla u_{B}\right)_{2}\right)+\\
+f_{\omega_{B}}-g\left(t\right)\cdot\nabla^{\perp}\varPhi\left(x\right) & =\Delta\varPsi,
\end{aligned}
\label{eq:vorticity equation final first attempt}
\end{equation}
where $\left(g_{1}\left(t\right),g_{2}\left(t\right)\right)=g\left(t\right)$.
Clearly, the extra term $g\left(t\right)\cdot\nabla^{\perp}\varPhi\left(x\right)$
does no harm to the regularity of $\varPsi$ and we can make the desired
factor vanish by setting
\begin{equation}
\begin{aligned}1 & +\frac{1}{\overline{\rho}+\rho_{B}\left(t,0\right)}\left[g_{1}\left(t\right)-\frac{\partial\varPsi}{\partial x_{2}}\left(t,0\right)-\left(\frac{\partial u_{B}}{\partial t}+u_{B}\cdot\nabla u_{B}\right)_{1}\left(t,0\right)\right]=0\iff\\
\iff & g_{1}\left(t\right)=\left(\frac{\partial u_{B}}{\partial t}+u_{B}\cdot\nabla u_{B}\right)_{1}\left(t,0\right)+\frac{\partial\varPsi}{\partial x_{2}}\left(t,0\right)-\left(\overline{\rho}+\rho_{B}\left(t,0\right)\right),
\end{aligned}
\label{eq:g1 first attempt}
\end{equation}
where we are assuming for simplicity that the singularity of $\rho_{B}$
takes place at the origin. (Notice that we can always ensure this
is the case by displacing our solution appropriately). Nonetheless,
a key difficulty remains: $\Delta\varPsi$ and $\nabla\varPsi$ appear
in the same equation; we need to know $\nabla\varPsi$ to compute
$g\left(t\right)$, but we need $g\left(t\right)$ to obtain $\Delta\varPsi$,
which determines $\nabla\varPsi$. One way out of this vicious circle
is to find a scenario where changes in $\nabla^{\perp}\varPsi$ almost
do not alter the value of the left-hand-side of equation \eqref{eq:vorticity equation final first attempt}\@.
In this context, a fixed-point argument can be set up to find the
desired $\varPsi$. We provide the details of the proof in section
\ref{sec:fixed-point argument}.

Lastly, we formalize some of the ideas into a Choice.
\begin{choice}
\label{choice: fu}We choose
\[
\rho=\overline{\rho}+\rho_{B},\quad u=u_{B},\quad f_{u}=g\left(t\right)\varPhi\left(x\right)+\nabla^{\perp}\varPsi
\]
in \eqref{eq:vorticity equation}, where $\left(\rho_{B},u_{B},f_{\rho_{B}},f_{\omega_{B}}\right)$
is one of the solutions constructed in \cite{Articulo Boussinesq},
$\varPhi\in C^{\infty}_{c}\left(\mathbb{R}^{2}\right)$, $\varPhi$
is even in $x_{2}$ and $\varPhi\left(0\right)=1$.
\end{choice}

\section{Some useful properties of Hölder spaces}

\subsection{Miscellaneous}
\begin{lem}
\label{lem:inverse Holder}Let $f\in C^{\alpha}\left(\mathbb{R}^{d}\right)$
bounded below, i.e., $\exists\lambda>0$ such that $f\left(x\right)\ge\lambda$
$\forall x\in\mathbb{R}^{d}$. Then, $\frac{1}{f}\in C^{\alpha}\left(\mathbb{R}^{d}\right)$.
Moreover,
\[
\left|\left|\frac{1}{f}\right|\right|_{L^{\infty}\left(\mathbb{R}^{d}\right)}\leq\frac{1}{\lambda},\quad\left|\left|\frac{1}{f}\right|\right|_{\dot{C}^{\alpha}\left(\mathbb{R}^{d}\right)}\leq\frac{1}{\lambda^{2}}\left|\left|f\right|\right|_{\dot{C}^{\alpha}\left(\mathbb{R}^{d}\right)}.
\]
\end{lem}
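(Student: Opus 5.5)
The plan is a direct pointwise computation; no earlier result is needed beyond the definitions of $\left|\left|\cdot\right|\right|_{L^{\infty}}$ and $\left|\left|\cdot\right|\right|_{\dot{C}^{\alpha}}$ given in the notation subsection.

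For the $L^{\infty}$ bound, observe that $f\left(x\right)\ge\lambda>0$ for every $x\in\mathbb{R}^{d}$. Hence $\frac{1}{f}$ is well defined and positive, and $0<\frac{1}{f\left(x\right)}\le\frac{1}{\lambda}$ pointwise. Taking the supremum gives $\left|\left|\frac{1}{f}\right|\right|_{L^{\infty}\left(\mathbb{R}^{d}\right)}\le\frac{1}{\lambda}$.

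For the Hölder seminorm, fix $x\neq y$ in $\mathbb{R}^{d}$ and write
\[
\frac{1}{f\left(x\right)}-\frac{1}{f\left(y\right)}=\frac{f\left(y\right)-f\left(x\right)}{f\left(x\right)f\left(y\right)}.
\]
The denominator satisfies $f\left(x\right)f\left(y\right)\ge\lambda^{2}$ by the lower bound. The numerator is bounded by $\left|\left|f\right|\right|_{\dot{C}^{\alpha}\left(\mathbb{R}^{d}\right)}\left|\left|x-y\right|\right|_{2}^{\alpha}$ by definition of the seminorm. Dividing by $\left|\left|x-y\right|\right|_{2}^{\alpha}$ and taking the supremum over $x\neq y$ yields
\[
\left|\left|\frac{1}{f}\right|\right|_{\dot{C}^{\alpha}\left(\mathbb{R}^{d}\right)}\le\frac{1}{\lambda^{2}}\left|\left|f\right|\right|_{\dot{C}^{\alpha}\left(\mathbb{R}^{d}\right)}.
\]

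The two bounds together show that $\frac{1}{f}$ belongs to $C^{\alpha}\left(\mathbb{R}^{d}\right)$. There is no real obstacle here. The only point to watch is that the lower bound is two-sided in effect: because $f\ge\lambda>0$, both values $f\left(x\right)$ and $f\left(y\right)$ are positive, so the product $f\left(x\right)f\left(y\right)$ is bounded below by $\lambda^{2}$ without any sign issues.
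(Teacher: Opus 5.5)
Your proof is correct and gives exactly the stated constants. The paper instead applies the Mean Value Theorem to $g(z)=\frac{1}{z}$ between $f(x)$ and $f(y)$, phrased via the image of the segment joining $x$ and $y$, and uses that $|g'(z)|=z^{-2}\le\lambda^{-2}$ there.

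Your exact identity $\frac{1}{f(x)}-\frac{1}{f(y)}=\frac{f(y)-f(x)}{f(x)f(y)}$, together with $f(x)f(y)\ge\lambda^{2}$, is more elementary. It needs neither differentiability of $g$ nor the segment $[x,y]$, so it applies verbatim on any subset of $\mathbb{R}^{d}$, for instance the closed sets $F$ in the paper's Hölder notation.

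The MVT route's only advantage is that it extends unchanged to compositions $G\circ f$ for any $C^{1}$ function $G$ with bounded derivative on $[\lambda,\infty)$. That generality is not needed here.
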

\begin{proof}
The bound for $\left|\left|\frac{1}{f}\right|\right|_{L^{\infty}\left(\mathbb{R}^{d}\right)}$
is obvious. For the second statement, let $x,y\in\mathbb{R}^{d}$
and consider
\[
\left|\frac{1}{f\left(y\right)}-\frac{1}{f\left(x\right)}\right|=\left|g\left(f\left(y\right)\right)-g\left(f\left(x\right)\right)\right|,
\]
where $g\left(z\right)=\frac{1}{z}$. By the Mean Value Theorem,
\[
\left|\frac{1}{f\left(y\right)}-\frac{1}{f\left(x\right)}\right|\leq\left(\sup_{z\in\overline{f\left(\left[x,y\right]\right)}}\left|g'\left(z\right)\right|\right)\left|f\left(y\right)-f\left(x\right)\right|,
\]
where $\left[x,y\right]$ denotes the segment that joins $x$ and
$y$ and $\overline{f\left(\left[x,y\right]\right)}$ represents the
closure of its image. Since $f$ is bounded below and $\left|g'\left(z\right)\right|=\frac{1}{z^{2}}$
is decreasing, we infer that
\[
\left|\frac{1}{f\left(y\right)}-\frac{1}{f\left(x\right)}\right|\leq\frac{1}{\lambda^{2}}\left|\left|f\right|\right|_{\dot{C}^{\alpha}}\left|\left|x-y\right|\right|^{\alpha}_{2}.
\]
Dividing by $\left|\left|x-y\right|\right|^{\alpha}_{2}$ at both
sides and taking suprema over $x$ and $y$ provides the result.
\end{proof}

\begin{prop}
\label{prop:dyadic decomposition H=0000F6lder}Consider $\beta\in\left(0,1\right)$.
Let $f\in C^{\beta}\left(\mathbb{R}^{2}\right)$ and $g\in C^{\beta}\left(\mathbb{R}^{2}\setminus B\left(0;r\right)\right)$
$\forall r>0$ with
\begin{enumerate}
\item $f\left(0\right)=0$,
\item $\left|\left|g\right|\right|_{L^{\infty}\left(\mathbb{R}^{2}\setminus B\left(0;r\right)\right)}\leq K_{0}\frac{1}{r^{\sigma}}$
$\forall r\in\left(0,R\right]$, for some $K_{0}>0$, some $R>0$
and some $0<\sigma<\beta<1$,
\item $\left|\left|g\right|\right|_{\dot{C}^{\beta-\sigma}\left(\mathbb{R}^{2}\setminus B\left(0;r\right)\right)}\leq K_{\beta-\sigma}\frac{1}{r^{\eta}}$
$\forall r\in\left(0,R\right]$, for some $K_{\beta-\sigma}>0$ and
some $0<\eta<\beta$.
\end{enumerate}
Then, $fg$ admits an extension to the origin such that $\left(fg\right)\left(0\right)=0$
and $fg\in C^{\beta-\sigma}\left(\mathbb{R}^{2}\right)$. Furthermore,
\[
\begin{aligned}\left|\left|fg\right|\right|_{L^{\infty}\left(\mathbb{R}^{2}\right)} & \leq\max\left\{ K_{0}R^{\beta-\sigma}\left|\left|f\right|\right|_{\dot{C}^{\beta}\left(\mathbb{R}^{2}\right)},\left|\left|f\right|\right|_{L^{\infty}\left(\mathbb{R}^{2}\setminus\overline{B\left(0;R\right)}\right)}\left|\left|g\right|\right|_{L^{\infty}\left(\mathbb{R}^{2}\setminus\overline{B\left(0;R\right)}\right)}\right\} ,\\
\left|\left|fg\right|\right|_{\dot{C}^{\beta-\sigma}\left(\mathbb{R}^{2}\right)} & \leq\max\left\{ \left|\left|g\right|\right|_{L^{\infty}\left(\mathbb{R}^{2}\setminus\overline{B\left(0;R\right)}\right)}\left|\left|f\right|\right|_{\dot{C}^{\beta-\sigma}\left(\mathbb{R}^{2}\setminus\overline{B\left(0;R\right)}\right)}+\left|\left|f\right|\right|_{L^{\infty}\left(\mathbb{R}^{2}\setminus\overline{B\left(0;R\right)}\right)}\left|\left|g\right|\right|_{\dot{C}^{\beta-\sigma}\left(\mathbb{R}^{2}\setminus\overline{B\left(0;R\right)}\right)},\right.\\
 & \quad\left.K_{\beta-\sigma}R^{\beta-\eta}\left|\left|f\right|\right|_{\dot{C}^{\beta}\left(\mathbb{R}^{2}\right)}+\max\left\{ 2^{\sigma}K_{0}\left|\left|f\right|\right|_{\dot{C}^{\beta}\left(\mathbb{R}^{2}\right)},\left|\left|g\right|\right|_{L^{\infty}\left(\mathbb{R}^{2}\setminus\overline{B\left(0;R\right)}\right)}\left|\left|f\right|\right|_{\dot{C}^{\beta-\sigma}\left(\mathbb{R}^{2}\right)}\right\} \right\} .
\end{aligned}
\]

\end{prop}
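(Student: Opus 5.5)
The proof rests on a single pointwise consequence of hypothesis 1. Since $f\left(0\right)=0$, we have $\left|f\left(x\right)\right|\le\left|\left|f\right|\right|_{\dot{C}^{\beta}\left(\mathbb{R}^{2}\right)}\left|\left|x\right|\right|^{\beta}_{2}$ for every $x$. Every estimate below is obtained by combining this with hypotheses 2 and 3, always applied at the radius $r$ equal to the smaller of the two norms involved.

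\emph{Extension and $L^{\infty}$ bound.} Take $0<\left|\left|x\right|\right|_{2}\le R$. Apply hypothesis 2 with $r=\left|\left|x\right|\right|_{2}$; this is legitimate because $x\notin B\left(0;r\right)$. We get
\[
\left|f\left(x\right)g\left(x\right)\right|\le K_{0}\left|\left|f\right|\right|_{\dot{C}^{\beta}\left(\mathbb{R}^{2}\right)}\left|\left|x\right|\right|^{\beta-\sigma}_{2}.
\]
This tends to $0$ as $x\to0$, since $\sigma<\beta$. So setting $\left(fg\right)\left(0\right)=0$ gives a continuous extension. The same inequality is bounded by $K_{0}R^{\beta-\sigma}\left|\left|f\right|\right|_{\dot{C}^{\beta}\left(\mathbb{R}^{2}\right)}$ on $\overline{B\left(0;R\right)}$. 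Outside $\overline{B\left(0;R\right)}$ we use the trivial product bound. Together these give the claimed $L^{\infty}$ estimate.

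\emph{Hölder seminorm: setup.} Fix $x\neq y$ and label them so that $\left|\left|x\right|\right|_{2}\le\left|\left|y\right|\right|_{2}$. If $\left|\left|x\right|\right|_{2}>R$, both points lie outside $\overline{B\left(0;R\right)}$. The product rule \eqref{eq:property Calpha multiplication} on that set then gives the first entry of the outer maximum. Now suppose $\left|\left|x\right|\right|_{2}\le R$. We split
\[
\left(fg\right)\left(x\right)-\left(fg\right)\left(y\right)=f\left(x\right)\left(g\left(x\right)-g\left(y\right)\right)+g\left(y\right)\left(f\left(x\right)-f\left(y\right)\right).
\]
If $x=0$, only the second term survives, with $f\left(0\right)=0$, and the analysis below applies verbatim. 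This is also consistent with the extension by continuity.

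\emph{First term.} Both $x$ and $y$ lie in $\mathbb{R}^{2}\setminus B\left(0;\left|\left|x\right|\right|_{2}\right)$. Hence hypothesis 3 with $r=\left|\left|x\right|\right|_{2}\le R$ gives
\[
\left|f\left(x\right)\right|\left|g\left(x\right)-g\left(y\right)\right|\le\left|\left|f\right|\right|_{\dot{C}^{\beta}}\left|\left|x\right|\right|^{\beta}_{2}K_{\beta-\sigma}\left|\left|x\right|\right|^{-\eta}_{2}\left|\left|x-y\right|\right|^{\beta-\sigma}_{2}\le K_{\beta-\sigma}R^{\beta-\eta}\left|\left|f\right|\right|_{\dot{C}^{\beta}}\left|\left|x-y\right|\right|^{\beta-\sigma}_{2},
\]
where the last step uses $\eta<\beta$.

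\emph{Second term.} There are two cases.
\begin{enumerate}
\item If $\left|\left|y\right|\right|_{2}\le R$, hypothesis 2 at $r=\left|\left|y\right|\right|_{2}$ gives $\left|g\left(y\right)\right|\le K_{0}\left|\left|y\right|\right|^{-\sigma}_{2}$. We also write $\left|f\left(x\right)-f\left(y\right)\right|\le\left|\left|f\right|\right|_{\dot{C}^{\beta}}\left|\left|x-y\right|\right|^{\beta-\sigma}_{2}\left|\left|x-y\right|\right|^{\sigma}_{2}$. Since $\left|\left|x-y\right|\right|_{2}\le\left|\left|x\right|\right|_{2}+\left|\left|y\right|\right|_{2}\le2\left|\left|y\right|\right|_{2}$, the factor $\left|\left|x-y\right|\right|^{\sigma}_{2}$ cancels $\left|\left|y\right|\right|^{-\sigma}_{2}$ up to $2^{\sigma}$. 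The result is $2^{\sigma}K_{0}\left|\left|f\right|\right|_{\dot{C}^{\beta}}\left|\left|x-y\right|\right|^{\beta-\sigma}_{2}$.
\item If $\left|\left|y\right|\right|_{2}>R$, we simply bound $\left|g\left(y\right)\right|$ by $\left|\left|g\right|\right|_{L^{\infty}\left(\mathbb{R}^{2}\setminus\overline{B\left(0;R\right)}\right)}$ and $\left|f\left(x\right)-f\left(y\right)\right|$ by $\left|\left|f\right|\right|_{\dot{C}^{\beta-\sigma}\left(\mathbb{R}^{2}\right)}\left|\left|x-y\right|\right|^{\beta-\sigma}_{2}$.
\end{enumerate}
These two cases produce exactly the inner maximum in the statement. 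Dividing by $\left|\left|x-y\right|\right|^{\beta-\sigma}_{2}$ and taking suprema yields the seminorm bound.

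The main obstacle is the second term, because $g$ is unbounded near the origin and no case split on $\left|\left|x-y\right|\right|_{2}$ versus $\left|\left|y\right|\right|_{2}$ is available to help. Two choices make it work. First, $g$ is evaluated at the point of larger norm, so that $\left|\left|x-y\right|\right|_{2}\le2\left|\left|y\right|\right|_{2}$ absorbs the singularity through the spare factor $\left|\left|x-y\right|\right|^{\sigma}_{2}$. Second, the Hölder difference of $g$ is paired with $f$ at the point of smaller norm, so that hypothesis 3 applies on a region containing both points.
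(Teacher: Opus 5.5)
Your proposal is correct and follows essentially the same route as the paper's proof. Both arguments order the points so that $\|x\|_{2}\le\|y\|_{2}$ and split according to whether $\|x\|_{2}\le R$. They pair $f(x)$ with the $g$-difference, using hypothesis 3 at $r=\|x\|_{2}$. They pair $g(y)$ with the $f$-difference, using hypothesis 2 at $r=\|y\|_{2}$ together with $\|x-y\|_{2}\le 2\|y\|_{2}$, or the exterior sup norm of $g$ when $\|y\|_{2}>R$. Your explicit treatment of the pair with $x=0$ is a minor detail that the paper leaves implicit.
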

\begin{proof}
First of all, we will see that $fg$ admits an extension to the origin
and that $\left(fg\right)\left(0\right)=0$. Let $x\in\partial B\left(0;r\right)$
with $r\in\left(0,R\right]$. Then,
\[
\left|\left(fg\right)\left(x\right)\right|=\left|f\left(x\right)\right|\left|g\left(x\right)\right|=\left|f\left(x\right)-\overbrace{f\left(0\right)}^{=0}\right|\left|g\left(x\right)\right|\leq\left|\left|f\right|\right|_{\dot{C}^{\beta}\left(\mathbb{R}^{2}\right)}r^{\beta}K_{0}\frac{1}{r^{\sigma}}=K_{0}\left|\left|f\right|\right|_{\dot{C}^{\beta}\left(\mathbb{R}^{2}\right)}r^{\beta-\sigma}\xrightarrow[r\to0]{}0
\]
since $\beta>\sigma$.

Now, we will deal with $\left|\left|fg\right|\right|_{L^{\infty}\left(\mathbb{R}^{2}\right)}$.
Clearly,
\[
\begin{aligned}\left|\left|fg\right|\right|_{L^{\infty}\left(\mathbb{R}^{2}\right)} & =\sup_{x\in\mathbb{R}^{2}}\left|f\left(x\right)\right|\left|g\left(x\right)\right|=\sup_{r>0}\sup_{\left|\left|x\right|\right|_{2}=r}\left|f\left(x\right)\right|\left|g\left(x\right)\right|=\\
 & =\max\left\{ \sup_{r\in\left(0,R\right]}\sup_{\left|\left|x\right|\right|_{2}=r}\left|f\left(x\right)-\overbrace{f\left(0\right)}^{=0}\right|\left|g\left(x\right)\right|,\sup_{r\in\left(R,\infty\right)}\sup_{\left|\left|x\right|\right|_{2}=r}\left|f\left(x\right)\right|\left|g\left(x\right)\right|\right\} \leq\\
 & \leq\max\left\{ \sup_{r\in\left(0,R\right]}\left|\left|f\right|\right|_{\dot{C}^{\beta}\left(\mathbb{R}^{2}\right)}r^{\beta}K_{0}\frac{1}{r^{\sigma}},\left|\left|f\right|\right|_{L^{\infty}\left(\mathbb{R}^{2}\setminus\overline{B\left(0;R\right)}\right)}\left|\left|g\right|\right|_{L^{\infty}\left(\mathbb{R}^{2}\setminus\overline{B\left(0;R\right)}\right)}\right\} =\\
 & \leq\max\left\{ K_{0}\left|\left|f\right|\right|_{\dot{C}^{\beta}\left(\mathbb{R}^{2}\right)}\underbrace{\sup_{r\in\left(0,R\right]}r^{\beta-\sigma}}_{=R^{\beta-\sigma}\,\because\beta>\sigma},\left|\left|f\right|\right|_{L^{\infty}\left(\mathbb{R}^{2}\setminus\overline{B\left(0;R\right)}\right)}\left|\left|g\right|\right|_{L^{\infty}\left(\mathbb{R}^{2}\setminus\overline{B\left(0;R\right)}\right)}\right\} =\\
 & \leq\max\left\{ K_{0}R^{\beta-\sigma}\left|\left|f\right|\right|_{\dot{C}^{\beta}\left(\mathbb{R}^{2}\right)},\left|\left|f\right|\right|_{L^{\infty}\left(\mathbb{R}^{2}\setminus\overline{B\left(0;R\right)}\right)}\left|\left|g\right|\right|_{L^{\infty}\left(\mathbb{R}^{2}\setminus\overline{B\left(0;R\right)}\right)}\right\} .
\end{aligned}
\]

Similarly, we can find bounds for 
\[
\begin{aligned}\left|\left|fg\right|\right|_{\dot{C}^{\beta-\sigma}\left(\mathbb{R}^{2}\right)} & =\sup_{\substack{x,y\in\mathbb{R}^{2}\\
x\neq y
}
}\frac{\left|\left(fg\right)\left(x\right)-\left(fg\right)\left(y\right)\right|}{\left|\left|x-y\right|\right|^{\beta-\sigma}_{2}}.\end{aligned}
\]
Inside the supremum, we can assume without loss of generality that
$\left|\left|x\right|\right|_{2}\leq\left|\left|y\right|\right|_{2}$.
Moreover, thanks to the identity
\[
f\left(x\right)g\left(x\right)-f\left(y\right)g\left(y\right)=\left(f\left(x\right)-f\left(y\right)\right)g\left(y\right)+\left(g\left(x\right)-g\left(y\right)\right)f\left(x\right),
\]
we can write
\[
\begin{aligned}\left|\left|fg\right|\right|_{\dot{C}^{\beta-\sigma}\left(\mathbb{R}^{2}\right)} & \leq\sup_{\substack{x,y\in\mathbb{R}^{2}\\
x\neq y\\
\left|\left|x\right|\right|_{2}\leq\left|\left|y\right|\right|_{2}
}
}\left(\left|g\left(y\right)\right|\frac{\left|f\left(x\right)-f\left(y\right)\right|}{\left|\left|x-y\right|\right|^{\beta-\sigma}_{2}}+\left|f\left(x\right)\right|\frac{\left|g\left(x\right)-g\left(y\right)\right|}{\left|\left|x-y\right|\right|^{\beta-\sigma}_{2}}\right)=\\
 & \leq\sup_{r>0}\sup_{\left|\left|x\right|\right|_{2}=r}\sup_{\left|\left|y\right|\right|_{2}\geq\left|\left|x\right|\right|_{2}}\left(\left|g\left(y\right)\right|\frac{\left|f\left(x\right)-f\left(y\right)\right|}{\left|\left|x-y\right|\right|^{\beta-\sigma}_{2}}+\left|f\left(x\right)\right|\frac{\left|g\left(x\right)-g\left(y\right)\right|}{\left|\left|x-y\right|\right|^{\beta-\sigma}_{2}}\right)=\\
 & \leq\max\left\{ \overbrace{\sup_{r\in\left(0,R\right]}\sup_{\left|\left|x\right|\right|_{2}=r}\sup_{\left|\left|y\right|\right|_{2}\geq\left|\left|x\right|\right|_{2}}\left(\left|g\left(y\right)\right|\frac{\left|f\left(x\right)-f\left(y\right)\right|}{\left|\left|x-y\right|\right|^{\beta-\sigma}_{2}}+\left|f\left(x\right)-\overbrace{f\left(0\right)}^{=0}\right|\frac{\left|g\left(x\right)-g\left(y\right)\right|}{\left|\left|x-y\right|\right|^{\beta-\sigma}_{2}}\right)}^{\eqqcolon I_{1}},\right.\\
 & \quad\left.\underbrace{\sup_{r\in\left(R,\infty\right)}\sup_{\left|\left|x\right|\right|_{2}=r}\sup_{\left|\left|y\right|\right|_{2}\geq\left|\left|x\right|\right|_{2}}\left(\left|g\left(y\right)\right|\frac{\left|f\left(x\right)-f\left(y\right)\right|}{\left|\left|x-y\right|\right|^{\beta-\sigma}_{2}}+\left|f\left(x\right)\right|\frac{\left|g\left(x\right)-g\left(y\right)\right|}{\left|\left|x-y\right|\right|^{\beta-\sigma}_{2}}\right)}_{\eqqcolon I_{2}}\right\} .
\end{aligned}
\]
Bounding $I_{2}$ is easy because
\[
I_{2}\leq\left|\left|g\right|\right|_{L^{\infty}\left(\mathbb{R}^{2}\setminus\overline{B\left(0;R\right)}\right)}\left|\left|f\right|\right|_{\dot{C}^{\beta-\sigma}\left(\mathbb{R}^{2}\setminus\overline{B\left(0;R\right)}\right)}+\left|\left|f\right|\right|_{L^{\infty}\left(\mathbb{R}^{2}\setminus\overline{B\left(0;R\right)}\right)}\left|\left|g\right|\right|_{\dot{C}^{\beta-\sigma}\left(\mathbb{R}^{2}\setminus\overline{B\left(0;R\right)}\right)}.
\]
Now, concerning $I_{1}$,
\[
\begin{aligned}I_{1} & \leq\sup_{r\in\left(0,R\right]}\sup_{\left|\left|x\right|\right|_{2}=r}\left|\left|f\right|\right|_{\dot{C}^{\beta}\left(\mathbb{R}^{2}\right)}\left|\left|x\right|\right|^{\beta}_{2}\left|\left|g\right|\right|_{\dot{C}^{\beta-\sigma}\left(\mathbb{R}^{2}\setminus B\left(0;\left|\left|x\right|\right|_{2}\right)\right)}+\\
 & \quad+\max\left\{ \sup_{r\in\left(0,R\right]}\sup_{\left|\left|x\right|\right|_{2}=r}\sup_{\left|\left|x\right|\right|_{2}\leq\left|\left|y\right|\right|_{2}\leq R}\left|\left|g\right|\right|_{L^{\infty}\left(\mathbb{R}^{2}\setminus B\left(0;\left|\left|y\right|\right|_{2}\right)\right)}\left|\left|f\right|\right|_{\dot{C}^{\beta}\left(\mathbb{R}^{2}\right)}\left|\left|x-y\right|\right|^{\sigma}_{2},\right.\\
 & \qquad\left.\sup_{r\in\left(0,R\right]}\sup_{\left|\left|x\right|\right|_{2}=r}\sup_{\left|\left|y\right|\right|_{2}\geq R}\left|\left|g\right|\right|_{L^{\infty}\left(\mathbb{R}^{2}\setminus\overline{B\left(0;R\right)}\right)}\left|\left|f\right|\right|_{\dot{C}^{\beta-\sigma}\left(\mathbb{R}^{2}\right)}\right\} \leq\\
 & \leq\sup_{r\in\left(0,R\right]}\left|\left|f\right|\right|_{\dot{C}^{\beta}\left(\mathbb{R}^{2}\right)}r^{\beta}K_{\beta-\sigma}\frac{1}{r^{\eta}}+\max\left\{ \left|\left|g\right|\right|_{L^{\infty}\left(\mathbb{R}^{2}\setminus\overline{B\left(0;R\right)}\right)}\left|\left|f\right|\right|_{\dot{C}^{\beta-\sigma}\left(\mathbb{R}^{2}\right)},\right.\\
 & \quad\left.\sup_{r\in\left(0,R\right]}\sup_{\left|\left|x\right|\right|_{2}=r}\sup_{\left|\left|x\right|\right|_{2}\leq\left|\left|y\right|\right|_{2}\leq R}K_{0}\frac{1}{\left|\left|y\right|\right|^{\sigma}_{2}}\left|\left|f\right|\right|_{\dot{C}^{\beta}\left(\mathbb{R}^{2}\right)}\left|\left|x-y\right|\right|^{\sigma}_{2}\right\} .
\end{aligned}
\]
Using that
\[
\frac{\left|\left|x-y\right|\right|_{2}}{\left|\left|y\right|\right|_{2}}\leq\frac{\left|\left|x\right|\right|_{2}+\left|\left|y\right|\right|_{2}}{\left|\left|y\right|\right|_{2}}\leq2,
\]
because $\left|\left|x\right|\right|_{2}\leq\left|\left|y\right|\right|_{2}$,
we arrive at
\[
I_{1}\leq K_{\beta-\sigma}\overbrace{\sup_{r\in\left(0,R\right]}r^{\beta-\eta}}^{=R^{\beta-\eta}\;\because\beta>\eta}\left|\left|f\right|\right|_{\dot{C}^{\beta}\left(\mathbb{R}^{2}\right)}+\max\left\{ 2^{\sigma}K_{0}\left|\left|f\right|\right|_{\dot{C}^{\beta}\left(\mathbb{R}^{2}\right)},\left|\left|g\right|\right|_{L^{\infty}\left(\mathbb{R}^{2}\setminus\overline{B\left(0;R\right)}\right)}\left|\left|f\right|\right|_{\dot{C}^{\beta-\sigma}\left(\mathbb{R}^{2}\right)}\right\} .
\]
\end{proof}

We finish this subsection with a partial ``converse'' to the last
Proposition.
\begin{prop}
\label{prop:dyadic decomposition f=00003D0 necessary}Let $f\in C^{0}_{t}C^{0}_{x}\left(\left[0,1\right]\times\mathbb{R}^{2}\right)$
and $g\in L^{\infty}_{t}L^{\infty}_{x}\left(\left[0,1-r\right]\times\mathbb{R}^{2}\right)\cap L^{\infty}_{t}L^{\infty}_{x}\left(\left[0,1\right]\times\left(\mathbb{R}^{2}\setminus B\left(0;r\right)\right)\right)$
$\forall r\in\left(0,1\right)$ with
\begin{enumerate}
\item $f\left(1,0\right)\neq0$,
\item $\int^{1}_{0}\left|\left|g\left(t,\cdot\right)\right|\right|_{L^{\infty}\left(\mathbb{R}^{2}\right)}\mathrm{d}t=\infty$.
\end{enumerate}
Then, $\limsup_{t\to1^{-}}\left|\left|\left(fg\right)\left(t,\cdot\right)\right|\right|_{L^{\infty}\left(\mathbb{R}^{2}\right)}=\infty$.

\end{prop}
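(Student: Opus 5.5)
The plan is to argue by contradiction, using the continuity of $f$ at $(1,0)$ to turn a bound on $fg$ into a bound on $g$ near the origin. The hypothesis $g\in L^{\infty}_{t}L^{\infty}_{x}\left(\left[0,1\right]\times\left(\mathbb{R}^{2}\setminus B\left(0;r\right)\right)\right)$ controls $g$ away from the origin. Together these would make $\left|\left|g\left(t,\cdot\right)\right|\right|_{L^{\infty}\left(\mathbb{R}^{2}\right)}$ bounded near $t=1$. That contradicts hypothesis 2.

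\textbf{Step 1 (lower bound on $f$).} Since $f\in C^{0}_{t}C^{0}_{x}$ and $f\left(1,0\right)\neq0$, set $c\coloneqq\frac{1}{2}\left|f\left(1,0\right)\right|>0$. Continuity of $f$ at $(1,0)$ gives $\delta\in\left(0,1\right)$ and $r_{0}\in\left(0,1\right)$ such that $\left|f\left(t,x\right)\right|\geq c$ for all $t\in\left[1-\delta,1\right]$ and $\left|\left|x\right|\right|_{2}<r_{0}$.

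\textbf{Step 2 (contradiction hypothesis).} Suppose $\limsup_{t\to1^{-}}\left|\left|\left(fg\right)\left(t,\cdot\right)\right|\right|_{L^{\infty}\left(\mathbb{R}^{2}\right)}<\infty$. Then there exist $M>0$ and $\delta'\in\left(0,\delta\right]$ with $\left|\left|\left(fg\right)\left(t,\cdot\right)\right|\right|_{L^{\infty}}\leq M$ for $t\in\left[1-\delta',1\right)$. For such $t$ and $\left|\left|x\right|\right|_{2}<r_{0}$, dividing by $\left|f\right|\geq c$ gives $\left|g\left(t,x\right)\right|\leq M/c$. For $\left|\left|x\right|\right|_{2}\geq r_{0}$, the hypothesis with $r=r_{0}$ gives $\left|g\left(t,x\right)\right|\leq C_{r_{0}}\coloneqq\left|\left|g\right|\right|_{L^{\infty}_{t}L^{\infty}_{x}\left(\left[0,1\right]\times\left(\mathbb{R}^{2}\setminus B\left(0;r_{0}\right)\right)\right)}$. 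Hence
\[
\left|\left|g\left(t,\cdot\right)\right|\right|_{L^{\infty}\left(\mathbb{R}^{2}\right)}\leq\max\left\{ M/c,\,C_{r_{0}}\right\} \quad\text{for }t\in\left[1-\delta',1\right).
\]

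\textbf{Step 3 (integrability).} On $\left[0,1-\delta'\right]$, the hypothesis with $r=\delta'$ gives $\left|\left|g\left(t,\cdot\right)\right|\right|_{L^{\infty}}\leq\left|\left|g\right|\right|_{L^{\infty}_{t}L^{\infty}_{x}\left(\left[0,1-\delta'\right]\times\mathbb{R}^{2}\right)}<\infty$. Combined with Step 2, the map $t\mapsto\left|\left|g\left(t,\cdot\right)\right|\right|_{L^{\infty}\left(\mathbb{R}^{2}\right)}$ is bounded on $\left[0,1\right)$. Its integral over $\left[0,1\right]$ is therefore finite, contradicting hypothesis 2. So the $\limsup$ is infinite.

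There is no real obstacle. The only points needing care are that the pointwise division in Step 2 should be read almost everywhere, and that the $L^{\infty}$ norms in time are essential suprema. Both are handled by discarding null sets, and neither affects the finiteness of the integral.
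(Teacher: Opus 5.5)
Your proof is correct and follows essentially the same contradiction argument as the paper. Continuity of $f$ near $(1,0)$ turns the assumed bound on $fg$ into a bound on $g$ in a small ball for $t$ close to $1$, and the two $L^{\infty}$ hypotheses on $g$ (on $\left[0,1-r\right]$ and away from the origin) then make $\int^{1}_{0}\left|\left|g\left(t,\cdot\right)\right|\right|_{L^{\infty}\left(\mathbb{R}^{2}\right)}\mathrm{d}t$ finite; the only cosmetic difference is that you keep separate radii $r_{0}$ and $\delta'$, whereas the paper uses the single $\varepsilon=\min\left\{ \delta,\eta\right\}$.
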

\begin{proof}
We will prove the result by contradiction. Assume that $\limsup_{t\to1^{-}}\left|\left|\left(fg\right)\left(t,\cdot\right)\right|\right|_{L^{\infty}\left(\mathbb{R}^{2}\right)}\leq M$
for some $M>0$. Then, on the one hand, $\left|\left|\left(fg\right)\left(t,\cdot\right)\right|\right|_{L^{\infty}\left(\mathbb{R}^{2}\right)}\leq2M$
$\forall t\in\left[1-\delta,1\right)$ for some $\delta>0$ small
enough. On the other hand, by the continuity of $f$, we can guarantee
that $\left|f\left(t,x\right)\right|\geq\frac{1}{2}\left|f\left(1,0\right)\right|$
$\forall\left(t,x\right)\in\left[1-\eta,1\right]\times B\left(0;\eta\right)$
for $\eta>0$ sufficiently small. Take $\varepsilon\coloneqq\min\left\{ \delta,\eta\right\} $.
Then,
\[
2M\geq\left|\left|\left(fg\right)\left(t,\cdot\right)\right|\right|_{L^{\infty}\left(B\left(0;\varepsilon\right)\right)}\geq\frac{1}{2}\left|f\left(1,0\right)\right|\left|\left|g\left(t,\cdot\right)\right|\right|_{L^{\infty}\left(B\left(0;\varepsilon\right)\right)}\quad\text{a.e. }t\in\left[1-\varepsilon,1\right)
\]
from which we deduce
\[
\left|\left|g\left(t,\cdot\right)\right|\right|_{L^{\infty}\left(B\left(0;\varepsilon\right)\right)}\leq\frac{4M}{\left|f\left(1,0\right)\right|}\quad\text{a.e. }t\in\left[1-\varepsilon,1\right).
\]
As $g\in L^{\infty}_{t}L^{\infty}_{x}\left(\left[0,1-\varepsilon\right]\times\mathbb{R}^{2}\right)$
by hypothesis, integrating in time we obtain that
\[
\int^{1}_{0}\left|\left|g\left(t,\cdot\right)\right|\right|_{L^{\infty}\left(B\left(0;\varepsilon\right)\right)}\mathrm{d}t=\int^{1-\varepsilon}_{0}\left|\left|g\left(t,\cdot\right)\right|\right|_{L^{\infty}\left(B\left(0;\varepsilon\right)\right)}\mathrm{d}t+\int^{1}_{1-\varepsilon}\left|\left|g\left(t,\cdot\right)\right|\right|_{L^{\infty}\left(B\left(0;\varepsilon\right)\right)}\mathrm{d}t<\infty.
\]
Moreover, since $g\in L^{\infty}_{t}L^{\infty}_{x}\left(\left[0,1\right]\times\left(\mathbb{R}^{2}\setminus B\left(0;\varepsilon\right)\right)\right)$
and 
\[
\left|\left|g\left(t,\cdot\right)\right|\right|_{L^{\infty}\left(\mathbb{R}^{2}\right)}=\max\left\{ \left|\left|g\left(t,\cdot\right)\right|\right|_{L^{\infty}\left(B\left(0;\varepsilon\right)\right)},\left|\left|g\left(t,\cdot\right)\right|\right|_{L^{\infty}\left(\mathbb{R}^{2}\setminus B\left(0;\varepsilon\right)\right)}\right\} \quad\text{a.e. }t\in\left[1-\varepsilon,1\right),
\]
we conclude that
\[
\int^{1}_{0}\left|\left|g\left(t,\cdot\right)\right|\right|_{L^{\infty}\left(\mathbb{R}^{2}\right)}\mathrm{d}t<\infty,
\]
contradicting hypothesis 2.
\end{proof}

\subsection{Interpolation}

In this section, we recall some well-known interpolation inequalities
in Hölder spaces. They can be seen as a particular case of real-interpolation
of Besov spaces (see, for example, Examples 1.8 and 1.10 of \cite{Lunardi})
or can be obtained with more straightforward methods (see the exercises
at the end of section 3.2 of \cite{krylov}).
\begin{prop}
\label{prop:easy interpolation H=0000F6lder}Let $\alpha\in\left(0,1\right)$,
$\gamma\in\left(0,1\right]$ with $\alpha<\gamma$ and $f\in C^{\gamma}\left(\mathbb{R}^{d}\right)$. 
\begin{enumerate}
\item $\forall\beta\in\left(\alpha,\gamma\right)$,
\[
\left|\left|f\right|\right|_{\dot{C}^{\beta}\left(\mathbb{R}^{d}\right)}\leq\left|\left|f\right|\right|^{\frac{\gamma-\beta}{\gamma-\alpha}}_{\dot{C}^{\alpha}\left(\mathbb{R}^{d}\right)}\left|\left|f\right|\right|^{\frac{\beta-\alpha}{\gamma-\alpha}}_{\dot{C}^{\gamma}\left(\mathbb{R}^{d}\right)}.
\]
\item $\forall\beta\in\left(0,\gamma\right)$,
\[
\left|\left|f\right|\right|_{\dot{C}^{\beta}\left(\mathbb{R}^{d}\right)}\leq2\left|\left|f\right|\right|^{\frac{\gamma-\beta}{\gamma}}_{L^{\infty}\left(\mathbb{R}^{d}\right)}\left|\left|f\right|\right|^{\frac{\beta}{\gamma}}_{\dot{C}^{\gamma}\left(\mathbb{R}^{d}\right)}.
\]
\end{enumerate}
\end{prop}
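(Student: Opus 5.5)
Both inequalities follow from splitting the Hölder quotient at a threshold scale $\lambda>0$ and then optimizing in $\lambda$. Fix $x\neq y$ and write $h=\left|\left|x-y\right|\right|_{2}$. It suffices to bound $\frac{\left|f\left(x\right)-f\left(y\right)\right|}{h^{\beta}}$ uniformly in $x,y$. We may assume the relevant seminorms are nonzero and finite: if $\left|\left|f\right|\right|_{\dot{C}^{\alpha}}=0$ (or $\left|\left|f\right|\right|_{\dot{C}^{\gamma}}=0$), then $f$ is constant and both sides vanish. Similarly, if $\left|\left|f\right|\right|_{L^{\infty}}=0$ the second statement is trivial.

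For part 1, I bound the numerator in two ways:
\[
\left|f\left(x\right)-f\left(y\right)\right|\le\left|\left|f\right|\right|_{\dot{C}^{\alpha}}h^{\alpha},\qquad\left|f\left(x\right)-f\left(y\right)\right|\le\left|\left|f\right|\right|_{\dot{C}^{\gamma}}h^{\gamma}.
\]
Then I take the geometric mean of the two bounds with weights $\theta=\frac{\gamma-\beta}{\gamma-\alpha}$ and $1-\theta=\frac{\beta-\alpha}{\gamma-\alpha}$. This gives
\[
\left|f\left(x\right)-f\left(y\right)\right|\le\left|\left|f\right|\right|^{\theta}_{\dot{C}^{\alpha}}\left|\left|f\right|\right|^{1-\theta}_{\dot{C}^{\gamma}}h^{\theta\alpha+\left(1-\theta\right)\gamma}.
\]
A direct computation shows $\theta\alpha+\left(1-\theta\right)\gamma=\beta$. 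Dividing by $h^{\beta}$ and taking the supremum gives the first inequality with constant $1$, so no threshold is even needed here. Note that $\min\{a,b\}\le a^{\theta}b^{1-\theta}$ is exactly what is used.

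For part 2, the same trick uses the bound $\left|f\left(x\right)-f\left(y\right)\right|\le2\left|\left|f\right|\right|_{L^{\infty}}$ in place of the $\dot{C}^{\alpha}$ one, which plays the role of "$\alpha=0$". The weights become $\theta=\frac{\gamma-\beta}{\gamma}$ and $1-\theta=\frac{\beta}{\gamma}$. This yields
\[
\left|f\left(x\right)-f\left(y\right)\right|\le\left(2\left|\left|f\right|\right|_{L^{\infty}}\right)^{\frac{\gamma-\beta}{\gamma}}\left|\left|f\right|\right|^{\frac{\beta}{\gamma}}_{\dot{C}^{\gamma}}h^{\beta}.
\]
Since $2^{\frac{\gamma-\beta}{\gamma}}\le2$, this gives the stated constant $2$.

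There is no real obstacle. The only points needing care are the exponent bookkeeping (checking $\theta\alpha+(1-\theta)\gamma=\beta$) and the degenerate cases where a seminorm vanishes, in which $f$ is constant and both sides are zero.
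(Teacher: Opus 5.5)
Your proof is correct. The paper does not prove this statement itself: it only remarks that the inequalities follow either from real interpolation of Besov spaces (Lunardi) or from the exercises in Section 3.2 of Krylov.

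Your argument is the elementary pointwise route. You bound $|f(x)-f(y)|$ by the smaller of the two available estimates and use $\min\{a,b\}\le a^{\theta}b^{1-\theta}$ with $\theta\alpha+(1-\theta)\gamma=\beta$; in part 2, $2\|f\|_{L^{\infty}}$ plays the role of the $\alpha=0$ bound. Compared with the abstract route, this gives exactly the constants in the statement for the homogeneous seminorms: $1$ in part 1, and $2^{(\gamma-\beta)/\gamma}\le 2$ in part 2. Realizing $C^{\beta}$ as a real interpolation space between $C^{\alpha}$ and $C^{\gamma}$ yields the same inequalities only up to constants coming from the equivalence between interpolation norms and H\"older norms. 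That framework does, however, extend directly to higher-order estimates such as the $C^{1,\gamma}$ interpolation used later in the paper. Your opening sentence announces a threshold scale $\lambda$ to be optimized, but you never use it; it can be dropped.

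One point deserves an explicit sentence. When $\gamma=1$, your bound $|f(x)-f(y)|\le\|f\|_{\dot{C}^{\gamma}}h^{\gamma}$ (with $h=\|x-y\|_{2}$) requires $\|f\|_{\dot{C}^{1}}$ to be the Lipschitz constant, i.e.\ the H\"older quotient with exponent $1$. Under the paper's general convention $\|f\|_{\dot{C}^{1}}=\max_{i}\|\partial f/\partial x_{i}\|_{L^{\infty}}$, one only gets $|f(x)-f(y)|\le\sqrt{d}\,\|f\|_{\dot{C}^{1}}h$. With that reading, part 1 with constant $1$ actually fails for $d\ge 2$. A counterexample is $f(x)=\chi(x_{1}+x_{2})$, where $\chi$ is a smoothed version of $s\mapsto\min\{\max\{s,0\},1\}$: its $\dot{C}^{s}$ seminorms are close to $2^{s/2}$, while its largest partial derivative is $1$. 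So the Lipschitz reading is the one under which both the statement and your proof are correct, and you should say you are using it.
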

\begin{prop}
\label{prop:hard interpolation H=0000F6lder}Let $\alpha\in\left(0,1\right)$,
$\gamma\in\left(0,1\right]$ and $f\in C^{1,\gamma}\left(\mathbb{R}^{d}\right)$.
Then,
\begin{enumerate}
\item ~
\[
\left|\left|f\right|\right|_{\dot{C}^{1}\left(\mathbb{R}^{d}\right)}\lesssim_{\alpha,\gamma}\left|\left|f\right|\right|^{\frac{\gamma}{1+\gamma-\alpha}}_{\dot{C}^{\alpha}\left(\mathbb{R}^{d}\right)}\left|\left|f\right|\right|^{\frac{1-\alpha}{1+\gamma-\alpha}}_{\dot{C}^{1,\gamma}\left(\mathbb{R}^{d}\right)},
\]
\item $\forall\beta\in\left(\alpha,1\right)$,
\[
\left|\left|f\right|\right|_{\dot{C}^{\beta}\left(\mathbb{R}^{d}\right)}\lesssim_{\alpha,\gamma}\left|\left|f\right|\right|^{\frac{1+\gamma-\beta}{1+\gamma-\alpha}}_{\dot{C}^{\alpha}\left(\mathbb{R}^{d}\right)}\left|\left|f\right|\right|^{\frac{\beta-\alpha}{1+\gamma-\alpha}}_{\dot{C}^{1,\gamma}\left(\mathbb{R}^{d}\right)},
\]
\item $\forall\beta\in\left(0,\gamma\right)$,
\[
\left|\left|f\right|\right|_{\dot{C}^{1,\beta}\left(\mathbb{R}^{d}\right)}\lesssim_{\alpha,\gamma}\left|\left|f\right|\right|^{\frac{\gamma-\beta}{1+\gamma-\alpha}}_{\dot{C}^{\alpha}\left(\mathbb{R}^{d}\right)}\left|\left|f\right|\right|^{\frac{1-\alpha+\beta}{1+\gamma-\alpha}}_{\dot{C}^{1,\gamma}\left(\mathbb{R}^{d}\right)}.
\]
\end{enumerate}
\end{prop}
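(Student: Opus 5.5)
The plan is to prove part 1 directly by a Taylor expansion with an optimized step size, and then to get parts 2 and 3 by feeding part 1 into elementary interpolation arguments, the second of which is Proposition \ref{prop:easy interpolation H=0000F6lder}. Throughout, write $A\coloneqq\left|\left|f\right|\right|_{\dot{C}^{\alpha}\left(\mathbb{R}^{d}\right)}$ and $B\coloneqq\left|\left|f\right|\right|_{\dot{C}^{1,\gamma}\left(\mathbb{R}^{d}\right)}$.

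\emph{Part 1.} Fix $x\in\mathbb{R}^{d}$, a coordinate direction $e_{j}$ and a step $h>0$. By the Mean Value Theorem, $f\left(x+he_{j}\right)-f\left(x\right)=h\,\partial_{j}f\left(\xi\right)$ for some $\xi$ on the segment, so $\left|\xi-x\right|\le h$. Hence
\[
h\left|\partial_{j}f\left(x\right)\right|\le\left|f\left(x+he_{j}\right)-f\left(x\right)\right|+h\left|\partial_{j}f\left(\xi\right)-\partial_{j}f\left(x\right)\right|\le Ah^{\alpha}+Bh^{1+\gamma}.
\]
This gives $\left|\partial_{j}f\left(x\right)\right|\le Ah^{\alpha-1}+Bh^{\gamma}$ for every $h>0$. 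If $B>0$, choose $h=\left(A/B\right)^{\frac{1}{1+\gamma-\alpha}}$; both terms then equal $A^{\frac{\gamma}{1+\gamma-\alpha}}B^{\frac{1-\alpha}{1+\gamma-\alpha}}$, which yields the claim with constant $2$. If $B=0$, let $h\to\infty$ to get $\nabla f\equiv0$, and the inequality holds trivially. The case $A=0$ is similar: $f$ is then constant, so both sides vanish.

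\emph{Part 2.} For $r=\left|x-y\right|$, we have both $\left|f\left(x\right)-f\left(y\right)\right|\le Ar^{\alpha}$ and $\left|f\left(x\right)-f\left(y\right)\right|\le\left|\left|f\right|\right|_{\dot{C}^{1}}r$. Take the geometric mean with weights $\frac{1-\beta}{1-\alpha}$ and $\frac{\beta-\alpha}{1-\alpha}$:
\[
\left|f\left(x\right)-f\left(y\right)\right|\le A^{\frac{1-\beta}{1-\alpha}}\left|\left|f\right|\right|^{\frac{\beta-\alpha}{1-\alpha}}_{\dot{C}^{1}}r^{\beta}.
\]
Now insert part 1 for $\left|\left|f\right|\right|_{\dot{C}^{1}}$. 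The exponent of $A$ becomes
\[
\frac{\left(1-\beta\right)\left(1+\gamma-\alpha\right)+\gamma\left(\beta-\alpha\right)}{\left(1-\alpha\right)\left(1+\gamma-\alpha\right)}=\frac{1+\gamma-\beta}{1+\gamma-\alpha},
\]
since the numerator factors as $\left(1-\alpha\right)\left(1+\gamma-\beta\right)$. The exponent of $B$ is $\frac{\beta-\alpha}{1+\gamma-\alpha}$, as required.

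\emph{Part 3.} Apply part 2 of Proposition \ref{prop:easy interpolation H=0000F6lder} to each $\partial_{j}f\in C^{\gamma}$ to get
\[
\left|\left|\partial_{j}f\right|\right|_{\dot{C}^{\beta}}\le2\left|\left|\nabla f\right|\right|^{\frac{\gamma-\beta}{\gamma}}_{L^{\infty}}B^{\frac{\beta}{\gamma}}.
\]
Then bound $\left|\left|\nabla f\right|\right|_{L^{\infty}}=\left|\left|f\right|\right|_{\dot{C}^{1}}$ by part 1. The exponent of $A$ is $\frac{\gamma}{1+\gamma-\alpha}\cdot\frac{\gamma-\beta}{\gamma}=\frac{\gamma-\beta}{1+\gamma-\alpha}$. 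The exponent of $B$ is $\frac{\left(1-\alpha\right)\left(\gamma-\beta\right)}{\gamma\left(1+\gamma-\alpha\right)}+\frac{\beta}{\gamma}=\frac{1-\alpha+\beta}{1+\gamma-\alpha}$.

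The only real obstacle is part 1: choosing the right one-sided Taylor estimate and handling the degenerate cases ($A=0$ or $B=0$). Parts 2 and 3 are exponent bookkeeping once part 1 is in hand. The implicit constants depend only on $d$, because of the passage from coordinate derivatives to $\nabla f$, and not on $\alpha$ or $\gamma$ beyond that.
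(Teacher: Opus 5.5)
Your proof is correct, and it supplies an argument where the paper has none. The paper records this proposition as well known and defers either to real interpolation (reiteration in the Besov/H\"older--Zygmund scale, \cite{Lunardi}) or to the elementary exercises of \cite{krylov}, Section 3.2. Your route makes the elementary option explicit.

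\begin{itemize}
\item For part 1, the mean value theorem along $e_j$, with step $h=(A/B)^{1/(1+\gamma-\alpha)}$ balancing $Ah^{\alpha-1}$ against $Bh^{\gamma}$, gives the bound with constant $2$.
\item Part 2 follows by composing part 1 with the $\alpha$--Lipschitz interpolation of Proposition \ref{prop:easy interpolation H=0000F6lder}. You correctly account for the factor $\sqrt{d}$ between the paper's coordinatewise $\dot{C}^{1}$ seminorm and the Lipschitz constant.
\item Part 3 follows by composing part 1 with the $L^{\infty}$--$C^{\gamma}$ interpolation from the same proposition, applied to each $\partial_j f$.
\end{itemize}

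The exponent algebra checks out. Since $\beta$ lies strictly inside the relevant intervals, every interpolation weight is in $(0,1)$, so the degenerate cases $A=0$ or $B=0$ cause no trouble.

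Compared with the cited abstract route, your argument has two advantages. First, it works directly with homogeneous seminorms. That is exactly the form in which part 3 is used in Proposition \ref{prop:MAP wins regularity 2}, where it is applied to the differences $T^{(2)}(a)(t,\cdot)-T^{(2)}(a)(s,\cdot)$. Second, it produces explicit constants that depend only on $d$ and are uniform in $\alpha,\beta,\gamma$, which is slightly stronger than the stated $\lesssim_{\alpha,\gamma}$.

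The real-interpolation approach gives more generality: the whole scale, higher orders and other endpoint pairs at once. However, it is usually formulated for inhomogeneous norms. Recovering the homogeneous version then needs an extra dilation step, $f\mapsto f(\lambda\,\cdot)$ with $\lambda\to\infty$, which is legitimate because the inequality is scale-invariant.
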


\section{\label{sec:properties of Boussinesq solution}Properties of the Boussinesq
solution}

In this section, we will perform multiple computations related to
the solution constructed in \cite{Articulo Boussinesq}. All of these
are necessary to obtain the Main Theorem of this paper, but have no
direct connection with the non-homogeneous Euler equations as they
only deal with the Boussinesq system. Throughout this section, and
only in this section, the subscript $B$ from $\left(\rho_{B},\omega_{B},f_{\rho_{B}},f_{\omega_{B}}\right)$
used to refer to a solution of the Boussinesq system will be dropped
for ease of notation.

\subsection{\label{subsec:summary Boussinesq}Brief summary of the Boussinesq
construction}

In \cite{Articulo Boussinesq}, we construct a finite-time singularity
for the forced Boussinesq system, where the source terms $f_{\rho}$
and $f_{u}$ had regularity $C^{0}_{t}C^{1,\alpha}_{x}\left(\left[0,1\right]\times\mathbb{R}^{2}\right)$.
$\alpha$ could be chosen freely in the interval $\left(0,\alpha_{*}\right)$,
$\alpha_{*}=\sqrt{\frac{4}{3}}-1$. Thereby, $\alpha_{*}$ represents
the ``maximum regularity'' of the Boussinesq solution. Moreover,
the stream function and the density of the solution are given by
\begin{equation}
\begin{aligned}\psi\left(t,x\right) & =\sum^{\infty}_{n=1}\psi^{\left(n\right)}\left(t,x\right),\\
\widetilde{\psi^{\left(n\right)}}^{n}\left(t,x\right) & =B_{n}\left(t\right)\varphi\left(\lambda_{n}x_{1}\right)\varphi\left(\lambda_{n}x_{2}\right)\sin\left(x_{1}\right)\sin\left(x_{2}\right),\\
\rho\left(t,x\right) & =\sum^{\infty}_{n=1}\rho^{\left(n\right)}\left(t,x\right),\\
\widetilde{\rho^{\left(n\right)}}^{n}\left(t,x\right) & =-\frac{1}{b_{n}\left(t\right)}\frac{\mathrm{d}}{\mathrm{d}t}\left[B_{n}\left(t\right)\left(a_{n}\left(t\right)^{2}+b_{n}\left(t\right)^{2}\right)\right]\varphi\left(\lambda_{n}x_{1}\right)\varphi\left(\lambda_{n}x_{2}\right)\sin\left(x_{1}\right)\cos\left(x_{2}\right),
\end{aligned}
\label{eq:Boussinesq psi rho}
\end{equation}
where
\begin{enumerate}
\item $\varphi\in C^{\infty}_{c}\left(\mathbb{R}\right)$ is any compactly
supported even smooth function that satisfies $\varphi\equiv1$ in
the interval $\left[-8\pi,8\pi\right]$ and $\varphi\equiv0$ in $\mathbb{R}\setminus\left[-16\pi,16\pi\right]$.
\item ~
\begin{equation}
\lambda_{n}=C^{-\Lambda\left(\frac{1}{1-\gamma}\right)^{n}}\leq1,\label{eq:Boussinesq lambda_n}
\end{equation}
with $\Lambda=\alpha_{*}\left(1+\alpha_{*}\right)$ and where $\gamma\in\left(\frac{1}{2},1\right)$
is a parameter that is chosen very close to $1$.
\item $\widetilde{\psi^{\left(n\right)}}^{n}\left(t,x\right)=\psi^{\left(n\right)}\left(t,\phi^{\left(n\right)}\left(t,x\right)\right)$,
where $\phi^{\left(n\right)}\left(t,x\right)$ is the change of variables
\begin{equation}
\phi^{\left(n\right)}\left(t,x\right)=\phi^{\left(n\right)}\left(t,0\right)+\left(\frac{1}{a_{n}\left(t\right)}x_{1},\frac{1}{b_{n}\left(t\right)}x_{2}\right).\label{eq:Boussinesq phi}
\end{equation}
Moreover,
\begin{equation}
\left(\mathrm{J}\phi^{\left(n\right)}\right)^{-1}\left(t,x\right)=\mathrm{J}\left(\phi^{\left(n\right)}\right)^{-1}\left(t,\phi^{\left(n\right)}\left(t,x\right)\right)=\left(\begin{matrix}a_{n}\left(t\right) & 0\\
0 & b_{n}\left(t\right)
\end{matrix}\right)\label{eq:Boussinesq jacobian inverse}
\end{equation}
and
\begin{equation}
\tilde{\nabla}^{n}=\left(a_{n}\left(t\right)\frac{\partial}{\partial x_{1}},b_{n}\left(t\right)\frac{\partial}{\partial x_{2}}\right).\label{eq:Boussinesq def nabla_tilde}
\end{equation}
\item The temporal dynamics of $a_{n}\left(t\right)$, $b_{n}\left(t\right)$,
$B_{n}\left(t\right)$ and $\phi^{\left(n\right)}\left(t,0\right)$
are given by
\begin{equation}
\begin{alignedat}{1}\phi^{\left(n\right)}_{2}\left(t,0\right) & =0,\\
\frac{\partial\phi^{\left(n\right)}_{1}}{\partial t}\left(t,0\right) & =\sum^{n-1}_{m=1}B_{m}\left(t\right)b_{m}\left(t\right)\sin\left(a_{m}\left(t\right)\left(\phi^{\left(n\right)}_{1}\left(t,0\right)-\phi^{\left(m\right)}_{1}\left(t,0\right)\right)\right),\\
\frac{\mathrm{d}}{\mathrm{d}t}\left(\ln\left(b_{n}\left(t\right)\right)\right) & =\sum^{n-1}_{m=1}B_{m}\left(t\right)a_{m}\left(t\right)b_{m}\left(t\right)\cos\left(a_{m}\left(t\right)\left(\phi^{\left(n\right)}_{1}\left(t,0\right)-\phi^{\left(m\right)}_{1}\left(t,0\right)\right)\right),\\
\frac{\mathrm{d}}{\mathrm{d}t}\left(a_{n}\left(t\right)b_{n}\left(t\right)\right) & =0,
\end{alignedat}
\label{eq:Boussinesq dynamics}
\end{equation}
$\forall t\in\left[0,1\right]$. $\phi^{\left(n\right)}_{1}\left(t,0\right)$
evolves in time like a weighted sum of inverted degenerate half-pendula:
\[
\phi^{\left(n\right)}_{1}\left(t,0\right)\sim\sum^{n}_{m=1}\overbrace{\frac{1}{a_{m}\left(1\right)}}^{\text{weight}}\overbrace{\left(a_{m}\left(1\right)\Xi^{\left(m\right)}_{0}\left(t\right)\right)}^{{\footnotesize \begin{matrix}\text{inverted degenerate}\\
\text{half-pendulum}
\end{matrix}}},
\]
where
\[
\begin{aligned}\frac{\mathrm{d}\Xi^{\left(n\right)}_{0}}{\mathrm{d}t}\left(t\right) & =B_{n-1}\left(1\right)b_{n-1}\left(1\right)\sin\left(a_{n-1}\left(1\right)\Xi^{\left(n\right)}_{0}\left(t\right)\right)\quad\forall t\in\left[t_{n},1\right],\\
\sin\left(a_{n-1}\left(1\right)\Xi^{\left(n\right)}_{0}\left(t_{n}+\left(1-t_{n}\right)\hat{\hat{t}}\right)\right) & =\frac{1}{\cosh\left(\mathrm{arccosh}\left(C^{k_{\max}\left(\frac{1}{1-\gamma}\right)^{n}}\right)\left(1-2\hat{\hat{t}}\right)\right)}\quad\forall\hat{\hat{t}}\in\left[0,1\right].
\end{aligned}
\]
These choices of $a_{n}\left(t\right)$, $b_{n}\left(t\right)$, $B_{n}\left(t\right)$
and $\phi^{\left(n\right)}\left(t,0\right)$ also guarantee (see Proposition
6 of \cite{Articulo Boussinesq}) that
\begin{equation}
\frac{\partial\phi^{\left(n\right)}}{\partial t}\left(t,x\right)=\widetilde{U^{\left(n-1\right)}}^{n}\left(t,0\right)+\mathrm{J}\widetilde{U^{\left(n-1\right)}}^{n}\left(t,0\right)\cdot\left(\begin{matrix}x_{1}\\
x_{2}
\end{matrix}\right)\quad\forall t\in\left[t_{n},1\right].\label{eq:Boussinesq transport is Taylor of velocity}
\end{equation}
\item \label{enu:The-time-scales}The time scales $\left(t_{n}\right)_{n\in\mathbb{N}}$
are given by
\begin{equation}
1-t_{n}=\frac{1}{Y}C^{-\delta\left(\frac{1}{1-\gamma}\right)^{n-1}}\mathrm{arccosh}\left(C^{k_{\max}\left(\frac{1}{1-\gamma}\right)^{n}}\right),\label{eq:Boussinesq time scale}
\end{equation}
where $k_{\max}=\alpha_{*}$, $Y$ is a normalization constant (to
ensure that $t_{1}=0$) and $\delta$ is a parameter that is chosen
sufficiently small.
\item The initial conditions for $a_{n}\left(t\right)$ and $b_{n}\left(t\right)$
are specified through
\begin{equation}
b_{n}\left(t\right)=C^{\left(1+k_{n}\left(t\right)\right)\left(\frac{1}{1-\gamma}\right)^{n}},\quad a_{n}\left(t\right)=C^{\left(1-k_{n}\left(t\right)\right)\left(\frac{1}{1-\gamma}\right)^{n}},\quad k_{n}\left(1\right)=0,\label{eq:Boussinesq an(t) bn(t)}
\end{equation}
where $C$ is a parameter that is chosen sufficiently large. Moreover,
these functions $k_{n}\left(t\right)$ resemble their ideal models
$\overline{k}_{n}\left(t\right)$, which display the following behavior
in $n$:
\[
k_{n}\left(t_{n}+\left(1-t_{n}\right)\hat{\hat{t}}\right)\overset{n\to\infty}{\sim}k_{\max}\left(1-\left|1-2\hat{\hat{t}}\right|\right)\quad\forall\hat{\hat{t}}\in\left[0,1\right].
\]
By the definition of $k_{\max}$ given in Choice 12 of \cite{Articulo Boussinesq},
we have
\begin{equation}
\max_{s\in\left[t_{n},1\right]}\overline{k}_{n}\left(s\right)=k_{\max}\quad\forall n\in\mathbb{N}.\label{eq:Boussinesq def kmax}
\end{equation}
\item The initial conditions $\phi^{\left(n\right)}\left(t_{n},0\right)=c^{\left(n\right)}$
are given by
\[
\begin{aligned}a_{n-1}\left(1\right)\Xi^{\left(n\right)}\left(t_{n}\right) & =\arcsin\left(C^{-k_{\max}\left(\frac{1}{1-\gamma}\right)^{n}}\right),\\
\Xi^{\left(n\right)}\left(t\right) & =\phi^{\left(n\right)}_{1}\left(t,0\right)-\phi^{\left(n-1\right)}_{1}\left(t,0\right),\\
\phi^{\left(n\right)}_{2}\left(t,0\right) & \equiv0,
\end{aligned}
\]
where we take $\phi^{\left(0\right)}\left(t,0\right)\equiv0$.
\item \label{item:Boussinesq active layer}$B_{n}\left(t\right)$ satisfies
\begin{equation}
\frac{\mathrm{d}}{\mathrm{d}t}\left(B_{n}\left(t\right)\left(a_{n}\left(t\right)^{2}+b_{n}\left(t\right)^{2}\right)\right)=2M_{n}\frac{h^{\left(n\right)}\left(t\right)b_{n}\left(t\right)}{\int^{1}_{t_{n}}h^{\left(n\right)}\left(s\right)b_{n}\left(s\right)\mathrm{d}s},\quad B_{n}\left(t\right)=\frac{2M_{n}}{a_{n}\left(t\right)^{2}+b_{n}\left(t\right)^{2}}\frac{\int^{t}_{t_{n}}h^{\left(n\right)}\left(s\right)b_{n}\left(s\right)\mathrm{d}s}{\int^{1}_{t_{n}}h^{\left(n\right)}\left(s\right)b_{n}\left(s\right)\mathrm{d}s}.\label{eq:Boussinesq Bn(t)}
\end{equation}
Here, $h^{\left(n\right)}\left(s\right)$ is a smooth function with
codomain $\left[0,1\right]$ and such that $\left.h^{\left(n\right)}\right|_{\left[0,t_{n}\right]\cup\left[t_{n+1},1\right]}\equiv0$.
In this way, a single density layer is active at any given time.
\item $M_{n}$ is given by
\begin{equation}
M_{n}=B_{n}\left(1\right)a_{n}\left(1\right)b_{n}\left(1\right)=YC^{\delta\left(\frac{1}{1-\gamma}\right)^{n}}.\label{eq:Boussinesq Mn}
\end{equation}
\end{enumerate}
Notice that, actually, because of how $B_{n}\left(t\right)$ and $t_{n}$
have been defined, for every $t\in\left[0,1\right)$, only a finite
number of summands in $\psi\left(t,x\right)=\sum^{\infty}_{n=1}\psi^{\left(n\right)}\left(t,x\right)$
are nonzero. Thereby, for fixed $t<1$, this is always a finite sum,
i.e., the intrinsic limit associated to the series is illusory. In
the case of the density $\rho\left(t,x\right)=\sum^{\infty}_{n=1}\rho^{\left(n\right)}\left(t,x\right)$,
because of how $h^{\left(n\right)}$ has been chosen, only one summand
is non-zero, i.e., not only is the sum finite, it actually encompasses
one single element. With this construction, the following Theorem
is proved:
\begin{thm}
\label{thm:MAIN THM Boussinesq}Let $\alpha\in\left(0,\alpha_{*}\right)$,
where $\alpha_{*}=\sqrt{\frac{4}{3}}-1$. There are classical solutions
$\left(u,\rho\right)$ in $\left[0,1\right)\times\mathbb{R}^{2}$
of the forced Boussinesq system \eqref{eq:forced Boussinesq} that
satisfy:
\begin{enumerate}
\item $\forall\varepsilon>0$
\[
u\in C^{\infty}_{t}C^{\infty}_{x,c}\left(\left[0,1-\varepsilon\right]\times\mathbb{R}^{2}\right),\quad\rho\in C^{\infty}_{t}C^{\infty}_{x,c}\left(\left[0,1-\varepsilon\right]\times\mathbb{R}^{2}\right).
\]
\item $\forall\varepsilon>0$
\[
f_{\omega}\in C^{\infty}_{t}C^{\infty}_{x,c}\left(\left[0,1-\varepsilon\right]\times\mathbb{R}^{2}\right),\quad f_{\rho}\in C^{\infty}_{t}C^{\infty}_{x,c}\left(\left[0,1-\varepsilon\right]\times\mathbb{R}^{2}\right).
\]
\item ~
\[
f_{\omega}\in C^{0}_{t}C^{\alpha}_{x,c}\left(\left[0,1\right]\times\mathbb{R}^{2}\right),\quad f_{\rho}\in C^{0}_{t}C^{1,\alpha}_{x,c}\left(\left[0,1\right]\times\mathbb{R}^{2}\right).
\]
In particular, $\left|\left|f_{\omega}\left(t,\cdot\right)\right|\right|_{C^{\alpha}\left(\mathbb{R}^{2}\right)}$
and $\left|\left|f_{\rho}\left(t,\cdot\right)\right|\right|_{C^{1,\alpha}\left(\mathbb{R}^{2}\right)}$
are uniformly bounded in $t\in\left[0,1\right]$.
\item $f_{\omega}\left(t,x\right)$ is odd in $x_{2}$ $\forall t\in\left[0,1\right]$.
\item There is a finite-time singularity at $t=1$, i.e.,
\[
\lim_{T\to1^{-}}\int^{T}_{0}\left|\left|\nabla\rho\left(t,\cdot\right)\right|\right|_{L^{\infty}\left(\mathbb{R}^{2};\mathbb{R}^{2}\right)}\mathrm{d}t=\infty.
\]
(See \cite{Chae Kim Nam} for the blow-up criterion).
\end{enumerate}
\end{thm}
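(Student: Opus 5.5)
The statement is the Main Theorem of \cite{Articulo Boussinesq}, restated for the reader's convenience. My plan is therefore not to rebuild the construction, but to check that each item follows from the explicit formulas \eqref{eq:Boussinesq psi rho}--\eqref{eq:Boussinesq Mn} summarized above. For the hard quantitative part I would defer to the estimates proved there.

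\emph{Smoothness before $t=1$ (items 1--2).} Fix $\varepsilon>0$. Since $t_n\to1$ and $B_n\equiv0$ on $[0,t_n]$, only finitely many layers $\psi^{(n)},\rho^{(n)}$ are nonzero on $[0,1-\varepsilon]$. Each layer is a smooth compactly supported profile (the cutoffs $\varphi(\lambda_n\cdot)$) composed with the affine map $(\phi^{(n)})^{-1}$. Its coefficients $a_n,b_n,B_n,\phi^{(n)}(t,0)$ solve the smooth ODE system \eqref{eq:Boussinesq dynamics}, so $u$ and $\rho$ are $C^\infty_tC^\infty_{x,c}$ there. Items 1 and 2 then follow once the forces are defined by the equations themselves,
\[
f_\rho:=\partial_t\rho+u\cdot\nabla\rho,\qquad f_\omega:=\partial_t\omega+u\cdot\nabla\omega-\partial_{x_2}\rho,
\]
since these are finite sums of smooth compactly supported functions.

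\emph{Parity (item 4).} We have $\phi^{(n)}_2(t,0)\equiv0$, and the change of variables \eqref{eq:Boussinesq phi} is diagonal, so it commutes with the reflection $x_2\mapsto-x_2$. Since $\varphi$ is even, each $\psi^{(n)}$ is odd in $x_2$ (factor $\sin x_2$), and each $\rho^{(n)}$ is even in $x_2$ (factor $\cos x_2$). It follows that $\omega=\Delta\psi$ is odd, $u_1=-\partial_2\psi$ is even and $u_2=\partial_1\psi$ is odd. Hence $u_1\partial_1\omega$, $u_2\partial_2\omega$, $\partial_t\omega$ and $\partial_{x_2}\rho$ are all odd in $x_2$, and so is $f_\omega$.

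\emph{Uniform Hölder bounds and blow-up (items 3 and 5).} These are the heart of \cite{Articulo Boussinesq}. There one shows that the error generated by layer $n$ decays geometrically in $n$ in $C^\alpha$ (for $f_\omega$) and in $C^{1,\alpha}$ (for $f_\rho$) as long as $\alpha<\alpha_*$. The main ingredients are the transport identity \eqref{eq:Boussinesq transport is Taylor of velocity}, which cancels the zeroth- and first-order Taylor terms of the advection by the earlier layers, and the parameter choices $\Lambda=\alpha_*(1+\alpha_*)$, $\gamma\to1$, $\delta\to0$, $C\to\infty$. Summing the layers then gives uniform-in-$t$ bounds and continuity in time. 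This is the main obstacle, and I would simply invoke it.

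For the blow-up I would give a short direct argument. Only layer $n$ of the density is active on $[t_n,t_{n+1}]$. On that interval its amplitude is of order $M_n/(a_nb_n)$ times the factor $b_n/(a_n^2+b_n^2)$ coming from \eqref{eq:Boussinesq Bn(t)}, and taking $\partial_{x_2}$ multiplies it by $b_n(t)$, since $\tilde\nabla^n$ carries a factor $b_n$. Integrating over $[t_n,t_{n+1}]$, using $b_n\ge a_n$ and \eqref{eq:Boussinesq Mn}, each interval contributes $\gtrsim M_n\to\infty$ to $\int\|\nabla\rho\|_{L^\infty}$. The criterion of \cite{Chae Kim Nam} then identifies this as a genuine singularity.
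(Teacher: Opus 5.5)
Your checks of items 1, 2 and 4 are sound. Deferring item 3 to \cite{Articulo Boussinesq} is exactly the paper's route: the paper states this theorem without proof and imports it from that reference. For item 4, add that oddness at $t=1$ passes to the $C^{\alpha}$-limit defining $f_\omega(1,\cdot)$.

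The genuine problem is the one step you argue yourself, the blow-up in item 5. Taken literally, the amplitude you assign to $\partial_{x_2}\rho^{(n)}$ is $\frac{M_n}{a_nb_n}\cdot\frac{b_n}{a_n^2+b_n^2}\cdot b_n$. This is at most $\frac{M_n}{a_nb_n}=YC^{(\delta-2)(\frac{1}{1-\gamma})^n}$, so its integral over $[t_n,t_{n+1}]$ tends to $0$ rather than being $\gtrsim M_n$. No inequality between $a_n$ and $b_n$ changes this. It looks as if you used the size of the velocity amplitude $B_n$. No pointwise-in-time size estimate of the density produces $M_n$.

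The missing observation is that the density profile in \eqref{eq:Boussinesq psi rho} is a time derivative divided by $b_n$. Use \eqref{eq:Lp one hand}--\eqref{eq:Lp other hand} and evaluate at $x=(\frac{\pi}{2},\frac{\pi}{2})$. There $\varphi(\lambda_n\cdot)\equiv1$ and $\varphi'(\lambda_n\cdot)=0$, which gives
\[
\|\nabla\rho^{(n)}(t,\cdot)\|_{L^\infty}\ge\frac{\partial\rho^{(n)}}{\partial x_2}\Bigl(t,\phi^{(n)}\bigl(t,(\tfrac{\pi}{2},\tfrac{\pi}{2})\bigr)\Bigr)=\frac{\mathrm{d}}{\mathrm{d}t}\Bigl[B_n(t)\bigl(a_n(t)^2+b_n(t)^2\bigr)\Bigr]\ge0,
\]
with the sign coming from \eqref{eq:Boussinesq Bn(t)}. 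Three facts make the time integral telescope. First, $\rho=\rho^{(n)}$ on $[t_n,t_{n+1}]$, since only one density layer is active at a time. Second, $B_n(t_n)=0$. Third, $B_n(a_n^2+b_n^2)$ is constant on $[t_{n+1},1]$, because $h^{(n)}$ vanishes there. Hence
\[
\int_{t_n}^{t_{n+1}}\|\nabla\rho(t,\cdot)\|_{L^\infty}\,\mathrm{d}t\ge B_n(1)\bigl(a_n(1)^2+b_n(1)^2\bigr)=2M_n=2YC^{\delta(\frac{1}{1-\gamma})^n}\longrightarrow\infty,
\]
where the middle equality is \eqref{eq:Boussinesq Bn(t)} at $t=1$ together with \eqref{eq:Boussinesq Mn}. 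No comparison between $a_n$ and $b_n$ is needed. With this replacement your argument for item 5 is complete.
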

Furthermore, let us recall Propositions 4, 5, 9, 11, 12 and 17, Lemmas
4, 5, 6, 7 and 13 and Corollary 2 of \cite{Articulo Boussinesq},
as well as equations (70) and (188) and an identity from the proof
of Proposition 23 of that article:
\begin{prop}[Explicit expressions for the velocity and vorticity]
\label{prop:computations vorticity}{[}Proposition 4 of \cite{Articulo Boussinesq}{]}
We have
\[
\begin{aligned}\widetilde{u^{\left(n\right)}}^{n}\left(t,x\right) & =B_{n}\left(t\right)\varphi\left(\lambda_{n}x_{1}\right)\varphi\left(\lambda_{n}x_{2}\right)\left(\begin{matrix}b_{n}\left(t\right)\sin\left(x_{1}\right)\cos\left(x_{2}\right)\\
-a_{n}\left(t\right)\cos\left(x_{1}\right)\sin\left(x_{2}\right)
\end{matrix}\right)+\\
 & \quad+\lambda_{n}B_{n}\left(t\right)\left(\begin{matrix}b_{n}\left(t\right)\varphi\left(\lambda_{n}x_{1}\right)\varphi'\left(\lambda_{n}x_{2}\right)\\
-a_{n}\left(t\right)\varphi'\left(\lambda_{n}x_{1}\right)\varphi\left(\lambda_{n}x_{2}\right)
\end{matrix}\right)\sin\left(x_{1}\right)\sin\left(x_{2}\right).
\end{aligned}
\]
\[
\begin{aligned}\widetilde{\omega^{\left(n\right)}}^{n}\left(t,x\right) & =B_{n}\left(t\right)\left(a_{n}\left(t\right)^{2}+b_{n}\left(t\right)^{2}\right)\varphi\left(\lambda_{n}x_{1}\right)\varphi\left(\lambda_{n}x_{2}\right)\sin\left(x_{1}\right)\sin\left(x_{2}\right)+\\
 & \quad-\lambda^{2}_{n}B_{n}\left(t\right)a_{n}\left(t\right)^{2}\varphi''\left(\lambda_{n}x_{1}\right)\varphi\left(\lambda_{n}x_{2}\right)\sin\left(x_{1}\right)\sin\left(x_{2}\right)+\\
 & \quad-\lambda^{2}_{n}B_{n}\left(t\right)b_{n}\left(t\right)^{2}\varphi\left(\lambda_{n}x_{1}\right)\varphi''\left(\lambda_{n}x_{2}\right)\sin\left(x_{1}\right)\sin\left(x_{2}\right)+\\
 & \quad-2\lambda_{n}B_{n}\left(t\right)a_{n}\left(t\right)^{2}\varphi'\left(\lambda_{n}x_{1}\right)\varphi\left(\lambda_{n}x_{2}\right)\cos\left(x_{1}\right)\sin\left(x_{2}\right)+\\
 & \quad-2\lambda_{n}B_{n}\left(t\right)b_{n}\left(t\right)^{2}\varphi\left(\lambda_{n}x_{1}\right)\varphi'\left(\lambda_{n}x_{2}\right)\sin\left(x_{1}\right)\cos\left(x_{2}\right).
\end{aligned}
\]
\end{prop}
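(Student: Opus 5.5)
The statement is the explicit formula for the velocity $\widetilde{u^{(n)}}^{n}$ and vorticity $\widetilde{\omega^{(n)}}^{n}$ of the $n$-th layer, pulled back by $\phi^{(n)}$. My approach is a direct computation from the stream function in \eqref{eq:Boussinesq psi rho}. By \eqref{eq:Boussinesq phi}, $\phi^{(n)}(t,\cdot)$ is an affine map whose inverse Jacobian is the diagonal matrix in \eqref{eq:Boussinesq jacobian inverse}. The chain rule therefore gives, for any function $F$,
\[
\widetilde{\nabla F}^{n}=\tilde{\nabla}^{n}\widetilde{F}^{n},
\]
with $\tilde{\nabla}^{n}$ as in \eqref{eq:Boussinesq def nabla_tilde}. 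Concretely, an $x_1$-derivative of the original function corresponds to $a_n\partial_{x_1}$ in the new variables, and an $x_2$-derivative to $b_n\partial_{x_2}$.

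\textbf{Velocity.} Since $u=\nabla^{\perp}\psi=(-\partial_{2}\psi,\partial_{1}\psi)$ up to the sign convention used in \cite{Articulo Boussinesq}, I would write
\[
\widetilde{u^{(n)}}^{n}=\bigl(-b_n\partial_{x_2},\,a_n\partial_{x_1}\bigr)\widetilde{\psi^{(n)}}^{n},
\]
with the overall sign fixed to match that convention. I then differentiate $B_n\varphi(\lambda_nx_1)\varphi(\lambda_nx_2)\sin x_1\sin x_2$ by the product rule. The derivative landing on the trigonometric factor gives the leading term; for instance, $b_n\partial_{x_2}$ yields $b_nB_n\varphi\varphi\sin x_1\cos x_2$. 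The derivative landing on the cutoff gives the $\lambda_n\varphi'$ terms. Matching signs with the stated formula fixes the orientation of $\nabla^{\perp}$ as the one used in \cite{Articulo Boussinesq}.

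\textbf{Vorticity.} Using $\omega=\Delta\psi$ (again with the matching sign convention), in the new variables
\[
\widetilde{\omega^{(n)}}^{n}=\bigl(a_n^2\partial_{x_1}^2+b_n^2\partial_{x_2}^2\bigr)\widetilde{\psi^{(n)}}^{n}.
\]
Expanding $\partial_{x_1}^2\bigl[\varphi(\lambda_nx_1)\sin x_1\bigr]=\lambda_n^2\varphi''\sin x_1+2\lambda_n\varphi'\cos x_1-\varphi\sin x_1$, and the analogous expression in $x_2$, gives five pieces:
\begin{itemize}
\item the two $-\varphi\sin x_1\sin x_2$ pieces, which combine into the leading term with coefficient $B_n(a_n^2+b_n^2)$;
\item the $\lambda_n^2\varphi''$ terms;
\item the $2\lambda_n\varphi'$ cross terms.
\end{itemize}
An overall minus sign, from the convention $\omega=-\Delta\psi$ or equivalently $\nabla^{\perp}\cdot u$, turns these exactly into the stated signs: $+$ for the leading term and $-$ for the rest.

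The only real obstacle is bookkeeping of the sign conventions, which I would check against the leading term. Everything else is routine calculus, and it is valid because $\phi^{(n)}$ is affine in $x$, so no second-order chain-rule terms appear.
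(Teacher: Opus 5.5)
Your proposal is correct and takes essentially the same route: the paper gives no proof here and imports the statement from Proposition~4 of \cite{Articulo Boussinesq}, whose content is exactly this direct differentiation of $\widetilde{\psi^{(n)}}^{n}$ under the diagonal affine change of variables $\phi^{(n)}$. On the sign bookkeeping, the stated formulas correspond to $u=(\partial_{x_2}\psi,-\partial_{x_1}\psi)$ and $\omega=\partial_{x_1}u_2-\partial_{x_2}u_1=-\Delta\psi$, so your initial $(-b_n\partial_{x_2},a_n\partial_{x_1})$ does need the overall minus sign you anticipated; this choice is consistent with the paper's relations $u=\nabla^{\perp}\Delta^{-1}\omega$ and $\omega=\nabla^{\perp}\cdot u$, where $\nabla^{\perp}=(-\partial_{x_2},\partial_{x_1})$.
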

\begin{prop}[Parameters of past layers are almost constant in time]
\label{prop:time convergence}{[}Proposition 9 of \cite{Articulo Boussinesq}{]}
Let $\beta\in\left(0,1\right)$. Provided that $C$ is sufficiently
big (let us say $C\ge\Upsilon\left(\beta,\delta\right)$), then, $\forall n\in\mathbb{N}$
and $\forall t\in\left[t_{n+1},1\right]$,
\[
\left|\frac{b_{n}\left(t\right)}{b_{n}\left(1\right)}-1\right|\leq C^{-\beta\delta\gamma\left(\frac{1}{1-\gamma}\right)^{n}},\quad\left|\frac{a_{n}\left(t\right)}{a_{n}\left(1\right)}-1\right|\leq C^{-\beta\delta\gamma\left(\frac{1}{1-\gamma}\right)^{n}},\quad\left|\frac{B_{n}\left(t\right)}{B_{n}\left(1\right)}-1\right|\leq C^{-\beta\delta\gamma\left(\frac{1}{1-\gamma}\right)^{n}}.
\]
\end{prop}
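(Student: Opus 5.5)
The plan is to control the logarithmic derivative of $b_{n}$ on $\left[t_{n+1},1\right]$ using the dynamics \eqref{eq:Boussinesq dynamics}. Then I would transfer the bound to $a_{n}$ via $a_{n}b_{n}\equiv\mathrm{const}$, and to $B_{n}$ via the explicit formula \eqref{eq:Boussinesq Bn(t)}.

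\textbf{Step 1 (bound on the velocity gradient felt by layer $n$).} By \eqref{eq:Boussinesq dynamics},
\[
\left|\frac{\mathrm{d}}{\mathrm{d}t}\ln b_{n}(t)\right|\leq\sum_{m=1}^{n-1}B_{m}(t)a_{m}(t)b_{m}(t).
\]
From \eqref{eq:Boussinesq Bn(t)}, since the ratio of integrals lies in $[0,1]$ (as $h^{(m)}\geq0$) and $a_{m}^{2}+b_{m}^{2}\geq2a_{m}b_{m}$, we get $B_{m}a_{m}b_{m}\leq M_{m}$. Using \eqref{eq:Boussinesq Mn}, $M_{m}=YC^{\delta(\frac{1}{1-\gamma})^{m}}$, which grows superexponentially in $m$. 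Hence, for $C$ large, the sum is bounded by $2YC^{\delta(\frac{1}{1-\gamma})^{n-1}}$.

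\textbf{Step 2 (integrate over the short time window).} For $t\in\left[t_{n+1},1\right]$,
\[
\left|\ln\frac{b_{n}(t)}{b_{n}(1)}\right|\leq2YC^{\delta(\frac{1}{1-\gamma})^{n-1}}(1-t_{n+1}).
\]
By \eqref{eq:Boussinesq time scale}, $1-t_{n+1}=\frac{1}{Y}C^{-\delta(\frac{1}{1-\gamma})^{n}}\mathrm{arccosh}\left(C^{k_{\max}(\frac{1}{1-\gamma})^{n+1}}\right)$. The arccosh is at most $\lesssim k_{\max}(\frac{1}{1-\gamma})^{n+1}\ln C$. The exponents of $C$ combine to
\[
\delta\left(\frac{1}{1-\gamma}\right)^{n-1}\left(1-\frac{1}{1-\gamma}\right)=-\delta\gamma\left(\frac{1}{1-\gamma}\right)^{n}.
\]
This gives a bound $\lesssim(\frac{1}{1-\gamma})^{n+1}\ln C\cdot C^{-\delta\gamma(\frac{1}{1-\gamma})^{n}}$. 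This is the main (though mild) obstacle: I must absorb the polynomial-logarithmic prefactor by sacrificing a fraction $1-\beta$ of the exponent. Concretely, I would check that
\[
2k_{\max}\left(\frac{1}{1-\gamma}\right)^{n+1}\ln C\leq\frac{1}{2}C^{(1-\beta)\delta\gamma(\frac{1}{1-\gamma})^{n}}
\]
holds uniformly in $n\geq1$ once $C\geq\Upsilon(\beta,\delta)$. This is true because the right side grows doubly exponentially in $n$ and polynomially in $C$, while the left grows only exponentially in $n$ and logarithmically in $C$. (The dependence on $\gamma$ is fixed, since $\gamma$ is chosen beforehand.) Then $|e^{x}-1|\leq2|x|$ for small $|x|$ yields $\left|\frac{b_{n}(t)}{b_{n}(1)}-1\right|\leq C^{-\beta\delta\gamma(\frac{1}{1-\gamma})^{n}}$, after slightly adjusting constants.

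\textbf{Step 3 ($a_{n}$ and $B_{n}$).} Since $a_{n}b_{n}$ is constant, $\ln\frac{a_{n}(t)}{a_{n}(1)}=-\ln\frac{b_{n}(t)}{b_{n}(1)}$, so the same bound holds for $a_{n}$. For $B_{n}$: when $t\geq t_{n+1}$ we have $h^{(n)}\equiv0$ on $\left[t,1\right]$, so the ratio of integrals in \eqref{eq:Boussinesq Bn(t)} equals $1$. Therefore
\[
\frac{B_{n}(t)}{B_{n}(1)}=\frac{a_{n}(1)^{2}+b_{n}(1)^{2}}{a_{n}(t)^{2}+b_{n}(t)^{2}}.
\]
Each of $a_{n}(t)^{2}/a_{n}(1)^{2}$ and $b_{n}(t)^{2}/b_{n}(1)^{2}$ lies within a factor $(1\pm\epsilon)^{2}$ of $1$, so the quotient lies within $(1\pm\epsilon)^{\pm2}$. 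Keeping the slack from Step 2 (using a slightly larger $\beta'\in(\beta,1)$ there) absorbs the factor $\approx4$ and gives the stated bound for $B_{n}$.
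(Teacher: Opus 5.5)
Your proposal is correct and follows the same route as the cited proof (Proposition 9 of \cite{Articulo Boussinesq}, whose ingredients appear here as Lemmas \ref{lem:estimate sum superexponential}, \ref{lem:arccosh by ln} and \ref{lem:exponential superexponential bound}): you bound the logarithmic derivative of $b_{n}$ by $\sum_{m<n}M_{m}$, integrate over the short window $\left[t_{n+1},1\right]$ to obtain the exponent $-\delta\gamma\left(\frac{1}{1-\gamma}\right)^{n}$, absorb the $\mathrm{arccosh}$ prefactor by giving up the fraction $1-\beta$ of that exponent, and then pass to $a_{n}$ via $a_{n}b_{n}\equiv\mathrm{const}$ and to $B_{n}$ via \eqref{eq:Boussinesq Bn(t)} with $h^{\left(n\right)}\equiv0$ on $\left[t_{n+1},1\right]$. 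Your parenthetical about fixing $\gamma$ is unnecessary: Lemma \ref{lem:exponential superexponential bound} has a constant independent of $\frac{1}{1-\gamma}$, and applying it with $b=\delta\gamma\geq\frac{\delta}{2}$ makes the absorption uniform in $\gamma\in\left(\frac{1}{2},1\right)$, so the threshold really is $\Upsilon\left(\beta,\delta\right)$ as stated.
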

\begin{prop}[Convergence of $k_{n}\left(t\right)$ to its ideal counterpart]
\label{prop:convergence kn to ideal model}{[}Proposition 11 of \cite{Articulo Boussinesq}{]}
Let $n\in\mathbb{N}$ with $n\ge2$ and $\beta,\beta'\in\left(0,1\right)$.
Moreover, let $k_{n}\left(t\right)$ be as defined in equation \eqref{eq:Boussinesq an(t) bn(t)}
and $\overline{k}_{n}\left(t\right)$ be the ideal model of $k_{n}\left(t\right)$.
Then, provided that $C$ is big enough (let us say $C\ge\Upsilon\left(\beta,\beta',\delta\right)$),
\[
\left|k_{n}\left(t\right)-\overline{k}_{n}\left(t\right)\right|\lesssim_{\delta}C^{-\frac{1}{2}\beta\beta'\delta\gamma\left(\frac{1}{1-\gamma}\right)^{n-1}}\quad\forall t\in\left[t_{n},1\right].
\]
\end{prop}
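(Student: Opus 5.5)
The plan is to compare $k_{n}$ and $\overline{k}_{n}$ through the ODE for $\ln b_{n}$ in \eqref{eq:Boussinesq dynamics}. Since $k_{n}\left(1\right)=0$ and $\ln b_{n}\left(t\right)=\left(1+k_{n}\left(t\right)\right)\left(\frac{1}{1-\gamma}\right)^{n}\ln C$, integrating backwards from $t=1$ gives
\[
k_{n}\left(t\right)=-\frac{1}{\left(\frac{1}{1-\gamma}\right)^{n}\ln C}\int^{1}_{t}\sum^{n-1}_{m=1}B_{m}\left(s\right)a_{m}\left(s\right)b_{m}\left(s\right)\cos\left(a_{m}\left(s\right)\left(\phi^{\left(n\right)}_{1}\left(s,0\right)-\phi^{\left(m\right)}_{1}\left(s,0\right)\right)\right)\mathrm{d}s .
\]
I take the ideal model $\overline{k}_{n}$ to be the same expression with only the term $m=n-1$ kept. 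In that term all parameters are frozen at $t=1$, so $B_{n-1}a_{n-1}b_{n-1}$ becomes $M_{n-1}$ by \eqref{eq:Boussinesq Mn}, and $\Xi^{\left(n\right)}$ is replaced by the inverted half-pendulum $\Xi^{\left(n\right)}_{0}$. The difference $k_{n}-\overline{k}_{n}$ then splits into three errors, to be estimated in this order.

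\textbf{(i) Older layers $m\le n-2$.} Each contributes at most $M_{m}\left(1-t_{n}\right)$ to the integral, since $\left|\cos\right|\le1$ and $\frac{B_{m}\left(s\right)}{B_{m}\left(1\right)}\approx1$ by Proposition \ref{prop:time convergence}. By \eqref{eq:Boussinesq Mn} and \eqref{eq:Boussinesq time scale},
\[
\sum^{n-2}_{m=1}M_{m}\left(1-t_{n}\right)\lesssim C^{\delta\left(\frac{1}{1-\gamma}\right)^{n-2}-\delta\left(\frac{1}{1-\gamma}\right)^{n-1}}\mathrm{arccosh}\left(C^{k_{\max}\left(\frac{1}{1-\gamma}\right)^{n}}\right).
\]
This is super-exponentially small in $C$. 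Dividing by $\left(\frac{1}{1-\gamma}\right)^{n}\ln C$ only helps.

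\textbf{(ii) Freezing the parameters of layer $n-1$.} For $s\in\left[t_{n},1\right]$, layer $n-1$ is a past layer. Proposition \ref{prop:time convergence} with exponent $\beta$ gives $\left|B_{n-1}a_{n-1}b_{n-1}\left(s\right)-M_{n-1}\right|\lesssim M_{n-1}C^{-\beta\delta\gamma\left(\frac{1}{1-\gamma}\right)^{n-1}}$. Likewise $a_{n-1}\left(s\right)=a_{n-1}\left(1\right)\left(1+O\left(C^{-\beta\delta\gamma\left(\frac{1}{1-\gamma}\right)^{n-1}}\right)\right)$ inside the cosine. The cosine is $1$-Lipschitz, but its argument $a_{n-1}\Xi^{\left(n\right)}$ is $O\left(1\right)$ (it stays in $\left[0,\pi\right]$). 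So the relative error is $C^{-\beta\delta\gamma\left(\frac{1}{1-\gamma}\right)^{n-1}}$ times $M_{n-1}\left(1-t_{n}\right)$. That product is $O\left(\mathrm{arccosh}\,C^{k_{\max}\left(\frac{1}{1-\gamma}\right)^{n}}\right)=O\left(\left(\frac{1}{1-\gamma}\right)^{n}\ln C\right)$, which is exactly cancelled by the normalization.

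\textbf{(iii) Replacing $\Xi^{\left(n\right)}$ by $\Xi^{\left(n\right)}_{0}$ — the main obstacle.} Subtracting the equations for $\phi^{\left(n\right)}_{1}$ and $\phi^{\left(n-1\right)}_{1}$, I expect $\Xi^{\left(n\right)}$ to satisfy the ODE of $\Xi^{\left(n\right)}_{0}$ up to forcing errors of the size in (i)–(ii), with the same initial datum at $t_{n}$. Setting $e=a_{n-1}\left(1\right)\left(\Xi^{\left(n\right)}-\Xi^{\left(n\right)}_{0}\right)$ gives
\[
\left|\dot{e}\right|\le M_{n-1}\left|e\right|+\text{error}.
\]
The pendulum is inverted, hence unstable, so Gronwall over $\left[t_{n},1\right]$ produces an amplification factor $e^{M_{n-1}\left(1-t_{n}\right)}\sim C^{k_{\max}\left(\frac{1}{1-\gamma}\right)^{n}}$, which could swamp the gain $C^{-\beta\delta\gamma\left(\frac{1}{1-\gamma}\right)^{n-1}}$.

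To overcome this I would not run Gronwall on the whole interval. Instead, I would split $\left[t_{n},1\right]$ at the times where $\left|\sin\left(a_{n-1}\left(1\right)\Xi_{0}\right)\right|\ge C^{-\beta'\left(\ldots\right)}$. On the short initial and final stretches the relevant quantity is controlled directly, because the pendulum is near its unstable equilibria and the contribution of the cosine integral there is $O$ of the stretch length. On the middle stretch one works with the conserved-type quantity $\log\tan\left(\frac{a_{n-1}\Xi}{2}\right)$, in which the ODE becomes linear with bounded perturbation. Along this quantity relative errors are only amplified by a factor of size $C^{\beta'\cdot(\text{small})}$, which still loses against the gain $C^{-\beta\delta\gamma\left(\frac{1}{1-\gamma}\right)^{n-1}}$.

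Optimizing the cutoff leaves an error of order $C^{-\frac{1}{2}\beta\beta'\delta\gamma\left(\frac{1}{1-\gamma}\right)^{n-1}}$; the $\beta'$ and the $\frac{1}{2}$ come from this balancing. Adding (i)–(iii), and taking $C\ge\Upsilon\left(\beta,\beta',\delta\right)$ to absorb constants, yields the claimed bound.
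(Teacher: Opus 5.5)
Your steps (i) and (ii) are fine, and so is identifying $\overline{k}_{n}$ with the frozen one-layer model. It does reproduce \eqref{eq:Boussinesq kn exact}, because $\frac{\mathrm{d}}{\mathrm{d}t}\ln\sin\theta_{0}=M_{n-1}\cos\theta_{0}$ for $\theta_{0}=a_{n-1}(1)\Xi^{(n)}_{0}$. The gap is in (iii), which is the whole content of the proposition, and your splitting there has the proportions backwards. Since $\sin\theta_{0}=\mathrm{sech}(A(1-2\hat{\hat{t}}))$ with $A=\mathrm{arccosh}(C^{k_{\max}(\frac{1}{1-\gamma})^{n}})\ge k_{\max}(\frac{1}{1-\gamma})^{n}\ln C$, the set where $\sin\theta_{0}\ge\epsilon$ has relative length at most $\ln(2/\epsilon)/A$. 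You need $\epsilon\ge C^{-\beta\delta\gamma(\frac{1}{1-\gamma})^{n-1}}$ so that the loss $\epsilon^{-1}$ does not beat the gain, and for any such cutoff this relative length is $\lesssim\delta(1-\gamma)/k_{\max}+1/\ln C\ll1$. So it is the \emph{middle} stretch that is short. The pendulum spends almost all of $[t_{n},1]$ exponentially close to $0$ or $\pi$, and that is exactly where $\int M_{n-1}\cos\theta_{0}$ builds $\overline{k}_{n}$ up to $k_{\max}$. Bounding the cosine integral on the end stretches by $M_{n-1}$ times their length therefore gives an error of order $M_{n-1}(1-t_{n})/((\frac{1}{1-\gamma})^{n}\ln C)\sim k_{\max}$ in $k_{n}$, the same size as $\overline{k}_{n}$ itself. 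What is needed there is $\cos\theta\approx\cos\theta_{0}$. That is a dynamical statement, and near $\theta=0$ it is precisely the unstable regime you flagged. Note also that $\pi$ is the \emph{attracting} equilibrium of $\dot\theta=M\sin\theta$, not an unstable one. Your sketch also never supplies $w-w_{0}$ at the entrance of the middle stretch.

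The missing idea is that the perturbation vanishes at the unstable equilibrium. Subtract the equations for $\phi^{(n)}_{1}$ and $\phi^{(n-1)}_{1}$ in \eqref{eq:Boussinesq dynamics}. Each older-layer term $B_{m}b_{m}[\sin(a_{m}(\phi^{(n)}_{1}-\phi^{(m)}_{1}))-\sin(a_{m}(\phi^{(n-1)}_{1}-\phi^{(m)}_{1}))]$ is bounded by $B_{m}a_{m}b_{m}|\Xi^{(n)}|$. If you set $\theta:=a_{n-1}(t)\Xi^{(n)}(t)$, the extra term $\frac{\dot{a}_{n-1}}{a_{n-1}}\theta$ is also proportional to $\theta$, with $|\frac{\dot{a}_{n-1}}{a_{n-1}}|\le\sum_{m\le n-2}B_{m}a_{m}b_{m}$. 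Hence $\dot\theta=\widetilde{M}(t)\sin\theta+R$, where $\widetilde{M}(t)=B_{n-1}a_{n-1}b_{n-1}(t)$ and $|R|\lesssim_{\delta}YC^{\delta(\frac{1}{1-\gamma})^{n-2}}\theta$. The $m=n-1$ term of $\frac{\mathrm{d}}{\mathrm{d}t}\ln b_{n}$ is then exactly $\widetilde{M}(t)\cos\theta$, so the second half of your (ii) becomes unnecessary. Your Gronwall inequality with an additive error discards this structure. An additive error of size $C^{-\delta\gamma(\frac{1}{1-\gamma})^{n-1}}$ at times when $\theta_{0}\approx C^{-k_{\max}(\frac{1}{1-\gamma})^{n}}$ really would destroy the tracking.

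With this structure, $w=\ln\tan\frac{\theta}{2}$ satisfies $\dot{w}=\widetilde{M}(t)+R/\sin\theta$, and $|R|/\sin\theta\lesssim YC^{\delta(\frac{1}{1-\gamma})^{n-2}}\eta^{-1}$ uniformly on $\{\theta\le\pi-\eta\}$, with no loss at all near $\theta=0$. Integrating from $t_{n}$ bounds $|w-w_{0}|$ directly, without Gronwall, and $|\cos\theta-\cos\theta_{0}|=|\tanh w-\tanh w_{0}|\le|w-w_{0}|$. The only cutoff needed is near $\pi$, where attractivity does the work. We have $\dot{w}\ge\frac{1}{2}M_{n-1}$ as long as $\cosh w\ll C^{\delta\gamma(\frac{1}{1-\gamma})^{n-1}}$. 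So once $\theta$ is within $\eta\gg C^{-\delta\gamma(\frac{1}{1-\gamma})^{n-1}}$ of $\pi$, it stays within $O(\eta)$ of it, and the integrand is $O(\eta)$ on the whole (long) final stretch. Two pieces of bookkeeping then produce an exponent like $\frac{1}{2}\beta\beta'\delta\gamma$. One is balancing $\eta$ against $C^{-\delta\gamma(\frac{1}{1-\gamma})^{n-1}}/\eta$. The other is absorbing the factor $(\frac{1}{1-\gamma})^{n}\ln C$ coming from $M_{n-1}(1-t_{n})$ via Lemma~\ref{lem:exponential superexponential bound}. As written, your attribution of $\beta'$ and $\frac{1}{2}$ to ``optimizing the cutoff'' rests on no computation.
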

\begin{prop}[Layer center never leaves zone where cut-off functions are one]
\label{prop:layer center never leaves cutoff area}{[}Proposition
12 of \cite{Articulo Boussinesq}{]} Let $n\in\mathbb{N}$. Then,
provided that $C$ is big enough (let us say $C\ge\Upsilon\left(\delta\right)$),
$\forall m\in\mathbb{N}$ with $m\le n$, we have 
\[
\left|\phi^{\left(n\right)}_{1}\left(t,0\right)-\phi^{\left(m\right)}_{1}\left(t,0\right)\right|\le\frac{8\pi}{a_{m}\left(t\right)}\quad\forall t\in\left[t_{n},1\right].
\]
\end{prop}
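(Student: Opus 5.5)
The plan is to telescope the difference of layer centers into consecutive increments and to control each increment through the inverted half-pendulum dynamics of \eqref{eq:Boussinesq dynamics}. For $m=n$ there is nothing to prove. For $m<n$ I will write
\[
\phi^{(n)}_{1}(t,0)-\phi^{(m)}_{1}(t,0)=\sum_{k=m+1}^{n}\Xi^{(k)}(t),\qquad \Xi^{(k)}(t)=\phi^{(k)}_{1}(t,0)-\phi^{(k-1)}_{1}(t,0).
\]
The goal is the bound $|a_{k-1}(t)\,\Xi^{(k)}(t)|\le\pi+\epsilon_{k}$ for all $t\in[t_{k},1]$, with $\epsilon_{k}$ super-exponentially small in $k$.

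\textbf{Step 1: the ODE for $\Xi^{(k)}$.} I subtract the equations for $\frac{\partial\phi^{(k)}_{1}}{\partial t}(t,0)$ and $\frac{\partial\phi^{(k-1)}_{1}}{\partial t}(t,0)$ in \eqref{eq:Boussinesq dynamics}. This gives
\[
\frac{d\Xi^{(k)}}{dt}=B_{k-1}b_{k-1}\sin\bigl(a_{k-1}\Xi^{(k)}\bigr)+R_{k}(t),
\]
where
\[
R_{k}=\sum_{j<k-1}B_{j}b_{j}\Bigl[\sin\bigl(a_{j}(\phi^{(k)}_{1}-\phi^{(j)}_{1})\bigr)-\sin\bigl(a_{j}(\phi^{(k-1)}_{1}-\phi^{(j)}_{1})\bigr)\Bigr].
\]
By the mean value theorem,
\[
|R_{k}|\le|\Xi^{(k)}|\sum_{j<k-1}B_{j}a_{j}b_{j}\lesssim|\Xi^{(k)}|\sum_{j<k-1}M_{j}.
\]
This is a linear perturbation whose coefficient is much smaller than the pendulum frequency $M_{k-1}$, because $M_{j}=YC^{\delta(\frac{1}{1-\gamma})^{j}}$ grows super-exponentially.

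\textbf{Step 2: freezing coefficients.} By Proposition \ref{prop:time convergence}, on $[t_{k},1]$ the coefficients $a_{k-1}$, $b_{k-1}$, $B_{k-1}$ equal their values at $t=1$ up to relative errors $C^{-\beta\delta\gamma(\frac{1}{1-\gamma})^{k-1}}$. The equation is therefore a small perturbation of the model equation for $\Xi^{(k)}_{0}$.

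\textbf{Step 3: the pendulum bound.} The model solution starts at $a_{k-1}(1)\Xi^{(k)}_{0}(t_{k})=\arcsin(C^{-k_{\max}(\frac{1}{1-\gamma})^{k}})\in(0,\pi/2)$. Its explicit form through $1/\cosh$ shows that it increases monotonically, reaches $\pi/2$ at the midpoint, and then approaches (but stays below) $\pi$. Hence $0<a_{k-1}(1)\Xi^{(k)}_{0}<\pi$.

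\textbf{Step 4: the main obstacle, comparing true and model solutions.} This is the step I expect to be hardest: showing that the perturbations $R_{k}$ and the coefficient drift do not push $a_{k-1}\Xi^{(k)}$ past $\pi$ by more than a tiny amount, even though the inverted pendulum is unstable near $0$ over a time window of length $1-t_{k}$. My first attempt is a continuity (bootstrap) argument. I assume $|a_{k-1}\Xi^{(k)}|\le 2\pi$ on $[t_{k},T]$ and use Grönwall on the difference from the model. The Lipschitz constant of the model is about $M_{k-1}$, and $(1-t_{k})M_{k-1}\approx\operatorname{arccosh}(C^{k_{\max}(\frac{1}{1-\gamma})^{k}})$, so the Grönwall amplification is only of size $C^{k_{\max}(\frac{1}{1-\gamma})^{k}}$. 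This is beaten by the relative smallness $C^{-\beta\delta\gamma(\frac{1}{1-\gamma})^{k-1}}$ of the perturbations only if the exponents are arranged favorably. If this bookkeeping fails, the fallback is a phase-plane argument: near $a_{k-1}\Xi=\pi$ the vector field points back toward $\pi$ (a stable direction), so the model is robust there. Near $0$ the perturbation is proportional to $\Xi$ itself and only slightly changes the growth rate.

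\textbf{Step 5: summation.} Given $|\Xi^{(k)}(t)|\le(\pi+\epsilon_{k})/a_{k-1}(t)$ for $t\in[t_{n},1]\subseteq[t_{k},1]$, I sum over $k$ to get
\[
\bigl|\phi^{(n)}_{1}(t,0)-\phi^{(m)}_{1}(t,0)\bigr|\le\frac{1}{a_{m}(t)}\sum_{k=m+1}^{n}(\pi+\epsilon_{k})\frac{a_{m}(t)}{a_{k-1}(t)}.
\]
Every index $k-1\le n-1$ corresponds to a past layer at times $t\ge t_{n}$. Proposition \ref{prop:time convergence} together with \eqref{eq:Boussinesq an(t) bn(t)} then gives $a_{m}(t)/a_{k-1}(t)\lesssim C^{(\frac{1}{1-\gamma})^{m}-(\frac{1}{1-\gamma})^{k-1}}$ for $k-1>m$, which is super-exponentially small. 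The sum is therefore at most $(\pi+o(1))/a_{m}(t)\le 8\pi/a_{m}(t)$ once $C\ge\Upsilon(\delta)$.
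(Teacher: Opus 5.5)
The paper does not reprove this statement; it quotes it from \cite{Articulo Boussinesq}. I therefore assess your plan on its own. The skeleton is sound, and three parts are fine as they stand:
\begin{itemize}
\item the telescoping in $\Xi^{(k)}$;
\item the perturbed pendulum equation of Step~1: since $B_ja_jb_j\le M_j$ at all times, $|R_k|\le K_k|\Xi^{(k)}|$ with $K_k:=\sum_{j\le k-2}M_j\lesssim_{\delta}YC^{\delta(\frac{1}{1-\gamma})^{k-2}}$ by Lemma~\ref{lem:estimate sum superexponential};
\item the summation of Step~5: every layer $j\le n-1$ is a past layer on $[t_n,1]$, so Proposition~\ref{prop:time convergence} applies to $a_m$ and $a_{k-1}$.
\end{itemize}

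The weak point is Step~4. You leave it undecided, and its first option does not close.
\begin{itemize}
\item The Lipschitz constant of the model field is $\approx M_{k-1}$. By \eqref{eq:Boussinesq time scale} and \eqref{eq:Boussinesq Mn}, $M_{k-1}(1-t_k)=\mathrm{arccosh}\bigl(C^{k_{\max}(\frac{1}{1-\gamma})^{k}}\bigr)$, so Gr\"onwall amplifies errors by about $2C^{k_{\max}(\frac{1}{1-\gamma})^{k}}$.
\item The forcing you would feed in (the drift from Proposition~\ref{prop:time convergence} and the ratio $K_k/M_{k-1}$) has relative size at best $C^{-\delta\gamma(\frac{1}{1-\gamma})^{k-1}}=C^{-\delta\gamma(1-\gamma)(\frac{1}{1-\gamma})^{k}}$.
\item Because $\delta$ is small, $\delta\gamma(1-\gamma)<k_{\max}$. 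The amplification wins, and the additive comparison with $\Xi^{(k)}_0$ gives nothing.
\end{itemize}
As written, the decisive estimate is therefore not proved.

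Your fallback is the right idea. In its simplest form it needs no model at all, so Steps~2--3 can be dropped.

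Set $y=a_{k-1}(t)\Xi^{(k)}(t)$ on $[t_k,1]$. Because $a_{k-1}b_{k-1}$ is constant, $|a_{k-1}'/a_{k-1}|=|\frac{d}{dt}\ln b_{k-1}|\le K_k$. Therefore
\[
y'=\Omega_{k-1}(t)\sin y+E(t),\qquad |E(t)|\le 2K_k|y|,\qquad \Omega_{k-1}=B_{k-1}a_{k-1}b_{k-1}\ge\tfrac{1}{2}M_{k-1},
\]
where the last bound follows from Proposition~\ref{prop:time convergence} and \eqref{eq:Boussinesq Mn}.

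Take $\eta_k$ equal to a suitable absolute constant times $K_k/M_{k-1}$, so that $\tfrac12M_{k-1}\sin\eta_k>2K_k(\pi+\eta_k)$. Then $y'<0$ at $y=\pi+\eta_k$ and $y'>0$ at $y=-(\pi+\eta_k)$. Since $y(t_k)\in(0,\frac{\pi}{2})$, continuity gives $|y|<\pi+\eta_k$ on all of $[t_k,1]$, with $\eta_k\lesssim_{\delta}C^{-\delta\gamma(\frac{1}{1-\gamma})^{k-1}}$. The instability at $y=0$ is irrelevant, because only an upper bound on $|y|$ is needed.

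Feeding this into Step~5 yields $|\phi^{(n)}_1(t,0)-\phi^{(m)}_1(t,0)|\le(\pi+o(1))/a_m(t)\le 8\pi/a_m(t)$ for $C\ge\Upsilon(\delta)$.
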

\begin{lem}[Simple bound for superexponential sum]
\label{lem:estimate sum superexponential}{[}Lemma 5 of \cite{Articulo Boussinesq}{]}
Let $\gamma\in\left(\frac{1}{2},1\right)$, $\delta>0$, $C>2$ and
$n\in\mathbb{N}$. We have
\[
\sum^{n-1}_{m=1}C^{\delta\left(\frac{1}{1-\gamma}\right)^{m}}\lesssim_{\delta}C^{\delta\left(\frac{1}{1-\gamma}\right)^{n-1}}.
\]
\end{lem}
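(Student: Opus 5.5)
We compare the sum with a geometric series. Write $q\coloneqq\frac{1}{1-\gamma}$; since $\gamma\in\left(\frac{1}{2},1\right)$ we have $q>2$. The key observation is that consecutive exponents differ by a lot: for $m\le n-2$,
\[
\delta q^{n-1}-\delta q^{m}\ge\delta q^{n-1}-\delta q^{n-2}=\delta q^{n-2}\left(q-1\right)\ge\delta q^{n-2}\ge\delta\left(n-1-m\right),
\]
where the last step only needs a crude bound such as $q^{k}\ge k+1$ for $k=n-2\ge n-1-m-1$. A cleaner route is to bound the ratio of consecutive terms directly. For $1\le m\le n-2$,
\[
\frac{C^{\delta q^{m}}}{C^{\delta q^{m+1}}}=C^{-\delta q^{m}\left(q-1\right)}\le C^{-\delta q}\le C^{-2\delta}.
\]

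Reading the sum backwards from the top term $m=n-1$, each term is therefore at most $C^{-2\delta}$ times the next one. This gives
\[
\sum_{m=1}^{n-1}C^{\delta q^{m}}\le C^{\delta q^{n-1}}\sum_{j=0}^{\infty}C^{-2\delta j}=\frac{1}{1-C^{-2\delta}}\,C^{\delta q^{n-1}}.
\]
The only issue is that the prefactor must be bounded in terms of $\delta$ alone. Since $C>2$, we have $C^{-2\delta}\le2^{-2\delta}<1$, so
\[
\frac{1}{1-C^{-2\delta}}\le\frac{1}{1-4^{-\delta}},
\]
which is a constant depending only on $\delta$.

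The case $n=1$ is trivial, since the sum is empty. Nothing in the argument is a real obstacle. The one point to check is that the ratio bound is uniform in $m$, and it is, because $q^{m}\left(q-1\right)\ge q\cdot1>2$ for $m\ge1$.
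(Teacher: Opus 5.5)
Your argument is correct. Since $q=\frac{1}{1-\gamma}>2$, the ratio bound $C^{\delta q^{m}}/C^{\delta q^{m+1}}=C^{-\delta q^{m}(q-1)}\le C^{-2\delta}\le 4^{-\delta}$ holds uniformly for $m\ge1$. The sum is therefore at most $(1-4^{-\delta})^{-1}C^{\delta q^{n-1}}$, with a constant depending only on $\delta$. Your first, abandoned display also works, giving the constant $(1-2^{-\delta})^{-1}$, but it is not needed.

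This paper does not prove the lemma; it only quotes it from the earlier Boussinesq article, so I cannot compare your route with theirs. Dominating a superexponentially growing sum by its top term through a geometric series is the natural argument for this bound, and nothing further is required.
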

\begin{lem}[Lower bound of denominator of density]
\label{lem:estimate of integral hn bn}{[}Lemma 13 of \cite{Articulo Boussinesq}{]}
Let $n\in\mathbb{N}$ with $n\ge2$. For every $\mu>0$, provided
that $C$ is big enough (let us say $C\ge\Upsilon\left(\delta,\mu\right)$),
we have
\[
\int^{1}_{t_{n}}h^{\left(n\right)}\left(s\right)b_{n}\left(s\right)\mathrm{d}s\gtrsim_{\mu}\frac{1}{Y}C^{\left(1+k_{\max}-\mu-\delta\left(1-\gamma\right)\right)\left(\frac{1}{1-\gamma}\right)^{n}}.
\]
\end{lem}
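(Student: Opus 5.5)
The plan is to bound the integral from below by restricting it to a short time window around the instant at which $k_{n}$ is maximal. On that window $b_{n}$ is essentially $C^{\left(1+k_{\max}\right)\left(\frac{1}{1-\gamma}\right)^{n}}$. Throughout I write $N\coloneqq\left(\frac{1}{1-\gamma}\right)^{n}$ and use the rescaled time $s=t_{n}+\left(1-t_{n}\right)\hat{\hat{t}}$ with $\hat{\hat{t}}\in\left[0,1\right]$. By \eqref{eq:Boussinesq an(t) bn(t)} we have $b_{n}\left(s\right)=C^{\left(1+k_{n}\left(s\right)\right)N}$, so the integrand is nonnegative. It therefore suffices to find a subinterval $I_{\mu}\subseteq\left[t_{n},1\right]$ of length $\gtrsim_{\mu}1-t_{n}$ on which $h^{\left(n\right)}$ is bounded below by a positive constant and $k_{n}\ge k_{\max}-\mu$.

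First, I would locate $I_{\mu}$ using the ideal model. Since $\overline{k}_{n}\left(t_{n}+\left(1-t_{n}\right)\hat{\hat{t}}\right)$ behaves like $k_{\max}\left(1-\left|1-2\hat{\hat{t}}\right|\right)$ and attains the value $k_{\max}$ by \eqref{eq:Boussinesq def kmax}, I would take $I_{\mu}$ to be the set of $\hat{\hat{t}}$ with $\left|1-2\hat{\hat{t}}\right|\le\frac{\mu}{4k_{\max}}$, up to the $n$-dependent deviation of $\overline{k}_{n}$ from the tent profile, which I take from its definition in \cite{Articulo Boussinesq}. On this set $\overline{k}_{n}\ge k_{\max}-\frac{\mu}{2}$, and its length in $s$ is $\gtrsim_{\mu}1-t_{n}$. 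Next, Proposition \ref{prop:convergence kn to ideal model} with fixed $\beta,\beta'$ gives $\left|k_{n}-\overline{k}_{n}\right|\lesssim_{\delta}C^{-\frac{1}{2}\beta\beta'\delta\gamma\left(\frac{1}{1-\gamma}\right)^{n-1}}$. For $C\ge\Upsilon\left(\delta,\mu\right)$ this is at most $\frac{\mu}{2}$, so $k_{n}\ge k_{\max}-\mu$ on $I_{\mu}$.

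Second, I need $h^{\left(n\right)}\gtrsim1$ on $I_{\mu}$. Here I rely on the construction of $h^{\left(n\right)}$ in \cite{Articulo Boussinesq}: it is a smooth cut-off with values in $\left[0,1\right]$, equal to $1$ away from the endpoints of $\left[t_{n},t_{n+1}\right]$. Moreover, $1-t_{n+1}\ll1-t_{n}$ by \eqref{eq:Boussinesq time scale}, because of the superexponential growth in $n$. Hence $I_{\mu}$, which is centred at $\hat{\hat{t}}=\frac{1}{2}$, lies well inside the region where $h^{\left(n\right)}\equiv1$. I expect this to be the main obstacle: the present text does not record the profile of $h^{\left(n\right)}$, so I would have to import its precise definition from \cite{Articulo Boussinesq} and check that its transition zones are short compared with $1-t_{n}$. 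If that check failed, I would fall back on showing that $h^{\left(n\right)}$ is $\ge\frac{1}{2}$ on some interval of comparable length near the peak of $k_{n}$.

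Finally, I would combine these estimates:
\[
\int^{1}_{t_{n}}h^{\left(n\right)}\left(s\right)b_{n}\left(s\right)\mathrm{d}s\ge\int_{I_{\mu}}C^{\left(1+k_{\max}-\mu\right)N}\mathrm{d}s\gtrsim_{\mu}\left(1-t_{n}\right)C^{\left(1+k_{\max}-\mu\right)N}.
\]
Then I would insert \eqref{eq:Boussinesq time scale}, using $\mathrm{arccosh}\left(C^{k_{\max}N}\right)\ge1$ and $C^{-\delta\left(\frac{1}{1-\gamma}\right)^{n-1}}=C^{-\delta\left(1-\gamma\right)N}$. This gives $1-t_{n}\ge\frac{1}{Y}C^{-\delta\left(1-\gamma\right)N}$, which yields the stated bound $\frac{1}{Y}C^{\left(1+k_{\max}-\mu-\delta\left(1-\gamma\right)\right)N}$. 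Replacing $\mu$ by a fixed fraction of itself at the start absorbs any remaining constants into the implicit constant depending on $\mu$.
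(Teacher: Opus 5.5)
The paper gives no proof of this lemma; it imports it from \cite{Articulo Boussinesq}. Your argument is the natural one and is sound. Write $N=\left(\frac{1}{1-\gamma}\right)^{n}$ and take a window $\left|1-2\hat{\hat{t}}\right|\lesssim\mu$ around the midpoint. There $\overline{k}_n\ge k_{\max}-\frac{\mu}{2}$. Proposition \ref{prop:convergence kn to ideal model} (equivalently Corollary \ref{cor:bound for an(t). bn(t)}, which you could cite directly) upgrades this to $b_n\ge C^{\left(1+k_{\max}-\mu\right)N}$. The window has length $\gtrsim_{\mu}1-t_n$, and $1-t_n\ge\frac{1}{Y}C^{-\delta\left(1-\gamma\right)N}$. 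Together these give exactly the stated exponent, with a constant depending on $\mu$.

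Two remarks.

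First, the lower bound on $\overline{k}_n$ needs nothing outside this text. From \eqref{eq:Boussinesq kn exact}, the inequality $\ln\cosh x\le\left|x\right|$ and Lemma \ref{lem:arccosh by ln} give
\[
\overline{k}_n\ge k_{\max}-\left|1-2\hat{\hat{t}}\right|\bigl(k_{\max}+\tfrac{\ln 2}{N\ln C}\bigr).
\]

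Second, the dependency you flag is real and cannot be avoided. The properties of $h^{(n)}$ recorded here are smoothness, values in $\left[0,1\right]$, and vanishing off $\left[t_n,t_{n+1}\right]$; these do not imply the lemma. For example, an $h^{(n)}$ concentrated near $t_n$, where $k_n\approx0$, would only give $\int^{1}_{t_n}h^{(n)}b_n\lesssim\left(1-t_n\right)C^{\left(1+\varepsilon\right)N}$ with $\varepsilon\ll k_{\max}$. So the step ``$h^{(n)}\gtrsim1$ on a fixed fraction of $\left[t_n,t_{n+1}\right]$ around its midpoint'' has to come from the actual definition of $h^{(n)}$ in \cite{Articulo Boussinesq}. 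Once that is granted, the rest of your proof goes through. Since $1-t_{n+1}\ll1-t_n$, that midpoint sits at essentially $\hat{\hat{t}}=\frac{1}{2}$, as you say. Shrinking $\mu$ so the window fits inside the plateau costs nothing, because the bound only gets weaker as $\mu$ increases.
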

\begin{cor}[Relation between $a_{n}\left(t\right),b_{n}\left(t\right)$ and the
ideal $\overline{k}_{n}\left(t\right)$]
\label{cor:bound for an(t). bn(t)}{[}Corollary 2 of \cite{Articulo Boussinesq}{]}
Let $n\in\mathbb{N}$ with $n\ge2$ and $\mu>0$. Provided that $C$
is big enough (let us say $C\ge\Upsilon\left(\delta,\mu\right)$),
then
\[
\begin{aligned} & C^{\left(1-\overline{k}_{n}\left(t\right)-\mu\right)\left(\frac{1}{1-\gamma}\right)^{n}}\le a_{n}\left(t\right)\leq C^{\left(1-\overline{k}_{n}\left(t\right)+\mu\right)\left(\frac{1}{1-\gamma}\right)^{n}}\le C^{\left(1+\mu\right)\left(\frac{1}{1-\gamma}\right)^{n}},\\
 & C^{\left(1-\mu\right)\left(\frac{1}{1-\gamma}\right)^{n}}\le C^{\left(1+\overline{k}_{n}\left(t\right)-\mu\right)\left(\frac{1}{1-\gamma}\right)^{n}}\le b_{n}\left(t\right)\leq C^{\left(1+\overline{k}_{n}\left(t\right)+\mu\right)\left(\frac{1}{1-\gamma}\right)^{n}}.
\end{aligned}
\]
Furthermore,
\[
\begin{aligned}C^{\left(1+\overline{k}_{n}\left(t\right)-\mu\right)\left(\frac{1}{1-\gamma}\right)^{n}}\le\max\left\{ a_{n}\left(t\right),b_{n}\left(t\right)\right\}  & \leq C^{\left(1+\overline{k}_{n}\left(t\right)+\mu\right)\left(\frac{1}{1-\gamma}\right)^{n}}\le b_{n}\left(t\right)C^{2\mu\left(\frac{1}{1-\gamma}\right)^{n}},\\
\min\left\{ a_{n}\left(t\right),b_{n}\left(t\right)\right\}  & \ge C^{\left(1-\overline{k}_{n}\left(t\right)-\mu\right)\left(\frac{1}{1-\gamma}\right)^{n}}\ge a_{n}\left(t\right)C^{-2\mu\left(\frac{1}{1-\gamma}\right)^{n}}.
\end{aligned}
\]
\end{cor}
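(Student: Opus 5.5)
The plan is to read everything off from the explicit exponential form of $a_{n}\left(t\right)$ and $b_{n}\left(t\right)$ in \eqref{eq:Boussinesq an(t) bn(t)}, which gives
\[
\log_{C}b_{n}\left(t\right)=\left(1+k_{n}\left(t\right)\right)\left(\tfrac{1}{1-\gamma}\right)^{n},\quad\log_{C}a_{n}\left(t\right)=\left(1-k_{n}\left(t\right)\right)\left(\tfrac{1}{1-\gamma}\right)^{n}.
\]
All stated inequalities then follow once the true exponent $k_{n}\left(t\right)$ is replaced by the ideal one $\overline{k}_{n}\left(t\right)$, with an error of at most $\mu$. I work for $t\in\left[t_{n},1\right]$, the range where Proposition \ref{prop:convergence kn to ideal model} applies.

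The first step is to fix, say, $\beta=\beta'=\frac{1}{2}$ in Proposition \ref{prop:convergence kn to ideal model}. This gives
\[
\left|k_{n}\left(t\right)-\overline{k}_{n}\left(t\right)\right|\le K_{\delta}C^{-\frac{1}{8}\delta\gamma\left(\frac{1}{1-\gamma}\right)^{n-1}}
\]
for $C\ge\Upsilon\left(\delta\right)$. The only delicate point is \emph{uniformity in $n\ge2$}. Since $\frac{1}{1-\gamma}>2$, we have $\left(\frac{1}{1-\gamma}\right)^{n-1}\ge2$, so the right-hand side is at most $K_{\delta}C^{-\frac{1}{4}\delta\gamma}$. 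Using $\gamma>\frac{1}{2}$, this is $\le\mu$ as soon as $C\ge\Upsilon\left(\delta,\mu\right)$. Hence $\overline{k}_{n}\left(t\right)-\mu\le k_{n}\left(t\right)\le\overline{k}_{n}\left(t\right)+\mu$, and exponentiating (with $C>1$) yields the two-sided bounds
\[
C^{\left(1-\overline{k}_{n}\left(t\right)\mp\mu\right)\left(\frac{1}{1-\gamma}\right)^{n}}\quad\text{for }a_{n}\left(t\right),\qquad C^{\left(1+\overline{k}_{n}\left(t\right)\pm\mu\right)\left(\frac{1}{1-\gamma}\right)^{n}}\quad\text{for }b_{n}\left(t\right).
\]

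The second step uses $\overline{k}_{n}\left(t\right)\ge0$ on $\left[t_{n},1\right]$. This holds because the ideal model is the tent profile $k_{\max}\left(1-\left|1-2\hat{\hat{t}}\right|\right)\ge0$, with maximum $k_{\max}$ by \eqref{eq:Boussinesq def kmax}. It gives the outer bounds $a_{n}\left(t\right)\le C^{\left(1+\mu\right)\left(\frac{1}{1-\gamma}\right)^{n}}$ and $b_{n}\left(t\right)\ge C^{\left(1-\mu\right)\left(\frac{1}{1-\gamma}\right)^{n}}$.

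The max/min statements follow from the same facts:
\begin{itemize}
\item \emph{Upper bound on the max.} Both $a_{n}$ and $b_{n}$ are bounded above by $C^{\left(1+\overline{k}_{n}+\mu\right)\left(\frac{1}{1-\gamma}\right)^{n}}$, since $1-\overline{k}_{n}\le1+\overline{k}_{n}$.
\item \emph{Lower bound on the max.} It is trivially $\ge b_{n}\ge C^{\left(1+\overline{k}_{n}-\mu\right)\left(\frac{1}{1-\gamma}\right)^{n}}$.
\item \emph{Comparison with $b_{n}$.} Combining the upper bound with the lower bound for $b_{n}$ gives $C^{\left(1+\overline{k}_{n}+\mu\right)\left(\frac{1}{1-\gamma}\right)^{n}}\le b_{n}C^{2\mu\left(\frac{1}{1-\gamma}\right)^{n}}$.
\item \emph{The min.} Symmetrically, both $a_{n}$ and $b_{n}$ are $\ge C^{\left(1-\overline{k}_{n}-\mu\right)\left(\frac{1}{1-\gamma}\right)^{n}}$. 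The upper bound on $a_{n}$ then converts this into $\ge a_{n}C^{-2\mu\left(\frac{1}{1-\gamma}\right)^{n}}$.
\end{itemize}

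The only step requiring care is the uniform-in-$n$ smallness of the error in the first step, which is handled as above. Everything else is monotonicity of $x\mapsto C^{x}$ for $C>1$.
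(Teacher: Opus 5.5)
Your argument is correct and is the natural derivation of this corollary (the paper only imports it from \cite{Articulo Boussinesq}). Proposition \ref{prop:convergence kn to ideal model} with $\beta,\beta'$ fixed gives $|k_n(t)-\overline{k}_n(t)|\le\mu$ uniformly in $n\ge2$ and $t\in[t_n,1]$ once $C\ge\Upsilon(\delta,\mu)$, and everything else follows from the monotonicity of $x\mapsto C^{x}$ together with $\overline{k}_n\ge0$.

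One correction concerns your justification of $\overline{k}_n\ge0$. $\overline{k}_n$ is not the tent profile itself but the explicit expression \eqref{eq:Boussinesq kn exact}. Its nonnegativity follows because $\cosh\bigl(\mathrm{arccosh}(X)\,s\bigr)\le X$ for $|s|\le1$, where $X=C^{k_{\max}(\frac{1}{1-\gamma})^{n}}$. Equivalently, it follows from the monotone decrease in Lemma \ref{lem:monotone convergence kn} toward the nonnegative tent profile.
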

\begin{prop}[Decomposition of the velocity]
\label{prop:form of the velocity}{[}Proposition 5 of \cite{Articulo Boussinesq}{]}
There are functions $V^{\left(n\right)},W^{\left(n\right)}:\left[0,1\right]\times\mathbb{R}^{2}\to\mathbb{R}^{2}$
such that
\[
\begin{aligned}\widetilde{u^{\left(n\right)}}^{n}\left(t,x\right) & =\widetilde{V^{\left(n\right)}}^{n}\left(t,x\right)+\lambda_{n}\widetilde{W^{\left(n\right)}}^{n}\left(t,x\right),\\
\widetilde{V^{\left(n\right)}}^{n}\left(t,x\right) & =B_{n}\left(t\right)\varphi\left(\lambda_{n}x_{1}\right)\varphi\left(\lambda_{n}x_{2}\right)\left(\begin{matrix}b_{n}\left(t\right)\sin\left(x_{1}\right)\cos\left(x_{2}\right)\\
-a_{n}\left(t\right)\cos\left(x_{1}\right)\sin\left(x_{2}\right)
\end{matrix}\right).
\end{aligned}
\]
Furthermore, given $\alpha\in\left(0,1\right)$,
\[
\begin{aligned}\left|\left|V^{\left(n\right)}_{1}\left(t,\cdot\right)\right|\right|_{L^{\infty}\left(\mathbb{R}^{2}\right)} & \le B_{n}\left(t\right)b_{n}\left(t\right), & \left|\left|V^{\left(n\right)}_{2}\left(t,\cdot\right)\right|\right|_{L^{\infty}\left(\mathbb{R}^{2}\right)} & \le B_{n}\left(t\right)a_{n}\left(t\right),\\
\left|\left|W^{\left(n\right)}_{1}\left(t,\cdot\right)\right|\right|_{L^{\infty}\left(\mathbb{R}^{2}\right)} & \lesssim_{\varphi}B_{n}\left(t\right)b_{n}\left(t\right), & \left|\left|W^{\left(n\right)}_{2}\left(t,\cdot\right)\right|\right|_{L^{\infty}\left(\mathbb{R}^{2}\right)} & \lesssim_{\varphi}B_{n}\left(t\right)a_{n}\left(t\right),\\
\left|\left|V^{\left(n\right)}_{1}\left(t,\cdot\right)\right|\right|_{\dot{C}^{\alpha}\left(\mathbb{R}^{2}\right)} & \lesssim_{\varphi}B_{n}\left(t\right)b_{n}\left(t\right)\max\left\{ a_{n}\left(t\right)^{\alpha},b_{n}\left(t\right)^{\alpha}\right\} , & \left|\left|V^{\left(n\right)}_{2}\left(t,\cdot\right)\right|\right|_{\dot{C}^{\alpha}\left(\mathbb{R}^{2}\right)} & \lesssim_{\varphi}B_{n}\left(t\right)a_{n}\left(t\right)\max\left\{ a_{n}\left(t\right)^{\alpha},b_{n}\left(t\right)^{\alpha}\right\} ,\\
\left|\left|W^{\left(n\right)}_{1}\left(t,\cdot\right)\right|\right|_{\dot{C}^{\alpha}\left(\mathbb{R}^{2}\right)} & \lesssim_{\varphi}B_{n}\left(t\right)b_{n}\left(t\right)\max\left\{ a_{n}\left(t\right)^{\alpha},b_{n}\left(t\right)^{\alpha}\right\} , & \left|\left|W^{\left(n\right)}_{2}\left(t,\cdot\right)\right|\right|_{\dot{C}^{\alpha}\left(\mathbb{R}^{2}\right)} & \lesssim_{\varphi}B_{n}\left(t\right)a_{n}\left(t\right)\max\left\{ a_{n}\left(t\right)^{\alpha},b_{n}\left(t\right)^{\alpha}\right\} .
\end{aligned}
\]
\end{prop}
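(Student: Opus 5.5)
Since the decomposition of $\widetilde{u^{(n)}}^{n}$ is already given explicitly in Proposition \ref{prop:computations vorticity}, I would first define $\widetilde{V^{(n)}}^{n}$ as the first summand there and $\widetilde{W^{(n)}}^{n}$ as the second summand divided by $\lambda_n$. That is,
\[
\widetilde{W^{(n)}}^{n}(t,x)=B_n(t)\left(\begin{matrix}b_n(t)\varphi(\lambda_nx_1)\varphi'(\lambda_nx_2)\\
-a_n(t)\varphi'(\lambda_nx_1)\varphi(\lambda_nx_2)\end{matrix}\right)\sin(x_1)\sin(x_2).
\]
Undoing the change of variables $\phi^{(n)}$, which is an affine map, gives $V^{(n)}$ and $W^{(n)}$ on $[0,1]\times\mathbb{R}^2$. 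The decomposition identity then holds by construction.

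The $L^\infty$ bounds are immediate. Composition with the bijection $\phi^{(n)}(t,\cdot)$ does not change sup norms, and $|\varphi|\le1$ (take $\varphi$ with values in $[0,1]$, or else absorb $\|\varphi\|_{L^\infty}$ into the constant). So $|V^{(n)}_1|\le B_nb_n$, $|V^{(n)}_2|\le B_na_n$, and the $W^{(n)}$ bounds hold with constant $\|\varphi\|_{L^\infty}\|\varphi'\|_{L^\infty}$.

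For the Hölder seminorms I would use interpolation, Proposition \ref{prop:easy interpolation H=0000F6lder}(2) with $\gamma=1$: $\|f\|_{\dot C^\alpha}\le2\|f\|_{L^\infty}^{1-\alpha}\|f\|_{\dot C^1}^{\alpha}$. This needs Lipschitz bounds. Write $V^{(n)}(t,y)=\widetilde{V^{(n)}}^{n}(t,(\phi^{(n)})^{-1}(t,y))$. By \eqref{eq:Boussinesq jacobian inverse}, the chain rule gives $\partial_{y_1}=a_n\partial_{x_1}$ and $\partial_{y_2}=b_n\partial_{x_2}$. Each $x$-derivative of the tilde expression is bounded by $\lesssim_\varphi B_nb_n$ (first component) or $B_na_n$ (second component), because $\lambda_n\le1$ keeps the derivatives falling on $\varphi(\lambda_n\cdot)$ bounded. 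Hence
\[
\|V^{(n)}_1\|_{\dot C^1}\lesssim_\varphi B_nb_n\max\{a_n,b_n\},
\]
and likewise for the other components. Interpolating yields
\[
\|V^{(n)}_1\|_{\dot C^\alpha}\lesssim(B_nb_n)^{1-\alpha}(B_nb_n\max\{a_n,b_n\})^{\alpha}=B_nb_n\max\{a_n,b_n\}^\alpha,
\]
which is the claim. Alternatively, one can bound directly with the composition property \eqref{eq:property Calpha composition}, using $\|(\phi^{(n)})^{-1}\|_{\dot C^1}\le\max\{a_n,b_n\}$.

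The only point requiring care is tracking that the derivatives hitting the cutoffs $\varphi(\lambda_n\cdot)$ carry an extra factor $\lambda_n\le1$ and therefore do not spoil the bounds. No real obstacle is expected.
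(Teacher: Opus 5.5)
Your proposal is correct. The decomposition and the $L^\infty$ bounds are read off from Proposition \ref{prop:computations vorticity} exactly as they must be. You are also right that the constant-free bound for $V^{(n)}$ tacitly uses the normalization $|\varphi|\le 1$.

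The paper only imports this statement without proof. The parallel estimates it does prove, Proposition \ref{prop:bounds gradu} and Proposition \ref{prop:bounded density}, follow the route you list as an alternative. First one bounds $\|\widetilde{V^{(n)}}^{n}\|_{\dot{C}^{\alpha}}\lesssim_{\varphi}B_nb_n$ in the moving frame, using \eqref{eq:property Calpha multiplication}, \eqref{eq:property Calpha composition} and $\lambda_n\le1$. Then one transfers it with \eqref{eq:property Calpha composition} and \eqref{eq:Boussinesq jacobian inverse}, paying a factor $\|(\phi^{(n)})^{-1}(t,\cdot)\|_{\dot{C}^{1}}^{\alpha}$ of size $\max\{a_n,b_n\}^{\alpha}$.

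Your primary route interpolates between $L^\infty$ and the Lipschitz seminorm. It needs only sup bounds on first derivatives, where the anisotropic chain-rule factors $a_n$, $b_n$ appear explicitly, and it avoids the H\"older product and composition rules. The composition route avoids differentiating altogether and stays within the H\"older calculus used throughout the paper. Both give exactly the same $\max\{a_n,b_n\}^{\alpha}$ loss, so the choice is a matter of taste.
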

\begin{prop}[Bounds for the Taylor development of the transport term]
\label{prop:bounds for Taylor development of transport}{[}Proposition
17 of \cite{Articulo Boussinesq}{]} Let $n\in\mathbb{N}$ with $n\ge2$
and define
\[
\begin{aligned}\widetilde{W^{\left(n\right)}}^{n}\left(t,x\right) & \coloneqq\widetilde{U^{\left(n-1\right)}}^{n}\left(t,x\right)-\widetilde{U^{\left(n-1\right)}}^{n}\left(t,0\right)-\mathrm{J}\widetilde{U^{\left(n-1\right)}}^{n}\left(t,0\right)\cdot\left(\begin{matrix}x_{1}\\
x_{2}
\end{matrix}\right),\\
D^{\left(n\right)}\left(t\right) & \coloneqq\left(\phi^{\left(n\right)}\right)^{-1}\left(t,\left[-\frac{16\pi}{\lambda_{n}},\frac{16\pi}{\lambda_{n}}\right]^{2}\right).
\end{aligned}
\]
Then, as long as $C$ is big enough (let us say $C\ge\Upsilon\left(\delta,\mu\right)$),
\[
\begin{aligned}\left|\left|W^{\left(n\right)}\left(t,\cdot\right)\right|\right|_{L^{\infty}\left(D^{\left(n\right)}\left(t\right);\mathbb{R}^{2}\right)} & \lesssim_{\delta,\varphi}YC^{\left[-2\left(1-\Lambda-k_{\max}\right)+2\mu+\left(1+\delta\right)\left(1-\gamma\right)\right]\left(\frac{1}{1-\gamma}\right)^{n}},\\
\left|\left|W^{\left(n\right)}\left(t,\cdot\right)\right|\right|_{\dot{C}^{\alpha}\left(D^{\left(n\right)}\left(t\right);\mathbb{R}^{2}\right)} & \lesssim_{\delta,\varphi}YC^{\left[-2\left(1-\Lambda-k_{\max}\right)+\max\left\{ \alpha\left(1-k_{\max}\right)-\Lambda,0\right\} +2\mu+\left(2+\delta\right)\left(1-\gamma\right)\right]\left(\frac{1}{1-\gamma}\right)^{n}}.
\end{aligned}
\]
\end{prop}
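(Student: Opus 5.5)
The plan is to treat $\widetilde{W^{\left(n\right)}}^{n}$ as the second-order Taylor remainder at $x=0$ of the rescaled function $x\mapsto\widetilde{U^{\left(n-1\right)}}^{n}\left(t,x\right)=U^{\left(n-1\right)}\left(t,\phi^{\left(n\right)}\left(t,x\right)\right)$. I read $U^{\left(n-1\right)}=\sum_{m\le n-1}u^{\left(m\right)}$ as the velocity generated by the past layers. The estimate then follows from bounds on second derivatives of each past layer, combined with the size of the rescaled domain.

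\emph{Step 1: size of the domain.} By \eqref{eq:Boussinesq phi}, a point $x$ with $\phi^{\left(n\right)}\left(t,x\right)$ in the box $\left[-\frac{16\pi}{\lambda_{n}},\frac{16\pi}{\lambda_{n}}\right]^{2}$ (taken in the $n$-th layer's rescaled variables) satisfies $\left|x_{1}\right|\lesssim\lambda_{n}^{-1}$ and $\left|x_{2}\right|\lesssim\lambda_{n}^{-1}$. The corresponding physical displacements from the layer centre are $x_{1}/a_{n}$ and $x_{2}/b_{n}$. Taylor's theorem with integral remainder then gives
\[
\left|\widetilde{W^{\left(n\right)}}^{n}\left(t,x\right)\right|\lesssim\sum_{m=1}^{n-1}\left|\left|\mathrm{D}^{2}u^{\left(m\right)}\left(t,\cdot\right)\right|\right|_{L^{\infty}}\frac{1}{\lambda_{n}^{2}\min\left\{ a_{n},b_{n}\right\} ^{2}}.
\]
Corollary \ref{cor:bound for an(t). bn(t)} yields $\min\left\{ a_{n},b_{n}\right\} \ge C^{\left(1-k_{\max}-\mu\right)\left(\frac{1}{1-\gamma}\right)^{n}}$, and $\lambda_{n}=C^{-\Lambda\left(\frac{1}{1-\gamma}\right)^{n}}$. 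Together these produce the factor $C^{\left[-2\left(1-\Lambda-k_{\max}\right)+2\mu\right]\left(\frac{1}{1-\gamma}\right)^{n}}$.

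\emph{Step 2: second derivatives of past layers.} I would differentiate the explicit formula of Proposition \ref{prop:computations vorticity}, equivalently the splitting $V^{\left(m\right)}+\lambda_{m}W^{\left(m\right)}$ of Proposition \ref{prop:form of the velocity}, twice in physical variables. Each derivative costs at most a factor $\max\left\{ a_{m},b_{m}\right\} $, and the cut-off terms only bring extra powers $\lambda_{m}\le1$. This gives $\left|\left|\mathrm{D}^{2}u^{\left(m\right)}\right|\right|_{L^{\infty}}\lesssim_{\varphi}B_{m}\max\left\{ a_{m},b_{m}\right\} ^{3}$.

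For $t\ge t_{n}\ge t_{m+1}$, Proposition \ref{prop:time convergence} together with $k_{m}\left(1\right)=0$ makes $a_{m},b_{m},B_{m}$ comparable to their values at $t=1$. These values are $a_{m}\left(1\right)=b_{m}\left(1\right)=C^{\left(\frac{1}{1-\gamma}\right)^{m}}$ and $B_{m}\left(1\right)a_{m}\left(1\right)b_{m}\left(1\right)=M_{m}=YC^{\delta\left(\frac{1}{1-\gamma}\right)^{m}}$. Hence $\left|\left|\mathrm{D}^{2}u^{\left(m\right)}\right|\right|\lesssim YC^{\left(1+\delta\right)\left(\frac{1}{1-\gamma}\right)^{m}}$.

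I would then sum over $m\le n-1$ with Lemma \ref{lem:estimate sum superexponential}, which reduces the sum to its last term, of exponent $\left(1+\delta\right)\left(\frac{1}{1-\gamma}\right)^{n-1}=\left(1+\delta\right)\left(1-\gamma\right)\left(\frac{1}{1-\gamma}\right)^{n}$. This yields the $L^{\infty}$ bound.

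\emph{Step 3: the Hölder bound.} I would interpolate with Proposition \ref{prop:easy interpolation H=0000F6lder} between the $L^{\infty}$ bound and a Lipschitz bound. The gradient $\nabla\widetilde{W^{\left(n\right)}}^{n}\left(x\right)=\mathrm{J}\widetilde{U^{\left(n-1\right)}}^{n}\left(x\right)-\mathrm{J}\widetilde{U^{\left(n-1\right)}}^{n}\left(0\right)$ is a first-order remainder. Estimated as in Steps 1--2, it carries one fewer factor $\left(\lambda_{n}a_{n}\right)^{-1}$ and one more factor $C^{\left(\frac{1}{1-\gamma}\right)^{n-1}}$ from the extra derivative on the past layer.

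Interpolation then multiplies the $L^{\infty}$ bound by roughly $\left(\lambda_{n}\min\left\{ a_{n},b_{n}\right\} \right)^{\alpha}C^{\alpha\left(1-\gamma\right)\left(\frac{1}{1-\gamma}\right)^{n}}$. However, on a domain of diameter $\lambda_{n}^{-1}$ one can alternatively bound the seminorm trivially by $2\left|\left|W\right|\right|_{L^{\infty}}$ divided by distance$^{\alpha}$ at large scales. Taking the better of the two regimes should produce exactly $\max\left\{ \alpha\left(1-k_{\max}\right)-\Lambda,0\right\} $ plus the extra $\left(1-\gamma\right)$.

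The main obstacle is this exponent bookkeeping in Step 3. In particular, I must check that splitting the Hölder quotient at scale $\left|x-y\right|\sim1$ (Lipschitz for short distances, sup for long ones) reproduces the $\max\left\{ \cdot,0\right\} $. I must also check that the $\mu$-losses from Corollary \ref{cor:bound for an(t). bn(t)} and the time-convergence errors are absorbed by taking $C\ge\Upsilon\left(\delta,\mu\right)$.
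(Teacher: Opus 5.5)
Your Steps 1--2 are correct and reproduce the stated $L^\infty$ bound exactly. Write $y_0=\phi^{(n)}(t,0)$, $N=(\frac{1}{1-\gamma})^n$ and $L=\frac{16\pi}{\lambda_n a_n}$. Then $W^{(n)}(t,y_0+h)$ is the physical second-order Taylor remainder of $U^{(n-1)}$ at $y_0$, and $D^{(n)}(t)=y_0+[-L,L]\times[-\frac{16\pi}{\lambda_n b_n},\frac{16\pi}{\lambda_n b_n}]$. Moreover $\|\mathrm{D}^2U^{(n-1)}\|_{L^\infty}\lesssim YC^{(1+\delta)(1-\gamma)N}$ and $L\le 16\pi C^{-(1-\Lambda-k_{\max}-\mu)N}$.

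The gap is Step 3, and it is more than bookkeeping. First, $\nabla W^{(n)}(y)=\mathrm{D}U^{(n-1)}(y)-\mathrm{D}U^{(n-1)}(y_0)$ is bounded by the same $\|\mathrm{D}^2U^{(n-1)}\|_{L^\infty}$ times $|y-y_0|$. No extra derivative falls on a past layer, so the factor $C^{(1-\gamma)N}$ you invoke does not arise, and that factor is exactly what would turn $(1+\delta)(1-\gamma)$ into $(2+\delta)(1-\gamma)$.

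Second, the seminorm in the statement is the physical one on $D^{(n)}(t)$; that is how it is paired with the physical $\dot{C}^{\alpha}$ bound for $\theta^{(n)}$ in Proposition \ref{prop:bound material derivative u}. This set has diameter $\approx L\ll 1$, not $\lambda_n^{-1}$. Splitting the quotient at $|y-z|\sim 1$ is therefore vacuous, and working in rescaled variables then pulling back costs $\max\{a_n,b_n\}^\alpha=b_n^\alpha$, which is worse.

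What interpolation actually gives is $\|W^{(n)}\|_{\dot{C}^{\alpha}(D^{(n)})}\lesssim \|\mathrm{D}^2U^{(n-1)}\|_{L^\infty}L^{2-\alpha}$. Its exponent is $-2(1-\Lambda-k_{\max})+\alpha(1-\Lambda-k_{\max})+(1+\delta)(1-\gamma)+O(\mu)$. Since $\alpha(1-\Lambda-k_{\max})-\max\{\alpha(1-k_{\max})-\Lambda,0\}=\min\{\Lambda(1-\alpha),\tfrac{2}{3}\alpha\}>0$ for $\alpha\in(0,1)$, this is weaker than the claim whenever $(1-\gamma)+O(\mu)$ is below that gap. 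That is precisely the regime of the construction, with $\gamma$ close to $1$.

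No cleverer splitting can close this, because the interpolated bound is attained.
\begin{itemize}
\item Let $t^*\in[t_n,1]$ be a time where $b_n$ is maximal. Then $k_n(t^*)=k_{\max}+o(N^{-1})$ by Proposition \ref{prop:convergence kn to ideal model} and $\max\overline{k}_n=k_{\max}$, so $\lambda_n a_n(t^*)\approx C^{\frac{2}{3}N}$.
\item The condition $\frac{\mathrm{d}}{\mathrm{d}t}\ln b_n(t^*)=0$ forces $\cos(a_{n-1}\Xi^{(n)}(t^*))\to 0$, because the layers $m\le n-2$ contribute only $O(YC^{\delta(\frac{1}{1-\gamma})^{n-2}})$.
\item Hence $|\partial_{11}U^{(n-1)}_1(t^*,y_0)|=a_{n-1}^2B_{n-1}b_{n-1}|\sin(a_{n-1}\Xi^{(n)})|+O(YC^{(1+\delta)(\frac{1}{1-\gamma})^{n-2}})\approx YC^{(1+\delta)(1-\gamma)N}$.
\item The cubic Taylor term is smaller by a factor $\lesssim C^{(1-\gamma)N}L\lesssim C^{((1-\gamma)+\mu-\frac{2}{3})N}$.
\end{itemize}
Thus $W^{(n)}_1(t^*,y_0+(s,0))=\tfrac{1}{2}\partial_{11}U^{(n-1)}_1(t^*,y_0)\,s^2(1+o(1))$ for $|s|\le L$. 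The Hölder quotient at $y=y_0+(L,0)$, $z=y_0$ is therefore $\gtrsim YC^{(1+\delta)(1-\gamma)N}L^{2-\alpha}$.

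Consequently the Hölder estimate with $\max\{\alpha(1-k_{\max})-\Lambda,0\}$ and $(2+\delta)(1-\gamma)$ cannot be proved as stated for $\gamma$ near $1$. The correct and sharp statement is the interpolated one. That version still suffices for the only use of this proposition in the paper: the transport term in Proposition \ref{prop:bound material derivative u} then decays with exponent $-\tfrac{2}{3}(2-\alpha)+O(\delta,\mu,1-\gamma)<0$.
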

\begin{lem}[Monotone convergence of $\overline{k}_{n}\left(t\right)$]
\label{lem:monotone convergence kn}{[}Lemma 4 of \cite{Articulo Boussinesq}{]}
$\forall n\in\mathbb{N}$ and $\forall\hat{\hat{t}}\in\left[0,1\right]$,
\[
\overline{k}_{n+1}\left(t_{n+1}+\left(1-t_{n+1}\right)\hat{\hat{t}}\right)\le\overline{k}_{n}\left(t_{n}+\left(1-t_{n}\right)\hat{\hat{t}}\right).
\]
\end{lem}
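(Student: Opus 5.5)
The plan is to reduce the inequality to an explicit formula for the ideal model and then to an elementary monotonicity property of $\ln\cosh$. Write $N\coloneqq\frac{1}{1-\gamma}$ and $A_{n}\coloneqq\mathrm{arccosh}\left(C^{k_{\max}N^{n}}\right)$. The first step recovers $\overline{k}_{n}$ in closed form from the ideal dynamics of Choice 12 of \cite{Articulo Boussinesq}; I am reconstructing this from the summary above, so the normalization should be checked against that choice. In the ideal model only layer $n-1$ acts on layer $n$, with frozen parameters $a_{n-1}(1),b_{n-1}(1),B_{n-1}(1)$. By \eqref{eq:Boussinesq dynamics} and \eqref{eq:Boussinesq Mn} this gives
\[
\frac{\mathrm{d}}{\mathrm{d}t}\ln b_{n}=M_{n-1}\cos\left(a_{n-1}(1)\Xi^{(n)}_{0}\right).
\]
The half-pendulum formula gives $\sin\left(a_{n-1}(1)\Xi^{(n)}_{0}\right)=1/\cosh\left(A_{n}(1-2\hat{\hat{t}})\right)$. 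Hence, up to sign, $\cos\left(a_{n-1}(1)\Xi^{(n)}_{0}\right)=\tanh\left(A_{n}(1-2\hat{\hat{t}})\right)$.

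The second step uses the rescaled time $\hat{\hat{t}}$. The prefactor is $(1-t_{n})M_{n-1}$, and by \eqref{eq:Boussinesq time scale} and \eqref{eq:Boussinesq Mn} it equals exactly $A_{n}$. Integrating and imposing $k_{n}(1)=0$ via \eqref{eq:Boussinesq an(t) bn(t)}, I expect
\[
\overline{k}_{n}\left(t_{n}+(1-t_{n})\hat{\hat{t}}\right)=\frac{1}{N^{n}\ln C}\ln\frac{\cosh A_{n}}{\cosh\left(A_{n}(1-2\hat{\hat{t}})\right)}=k_{\max}\left(1-\frac{\ln\cosh\left(A_{n}x\right)}{\ln\cosh A_{n}}\right).
\]
Here $x\coloneqq\left|1-2\hat{\hat{t}}\right|\in[0,1]$, and I used $\ln\cosh A_{n}=k_{\max}N^{n}\ln C$. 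This is consistent with \eqref{eq:Boussinesq def kmax}, since the maximum $k_{\max}$ is attained at $x=0$.

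Since $A_{n}$ is increasing in $n$ (because $C>1$), the lemma reduces to the following claim. For each fixed $x\in[0,1]$, the map
\[
A\mapsto R(A)\coloneqq\frac{\varphi(xA)}{\varphi(A)},\qquad\varphi(y)=\ln\cosh y,
\]
is nondecreasing on $(0,\infty)$. The cases $x\in\{0,1\}$ are trivial.

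For $x\in(0,1)$, the third step differentiates $R$. One finds $R'(A)\ge0$ if and only if $E(xA)\ge E(A)$, where $E(y)\coloneqq\frac{y\varphi'(y)}{\varphi(y)}=\frac{y\tanh y}{\ln\cosh y}$ is the elasticity of $\varphi$. So it suffices to show that $E$ is nonincreasing on $(0,\infty)$; note that $E(0^{+})=2$ and $E(\infty)=1$. Clearing denominators, $E'\le0$ is equivalent to
\[
\left(\tanh y+y\,\mathrm{sech}^{2}y\right)\ln\cosh y\le y\tanh^{2}y\qquad\forall y>0.
\]
This inequality is the main obstacle.

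I would first try to prove it via the equivalent condition that $\frac{y\varphi''(y)}{\varphi'(y)}+1-E(y)\le0$. The slicker route is the standard criterion: if $\varphi(0)=0$ and $\varphi'(y)/y$ is nonincreasing, then $y\varphi'/\varphi$ is nonincreasing. This is a monotone l'Hôpital-type rule applied to the ratio $\varphi(y)/\left(y^{2}/2\right)$, whose derivative ratio is $\varphi'(y)/y=\tanh(y)/y$. Since $\tanh y/y$ is decreasing ($\tanh$ is concave on $[0,\infty)$ with $\tanh0=0$), this rule shows that $\varphi(y)/y^{2}$ is nonincreasing. A second application to $\varphi$ against $y\varphi'$ then gives the monotonicity of $E$. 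If that route stalls, I would instead check the displayed inequality directly: both sides vanish to order $y^{3}$ at $0$, and comparing derivatives, or Taylor-expanding near $0$ and using asymptotics for large $y$, should close it.
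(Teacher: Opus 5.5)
Your reduction is correct. With $x=|1-2\hat{\hat t}|$ and $A_n=\mathrm{arccosh}(C^{k_{\max}N^n})$, the recorded formula \eqref{eq:Boussinesq kn exact} together with $\ln\cosh A_n=k_{\max}N^n\ln C$ gives exactly $\overline k_n=k_{\max}\bigl(1-\varphi(xA_n)/\varphi(A_n)\bigr)$, so you can cite that formula instead of rederiving it. Your rederivation as written integrates $\frac{A_n}{N^n\ln C}\tanh(A_n(1-2\hat{\hat t}))$, which yields \emph{half} of your displayed formula; the summary's normalizations are off from each other by a factor $2$. This does not matter, because a positive constant factor does not affect the lemma. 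Since $A_n$ increases with $n$, it suffices that $A\mapsto\varphi(xA)/\varphi(A)$ is nondecreasing. Your computation is also right that this is equivalent to $E(xA)\ge E(A)$, and hence follows once $E(y)=y\tanh y/\ln\cosh y$ is nonincreasing.

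The gap is exactly at the step you identify as the main obstacle. The ``standard criterion'' you quote is false: $\varphi(0)=0$ together with $\varphi'(y)/y$ nonincreasing does not force $y\varphi'/\varphi$ to be nonincreasing. For example, take $0<\varepsilon<1$, $\varphi'(y)=y$ on $[0,1]$ and $\varphi'(y)=\varepsilon y$ for $y>1$ (smooth it if you wish). Then $E\equiv2$ on $(0,1]$, $E(1^{+})=2\varepsilon$, and $E(y)\to2$ as $y\to\infty$. Your first monotone l'H\^opital step does correctly show that $\varphi(y)/y^{2}$ is nonincreasing, but that is only the pointwise bound $E\le2$. The ``second application to $\varphi$ against $y\varphi'$'' is the right idea. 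However, its hypothesis is the monotonicity of $(y\varphi')'/\varphi'=1+y\varphi''/\varphi'$, not of $\varphi'/y$, and you neither identify nor check it. Your fallback (Taylor expansion at $0$ plus asymptotics at $\infty$) does not control the intermediate range of $y$.

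The missing observation takes one line. We have $1+\frac{y\,\mathrm{sech}^{2}y}{\tanh y}=1+\frac{2y}{\sinh 2y}$, which is strictly decreasing on $(0,\infty)$ because $\sinh z/z$ is increasing. Apply Cauchy's mean value theorem on $[0,y]$ to $f(y)=y\tanh y$ and $g(y)=\ln\cosh y$, both of which vanish at $0$. This gives, for some $\xi\in(0,y)$,
$$E(y)=1+\frac{2\xi}{\sinh 2\xi}>1+\frac{2y}{\sinh 2y}=\frac{f'(y)}{g'(y)}.$$
That is exactly your displayed inequality $f'g\le fg'$, equivalently your condition $y\varphi''/\varphi'+1-E\le0$. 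With this inserted, $E$ is strictly decreasing and your argument proves the lemma.
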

\begin{lem}[Simple bound for $\mathrm{arccosh}$]
\label{lem:arccosh by ln}{[}Lemma 6 of \cite{Articulo Boussinesq}{]}
We have
\[
\mathrm{arccosh}\left(x\right)\le\ln\left(2x\right)\quad\forall x\in\left[1,\infty\right).
\]
\end{lem}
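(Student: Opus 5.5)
The claim $\mathrm{arccosh}(x)\le\ln(2x)$ on $[1,\infty)$ follows directly from the closed form of $\mathrm{arccosh}$. For $x\ge1$ we have $\mathrm{arccosh}(x)=\ln\bigl(x+\sqrt{x^{2}-1}\bigr)$. This is the unique nonnegative $y$ with $\cosh y=x$: solving $e^{y}+e^{-y}=2x$ gives $e^{y}=x\pm\sqrt{x^{2}-1}$, and the root $x+\sqrt{x^{2}-1}\ge1$ is the one giving $y\ge0$.

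Since $x\ge1$, we have $0\le x^{2}-1\le x^{2}$, so $\sqrt{x^{2}-1}\le x$. Hence $x+\sqrt{x^{2}-1}\le 2x$. Because $\ln$ is increasing on $(0,\infty)$ and both arguments are positive, we conclude
\[
\mathrm{arccosh}(x)=\ln\left(x+\sqrt{x^{2}-1}\right)\le\ln\left(2x\right)\quad\forall x\in\left[1,\infty\right).
\]

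No step here is a real obstacle. The only point to check is that the logarithmic expression is the correct branch of the inverse of $\cosh$ on $[1,\infty)$, and that is settled by the sign choice above. As an alternative without the explicit formula, set $y=\mathrm{arccosh}(x)\ge0$. Then $2x=e^{y}+e^{-y}\ge e^{y}$, and taking logarithms gives $y\le\ln(2x)$. I would present this version, since it needs only the definition $\cosh y=\frac{e^{y}+e^{-y}}{2}$ and the positivity of $e^{-y}$.
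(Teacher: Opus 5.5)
Your proof is correct and takes essentially the same approach as the paper. The lemma is only cited here from the earlier Boussinesq article, but the paper uses the same reasoning (the closed form $\mathrm{arccosh}(x)=\ln\bigl(x+\sqrt{x^{2}-1}\bigr)$ together with $\sqrt{x^{2}-1}\le x$) in the proof of Lemma \ref{lem:convergence of knbar to limit}, and your alternative via $2x=e^{y}+e^{-y}\ge e^{y}$ is an equally valid restatement of the same fact.
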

\begin{lem}[Exponentially increasing times superexponentially decreasing is superexponentially
decreasing]
\label{lem:exponential superexponential bound}{[}Lemma 7 of \cite{Articulo Boussinesq}{]}
Let $n\in\mathbb{N}$, $a\ge1$, $b>0$ and $\beta\in\left(0,1\right)$.
Then,
\[
a^{n+1}C^{-ba^{n}}\le\frac{4\mathrm{e}^{-2}}{\left(1-\beta\right)^{2}b^{2}\ln\left(C\right)^{2}}C^{-\beta ba^{n}}.
\]
\end{lem}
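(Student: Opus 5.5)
Since this is Lemma 7 of \cite{Articulo Boussinesq}, I would reprove it directly by calculus: reduce to a single variable $x=a^{n}$ and maximise an explicit function. I assume $C>1$, as everywhere in the construction, so that $\ln\left(C\right)>0$.

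\textbf{Step 1: reduction to $x=a^{n}$.} Set $x\coloneqq a^{n}\ge1$. Because $a\ge1$ and $n\ge1$, we have $a\le a^{n}=x$. Hence
\[
a^{n+1}=a\cdot a^{n}\le x^{2}.
\]

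\textbf{Step 2: split off the factor we want to keep.} Write
\[
C^{-bx}=C^{-\beta bx}\,C^{-\left(1-\beta\right)bx}.
\]
It is then enough to prove
\[
x^{2}C^{-\left(1-\beta\right)bx}\le\frac{4\mathrm{e}^{-2}}{\left(1-\beta\right)^{2}b^{2}\ln\left(C\right)^{2}}\quad\forall x>0.
\]

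\textbf{Step 3: the one-variable maximisation.} Put $s\coloneqq\left(1-\beta\right)b\ln\left(C\right)$, which is positive because $\beta<1$, $b>0$ and $C>1$. Then $C^{-\left(1-\beta\right)bx}=\mathrm{e}^{-sx}$, and we study $h\left(x\right)=x^{2}\mathrm{e}^{-sx}$ on $\left(0,\infty\right)$.

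We have $h'\left(x\right)=x\left(2-sx\right)\mathrm{e}^{-sx}$. So $h$ increases on $\left(0,2/s\right)$ and decreases on $\left(2/s,\infty\right)$. Its global maximum is therefore
\[
h\left(2/s\right)=\frac{4\mathrm{e}^{-2}}{s^{2}}=\frac{4\mathrm{e}^{-2}}{\left(1-\beta\right)^{2}b^{2}\ln\left(C\right)^{2}},
\]
which is exactly the constant in the statement.

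\textbf{Conclusion.} Combining Steps 1–3 gives
\[
a^{n+1}C^{-ba^{n}}\le x^{2}\mathrm{e}^{-sx}\,C^{-\beta bx}\le\frac{4\mathrm{e}^{-2}}{\left(1-\beta\right)^{2}b^{2}\ln\left(C\right)^{2}}\,C^{-\beta ba^{n}}.
\]

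None of these steps is genuinely hard. The only point needing care is Step 1, where the bound $a\le a^{n}$ uses $n\ge1$ (and fails for $n=0$).
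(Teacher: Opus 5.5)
Your proof is correct, including the observations that $C>1$ is implicitly required and that $n\ge1$ is what makes $a^{n+1}\le\left(a^{n}\right)^{2}$ valid. This paper only recalls the lemma from \cite{Articulo Boussinesq} without proof, but the constant $4\mathrm{e}^{-2}/s^{2}$ with $s=\left(1-\beta\right)b\ln\left(C\right)$ is exactly $\max_{x>0}x^{2}\mathrm{e}^{-sx}$. This strongly suggests the cited proof takes the same route as yours: bound $a^{n+1}$ by $\left(a^{n}\right)^{2}$, split off $C^{-\beta ba^{n}}$, and maximise in one variable.
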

Furthermore, from \cite{Articulo Boussinesq} we also record the bound
\begin{equation}
\left|\frac{\mathrm{d}B_{n}}{\mathrm{d}t}\left(t\right)\right|\lesssim_{\delta,\mu}\frac{2M_{n}}{a_{n}\left(t\right)^{2}+b_{n}\left(t\right)^{2}}YC^{\left(2\mu+\delta\left(1-\gamma\right)\right)\left(\frac{1}{1-\gamma}\right)^{n}},\label{eq:Boussinesq dBndt}
\end{equation}
the exact formula
\begin{equation}
\overline{k}_{n}\left(t_{n}+\left(1-t_{n}\right)\hat{\hat{t}}\right)=k_{\max}+\frac{1}{\ln\left(C\right)\left(\frac{1}{1-\gamma}\right)^{n}}\ln\left(\frac{1}{\cosh\left(\mathrm{arccosh}\left(C^{k_{\max}\left(\frac{1}{1-\gamma}\right)^{n}}\right)\left(1-2\hat{\hat{t}}\right)\right)}\right),\label{eq:Boussinesq kn exact}
\end{equation}
and the identity resulting from the optimal choice of parameters:
\begin{equation}
1-\Lambda-k_{\max}=\frac{2}{3},\quad-1+2\Lambda+3k_{\max}=-\Lambda.\label{eq:Boussinesq identity Lambda kmax}
\end{equation}
Bear in mind that there is a typographical error in the published
version of \cite{Articulo Boussinesq} regarding equation \eqref{eq:Boussinesq identity Lambda kmax}.
The version presented here is correct.

\subsection{Existence of blow-up point}

Although the existence of a blow-up point in the construction given
in \cite{Articulo Boussinesq} is suggested, it is never explicitly
computed or proven. In this next Lemma, we formally solve this issue.
\begin{lem}
\label{lem:blow-up point}The sequence $\left(\phi^{\left(n\right)}_{1}\left(1,0\right)\right)_{n\in\mathbb{N}}$
is convergent. We denote its limit by $\phi^{\left(\infty\right)}_{1}\left(1,0\right)$.
Then,
\[
\left|\phi^{\left(\infty\right)}_{1}\left(1,0\right)-\phi^{\left(n\right)}_{1}\left(1,0\right)\right|\leq8\pi C^{-\left(\frac{1}{1-\gamma}\right)^{n}}.
\]
\end{lem}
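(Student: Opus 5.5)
The plan is to show that the sequence is Cauchy via a telescoping sum, controlling consecutive differences with Proposition \ref{prop:layer center never leaves cutoff area} evaluated at $t=1$. The ingredients are as follows.

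First, take $m=n$ and replace $n$ by $n+1$ in that proposition, evaluating at $t=1\in\left[t_{n+1},1\right]$. This gives
\[
\left|\phi^{\left(n+1\right)}_{1}\left(1,0\right)-\phi^{\left(n\right)}_{1}\left(1,0\right)\right|\le\frac{8\pi}{a_{n}\left(1\right)}.
\]
Second, since $k_{n}\left(1\right)=0$ by \eqref{eq:Boussinesq an(t) bn(t)}, we have $a_{n}\left(1\right)=C^{\left(\frac{1}{1-\gamma}\right)^{n}}$. Hence the consecutive differences are bounded by $8\pi C^{-\left(\frac{1}{1-\gamma}\right)^{n}}$.

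Third, since $\frac{1}{1-\gamma}>2$ and $C$ is large, these bounds are summable. The series $\sum_{n}\left(\phi^{\left(n+1\right)}_{1}\left(1,0\right)-\phi^{\left(n\right)}_{1}\left(1,0\right)\right)$ therefore converges absolutely, so the sequence converges to some limit $\phi^{\left(\infty\right)}_{1}\left(1,0\right)$.

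For the quantitative bound I would avoid summing the tail, since a tail sum would cost a constant factor. Instead, apply Proposition \ref{prop:layer center never leaves cutoff area} directly with $m=n$ and an arbitrary $N\ge n$ at $t=1$:
\[
\left|\phi^{\left(N\right)}_{1}\left(1,0\right)-\phi^{\left(n\right)}_{1}\left(1,0\right)\right|\le\frac{8\pi}{a_{n}\left(1\right)}=8\pi C^{-\left(\frac{1}{1-\gamma}\right)^{n}}\quad\forall N\ge n.
\]
Letting $N\to\infty$ then yields the stated estimate with the sharp constant $8\pi$.

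This uniform-in-$N$ bound also gives the Cauchy property on its own, because $C^{-\left(\frac{1}{1-\gamma}\right)^{n}}\to0$; the telescoping step is only a sanity check. The only points that need verifying are that Proposition \ref{prop:layer center never leaves cutoff area} applies at the endpoint $t=1$ for every pair $m\le N$, which it does since $1\in\left[t_{N},1\right]$, and that $C$ is large enough, $C\ge\Upsilon\left(\delta\right)$, for that proposition to hold. I expect no real obstacle beyond these two checks.
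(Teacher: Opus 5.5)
Your proposal is correct and is essentially the paper's own proof. The paper also applies Proposition \ref{prop:layer center never leaves cutoff area} at $t=1$, uses $k_n(1)=0$ to get the bound $8\pi C^{-(\frac{1}{1-\gamma})^{m}}$ uniformly in $n\ge m$, deduces the Cauchy property, and lets $n\to\infty$; your extra telescoping check is harmless but unnecessary.
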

\begin{proof}
By Proposition \ref{prop:layer center never leaves cutoff area},
$\forall\left(n,m\right)\in\mathbb{N}^{2}$ with $n\ge m$,
\[
\left|\phi^{\left(n\right)}_{1}\left(t,0\right)-\phi^{\left(m\right)}_{1}\left(t,0\right)\right|\leq\frac{8\pi}{a_{m}\left(t\right)}\quad\forall t\in\left[t_{n},1\right].
\]
Evaluating at $t=1$ and using equation \eqref{eq:Boussinesq an(t) bn(t)},
we deduce that
\[
\left|\phi^{\left(n\right)}_{1}\left(1,0\right)-\phi^{\left(m\right)}_{1}\left(1,0\right)\right|\leq8\pi C^{-\left(\frac{1}{1-\gamma}\right)^{m}}.
\]
Thus, $\left(\phi^{\left(n\right)}_{1}\left(1,0\right)\right)_{n\in\mathbb{N}}$
is a Cauchy sequence in $\mathbb{R}$ and, therefore, convergent.
Furthermore, taking limits as $n\to\infty$ and relabeling $m\leftrightarrow n$
, we deduce the bound of the statement.
\end{proof}

\begin{choice}
\label{choice:blow-up origin}From here onwards, we \textbf{will assume}
without loss of generality that the \textbf{singularity} of the Boussinesq
solution constructed in \cite{Articulo Boussinesq} \textbf{occurs
at the origin}.
\end{choice}
\begin{rem*}
Notice that adapting the solution given in \cite{Articulo Boussinesq}
so that it blows up at the origin only requires a translation in the
$x_{1}$-axis. Consequently, all symmetries in the $x_{2}$-axis are
preserved.
\end{rem*}

\subsection{Bounds for the good terms}

In this section, we will prove that $\rho,\frac{\partial u}{\partial t}+u\cdot\nabla u\in C^{0}_{t}C^{1-}_{x}$
and that $\frac{\partial\rho}{\partial x_{1}}\in C^{0}_{t}C^{\varepsilon}_{x}$
for some $\varepsilon>0$. These are the ``good'' terms, because
they retain some subcritical regularity at the blow-up time.
\begin{prop}
\label{prop:bounded density}Let $\left(\rho,\omega,f_{\rho},f_{\omega}\right)$
be the solution of the forced Boussinesq system presented in \cite{Articulo Boussinesq}.
Then, $\forall\alpha\in\left(0,1\right)$, as long as $\delta\le\Upsilon_{1}\left(\alpha\right)$
and $\mu\leq\Upsilon_{2}\left(\alpha\right)$, we have $\rho\in C^{0}_{t}C^{\alpha}_{x}\left(\left[0,1\right]\times\mathbb{R}^{2}\right)$.
\end{prop}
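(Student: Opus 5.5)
The key structural fact is that, by the choice of $h^{(n)}$ in \eqref{eq:Boussinesq Bn(t)}, at every time $t\in\left[0,1\right)$ only one density layer is non-zero. Hence, for $t\in\left[t_{n},t_{n+1}\right]$, $\rho\left(t,\cdot\right)=\rho^{\left(n\right)}\left(t,\cdot\right)$. The plan is therefore to bound $\left|\left|\rho^{\left(n\right)}\left(t,\cdot\right)\right|\right|_{C^{\alpha}}$ uniformly on $\left[t_{n},1\right]$ by a quantity that decays superexponentially in $n$. Continuity in time on $\left[0,1\right)$ is clear from smoothness. Continuity at $t=1$ then follows from $\rho\left(1,\cdot\right)=0$ together with this decay, since $\left|\left|\rho\left(t\right)-\rho\left(1\right)\right|\right|_{C^{\alpha}}=\left|\left|\rho^{\left(n\right)}\left(t\right)\right|\right|_{C^{\alpha}}\to0$ as $t\to1^{-}$.

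\textbf{Step 1 ($L^{\infty}$ bound).} From \eqref{eq:Boussinesq psi rho} and \eqref{eq:Boussinesq Bn(t)},
\[
\left|\left|\rho^{\left(n\right)}\left(t,\cdot\right)\right|\right|_{L^{\infty}}\le\frac{2M_{n}h^{\left(n\right)}\left(t\right)}{\int^{1}_{t_{n}}h^{\left(n\right)}\left(s\right)b_{n}\left(s\right)\mathrm{d}s}.
\]
Using $h^{\left(n\right)}\le1$, \eqref{eq:Boussinesq Mn} and Lemma \ref{lem:estimate of integral hn bn}, this gives
\[
\left|\left|\rho^{\left(n\right)}\left(t,\cdot\right)\right|\right|_{L^{\infty}}\lesssim_{\mu}Y^{2}C^{\left(\delta-1-k_{\max}+\mu+\delta\left(1-\gamma\right)\right)\left(\frac{1}{1-\gamma}\right)^{n}}.
\]

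\textbf{Step 2 (Hölder seminorm).} Since $\rho^{\left(n\right)}\left(t,x\right)=\widetilde{\rho^{\left(n\right)}}^{n}\left(t,\left(\phi^{\left(n\right)}\right)^{-1}\left(t,x\right)\right)$ and $\left(\phi^{\left(n\right)}\right)^{-1}$ is affine with Jacobian $\mathrm{diag}\left(a_{n},b_{n}\right)$ by \eqref{eq:Boussinesq jacobian inverse}, the composition property \eqref{eq:property Calpha composition} gives
\[
\left|\left|\rho^{\left(n\right)}\right|\right|_{\dot{C}^{\alpha}}\lesssim\max\left\{ a_{n},b_{n}\right\} ^{\alpha}\left|\left|\widetilde{\rho^{\left(n\right)}}^{n}\right|\right|_{\dot{C}^{\alpha}}.
\]
The profile $\varphi\left(\lambda_{n}x_{1}\right)\varphi\left(\lambda_{n}x_{2}\right)\sin x_{1}\cos x_{2}$ has $\dot{C}^{\alpha}$ seminorm bounded by a constant depending only on $\varphi$ (because $\lambda_{n}\le1$), obtained by interpolating $L^{\infty}$ and $\dot{C}^{1}$ via Proposition \ref{prop:easy interpolation H=0000F6lder}. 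By Corollary \ref{cor:bound for an(t). bn(t)}, $\max\left\{ a_{n},b_{n}\right\} \le C^{\left(1+k_{\max}+\mu\right)\left(\frac{1}{1-\gamma}\right)^{n}}$, using $\overline{k}_{n}\le k_{\max}$ from \eqref{eq:Boussinesq def kmax}. Combining with Step 1, the exponent of $C^{\left(\frac{1}{1-\gamma}\right)^{n}}$ in the $\dot{C}^{\alpha}$ bound is
\[
-\left(1-\alpha\right)\left(1+k_{\max}\right)+\left(1+\alpha\right)\mu+\delta\left(2-\gamma\right).
\]
This is strictly negative provided $\mu\le\Upsilon_{2}\left(\alpha\right)$ and $\delta\le\Upsilon_{1}\left(\alpha\right)$ are small relative to $\left(1-\alpha\right)\left(1+k_{\max}\right)$. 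That is exactly the dependence in the statement, and it is why $\alpha<1$ is needed.

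\textbf{Step 3 (uniformity and the main obstacle).} The remaining task is to handle the prefactors $Y^{2}$ and the implicit constants depending on $\mu,\varphi$, and to ensure the bound holds for every $n$. The case $n=1$ is a single smooth layer, and the constants are absorbed by taking $C$ large; Lemma \ref{lem:exponential superexponential bound} can be used if polynomial-in-$n$ factors appear. The delicate point is Step 1: the only available lower bound for $\int h^{\left(n\right)}b_{n}$ is Lemma \ref{lem:estimate of integral hn bn}, which carries the $\mu$ loss. One must also check that Corollary \ref{cor:bound for an(t). bn(t)} applies on the whole interval $\left[t_{n},t_{n+1}\right]$ where the layer is active; it is stated for $t\in\left[t_{n},1\right]$, so it does. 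Summing up, $\sup_{t}\left|\left|\rho\left(t\right)\right|\right|_{C^{\alpha}}<\infty$ and $\rho\left(t\right)\to0$ in $C^{\alpha}$ as $t\to1$, which gives $\rho\in C^{0}_{t}C^{\alpha}_{x}\left(\left[0,1\right]\times\mathbb{R}^{2}\right)$.
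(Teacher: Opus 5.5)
Your proposal is correct, and it differs from the paper only in how the per-layer estimate is assembled into the conclusion.

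\textbf{Per-layer estimate (same as the paper).} You use the same ingredients: the explicit formula $\widetilde{\rho^{(n)}}^{n}=-2M_{n}h^{(n)}(t)\bigl(\int_{t_n}^{1}h^{(n)}b_n\bigr)^{-1}\varphi(\lambda_n x_1)\varphi(\lambda_n x_2)\sin x_1\cos x_2$, Lemma \ref{lem:estimate of integral hn bn}, the composition bound with the factor $\max\{a_n,b_n\}^{\alpha}$, and Corollary \ref{cor:bound for an(t). bn(t)} with $\overline{k}_n\le k_{\max}$. Your exponent $-(1-\alpha)(1+k_{\max})+(1+\alpha)\mu+\delta(2-\gamma)$ is a slightly sharper form of the paper's $-(1-\alpha)(1+k_{\max})+2\delta+2\mu$.

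\textbf{Assembly (different route).} The paper never uses the fact that only one density layer is active. It shows that the partial sums $P^{(n)}=\sum_{m\le n}\rho^{(m)}$ are Cauchy in $C^{0}_{t}C^{\alpha}_{x}([0,1]\times\mathbb{R}^{2})$ by summing the tail of the superexponentially decaying bounds, and obtains the limit by completeness. You instead exploit the disjoint time supports of the $h^{(n)}$. This gives $\rho(t)=\rho^{(n)}(t)$ on $[t_n,t_{n+1}]$ and $\rho(1,\cdot)=0$. Boundedness then reduces to $\sup_n\sup_t\|\rho^{(n)}(t)\|_{C^{\alpha}}<\infty$. Continuity at $t=1$ reduces to $\sup_t\|\rho^{(n)}(t)\|_{C^{\alpha}}\to0$, together with $t_n\to1$.

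\textbf{What each route buys.} Your route needs only decay of the layer norms, not summability. The paper's route is a generic template that it reuses for $\partial_{x_1}\rho$. It also reuses it for the material derivative in Proposition \ref{prop:bound material derivative u}, where many velocity layers are active at once and your single-layer shortcut is not available.

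\textbf{Minor remarks.} Your Step 3 ``obstacle'' is not one: you never need to absorb $Y^{2}$ or the $\mu,\varphi$-dependent constants by enlarging $C$, since a fixed constant times a sequence tending to zero suffices. Also, your $L^{\infty}$ bound should read $\lesssim_{\varphi}$ rather than $\le$.
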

\begin{proof}
Employing equations \eqref{eq:Boussinesq psi rho} and \eqref{eq:Boussinesq Bn(t)},
we deduce that
\[
\widetilde{\rho^{\left(n\right)}}^{n}\left(t,x\right)=-2M_{n}\frac{h^{\left(n\right)}\left(t\right)}{\int^{1}_{t_{n}}h^{\left(n\right)}\left(s\right)b_{n}\left(s\right)\mathrm{d}s}\varphi\left(\lambda_{n}x_{1}\right)\varphi\left(\lambda_{n}x_{2}\right)\sin\left(x_{1}\right)\cos\left(x_{2}\right).
\]
By Lemma \ref{lem:estimate of integral hn bn}, equations \eqref{eq:Boussinesq lambda_n}, \eqref{eq:property Calpha multiplication}, and \eqref{eq:property Calpha composition}
and the fact that $h^{\left(n\right)}\left(t\right)\leq1$, we can
bound
\[
\begin{aligned}\left|\left|\widetilde{\rho^{\left(n\right)}}^{n}\left(t,\cdot\right)\right|\right|_{L^{\infty}\left(\mathbb{R}^{2}\right)} & \lesssim_{\varphi,\mu}\frac{2M_{n}}{\frac{1}{Y}C^{\left(1+k_{\max}-\mu-\delta\left(1-\gamma\right)\right)\left(\frac{1}{1-\gamma}\right)^{n}}},\\
\left|\left|\widetilde{\rho^{\left(n\right)}}^{n}\left(t,\cdot\right)\right|\right|_{\dot{C}^{\alpha}\left(\mathbb{R}^{2}\right)} & \lesssim_{\varphi,\mu}\frac{2M_{n}}{\frac{1}{Y}C^{\left(1+k_{\max}-\mu-\delta\left(1-\gamma\right)\right)\left(\frac{1}{1-\gamma}\right)^{n}}}.
\end{aligned}
\]
Employing equation \eqref{eq:Boussinesq Mn} and bounding $1-\gamma\leq1$,
we arrive at
\[
\begin{aligned}\left|\left|\widetilde{\rho^{\left(n\right)}}^{n}\left(t,\cdot\right)\right|\right|_{L^{\infty}\left(\mathbb{R}^{2}\right)} & \lesssim_{\varphi,\mu}Y^{2}\frac{C^{\delta\left(\frac{1}{1-\gamma}\right)^{n}}}{C^{\left(1+k_{\max}-\mu-\delta\left(1-\gamma\right)\right)\left(\frac{1}{1-\gamma}\right)^{n}}}\leq Y^{2}C^{\left(-1-k_{\max}+2\delta+\mu\right)\left(\frac{1}{1-\gamma}\right)^{n}},\\
\left|\left|\widetilde{\rho^{\left(n\right)}}^{n}\left(t,\cdot\right)\right|\right|_{\dot{C}^{\alpha}\left(\mathbb{R}^{2}\right)} & \lesssim_{\varphi,\mu}Y^{2}\frac{C^{\delta\left(\frac{1}{1-\gamma}\right)^{n}}}{C^{\left(1+k_{\max}-\mu-\delta\left(1-\gamma\right)\right)\left(\frac{1}{1-\gamma}\right)^{n}}}\leq Y^{2}C^{\left(-1-k_{\max}+2\delta+\mu\right)\left(\frac{1}{1-\gamma}\right)^{n}}.
\end{aligned}
\]
On the one hand, since $\left|\left|\cdot\right|\right|_{L^{\infty}\left(\mathbb{R}^{2}\right)}$
is invariant under diffeomorphisms of the domain, we obtain that
\[
\left|\left|\rho^{\left(n\right)}\left(t,\cdot\right)\right|\right|_{L^{\infty}\left(\mathbb{R}^{2}\right)}\lesssim_{\varphi,\mu}Y^{2}C^{\left(-1-k_{\max}+2\delta+\mu\right)\left(\frac{1}{1-\gamma}\right)^{n}}.
\]
On the other hand, employing equations \eqref{eq:property Calpha composition} and \eqref{eq:Boussinesq jacobian inverse}
and Corollary \ref{cor:bound for an(t). bn(t)}, we deduce that
\[
\begin{aligned}\left|\left|\rho^{\left(n\right)}\left(t,\cdot\right)\right|\right|_{\dot{C}^{\alpha}\left(\mathbb{R}^{2}\right)} & \lesssim\left|\left|\left(\phi^{\left(n\right)}\right)^{-1}\left(t,\cdot\right)\right|\right|^{\alpha}_{\dot{C}^{1}\left(\mathbb{R}^{2};\mathbb{R}^{2}\right)}\left|\left|\widetilde{\rho^{\left(n\right)}}^{n}\left(t,\cdot\right)\right|\right|_{\dot{C}^{\alpha}\left(\mathbb{R}^{2}\right)}\lesssim_{\varphi,\mu}\\
 & \lesssim_{\varphi,\mu}Y^{2}C^{\left(-1-k_{\max}+2\delta+\mu+\alpha\left(1+\overline{k}_{n}\left(t\right)+\mu\right)\right)\left(\frac{1}{1-\gamma}\right)^{n}}.
\end{aligned}
\]
Bounding $\overline{k}_{n}\left(t\right)\le k_{\max}$ (see equation
\eqref{eq:Boussinesq def kmax}) and $\alpha\le1$ where it is multiplied
by $\mu$, we arrive at
\[
\left|\left|\rho^{\left(n\right)}\left(t,\cdot\right)\right|\right|_{\dot{C}^{\alpha}\left(\mathbb{R}^{2}\right)}\lesssim_{\varphi,\mu}Y^{2}C^{\left(-\left(1-\alpha\right)\left(1+k_{\max}\right)+2\delta+2\mu\right)\left(\frac{1}{1-\gamma}\right)^{n}}.
\]
Thus, clearly,
\[
\left|\left|\rho^{\left(n\right)}\left(t,\cdot\right)\right|\right|_{C^{\alpha}\left(\mathbb{R}^{2}\right)}\lesssim_{\varphi,\mu}Y^{2}C^{\left(-\left(1-\alpha\right)\left(1+k_{\max}\right)+2\delta+2\mu\right)\left(\frac{1}{1-\gamma}\right)^{n}}.
\]

Since $C^{0}_{t}C^{\alpha}_{x}\left(\left[0,1\right]\times\mathbb{R}^{2}\right)$
is complete and $\rho=\lim_{n\to\infty}\sum^{n}_{m=1}\rho^{\left(m\right)}$,
to prove the result, it suffices to see that $P^{\left(n\right)}=\sum^{n}_{m=1}\rho^{\left(m\right)}$
is a Cauchy sequence in $C^{0}_{t}C^{\alpha}_{x}\left(\left[0,1\right]\times\mathbb{R}^{2}\right)$.
First of all, notice that, $\forall n\in\mathbb{N}$, $P^{\left(n\right)}$
is nothing but a finite sum of compositions of smooth functions in
space and time. Consequently, $P^{\left(n\right)}\in C^{0}_{t}C^{\alpha}_{x}\left(\left[0,1\right]\times\mathbb{R}^{2}\right)$
$\forall n\in\mathbb{N}$. Moreover, by the telescopic nature of $P^{\left(n\right)}$,
$\forall l,m\in\mathbb{N}$ with $l\ge m$, we have
\[
\begin{aligned}\left|\left|P^{\left(l\right)}-P^{\left(m\right)}\right|\right|_{C^{0}_{t}C^{\alpha}_{x}\left(\left[0,1\right]\times\mathbb{R}^{2}\right)} & =\left|\left|\sum^{l}_{n=m+1}\rho^{\left(n\right)}\right|\right|_{C^{0}_{t}C^{\alpha}_{x}\left(\left[0,1\right]\times\mathbb{R}^{2}\right)}\leq\sum^{\infty}_{n=m+1}\sup_{t\in\left[0,1\right]}\left|\left|\rho^{\left(n\right)}\left(t,\cdot\right)\right|\right|_{C^{\alpha}\left(\mathbb{R}^{2}\right)}\lesssim_{\varphi,\mu}\\
 & \lesssim_{\varphi,\mu}\sum^{\infty}_{n=m+1}Y^{2}C^{\left(-\left(1-\alpha\right)\left(1+k_{\max}\right)+2\delta+2\mu\right)\left(\frac{1}{1-\gamma}\right)^{n}}.
\end{aligned}
\]
Clearly, as long as $\delta$ and $\mu$ are small enough in relation
to $1-\alpha$, the series above is convergent. In this way, in the
bound above, we have the tail of a convergent series. Consequently,
we must have $\left|\left|P^{\left(l\right)}-P^{\left(m\right)}\right|\right|_{C^{0}_{t}C^{\alpha}_{x}\left(\left[0,1\right]\times\mathbb{R}^{2}\right)}\xrightarrow[m\to\infty]{}0$
and, therefore, $\left(P^{\left(n\right)}\right)_{n\in\mathbb{N}}$
is a Cauchy sequence in $C^{0}_{t}C^{\alpha}_{x}\left(\left[0,1\right]\times\mathbb{R}^{2}\right)$.
\end{proof}

\begin{prop}
\label{prop:evenness rho}Let $\left(\rho,\omega,f_{\rho},f_{\omega}\right)$
be the solution of the forced Boussinesq system presented in \cite{Articulo Boussinesq}.
$\rho\left(t,x\right)$ is even in $x_{2}$.
\end{prop}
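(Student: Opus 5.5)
The evenness of $\rho$ in $x_{2}$ should follow layer by layer from the explicit formula \eqref{eq:Boussinesq psi rho}, once we check that the change of variables $\phi^{\left(n\right)}$ commutes with the reflection $R\left(x_{1},x_{2}\right)=\left(x_{1},-x_{2}\right)$. Since $\rho=\sum_{n}\rho^{\left(n\right)}$, and for each fixed $t<1$ this sum is in fact finite (only one summand is non-zero), it suffices to show that each $\rho^{\left(n\right)}\left(t,\cdot\right)$ is even in $x_{2}$. At $t=1$ the series converges in $C^{0}_{t}C^{\alpha}_{x}$ by Proposition \ref{prop:bounded density}, so evenness passes to the limit pointwise. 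The translation in Choice \ref{choice:blow-up origin} only acts in $x_{1}$, so it does not affect this symmetry (see the Remark after it).

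The first step is to write $\rho^{\left(n\right)}\left(t,y\right)=\widetilde{\rho^{\left(n\right)}}^{n}\left(t,\left(\phi^{\left(n\right)}\right)^{-1}\left(t,y\right)\right)$. By \eqref{eq:Boussinesq phi} and the dynamics \eqref{eq:Boussinesq dynamics}, $\phi^{\left(n\right)}_{2}\left(t,0\right)\equiv0$, so
\[
\left(\phi^{\left(n\right)}\right)^{-1}\left(t,y\right)=\left(a_{n}\left(t\right)\left(y_{1}-\phi^{\left(n\right)}_{1}\left(t,0\right)\right),\,b_{n}\left(t\right)y_{2}\right).
\]
Its second component is $b_{n}\left(t\right)y_{2}$, which is odd in $y_{2}$, while the first component does not depend on $y_{2}$. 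Hence $\left(\phi^{\left(n\right)}\right)^{-1}\left(t,Ry\right)=R\left(\phi^{\left(n\right)}\right)^{-1}\left(t,y\right)$.

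The second step is to check that $\widetilde{\rho^{\left(n\right)}}^{n}\left(t,x\right)$ is even in $x_{2}$. It equals a time-dependent scalar factor times $\varphi\left(\lambda_{n}x_{1}\right)\varphi\left(\lambda_{n}x_{2}\right)\sin\left(x_{1}\right)\cos\left(x_{2}\right)$. Here $\varphi$ is even by hypothesis and $\cos$ is even, so $x_{2}\mapsto\varphi\left(\lambda_{n}x_{2}\right)\cos\left(x_{2}\right)$ is even. Composing with the reflection-equivariant map from the first step then gives $\rho^{\left(n\right)}\left(t,Ry\right)=\rho^{\left(n\right)}\left(t,y\right)$.

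There is no real obstacle. The only point needing care is confirming that $\phi^{\left(n\right)}_{2}\left(t,0\right)=0$ for all $t$, which is stated explicitly among the dynamics and initial conditions, and that the limit $t\to1$ preserves the symmetry, which is immediate from pointwise convergence.
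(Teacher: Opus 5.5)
Your proof is correct and follows the same route as the paper. You check evenness of each $\widetilde{\rho^{(n)}}^{n}$ in $x_2$, use that $\phi^{(n)}$ acts linearly in $x_2$ with $\phi^{(n)}_2(t,0)\equiv 0$ so it commutes with the reflection, and then pass to the sum and the limit via Proposition~\ref{prop:bounded density}; your explicit reflection-equivariance check and the remark that the sum is finite for $t<1$ only spell out details the paper leaves implicit.
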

\begin{proof}
Taking a look at equation \eqref{eq:Boussinesq psi rho}, we see that
$\widetilde{\rho^{\left(n\right)}}^{n}\left(t,x\right)$ is even in
$x_{2}$. Since the change of variables $\phi^{\left(n\right)}_{2}\left(t,x\right)$
is linear in $x_{2}$ (see equation \eqref{eq:Boussinesq phi}), $\rho\left(t,x\right)$
must also be even in $x_{2}$. As the sum of even functions is even
and the limit of even functions is even, we conclude that $\rho\left(t,x\right)$
is even in $x_{2}$. (Notice that the limit exists thanks to Proposition
\ref{prop:bounded density}).
\end{proof}

\begin{prop}
\label{prop:bounded drhodx1}Let $\left(\rho,\omega,f_{\rho},f_{\omega}\right)$
be the solution of the forced Boussinesq system presented in \cite{Articulo Boussinesq}.
Then, $\forall\alpha\in\left(0,\frac{\alpha_{*}}{1+\alpha_{*}}\right)$,
as long as $\delta\le\Upsilon_{1}\left(\alpha\right)$ and $\mu\leq\Upsilon_{2}\left(\alpha\right)$,
we have $\frac{\partial\rho}{\partial x_{1}}\in C^{0}_{t}C^{\alpha}_{x}\left(\left[0,1\right]\times\mathbb{R}^{2}\right)$.
\end{prop}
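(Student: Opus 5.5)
We follow the scheme of the proof of Proposition \ref{prop:bounded density}: bound each layer $\frac{\partial\rho^{\left(n\right)}}{\partial x_{1}}$ in $C^{\alpha}$ uniformly in time, show the resulting series of bounds converges, and conclude via the Cauchy argument in $C^{0}_{t}C^{\alpha}_{x}$.

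\textbf{Step 1: the $n$-th layer in tilded variables.} Since $\rho^{\left(n\right)}\left(t,x\right)=\widetilde{\rho^{\left(n\right)}}^{n}\left(t,\left(\phi^{\left(n\right)}\right)^{-1}\left(t,x\right)\right)$ and the inverse change of variables has Jacobian $\mathrm{diag}\left(a_{n},b_{n}\right)$ by \eqref{eq:Boussinesq jacobian inverse}, we get
\[
\frac{\partial\rho^{\left(n\right)}}{\partial x_{1}}=a_{n}\left(t\right)\left(\frac{\partial\widetilde{\rho^{\left(n\right)}}^{n}}{\partial x_{1}}\right)\circ\left(\phi^{\left(n\right)}\right)^{-1}.
\]
From the explicit formula for $\widetilde{\rho^{\left(n\right)}}^{n}$ (the expression involving $M_{n}$, $h^{\left(n\right)}$ and $\int h^{\left(n\right)}b_{n}$ from the previous proof), the $x_{1}$-derivative only produces bounded smooth factors, since $\lambda_{n}\le1$. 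Lemma \ref{lem:estimate of integral hn bn} and \eqref{eq:Boussinesq Mn} then bound both the $L^{\infty}$ norm and the $\dot{C}^{\alpha}$ seminorm of $\partial_{x_{1}}\widetilde{\rho^{\left(n\right)}}^{n}$ by
\[
Y^{2}C^{\left(-1-k_{\max}+2\delta+\mu\right)\left(\frac{1}{1-\gamma}\right)^{n}}.
\]

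\textbf{Step 2: undo the change of variables.} We multiply by $a_{n}\left(t\right)\le C^{\left(1-\overline{k}_{n}\left(t\right)+\mu\right)\left(\frac{1}{1-\gamma}\right)^{n}}$ and, for the seminorm, apply \eqref{eq:property Calpha composition} to pick up $\max\left\{ a_{n},b_{n}\right\} ^{\alpha}\le C^{\alpha\left(1+\overline{k}_{n}\left(t\right)+\mu\right)\left(\frac{1}{1-\gamma}\right)^{n}}$ (Corollary \ref{cor:bound for an(t). bn(t)}). The exponent of $C^{\left(\frac{1}{1-\gamma}\right)^{n}}$ in the $\dot{C}^{\alpha}$ bound is then
\[
-k_{\max}-\overline{k}_{n}\left(t\right)+\alpha\left(1+\overline{k}_{n}\left(t\right)\right)+O\left(\delta+\mu\right)=-k_{\max}+\alpha-\left(1-\alpha\right)\overline{k}_{n}\left(t\right)+O\left(\delta+\mu\right).
\]
This is the main point: we use $\overline{k}_{n}\ge0$ and $1-\alpha>0$ to drop the $\overline{k}_{n}$ term, giving worst case $\alpha-k_{\max}+O\left(\delta+\mu\right)$. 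The $L^{\infty}$ bound has the better exponent $-k_{\max}-\overline{k}_{n}\left(t\right)+O\left(\delta+\mu\right)$.

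The worst case $\overline{k}_{n}=0$ occurs at the endpoints $t=t_{n}$ and $t=1$. Nonnegativity of $\overline{k}_{n}$ follows from the exact formula \eqref{eq:Boussinesq kn exact}, since $\cosh\left(\mathrm{arccosh}\left(X\right)\left(1-2\hat{\hat{t}}\right)\right)\le X$. Hence negativity of the exponent requires only $\alpha<k_{\max}=\alpha_{*}$, and the stated range $\alpha<\frac{\alpha_{*}}{1+\alpha_{*}}$ is stronger than that, so Step 2 leaves slack.

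\textbf{Step 3: the remaining checks, and the expected obstacle.} The point I expect to need care is whether the prefactor $h^{\left(n\right)}/\int h^{\left(n\right)}b_{n}$ genuinely saves the full $b_{n}$-size, i.e.\ whether Lemma \ref{lem:estimate of integral hn bn} gives the factor $C^{-\left(1+k_{\max}\right)\left(\frac{1}{1-\gamma}\right)^{n}}$ with the maximal $k_{\max}$ rather than the instantaneous $\overline{k}_{n}\left(t\right)$. If some extra loss appears in the support region of $h^{\left(n\right)}$ (where $t\in\left[t_{n},t_{n+1}\right]$ and $\overline{k}_{n}$ can be small), the restriction $\alpha<\frac{\alpha_{*}}{1+\alpha_{*}}$ is the margin I would use to absorb it. Once the exponent is negative, choosing $\delta\le\Upsilon_{1}\left(\alpha\right)$ and $\mu\le\Upsilon_{2}\left(\alpha\right)$ small makes the series $\sum_{n}\sup_{t}\left|\left|\partial_{x_{1}}\rho^{\left(n\right)}\left(t,\cdot\right)\right|\right|_{C^{\alpha}}$ converge. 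Finally, each partial sum is smooth in space and time, and only one layer is active at a time, so the same Cauchy argument as in Proposition \ref{prop:bounded density} gives $\frac{\partial\rho}{\partial x_{1}}\in C^{0}_{t}C^{\alpha}_{x}\left(\left[0,1\right]\times\mathbb{R}^{2}\right)$.
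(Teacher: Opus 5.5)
Your proof is correct. It has the same skeleton as the paper's proof: the explicit $x_{1}$-derivative of the $n$-th layer in moving coordinates, the chain-rule factor $a_{n}(t)$, the composition estimate \eqref{eq:property Calpha composition}, Lemma \ref{lem:estimate of integral hn bn}, and then a Cauchy argument. Your exponent bookkeeping, however, is sharper.

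The paper bounds the two time-dependent factors independently:
$a_{n}(t)\le C^{(1+\mu)(\frac{1}{1-\gamma})^{n}}$, whose worst case is $\overline{k}_{n}=0$, and $\max\{a_{n},b_{n}\}^{\alpha}\le C^{\alpha(1+k_{\max}+\mu)(\frac{1}{1-\gamma})^{n}}$, whose worst case is $\overline{k}_{n}=k_{\max}$. Combining two incompatible worst cases gives the exponent $-k_{\max}+\alpha(1+k_{\max})+2\delta+3\mu$, and this is where the restriction $\alpha<\frac{\alpha_{*}}{1+\alpha_{*}}$ comes from.

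You keep the same $\overline{k}_{n}(t)$ in both factors. Equivalently, you exploit $a_{n}b_{n}=C^{2(\frac{1}{1-\gamma})^{n}}$ from \eqref{eq:Boussinesq an(t) bn(t)}, so that $a_{n}\max\{a_{n},b_{n}\}^{\alpha}\le C^{2(\frac{1}{1-\gamma})^{n}}b_{n}^{\alpha-1}C^{2\mu(\frac{1}{1-\gamma})^{n}}$ is largest when $b_{n}$ is smallest. This yields the exponent $-k_{\max}+\alpha-(1-\alpha)\overline{k}_{n}(t)+O(\delta+\mu)$. Together with $\overline{k}_{n}\ge0$, which you read off correctly from \eqref{eq:Boussinesq kn exact}, you get convergence for every $\alpha<\alpha_{*}$, a strictly larger range than the one stated. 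The paper's cruder estimate is simply enough for its later use, where only $C^{0}_{t}C^{0}_{x}$ and some small Hölder exponent are needed.

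Your worry in Step 3 is unfounded. The quantity $\int_{t_{n}}^{1}h^{(n)}(s)b_{n}(s)\,\mathrm{d}s$ is a time-independent normalising constant, and Lemma \ref{lem:estimate of integral hn bn} bounds it from below with $k_{\max}$ once and for all. Time enters only through $h^{(n)}(t)\le1$, $a_{n}(t)$ and $b_{n}(t)$, all of which you already handled, so no margin is consumed there.

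Finally, the paper also checks that the limit of $\sum_{m\le n}\partial_{x_{1}}\rho^{(m)}$ is indeed $\partial_{x_{1}}\rho$, using the fundamental theorem of calculus and dominated convergence. Your remark that at most one density layer is nonzero at each time, and that all of them vanish at $t=1$, makes this identification immediate. You should state it explicitly rather than leave it inside ``the same Cauchy argument''.
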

\begin{proof}
Employing equations \eqref{eq:Boussinesq psi rho} and \eqref{eq:Boussinesq Bn(t)},
we deduce that
\[
\widetilde{\rho^{\left(n\right)}}^{n}\left(t,x\right)=-2M_{n}\frac{h^{\left(n\right)}\left(t\right)}{\int^{1}_{t_{n}}h^{\left(n\right)}\left(s\right)b_{n}\left(s\right)\mathrm{d}s}\varphi\left(\lambda_{n}x_{1}\right)\varphi\left(\lambda_{n}x_{2}\right)\sin\left(x_{1}\right)\cos\left(x_{2}\right).
\]
Differentiating with respect to $x_{1}$, we get
\begin{equation}
\frac{\partial\widetilde{\rho^{\left(n\right)}}^{n}}{\partial x_{1}}\left(t,x\right)=-2M_{n}\frac{h^{\left(n\right)}\left(t\right)}{\int^{1}_{t_{n}}h^{\left(n\right)}\left(s\right)b_{n}\left(s\right)\mathrm{d}s}\varphi\left(\lambda_{n}x_{2}\right)\cos\left(x_{2}\right)\left[\varphi\left(\lambda_{n}x_{1}\right)\cos\left(x_{1}\right)+\lambda_{n}\varphi'\left(\lambda_{n}x_{1}\right)\sin\left(x_{1}\right)\right].\label{eq:drhondx1 moving}
\end{equation}
Besides, the chain rule along with equation \eqref{eq:Boussinesq jacobian inverse}
shows that
\[
\frac{\partial\rho^{\left(n\right)}}{\partial x_{1}}\left(t,x\right)=\frac{\partial}{\partial x_{1}}\left(\widetilde{\rho^{\left(n\right)}}^{n}\left(t,\left(\phi^{\left(n\right)}\right)^{-1}\left(t,x\right)\right)\right)=a_{n}\left(t\right)\frac{\partial\widetilde{\rho^{\left(n\right)}}^{n}}{\partial x_{1}}\left(t,\left(\phi^{\left(n\right)}\right)^{-1}\left(t,x\right)\right).
\]
Employing the fact that $\left|\left|\cdot\right|\right|_{L^{\infty}\left(\mathbb{R}^{2}\right)}$
remains invariant under diffeomorphisms of the domain and equation
\eqref{eq:property Calpha composition}, we deduce that
\[
\begin{aligned}\left|\left|\frac{\partial\rho^{\left(n\right)}}{\partial x_{1}}\left(t,\cdot\right)\right|\right|_{L^{\infty}\left(\mathbb{R}^{2}\right)} & =a_{n}\left(t\right)\left|\left|\frac{\partial\widetilde{\rho^{\left(n\right)}}^{n}}{\partial x_{1}}\left(t,\cdot\right)\right|\right|_{L^{\infty}\left(\mathbb{R}^{2}\right)},\\
\left|\left|\frac{\partial\rho^{\left(n\right)}}{\partial x_{1}}\left(t,\cdot\right)\right|\right|_{\dot{C}^{\alpha}\left(\mathbb{R}^{2}\right)} & \lesssim a_{n}\left(t\right)\left|\left|\left(\phi^{\left(n\right)}\right)^{-1}\left(t,\cdot\right)\right|\right|^{\alpha}_{\dot{C}^{1}\left(\mathbb{R}^{2};\mathbb{R}^{2}\right)}\left|\left|\frac{\partial\widetilde{\rho^{\left(n\right)}}^{n}}{\partial x_{1}}\left(t,\cdot\right)\right|\right|_{\dot{C}^{\alpha}\left(\mathbb{R}^{2}\right)}.
\end{aligned}
\]
Looking at equations \eqref{eq:drhondx1 moving}, \eqref{eq:property Calpha multiplication}, \eqref{eq:property Calpha composition}, \eqref{eq:Boussinesq lambda_n}, \eqref{eq:Boussinesq jacobian inverse}, and \eqref{eq:Boussinesq Mn},
along with Lemma \ref{lem:estimate of integral hn bn}, Corollary
\ref{cor:bound for an(t). bn(t)} and the fact that $h^{\left(n\right)}\left(t\right)\leq1$,
we can bound
\[
\begin{aligned}\left|\left|\frac{\partial\rho^{\left(n\right)}}{\partial x_{1}}\left(t,\cdot\right)\right|\right|_{L^{\infty}\left(\mathbb{R}^{2}\right)} & \lesssim_{\varphi,\mu,Y}C^{\left(1+\mu\right)\left(\frac{1}{1-\gamma}\right)^{n}}\frac{C^{\delta\left(\frac{1}{1-\gamma}\right)^{n}}}{C^{\left(1+k_{\max}-\mu-\delta\left(1-\gamma\right)\right)\left(\frac{1}{1-\gamma}\right)^{n}}},\\
\left|\left|\frac{\partial\rho^{\left(n\right)}}{\partial x_{1}}\left(t,\cdot\right)\right|\right|_{\dot{C}^{\alpha}\left(\mathbb{R}^{2}\right)} & \lesssim_{\varphi,\mu,Y}C^{\left(1+\mu\right)\left(\frac{1}{1-\gamma}\right)^{n}}C^{\alpha\left(1+\overline{k}_{n}\left(t\right)+\mu\right)\left(\frac{1}{1-\gamma}\right)^{n}}\frac{C^{\delta\left(\frac{1}{1-\gamma}\right)^{n}}}{C^{\left(1+k_{\max}-\mu-\delta\left(1-\gamma\right)\right)\left(\frac{1}{1-\gamma}\right)^{n}}}.
\end{aligned}
\]
Bounding $\overline{k}_{n}\left(t\right)\le k_{\max}$ (see \eqref{eq:Boussinesq def kmax}),
$1-\gamma\leq1$ and $\alpha\leq1$ where it is multiplied by $\mu$
leads us to
\begin{equation}
\begin{aligned}\left|\left|\frac{\partial\rho^{\left(n\right)}}{\partial x_{1}}\left(t,\cdot\right)\right|\right|_{L^{\infty}\left(\mathbb{R}^{2}\right)} & \lesssim_{\varphi,\mu,Y}C^{\left(-k_{\max}+2\delta+2\mu\right)\left(\frac{1}{1-\gamma}\right)^{n}},\\
\left|\left|\frac{\partial\rho^{\left(n\right)}}{\partial x_{1}}\left(t,\cdot\right)\right|\right|_{\dot{C}^{\alpha}\left(\mathbb{R}^{2}\right)} & \lesssim_{\varphi,\mu,Y}C^{\left(-k_{\max}+\alpha\left(1+k_{\max}\right)+2\delta+3\mu\right)\left(\frac{1}{1-\gamma}\right)^{n}}.
\end{aligned}
\label{eq:bound drhodx1 n}
\end{equation}
Thus, clearly,
\[
\left|\left|\frac{\partial\rho^{\left(n\right)}}{\partial x_{1}}\left(t,\cdot\right)\right|\right|_{C^{\alpha}\left(\mathbb{R}^{2}\right)}\lesssim_{\varphi,\mu,Y}C^{\left(-k_{\max}+\alpha\left(1+k_{\max}\right)+2\delta+3\mu\right)\left(\frac{1}{1-\gamma}\right)^{n}}.
\]

Next in line is to prove that $S^{\left(n\right)}\coloneqq\sum^{n}_{m=1}\frac{\partial\rho^{\left(m\right)}}{\partial x_{1}}$
has a well-defined limit in $C^{0}_{t}C^{\alpha}_{x}\left(\left[0,1\right]\times\mathbb{R}^{2}\right)$
as $n\to\infty$. Since $C^{0}_{t}C^{\alpha}_{x}\left(\left[0,1\right]\times\mathbb{R}^{2}\right)$
is complete, it suffices to see that $S^{\left(n\right)}$ is a Cauchy
sequence in $C^{0}_{t}C^{\alpha}_{x}\left(\left[0,1\right]\times\mathbb{R}^{2}\right)$.
First of all, notice that, $\forall n\in\mathbb{N}$, $S^{\left(n\right)}$
is nothing but a finite sum of compositions of smooth functions in
space and time. Consequently, $S^{\left(n\right)}\in C^{0}_{t}C^{\alpha}_{x}\left(\left[0,1\right]\times\mathbb{R}^{2}\right)$
$\forall n\in\mathbb{N}$. Moreover, by the telescopic nature of $S^{\left(n\right)}$,
$\forall l,m\in\mathbb{N}$ with $l\ge m$, we have
\begin{equation}
\begin{aligned}\left|\left|S^{\left(l\right)}-S^{\left(m\right)}\right|\right|_{C^{0}_{t}C^{\alpha}_{x}\left(\left[0,1\right]\times\mathbb{R}^{2}\right)} & =\left|\left|\sum^{l}_{n=m+1}\frac{\partial\rho^{\left(n\right)}}{\partial x_{1}}\right|\right|_{C^{0}_{t}C^{\alpha}_{x}\left(\left[0,1\right]\times\mathbb{R}^{2}\right)}\leq\sum^{\infty}_{n=m+1}\sup_{t\in\left[0,1\right]}\left|\left|\frac{\partial\rho^{\left(n\right)}}{\partial x_{1}}\left(t,\cdot\right)\right|\right|_{C^{\alpha}\left(\mathbb{R}^{2}\right)}\leq\\
 & \lesssim_{\varphi,\mu,Y}\sum^{\infty}_{n=m+1}C^{\left(-k_{\max}+\alpha\left(1+k_{\max}\right)+2\delta+3\mu\right)\left(\frac{1}{1-\gamma}\right)^{n}}.
\end{aligned}
\label{eq:cauchy S}
\end{equation}
Clearly, as long as $\alpha<\frac{k_{\max}}{1+k_{\max}}$ and $\delta$
and $\mu$ are small enough in relation to $k_{\max}-\alpha\left(1+k_{\max}\right)$,
the series above is convergent. In this way, in the bound above, we
have the tail of a convergent series. Consequently, $\left|\left|S^{\left(l\right)}-S^{\left(m\right)}\right|\right|_{C^{0}_{t}C^{\alpha}_{x}\left(\left[0,1\right]\times\mathbb{R}^{2}\right)}\xrightarrow[m\to\infty]{}0$
and, therefore, $\left(S^{\left(n\right)}\right)_{n\in\mathbb{N}}$
is a Cauchy sequence in $C^{0}_{t}C^{\alpha}_{x}\left(\left[0,1\right]\times\mathbb{R}^{2}\right)$.
Since by point \ref{enu:The-time-scales} of subsection \ref{subsec:summary Boussinesq},
we have $k_{\max}=\alpha_{*}$, the requirement for $\alpha$ is $\alpha<\frac{\alpha_{*}}{1+\alpha_{*}}$.

Finally, we need to prove that $\frac{\partial\rho}{\partial x_{1}}=\lim_{n\to\infty}S^{\left(n\right)}$.
We will do this by a standard calculus argument. Let $x_{1},y_{1},z\in\mathbb{R}$.
Then, the Fundamental Theorem of Calculus guarantees that
\[
\left(\sum^{n}_{m=1}\rho^{\left(m\right)}\right)\left(t,\left(y_{1},z\right)\right)-\left(\sum^{n}_{m=1}\rho^{\left(m\right)}\right)\left(t,\left(x_{1},z\right)\right)=\int^{y_{1}}_{x_{1}}\left(\sum^{n}_{m=1}\frac{\partial\rho^{\left(m\right)}}{\partial x_{1}}\right)\left(t,\left(\xi,z\right)\right)\mathrm{d}\xi=\int^{y_{1}}_{x_{1}}S^{\left(n\right)}\left(t,\left(\xi,z\right)\right)\mathrm{d}\xi.
\]
Since $S^{\left(n\right)}$ can be uniformly bounded in space, time
and $n\in\mathbb{N}$ via equation \eqref{eq:cauchy S} and the domain
of integration is compact (as $x$ and $y$ are fixed), the Dominated
Convergence Theorem ensures us that
\[
\rho\left(t,\left(y_{1},z\right)\right)-\rho\left(t,\left(x_{1},z\right)\right)=\int^{y_{1}}_{x_{1}}\lim_{n\to\infty}S^{\left(n\right)}\left(t,\left(\xi,z\right)\right)\mathrm{d}\xi.
\]
As this argument is valid $\forall t\in\left[0,1\right]$ and $\forall x_{1},y_{1},z\in\mathbb{R}$,
we conclude that $\lim_{n\to\infty}S^{\left(n\right)}=\frac{\partial\rho}{\partial x_{1}}$.
\end{proof}

\begin{prop}
\label{prop:bounds gradu}Let $\left(\rho,\omega,f_{\rho},f_{\omega}\right)$
be the solution of the forced Boussinesq system presented in \cite{Articulo Boussinesq}.
We have
\[
\widetilde{\nabla}^{n}\widetilde{u^{\left(n\right)}}^{n}\left(t,x\right)=\widetilde{\theta^{\left(n\right)}}^{n}\left(t,x\right)+\lambda_{n}\widetilde{\Theta^{\left(n\right)}}^{n}\left(t,x\right),
\]
where
\[
\widetilde{\theta^{\left(n\right)}}^{n}\left(t,x\right)=B_{n}\left(t\right)\varphi\left(\lambda_{n}x_{1}\right)\varphi\left(\lambda_{n}x_{2}\right)\left(\begin{array}{cc}
a_{n}\left(t\right)b_{n}\left(t\right)\cos\left(x_{1}\right)\cos\left(x_{2}\right) & a_{n}\left(t\right)^{2}\sin\left(x_{1}\right)\sin\left(x_{2}\right)\\
-b_{n}\left(t\right)^{2}\sin\left(x_{1}\right)\sin\left(x_{2}\right) & -a_{n}\left(t\right)b_{n}\left(t\right)\cos\left(x_{1}\right)\cos\left(x_{2}\right)
\end{array}\right).
\]
Furthermore,
\[
\begin{aligned}\left|\left|\theta^{\left(n\right)}\left(t,\cdot\right)\right|\right|_{L^{\infty}\left(\mathbb{R}^{2};\mathbb{R}^{2\times2}\right)} & \lesssim_{\varphi}YC^{\delta\left(\frac{1}{1-\gamma}\right)^{n}},\\
\left|\left|\theta^{\left(n\right)}\left(t,\cdot\right)\right|\right|_{\dot{C}^{\alpha}\left(\mathbb{R}^{2};\mathbb{R}^{2\times2}\right)} & \lesssim_{\varphi}Yb_{n}\left(t\right)^{\alpha}C^{\left(\delta+2\mu\right)\left(\frac{1}{1-\gamma}\right)^{n}}\leq YC^{\left(\alpha\left(1+k_{\max}\right)+\delta+3\mu\right)\left(\frac{1}{1-\gamma}\right)^{n}},\\
\left|\left|\Theta^{\left(n\right)}\left(t,\cdot\right)\right|\right|_{L^{\infty}\left(\mathbb{R}^{2};\mathbb{R}^{2\times2}\right)} & \lesssim_{\varphi}YC^{\delta\left(\frac{1}{1-\gamma}\right)^{n}},\\
\left|\left|\Theta^{\left(n\right)}\left(t,\cdot\right)\right|\right|_{\dot{C}^{\alpha}\left(\mathbb{R}^{2};\mathbb{R}^{2\times2}\right)} & \lesssim_{\varphi}Yb_{n}\left(t\right)^{\alpha}C^{\left(\delta+2\mu\right)\left(\frac{1}{1-\gamma}\right)^{n}}\leq YC^{\left(\alpha\left(1+k_{\max}\right)+\delta+3\mu\right)\left(\frac{1}{1-\gamma}\right)^{n}}.
\end{aligned}
\]
\end{prop}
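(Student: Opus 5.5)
The plan is to differentiate the explicit velocity of Proposition \ref{prop:computations vorticity} in the moving frame and then transport the bounds back to the physical frame.

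\emph{Step 1: the algebraic decomposition.} Apply $\widetilde{\nabla}^{n}=\left(a_{n}\partial_{x_{1}},b_{n}\partial_{x_{2}}\right)$ from \eqref{eq:Boussinesq def nabla_tilde} to $\widetilde{u^{\left(n\right)}}^{n}$. The decomposition of Proposition \ref{prop:form of the velocity} writes $\widetilde{u^{\left(n\right)}}^{n}=\widetilde{V^{\left(n\right)}}^{n}+\lambda_{n}\widetilde{W^{\left(n\right)}}^{n}$.
\begin{itemize}
\item When the derivatives fall on the trigonometric factors of $\widetilde{V^{\left(n\right)}}^{n}$, they produce exactly the matrix $\widetilde{\theta^{\left(n\right)}}^{n}$ of the statement. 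For instance, $a_{n}\partial_{x_{1}}\left(b_{n}\sin x_{1}\cos x_{2}\right)=a_{n}b_{n}\cos x_{1}\cos x_{2}$, and the other three entries follow in the same way.
\item Every remaining term carries a factor $\lambda_{n}$. Such a factor arises either when a derivative falls on $\varphi\left(\lambda_{n}\cdot\right)$, or from the $W$-part. These terms are collected into $\lambda_{n}\widetilde{\Theta^{\left(n\right)}}^{n}$.
\item The entries of $\widetilde{\Theta^{\left(n\right)}}^{n}$ are sums of terms of the form
\[
B_{n}\times\left\{ a_{n}b_{n},a_{n}^{2},b_{n}^{2}\right\} \times\left(\text{products of }\varphi,\varphi',\varphi''\text{ at }\lambda_{n}x_{j}\right)\times\left(\text{trigonometric factors}\right),
\]
where each use of $\varphi''$ comes with an extra $\lambda_{n}\le1$.
\end{itemize}

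\emph{Step 2: $L^{\infty}$ bounds.} The $L^{\infty}$ norm is invariant under the change of variables $\phi^{\left(n\right)}$, so it suffices to bound the coefficients. These coefficients are $B_{n}a_{n}b_{n}$, $B_{n}a_{n}^{2}$ and $B_{n}b_{n}^{2}$, and each is at most $B_{n}\left(a_{n}^{2}+b_{n}^{2}\right)$. By \eqref{eq:Boussinesq Bn(t)},
\[
B_{n}\left(a_{n}^{2}+b_{n}^{2}\right)\le2M_{n},
\]
because the ratio of integrals is at most $1$. By \eqref{eq:Boussinesq Mn}, $2M_{n}=2YC^{\delta\left(\frac{1}{1-\gamma}\right)^{n}}$. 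This gives the $L^{\infty}$ bounds for $\theta^{\left(n\right)}$ and $\Theta^{\left(n\right)}$.

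\emph{Step 3: $\dot{C}^{\alpha}$ bounds.} In the moving frame, every factor $\varphi^{\left(k\right)}\left(\lambda_{n}\cdot\right)$ and every trigonometric factor is uniformly Lipschitz and bounded, because $\lambda_{n}\le1$. Hence, by \eqref{eq:property Calpha multiplication}, the $\dot{C}^{\alpha}$ seminorm in the moving frame is $\lesssim_{\varphi}YC^{\delta\left(\frac{1}{1-\gamma}\right)^{n}}$.

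To return to the physical frame, use \eqref{eq:property Calpha composition} together with \eqref{eq:Boussinesq jacobian inverse}. This multiplies the seminorm by $\left\Vert \left(\phi^{\left(n\right)}\right)^{-1}\right\Vert _{\dot{C}^{1}}^{\alpha}\lesssim\max\left\{ a_{n},b_{n}\right\} ^{\alpha}$. Corollary \ref{cor:bound for an(t). bn(t)} bounds this by $b_{n}^{\alpha}C^{2\mu\alpha\left(\frac{1}{1-\gamma}\right)^{n}}\le b_{n}^{\alpha}C^{2\mu\left(\frac{1}{1-\gamma}\right)^{n}}$, which gives the first inequality.

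For the second inequality, use $b_{n}\le C^{\left(1+\overline{k}_{n}+\mu\right)\left(\frac{1}{1-\gamma}\right)^{n}}$ from the same corollary, together with $\overline{k}_{n}\le k_{\max}$ from \eqref{eq:Boussinesq def kmax} and $\alpha\mu\le\mu$. This yields
\[
b_{n}^{\alpha}\le C^{\left(\alpha\left(1+k_{\max}\right)+\mu\right)\left(\frac{1}{1-\gamma}\right)^{n}}.
\]

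\emph{Main obstacle.} There is no real analytic difficulty; the statement is computational. The delicate point is the bookkeeping in Step 1. One must check that every term placed in $\Theta^{\left(n\right)}$ really carries an overall factor $\lambda_{n}$, and that its amplitude is controlled by $B_{n}\max\left\{ a_{n},b_{n}\right\} ^{2}\le2M_{n}$. In particular, terms such as $B_{n}b_{n}^{2}$ multiplied by $\varphi'$ have to be bounded through $B_{n}\left(a_{n}^{2}+b_{n}^{2}\right)$, not through $B_{n}a_{n}b_{n}$.
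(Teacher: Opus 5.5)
Your proposal is correct and follows essentially the same route as the paper: you differentiate the explicit velocity of Proposition~\ref{prop:computations vorticity} in the moving frame, bound every coefficient by $B_{n}(a_{n}^{2}+b_{n}^{2})\le 2M_{n}=2YC^{\delta(\frac{1}{1-\gamma})^{n}}$ (the paper phrases this via $B_{n}\max\{a_{n},b_{n}\}^{2}$), and transfer the H\"older seminorm to the physical frame with \eqref{eq:property Calpha composition}, picking up the factor $\max\{a_{n},b_{n}\}^{\alpha}\le b_{n}^{\alpha}C^{2\mu(\frac{1}{1-\gamma})^{n}}$ from Corollary~\ref{cor:bound for an(t). bn(t)}. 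Your term-by-term bookkeeping for $\Theta^{(n)}$, including the extra $\lambda_{n}$ accompanying each $\varphi''$, also matches the paper's treatment.
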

\begin{proof}
By differentiating in the expression given for the velocity in Proposition
\ref{prop:computations vorticity}, taking into account the definition
of $\tilde{\nabla}^{n}$ given in equation \eqref{eq:Boussinesq def nabla_tilde},
we obtain the first part of the statement. Clearly, by equations \eqref{eq:property Calpha multiplication}, \eqref{eq:property Calpha composition}, and \eqref{eq:Boussinesq lambda_n},
we have
\[
\begin{aligned}\left|\left|\widetilde{\theta^{\left(n\right)}}^{n}\left(t,\cdot\right)\right|\right|_{L^{\infty}\left(\mathbb{R}^{2};\mathbb{R}^{2\times2}\right)} & \lesssim_{\varphi}B_{n}\left(t\right)\max\left\{ a_{n}\left(t\right),b_{n}\left(t\right)\right\} ^{2}\leq B_{n}\left(t\right)\left(a_{n}\left(t\right)^{2}+b_{n}\left(t\right)^{2}\right),\\
\left|\left|\widetilde{\theta^{\left(n\right)}}^{n}\left(t,\cdot\right)\right|\right|_{\dot{C}^{\alpha}\left(\mathbb{R}^{2};\mathbb{R}^{2\times2}\right)} & \lesssim_{\varphi}B_{n}\left(t\right)\left(1+\lambda^{\alpha}_{n}\right)\max\left\{ a_{n}\left(t\right),b_{n}\left(t\right)\right\} ^{2}\lesssim B_{n}\left(t\right)\left(a_{n}\left(t\right)^{2}+b_{n}\left(t\right)^{2}\right).
\end{aligned}
\]
Taking into account equations \eqref{eq:Boussinesq Bn(t)} and \eqref{eq:Boussinesq Mn},
we deduce that
\[
\left|\left|\widetilde{\theta^{\left(n\right)}}^{n}\left(t,\cdot\right)\right|\right|_{L^{\infty}\left(\mathbb{R}^{2};\mathbb{R}^{2\times2}\right)}\lesssim_{\varphi}YC^{\delta\left(\frac{1}{1-\gamma}\right)^{n}},\quad\left|\left|\widetilde{\theta^{\left(n\right)}}^{n}\left(t,\cdot\right)\right|\right|_{\dot{C}^{\alpha}\left(\mathbb{R}^{2};\mathbb{R}^{2\times2}\right)}\lesssim_{\varphi}YC^{\delta\left(\frac{1}{1-\gamma}\right)^{n}}.
\]
Since $\left|\left|\cdot\right|\right|_{L^{\infty}\left(\mathbb{R}^{2}\right)}$
is invariant under diffeomorphisms of the domain, we conclude the
result given in the statement for $\left|\left|\theta^{\left(n\right)}\left(t,\cdot\right)\right|\right|_{L^{\infty}\left(\mathbb{R}^{2}\right)}$.
Concerning $\left|\left|\theta^{\left(n\right)}\left(t,\cdot\right)\right|\right|_{\dot{C}^{\alpha}\left(\mathbb{R}^{2}\right)}$,
by equations \eqref{eq:property Calpha composition} and \eqref{eq:Boussinesq jacobian inverse},
it must be
\[
\begin{aligned}\left|\left|\theta^{\left(n\right)}\left(t,\cdot\right)\right|\right|_{\dot{C}^{\alpha}\left(\mathbb{R}^{2};\mathbb{R}^{2\times2}\right)} & =\left|\left|\widetilde{\theta^{\left(n\right)}}^{n}\left(t,\left(\phi^{\left(n\right)}\right)^{-1}\left(t,\cdot\right)\right)\right|\right|_{\dot{C}^{\alpha}\left(\mathbb{R}^{2};\mathbb{R}^{2\times2}\right)}\lesssim\\
 & \lesssim\left|\left|\left(\phi^{\left(n\right)}\right)^{-1}\left(t,\cdot\right)\right|\right|^{\alpha}_{\dot{C}^{1}\left(\mathbb{R}^{2};\mathbb{R}^{2}\right)}\left|\left|\widetilde{\theta^{\left(n\right)}}^{n}\left(t,\cdot\right)\right|\right|_{\dot{C}^{\alpha}\left(\mathbb{R}^{2};\mathbb{R}^{2\times2}\right)}\leq\\
 & \lesssim\max\left\{ a_{n}\left(t\right),b_{n}\left(t\right)\right\} ^{\alpha}YC^{\delta\left(\frac{1}{1-\gamma}\right)^{n}}.
\end{aligned}
\]
Finally, Corollary \ref{cor:bound for an(t). bn(t)} and equation
\eqref{eq:Boussinesq def kmax}, along with the bound $\alpha\leq1$
applied where $\alpha$ is multiplied by $\mu$, ensure that
\[
\begin{aligned}\left|\left|\theta^{\left(n\right)}\left(t,\cdot\right)\right|\right|_{\dot{C}^{\alpha}\left(\mathbb{R}^{2};\mathbb{R}^{2\times2}\right)} & \lesssim_{\varphi}Yb_{n}\left(t\right)^{\alpha}C^{\left(\delta+2\alpha\mu\right)\left(\frac{1}{1-\gamma}\right)^{n}}\leq Yb_{n}\left(t\right)^{\alpha}C^{\left(\delta+2\mu\right)\left(\frac{1}{1-\gamma}\right)^{n}}\leq\\
 & \lesssim_{\varphi}YC^{\left(\alpha\left(1+k_{\max}\right)+\delta+3\mu\right)\left(\frac{1}{1-\gamma}\right)^{n}}.
\end{aligned}
\]

Now, let us focus on the bounds of $\Theta^{\left(n\right)}\left(t,\cdot\right)$.
Looking at Proposition \ref{prop:computations vorticity}, we see
that any component of the matrix $\widetilde{\Theta^{\left(n\right)}}^{n}\left(t,x\right)$
is given by a sum of terms of the form
\[
\begin{aligned} & B_{n}\left(t\right)\overbrace{\left(\begin{array}{c}
\text{non-negative}\\
\text{power of }\lambda_{n}
\end{array}\right)}^{\leq1}\cdot\overbrace{\left(\begin{array}{c}
a_{n}\left(t\right)\\
\text{or}\\
b_{n}\left(t\right)
\end{array}\right)\cdot\left(\begin{array}{c}
a_{n}\left(t\right)\\
\text{or}\\
b_{n}\left(t\right)
\end{array}\right)}^{\leq\max\left\{ a_{n}\left(t\right)^{2},b_{n}\left(t\right)^{2}\right\} }\cdot\\
\cdot & \left(\begin{array}{c}
\text{derivative}\\
\text{of }\varphi\text{ of}\\
\text{order }\le2
\end{array}\right)\left(\lambda_{n}x_{1}\right)\left(\begin{array}{c}
\text{derivative}\\
\text{of }\varphi\text{ of}\\
\text{order }\le2
\end{array}\right)\left(\lambda_{n}x_{2}\right)\left(\begin{array}{c}
\sin\\
\text{or}\\
\cos
\end{array}\right)\left(x_{1}\right)\left(\begin{array}{c}
\sin\\
\text{or}\\
\cos
\end{array}\right)\left(x_{2}\right).
\end{aligned}
\]
Thereby, the procedure we have followed to obtain bounds for $\theta^{\left(n\right)}$
is also valid for $\Theta^{\left(n\right)}$, which provides the result.
\end{proof}

\begin{lem}
\label{lem:temporal derivatives velocity}Let $\left(\rho,\omega,f_{\rho},f_{\omega}\right)$
be the solution of the forced Boussinesq system presented in \cite{Articulo Boussinesq}.
Let $n\in\mathbb{N}$ with $n\ge2$. Then, $\forall t\in\left[t_{n},1\right]$,
we have
\[
\begin{aligned}\left|\frac{\mathrm{d}}{\mathrm{d}t}\left(B_{n}\left(t\right)a_{n}\left(t\right)\right)\right| & \lesssim_{\delta,\mu}\frac{Y^{2}C^{\left(2\mu+2\delta\right)\left(\frac{1}{1-\gamma}\right)^{n}}}{b_{n}\left(t\right)},\\
\left|\frac{\mathrm{d}}{\mathrm{d}t}\left(B_{n}\left(t\right)b_{n}\left(t\right)\right)\right| & \lesssim_{\delta,\mu}\frac{Y^{2}C^{\left(2\mu+2\delta\right)\left(\frac{1}{1-\gamma}\right)^{n}}}{b_{n}\left(t\right)}.
\end{aligned}
\]
\end{lem}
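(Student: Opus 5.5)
The plan is to expand the derivatives with the product rule, using $a_n(t)b_n(t)=a_n(1)b_n(1)$, which is constant by \eqref{eq:Boussinesq dynamics}. Write
\[
\frac{\mathrm{d}}{\mathrm{d}t}\left(B_{n}a_{n}\right)=\frac{\mathrm{d}B_{n}}{\mathrm{d}t}a_{n}+B_{n}\frac{\mathrm{d}a_{n}}{\mathrm{d}t},\qquad\frac{\mathrm{d}}{\mathrm{d}t}\left(B_{n}b_{n}\right)=\frac{\mathrm{d}B_{n}}{\mathrm{d}t}b_{n}+B_{n}\frac{\mathrm{d}b_{n}}{\mathrm{d}t}.
\]
Since $a_nb_n$ is constant, $\frac{\mathrm{d}a_{n}}{\mathrm{d}t}=-a_{n}\frac{\mathrm{d}}{\mathrm{d}t}\ln b_{n}$ and $\frac{\mathrm{d}b_{n}}{\mathrm{d}t}=b_{n}\frac{\mathrm{d}}{\mathrm{d}t}\ln b_{n}$. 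Both derivatives therefore reduce to bounding three quantities: $\left|\frac{\mathrm{d}B_{n}}{\mathrm{d}t}\right|\max\{a_n,b_n\}$, $B_n\max\{a_n,b_n\}$, and $\left|\frac{\mathrm{d}}{\mathrm{d}t}\ln b_n\right|$.

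For the first term I use \eqref{eq:Boussinesq dBndt} together with \eqref{eq:Boussinesq Mn}:
\[
\left|\frac{\mathrm{d}B_{n}}{\mathrm{d}t}\right|\lesssim\frac{M_n}{a_n^2+b_n^2}YC^{(2\mu+\delta(1-\gamma))(\frac{1}{1-\gamma})^n}.
\]
Multiplying by $\max\{a_n,b_n\}$ and using $\frac{\max\{a_n,b_n\}}{a_n^2+b_n^2}\le\frac{1}{\max\{a_n,b_n\}}\le\frac{1}{b_n}$ gives a bound of $Y^2C^{(\delta+2\mu+\delta(1-\gamma))(\frac{1}{1-\gamma})^n}/b_n$. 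This is at most $Y^2C^{(2\mu+2\delta)(\frac{1}{1-\gamma})^n}/b_n$, since $1-\gamma\le1$.

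For the second term, \eqref{eq:Boussinesq Bn(t)} gives $B_n\le\frac{2M_n}{a_n^2+b_n^2}$, so $B_n\max\{a_n,b_n\}\lesssim M_n/b_n=YC^{\delta(\frac{1}{1-\gamma})^n}/b_n$. It then remains to show that $\left|\frac{\mathrm{d}}{\mathrm{d}t}\ln b_n\right|\lesssim_{\delta}YC^{\delta(\frac{1}{1-\gamma})^n}$, or at most that quantity times $C^{2\mu(\cdot)}$. By \eqref{eq:Boussinesq dynamics},
\[
\left|\frac{\mathrm{d}}{\mathrm{d}t}\ln b_n\right|\le\sum_{m=1}^{n-1}B_m(t)a_m(t)b_m(t).
\]
For $t\ge t_n\ge t_{m+1}$, Proposition \ref{prop:time convergence} gives $B_ma_mb_m\le2B_m(1)a_m(1)b_m(1)=2M_m=2YC^{\delta(\frac{1}{1-\gamma})^m}$; here I use that $a_mb_m$ is constant and that $B_m(t)\le2B_m(1)$ for $C$ large. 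Lemma \ref{lem:estimate sum superexponential} then bounds the sum by $\lesssim_\delta YC^{\delta(\frac{1}{1-\gamma})^{n-1}}\le YC^{\delta(\frac{1}{1-\gamma})^n}$. The product is therefore $\lesssim_\delta Y^2C^{2\delta(\frac{1}{1-\gamma})^n}/b_n$.

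Adding the two contributions gives both claimed bounds. The bound for $B_na_n$ is even better, since $a_n\le\max\{a_n,b_n\}$. I expect the only delicate point to be the handling of $\frac{\mathrm{d}B_n}{\mathrm{d}t}$ near $t_n$, where $B_n$ is small. That step is already covered by the recorded bound \eqref{eq:Boussinesq dBndt}, so the remaining concern is checking that the exponents combine into $2\mu+2\delta$, which the computation above confirms.
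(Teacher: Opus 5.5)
Your proof is correct and follows essentially the same route as the paper: Leibniz' rule, the recorded bound \eqref{eq:Boussinesq dBndt} on $\frac{\mathrm{d}B_n}{\mathrm{d}t}$, and the bound on $\frac{\mathrm{d}}{\mathrm{d}t}\ln b_n$ obtained from the layer dynamics together with Proposition~\ref{prop:time convergence} and Lemma~\ref{lem:estimate sum superexponential}. As in the paper, the conservation of $a_nb_n$ is what transfers that estimate to $a_n$. The only cosmetic difference is that you absorb the geometric factor through $\frac{\max\{a_n,b_n\}}{a_n^2+b_n^2}\le\frac{1}{b_n}$, whereas the paper writes it as $\frac{a_nb_n}{a_n^2+b_n^2}\lesssim1$ and $\frac{b_n^2}{a_n^2+b_n^2}\le1$.
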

\begin{proof}
On the one hand, equation \eqref{eq:Boussinesq dBndt} ensures us
that
\begin{equation}
\begin{aligned}\left|\frac{\mathrm{d}B_{n}}{\mathrm{d}t}\left(t\right)\right| & \lesssim_{\delta,\mu}\frac{2M_{n}}{a_{n}\left(t\right)^{2}+b_{n}\left(t\right)^{2}}YC^{\left(2\mu+\delta\left(1-\gamma\right)\right)\left(\frac{1}{1-\gamma}\right)^{n}}.\end{aligned}
\label{eq:bound dBndt}
\end{equation}
On the other hand, equation \eqref{eq:Boussinesq dynamics} provides,
$\forall t\in\left[t_{n},1\right]$,
\[
\left|\frac{\mathrm{d}b_{n}}{\mathrm{d}t}\left(t\right)\right|\leq b_{n}\left(t\right)\left|\frac{\mathrm{d}}{\mathrm{d}t}\left(\ln\left(b_{n}\left(t\right)\right)\right)\right|\lesssim b_{n}\left(t\right)\sum^{n-1}_{m=1}B_{m}\left(t\right)a_{m}\left(t\right)b_{m}\left(t\right)=b_{n}\left(t\right)\sum^{n-1}_{m=1}B_{m}\left(t\right)a_{m}\left(1\right)b_{m}\left(1\right).
\]
Proposition \ref{prop:time convergence}, Lemma \ref{lem:estimate sum superexponential}
and equation \eqref{eq:Boussinesq Mn} lead us to
\begin{equation}
\left|\frac{\mathrm{d}b_{n}}{\mathrm{d}t}\left(t\right)\right|\lesssim b_{n}\left(t\right)\sum^{n-1}_{m=1}M_{m}=b_{n}\left(t\right)\sum^{n-1}_{m=1}YC^{\delta\left(\frac{1}{1-\gamma}\right)^{m}}\lesssim_{\delta}b_{n}\left(t\right)YC^{\delta\left(\frac{1}{1-\gamma}\right)^{n-1}}.\label{eq:bound dbndt}
\end{equation}
Furthermore, since $\frac{\mathrm{d}}{\mathrm{d}t}\left(a_{n}\left(t\right)b_{n}\left(t\right)\right)=0$
by equation \eqref{eq:Boussinesq dynamics}, we must have
\begin{equation}
\left|\frac{\mathrm{d}a_{n}}{\mathrm{d}t}\left(t\right)\right|=\frac{a_{n}\left(t\right)}{b_{n}\left(t\right)}\left|\frac{\mathrm{d}b_{n}}{\mathrm{d}t}\left(t\right)\right|\lesssim_{\delta}a_{n}\left(t\right)YC^{\delta\left(\frac{1}{1-\gamma}\right)^{n-1}}.\label{eq:bound dandt}
\end{equation}

Then, Leibniz' rule, combined with equations \eqref{eq:bound dBndt}, \eqref{eq:bound dbndt}, \eqref{eq:bound dandt}, and \eqref{eq:Boussinesq Bn(t)}
and the bounds $1-\gamma\leq1$ and $h^{\left(n\right)}\left(t\right)\leq1$,
provides
\[
\begin{aligned}\left|\frac{\mathrm{d}}{\mathrm{d}t}\left(B_{n}\left(t\right)a_{n}\left(t\right)\right)\right| & \lesssim_{\delta,\mu}\frac{2M_{n}}{a_{n}\left(t\right)^{2}+b_{n}\left(t\right)^{2}}YC^{\left(2\mu+\delta\left(1-\gamma\right)\right)\left(\frac{1}{1-\gamma}\right)^{n}}a_{n}\left(t\right)+\frac{2M_{n}}{a_{n}\left(t\right)^{2}+b_{n}\left(t\right)^{2}}a_{n}\left(t\right)YC^{\delta\left(\frac{1}{1-\gamma}\right)^{n-1}}\lesssim\\
 & \lesssim_{\delta,\mu}2M_{n}YC^{\left(2\mu+\delta\right)\left(\frac{1}{1-\gamma}\right)^{n-1}}\underbrace{\frac{a_{n}\left(t\right)b_{n}\left(t\right)}{a_{n}\left(t\right)^{2}+b_{n}\left(t\right)^{2}}}_{\lesssim1}\frac{1}{b_{n}\left(t\right)},\\
\left|\frac{\mathrm{d}}{\mathrm{d}t}\left(B_{n}\left(t\right)b_{n}\left(t\right)\right)\right| & \lesssim_{\delta,\mu}\frac{2M_{n}}{a_{n}\left(t\right)^{2}+b_{n}\left(t\right)^{2}}YC^{\left(2\mu+\delta\left(1-\gamma\right)\right)\left(\frac{1}{1-\gamma}\right)^{n}}b_{n}\left(t\right)+\frac{2M_{n}}{a_{n}\left(t\right)^{2}+b_{n}\left(t\right)^{2}}b_{n}\left(t\right)YC^{\delta\left(\frac{1}{1-\gamma}\right)^{n-1}}\lesssim\\
 & \lesssim_{\delta,\mu}2M_{n}YC^{\left(2\mu+\delta\right)\left(\frac{1}{1-\gamma}\right)^{n-1}}\underbrace{\frac{b_{n}\left(t\right)^{2}}{a_{n}\left(t\right)^{2}+b_{n}\left(t\right)^{2}}}_{\leq1}\frac{1}{b_{n}\left(t\right)}.
\end{aligned}
\]
Using equation \eqref{eq:Boussinesq Mn} to get rid of $M_{n}$ leads
to the results of the statement.
\end{proof}

\begin{prop}
\label{prop:bound material derivative u}Let $\left(\rho,\omega,f_{\rho},f_{\omega}\right)$
be the solution of the forced Boussinesq system presented in \cite{Articulo Boussinesq}.
Then, $\forall\alpha\in\left(0,1\right)$, as long as $\delta\le\Upsilon_{1}\left(\alpha\right)$,
$\mu\leq\Upsilon_{2}\left(\alpha\right)$ and $1-\gamma\leq\Upsilon_{3}$,
$\left(\frac{\partial u}{\partial t}+u\cdot\nabla u\right)\left(t,\cdot\right)$
admits a (unique) extension to time $t=1$ such that $\frac{\partial u}{\partial t}+u\cdot\nabla u\in C^{0}_{t}C^{\alpha}_{x}\left(\left[0,1\right]\times\mathbb{R}^{2}\right)$.
\end{prop}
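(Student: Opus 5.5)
The plan is to avoid bounding $\frac{\partial u}{\partial t}$ and $u\cdot\nabla u$ separately, since each is large. Instead we use the Boussinesq momentum equation to rewrite the material derivative. We cannot do this directly: in the forced Boussinesq system the velocity equation is $\frac{\partial u}{\partial t}+u\cdot\nabla u=-\nabla P+\rho\,\hat{e}_{\bullet}+f_u^{B}$, and the pressure is not controlled. We therefore work layer by layer in the moving frames. We write $u=\sum_n u^{(n)}$ and split the material derivative as
\[
\frac{\partial u}{\partial t}+u\cdot\nabla u=\sum_{n}\left(\frac{\partial u^{(n)}}{\partial t}+U^{(n-1)}\cdot\nabla u^{(n)}\right)+\sum_n u^{(n)}\cdot\nabla u^{(n)}+\sum_{n}\sum_{m>n}u^{(m)}\cdot\nabla u^{(n)} ,
\]
where $U^{(n-1)}=\sum_{m<n}u^{(m)}$. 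For fixed $t<1$ all sums are finite. For each piece we aim at a bound of the form $\|\cdot\|_{C^{\alpha}}\lesssim C^{(-\epsilon_0+O(\delta+\mu+1-\gamma))(\frac{1}{1-\gamma})^n}$, with $\epsilon_0>0$ depending on $\alpha<1$. Summing such bounds gives a Cauchy sequence in $C^0_tC^\alpha_x$, exactly as in the proofs of Propositions \ref{prop:bounded density} and \ref{prop:bounded drhodx1}. That yields the unique continuous extension to $t=1$.

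\emph{First piece.} We pass to the variables of layer $n$. By the chain rule and \eqref{eq:Boussinesq transport is Taylor of velocity},
\[
\frac{\partial u^{(n)}}{\partial t}+U^{(n-1)}\cdot\nabla u^{(n)}
\]
becomes $\partial_t\widetilde{u^{(n)}}^{n}$, with the $t$-derivative falling only on $B_na_n$ and $B_nb_n$, plus $\widetilde{W^{(n)}}^n\cdot\tilde\nabla^n\widetilde{u^{(n)}}^n$. The Jacobian factors cancel because the frame moves with the linear part of $U^{(n-1)}$. The coefficient derivatives are handled by Lemma \ref{lem:temporal derivatives velocity}: they give size $Y^2C^{(2\mu+2\delta)(\frac{1}{1-\gamma})^n}/b_n(t)$. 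After composition with $(\phi^{(n)})^{-1}$ via \eqref{eq:property Calpha composition}, the $C^\alpha$ norm gains at most a factor $b_n^{\alpha}C^{2\mu(\cdot)}$, so we get $b_n^{\alpha-1}$ times harmless factors. Since $b_n\ge C^{(1-\mu)(\frac{1}{1-\gamma})^n}$ by Corollary \ref{cor:bound for an(t). bn(t)}, this decays for $\alpha<1$. For the Taylor remainder we combine Proposition \ref{prop:bounds for Taylor development of transport}, which gives decay of order $C^{-\frac{4}{3}(\cdot)}$ by \eqref{eq:Boussinesq identity Lambda kmax}, with Proposition \ref{prop:bounds gradu} for $\tilde\nabla^n\widetilde{u^{(n)}}^n$, of size $YC^{\delta(\cdot)}$. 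The product rule \eqref{eq:property Calpha multiplication} then gives decay as long as $\alpha(1+k_{\max})<\frac{4}{3}$ up to small errors, which holds for $\alpha<1$.

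\emph{Self-interaction piece.} For $u^{(n)}\cdot\nabla u^{(n)}$ we use the cellular structure: $V^{(n)}\cdot\nabla V^{(n)}$ is, in the moving frame, a gradient plus zero. Since $V^{(n)}$ is built from $\sin\sin$, the relevant quantity is $\sin x_1\cos x_1$-type terms with coefficient $B_n^2a_nb_n\cdot(a_n\text{ or }b_n)$. We check whether this is small. Here $B_n^2a_nb_n^2\sim M_n^2/(a_nb_n)\cdot b_n/ a_n$, and since $a_nb_n=a_n(1)b_n(1)$ is huge this is likely small, but the check is needed.

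\emph{Cross terms and main obstacle.} The cross terms $u^{(m)}\cdot\nabla u^{(n)}$ with $m>n$ are supported where layer $m$ lives. There $u^{(m)}$ is tiny in $L^\infty$: $B_mb_m\lesssim M_m/a_m$ with $a_m\ge C^{(1-k_{\max}-\mu)(\cdot)}$. Meanwhile $\nabla u^{(n)}$ is bounded by $YC^{\delta(\cdot)}$ (Proposition \ref{prop:bounds gradu}). The product in $C^\alpha$ therefore costs at most $\max\{a_m,b_m\}^\alpha$, which requires $\alpha(1+k_{\max})<1-k_{\max}$. This fails for $\alpha$ near $1$. The main obstacle is therefore the cross terms, together with the self-interaction terms, for $\alpha$ close to $1$. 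To handle them we will interpolate instead, via Proposition \ref{prop:easy interpolation H=0000F6lder}: a small $L^\infty$ norm combined with a Lipschitz norm that grows only like $b_m^{1-\epsilon}$ times the $L^\infty$ norm. If decay still fails, we will regroup $u^{(m)}\cdot\nabla U^{(m-1)}$ into the layer-$m$ frame using \eqref{eq:Boussinesq transport is Taylor of velocity} again, as in the first piece. The constraints $1-\gamma\le\Upsilon_3$ and $\delta,\mu$ small are then used to absorb all the error exponents.
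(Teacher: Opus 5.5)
Your decomposition is the paper's in disguise. Since $\sum_{n<m}u^{(m)}\cdot\nabla u^{(n)}=u^{(m)}\cdot\nabla U^{(m-1)}$, your three pieces regroup into exactly the four terms of the telescoping difference $\mathcal{U}^{(m)}-\mathcal{U}^{(m-1)}$ that the paper estimates, where $\mathcal{U}^{(m)}=\partial_tU^{(m)}+U^{(m)}\cdot\nabla U^{(m)}$. Your treatment of the first piece is also the paper's: moving frame, \eqref{eq:Boussinesq transport is Taylor of velocity}, Lemma~\ref{lem:temporal derivatives velocity}, and Propositions~\ref{prop:bounds for Taylor development of transport} and~\ref{prop:bounds gradu}.

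The gap is that you never close the cross and self-interaction estimates, and you even assert that they fail for $\alpha$ near $1$. That obstruction is a bookkeeping artifact. You bound $\|u^{(m)}\|_{L^\infty}$ by $M_m/a_m$ with $a_m\ge C^{(1-k_{\max}-\mu)(\frac{1}{1-\gamma})^m}$. Separately, you charge $\max\{a_m,b_m\}^{\alpha}\le C^{\alpha(1+k_{\max}+\mu)(\frac{1}{1-\gamma})^m}$ for the H\"older seminorm. Both worst cases occur at the same instant, namely $k_m\approx k_{\max}$, where $b_m\gg a_m$. So your $L^\infty$ bound is too large by the factor $b_m/a_m$.

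The missing observation is that \eqref{eq:Boussinesq Bn(t)} gives $B_m(t)\le 2M_m/(a_m(t)^2+b_m(t)^2)$ at every time. Proposition~\ref{prop:form of the velocity} then yields
\[
\|u^{(m)}(t,\cdot)\|_{L^{\infty}}\lesssim\frac{M_m}{\max\{a_m,b_m\}}\le\frac{M_m}{b_m},\qquad\|u^{(m)}(t,\cdot)\|_{\dot{C}^{\alpha}}\lesssim M_m\max\{a_m,b_m\}^{\alpha-1}\le M_m\,b_m^{\alpha-1},
\]
with $b_m\ge C^{(1-\mu)(\frac{1}{1-\gamma})^m}$ by Corollary~\ref{cor:bound for an(t). bn(t)}. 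This is the same $b_m^{\alpha-1}$ gain you already used for $\partial_t\widetilde{u^{(n)}}^{n}$, and it decays for every $\alpha<1$.

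With this bound the product rule closes directly, and neither interpolation nor regrouping is needed.
\begin{itemize}
\item For $u^{(m)}\cdot\nabla U^{(m-1)}$, the term $\|u^{(m)}\|_{\dot C^\alpha}\|\nabla U^{(m-1)}\|_{L^\infty}$ is $\lesssim Y^2b_m^{\alpha-1}C^{2\delta(\frac{1}{1-\gamma})^m}$, by Proposition~\ref{prop:bounds gradu} and Lemma~\ref{lem:estimate sum superexponential}.
\item The other term, $\|u^{(m)}\|_{L^\infty}\|\nabla U^{(m-1)}\|_{\dot C^\alpha}$, is $\lesssim\frac{M_m}{b_m}YC^{(\alpha(1+k_{\max})+\delta+3\mu)(1-\gamma)(\frac{1}{1-\gamma})^{m}}$, because $(\frac{1}{1-\gamma})^{m-1}=(1-\gamma)(\frac{1}{1-\gamma})^{m}$. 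It decays once $1-\gamma$ is small, uniformly in $\alpha\le1$. This is where the condition $1-\gamma\le\Upsilon_3$ genuinely enters.
\item The self-interaction term needs no gradient structure; that structure would only help its curl, not the material derivative itself. The same two-term bound, with $\|\nabla u^{(m)}\|_{L^\infty}\lesssim M_m$ and $\|\nabla u^{(m)}\|_{\dot C^\alpha}\lesssim Yb_m^{\alpha}C^{(\delta+2\mu)(\frac{1}{1-\gamma})^m}$, gives $Y^2b_m^{\alpha-1}C^{(2\delta+2\mu)(\frac{1}{1-\gamma})^m}$.
\end{itemize}
As written, however, your proposal leaves these terms unproven and claims a restriction on $\alpha$ that does not actually exist.
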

\begin{proof}
To tackle this proof, it proves useful to consider
\begin{equation}
\mathcal{U}^{\left(n\right)}\coloneqq\frac{\partial U^{\left(n\right)}}{\partial t}+U^{\left(n\right)}\cdot\nabla U^{\left(n\right)},\label{eq:caligraphic capital U}
\end{equation}
where we recall that $U^{\left(n\right)}=\sum^{n}_{m=1}u^{\left(m\right)}$,
and to write the equation for the difference $\mathcal{U}^{\left(n\right)}-\mathcal{U}^{\left(n-1\right)}$:
\[
\mathcal{U}^{\left(n\right)}-\mathcal{U}^{\left(n-1\right)}=\frac{\partial u^{\left(n\right)}}{\partial t}+U^{\left(n-1\right)}\cdot\nabla u^{\left(n\right)}+u^{\left(n\right)}\cdot\nabla U^{\left(n-1\right)}+u^{\left(n\right)}\cdot\nabla u^{\left(n\right)}.
\]
To make the computations easier, we should change variables to move
with layer $n$. Notice that, by equations \eqref{eq:Boussinesq phi}, \eqref{eq:Boussinesq jacobian inverse}, and \eqref{eq:Boussinesq def nabla_tilde},
\[
\begin{aligned}\left(u^{\left(n\right)}\cdot\nabla u^{\left(n\right)}\right)\left(t,\phi^{\left(n\right)}\left(t,x\right)\right) & =\widetilde{u^{\left(n\right)}}^{n}\left(t,x\right)\cdot\tilde{\nabla}^{n}\widetilde{u^{\left(n\right)}}^{n}\left(t,x\right),\\
\left(u^{\left(n\right)}\cdot\nabla U^{\left(n-1\right)}\right)\left(t,\phi^{\left(n\right)}\left(t,x\right)\right) & =\widetilde{u^{\left(n\right)}}^{n}\left(t,x\right)\cdot\tilde{\nabla}^{n}\widetilde{U^{\left(n-1\right)}}^{n}\left(t,x\right),\\
\left(U^{\left(n-1\right)}\cdot\nabla u^{\left(n\right)}\right)\left(t,\phi^{\left(n\right)}\left(t,x\right)\right) & =\widetilde{U^{\left(n-1\right)}}^{n}\left(t,x\right)\cdot\tilde{\nabla}^{n}\widetilde{u^{\left(n\right)}}^{n}\left(t,x\right),\\
\frac{\partial u^{\left(n\right)}}{\partial t}\left(t,\phi^{\left(n\right)}\left(t,x\right)\right) & =\frac{\partial\widetilde{u^{\left(n\right)}}^{n}}{\partial t}\left(t,x\right)-\frac{\partial\phi^{\left(n\right)}}{\partial t}\left(t,x\right)\cdot\widetilde{\nabla}^{n}\widetilde{u^{\left(n\right)}}^{n}\left(t,x\right).
\end{aligned}
\]
Combining the above with equation \eqref{eq:Boussinesq transport is Taylor of velocity},
we deduce that
\begin{equation}
\begin{aligned}\widetilde{\mathcal{U}^{\left(n\right)}}^{n}-\widetilde{\mathcal{U}^{\left(n-1\right)}}^{n} & =\frac{\partial\widetilde{u^{\left(n\right)}}^{n}}{\partial t}\left(t,x\right)+\left(\widetilde{U^{\left(n-1\right)}}^{n}\left(t,x\right)-\widetilde{U^{\left(n-1\right)}}^{n}\left(t,0\right)-\mathrm{J}\widetilde{U^{\left(n-1\right)}}^{n}\left(t,0\right)\cdot\left(\begin{array}{c}
x_{1}\\
x_{2}
\end{array}\right)\right)\cdot\tilde{\nabla}^{n}\widetilde{u^{\left(n\right)}}^{n}\left(t,x\right)+\\
 & \quad+\widetilde{u^{\left(n\right)}}^{n}\left(t,x\right)\cdot\tilde{\nabla}^{n}\widetilde{U^{\left(n-1\right)}}^{n}\left(t,x\right)+\widetilde{u^{\left(n\right)}}^{n}\left(t,x\right)\cdot\tilde{\nabla}^{n}\widetilde{u^{\left(n\right)}}^{n}\left(t,x\right).
\end{aligned}
\label{eq:decomposition velocity}
\end{equation}

Let us begin bounding the pure quadratic term
\[
\widetilde{Q^{\left(n\right)}_{u}}^{n}\left(t,x\right)\coloneqq\widetilde{u^{\left(n\right)}}^{n}\left(t,x\right)\cdot\tilde{\nabla}^{n}\widetilde{u^{\left(n\right)}}^{n}\left(t,x\right).
\]
Bounding $\left|\left|Q^{\left(n\right)}_{u}\left(t,\cdot\right)\right|\right|_{L^{\infty}\left(\mathbb{R}^{2};\mathbb{R}^{2}\right)}$
is immediate. Using Propositions \ref{prop:bounds gradu} and \ref{prop:form of the velocity},
we infer that
\[
\left|\left|Q^{\left(n\right)}_{u}\left(t,\cdot\right)\right|\right|_{L^{\infty}\left(\mathbb{R}^{2};\mathbb{R}^{2}\right)}\lesssim_{\varphi}B_{n}\left(t\right)\max\left\{ a_{n}\left(t\right),b_{n}\left(t\right)\right\} YC^{\delta\left(\frac{1}{1-\gamma}\right)^{n}}.
\]
Equations \eqref{eq:Boussinesq Bn(t)} and \eqref{eq:Boussinesq an(t) bn(t)}
and Corollary \ref{cor:bound for an(t). bn(t)} provide
\[
\left|\left|Q^{\left(n\right)}_{u}\left(t,\cdot\right)\right|\right|_{L^{\infty}\left(\mathbb{R}^{2};\mathbb{R}^{2}\right)}\lesssim_{\varphi}\frac{2M_{n}}{a_{n}\left(t\right)^{2}+b_{n}\left(t\right)^{2}}b_{n}\left(t\right)YC^{\left(2\mu+\delta\right)\left(\frac{1}{1-\gamma}\right)^{n}}.
\]
Employing equation \eqref{eq:Boussinesq Mn} and Corollary \ref{cor:bound for an(t). bn(t)}
again, we deduce that
\begin{equation}
\left|\left|Q^{\left(n\right)}_{u}\left(t,\cdot\right)\right|\right|_{L^{\infty}\left(\mathbb{R}^{2};\mathbb{R}^{2}\right)}\lesssim_{\varphi}\frac{2M_{n}YC^{\left(2\mu+\delta\right)\left(\frac{1}{1-\gamma}\right)^{n}}}{b_{n}\left(t\right)}\underbrace{\frac{b_{n}\left(t\right)^{2}}{a_{n}\left(t\right)^{2}+b_{n}\left(t\right)^{2}}}_{\leq1}\leq\frac{Y^{2}C^{\left(2\mu+2\delta\right)\left(\frac{1}{1-\gamma}\right)^{n}}}{b_{n}\left(t\right)}\leq Y^{2}C^{\left(-1+3\mu+2\delta\right)\left(\frac{1}{1-\gamma}\right)^{n}}.\label{eq:U bound quadratic Linfty}
\end{equation}
Now, we shall continue with $\left|\left|Q^{\left(n\right)}_{u}\left(t,\cdot\right)\right|\right|_{\dot{C}^{\alpha}\left(\mathbb{R}^{2}\right)}$.
Equation \eqref{eq:property Calpha multiplication}, Propositions
\ref{prop:bounds gradu} and \ref{prop:form of the velocity} ensure
us that
\[
\begin{aligned}\left|\left|Q^{\left(n\right)}_{u}\left(t,\cdot\right)\right|\right|_{\dot{C}^{\alpha}\left(\mathbb{R}^{2};\mathbb{R}^{2}\right)} & \lesssim_{\varphi}B_{n}\left(t\right)\max\left\{ a_{n}\left(t\right)^{1+\alpha},b_{n}\left(t\right)^{1+\alpha}\right\} YC^{\delta\left(\frac{1}{1-\gamma}\right)^{n}}+\\
 & \quad+B_{n}\left(t\right)\max\left\{ a_{n}\left(t\right),b_{n}\left(t\right)\right\} Yb_{n}\left(t\right)^{\alpha}C^{\left(\delta+2\mu\right)\left(\frac{1}{1-\gamma}\right)^{n}}.
\end{aligned}
\]
Employing the same procedure as before, we arrive at
\begin{equation}
\begin{aligned}\left|\left|Q^{\left(n\right)}_{u}\left(t,\cdot\right)\right|\right|_{\dot{C}^{\alpha}\left(\mathbb{R}^{2};\mathbb{R}^{2}\right)} & \lesssim_{\varphi}\frac{2M_{n}}{a_{n}\left(t\right)^{2}+b_{n}\left(t\right)^{2}}\frac{b_{n}\left(t\right)^{2}}{b_{n}\left(t\right)^{1-\alpha}}YC^{\left(4\mu+\delta\right)\left(\frac{1}{1-\gamma}\right)^{n}}+\\
 & \quad+\frac{2M_{n}}{a_{n}\left(t\right)^{2}+b_{n}\left(t\right)^{2}}\frac{b_{n}\left(t\right)^{2}}{b_{n}\left(t\right)}Yb_{n}\left(t\right)^{\alpha}C^{\left(\delta+4\mu\right)\left(\frac{1}{1-\gamma}\right)^{n}}\lesssim\\
 & \lesssim_{\varphi}Y^{2}\frac{C^{\left(4\mu+2\delta\right)\left(\frac{1}{1-\gamma}\right)^{n}}}{b_{n}\left(t\right)^{1-\alpha}}+Y^{2}\frac{C^{\left(2\delta+4\mu\right)\left(\frac{1}{1-\gamma}\right)^{n}}}{b_{n}\left(t\right)^{1-\alpha}}\leq\\
 & \lesssim_{\varphi}Y^{2}C^{\left(-\left(1-\alpha\right)+5\mu+2\delta\right)\left(\frac{1}{1-\gamma}\right)^{n}}.
\end{aligned}
\label{eq:U bound quadratic Calpha}
\end{equation}

Let us continue with the temporal derivative $\frac{\partial\widetilde{u^{\left(n\right)}}^{n}}{\partial t}\left(t,x\right)$.
Using Proposition \ref{prop:computations vorticity}, we deduce that,
$\forall i\in\left\{ 1,2\right\} $,
\[
\begin{aligned}\frac{\partial\widetilde{u^{\left(n\right)}_{i}}^{n}}{\partial t}\left(t,x\right) & =\sum_{2}\underbrace{\lambda^{0\text{ or }1}_{n}}_{\leq1}\left(\begin{matrix}\frac{\mathrm{d}}{\mathrm{d}t}\left(B_{n}\left(t\right)b_{n}\left(t\right)\right)\\
\text{or}\\
\frac{\mathrm{d}}{\mathrm{d}t}\left(B_{n}\left(t\right)a_{n}\left(t\right)\right)
\end{matrix}\right)\left(\begin{array}{c}
\text{derivative}\\
\text{of }\varphi\text{ of}\\
\text{order }\le1
\end{array}\right)\left(\lambda_{n}x_{1}\right)\left(\begin{array}{c}
\text{derivative}\\
\text{of }\varphi\text{ of}\\
\text{order }\le1
\end{array}\right)\left(\lambda_{n}x_{2}\right)\cdot\\
 & \quad\cdot\left(\begin{array}{c}
\sin\\
\text{or}\\
\cos
\end{array}\right)\left(x_{1}\right)\left(\begin{array}{c}
\sin\\
\text{or}\\
\cos
\end{array}\right)\left(x_{2}\right),
\end{aligned}
\]
where the subscript of the sum only indicates the number of summands.
The invariance of $\left|\left|\cdot\right|\right|_{L^{\infty}\left(\mathbb{R}^{2}\right)}$
under diffeomorphisms of the domain, Lemma \ref{lem:temporal derivatives velocity},
equations \eqref{eq:property Calpha composition} and \eqref{eq:Boussinesq jacobian inverse}
and Corollary \ref{cor:bound for an(t). bn(t)} allow us to conclude
that
\begin{equation}
\begin{aligned}\left|\left|\frac{\partial\widetilde{u^{\left(n\right)}}^{n}}{\partial t}\left(t,\left(\phi^{\left(n\right)}\right)^{-1}\left(t,\cdot\right)\right)\right|\right|_{L^{\infty}\left(\mathbb{R}^{2};\mathbb{R}^{2}\right)} & \lesssim_{\delta,\mu,\varphi}\frac{Y^{2}C^{\left(2\mu+2\delta\right)\left(\frac{1}{1-\gamma}\right)^{n}}}{b_{n}\left(t\right)}\leq Y^{2}C^{\left(-1+3\mu+2\delta\right)\left(\frac{1}{1-\gamma}\right)^{n}},\\
\left|\left|\frac{\partial\widetilde{u^{\left(n\right)}}^{n}}{\partial t}\left(t,\left(\phi^{\left(n\right)}\right)^{-1}\left(t,\cdot\right)\right)\right|\right|_{\dot{C}^{\alpha}\left(\mathbb{R}^{2};\mathbb{R}^{2}\right)} & \lesssim\left|\left|\left(\phi^{\left(n\right)}\right)^{-1}\left(t,\cdot\right)\right|\right|^{\alpha}_{\dot{C}^{1}\left(\mathbb{R}^{2};\mathbb{R}^{2}\right)}\left|\left|\frac{\partial\widetilde{u^{\left(n\right)}}^{n}}{\partial t}\left(t,\cdot\right)\right|\right|_{\dot{C}^{\alpha}\left(\mathbb{R}^{2}\right)}\lesssim\\
 & \lesssim_{\delta,\mu,\varphi}\frac{\max\left\{ a_{n}\left(t\right)^{\alpha},b_{n}\left(t\right)^{\alpha}\right\} }{b_{n}\left(t\right)}Y^{2}C^{\left(2\mu+2\delta\right)\left(\frac{1}{1-\gamma}\right)^{n}}\lesssim\\
 & \lesssim_{\delta,\mu,\varphi}\frac{b_{n}\left(t\right)^{\alpha}C^{2\mu\left(\frac{1}{1-\gamma}\right)^{n}}}{b_{n}\left(t\right)}Y^{2}C^{\left(2\mu+2\delta\right)\left(\frac{1}{1-\gamma}\right)^{n}}\lesssim\\
 & \lesssim_{\delta,\mu,\varphi}\frac{Y^{2}}{b_{n}\left(t\right)^{1-\alpha}}C^{\left(4\mu+2\delta\right)\left(\frac{1}{1-\gamma}\right)^{n}}\leq Y^{2}C^{\left(-\left(1-\alpha\right)+5\mu+2\delta\right)\left(\frac{1}{1-\gamma}\right)^{n}}.
\end{aligned}
\label{eq:U bound dudt}
\end{equation}

Now, we shall focus our attention on the term
\[
\widetilde{\Pi^{\left(n\right)}_{u}}^{n}\left(t,x\right)\coloneqq\widetilde{u^{\left(n\right)}}^{n}\left(t,x\right)\cdot\tilde{\nabla}^{n}\widetilde{U^{\left(n-1\right)}}^{n}\left(t,x\right).
\]
The chain rule, the definition of $\widetilde{\nabla}^{n}$ (see equation
\eqref{eq:Boussinesq def nabla_tilde}) and equation \eqref{eq:Boussinesq phi}
show that
\[
\begin{aligned}\widetilde{\Pi^{\left(n\right)}_{u}}^{n}\left(t,x\right) & =\widetilde{u^{\left(n\right)}}^{n}\left(t,x\right)\cdot\left[\nabla\phi^{\left(n\right)}\left(t,0\right)\right]^{-1}\cdot\nabla\left[\sum^{n-1}_{m=1}u^{\left(m\right)}\left(t,\phi^{\left(n\right)}\left(t,x\right)\right)\right]=\\
 & =\widetilde{u^{\left(n\right)}}^{n}\left(t,x\right)\cdot\left[\nabla\phi^{\left(n\right)}\left(t,0\right)\right]^{-1}\cdot\left[\nabla\phi^{\left(n\right)}\left(t,0\right)\right]\cdot\left(\sum^{n-1}_{m=1}\nabla u^{\left(m\right)}\left(t,\phi^{\left(n\right)}\left(t,x\right)\right)\right)=\\
 & =\widetilde{u^{\left(n\right)}}^{n}\left(t,x\right)\cdot\left(\sum^{n-1}_{m=1}\nabla u^{\left(m\right)}\left(t,\phi^{\left(m\right)}\left(t,\left(\phi^{\left(m\right)}\right)^{-1}\left(t,\phi^{\left(n\right)}\left(t,x\right)\right)\right)\right)\right)=\\
 & =\widetilde{u^{\left(n\right)}}^{n}\left(t,x\right)\cdot\left(\sum^{n-1}_{m=1}\widetilde{\nabla u^{\left(m\right)}}^{m}\left(t,\left(\phi^{\left(m\right)}\right)^{-1}\left(t,\phi^{\left(n\right)}\left(t,x\right)\right)\right)\right)=\\
 & =\widetilde{u^{\left(n\right)}}^{n}\left(t,x\right)\cdot\sum^{n-1}_{m=1}\tilde{\nabla}^{m}\widetilde{u^{\left(m\right)}}^{m}\left(t,\left(\phi^{\left(m\right)}\right)^{-1}\left(t,\phi^{\left(n\right)}\left(t,x\right)\right)\right).
\end{aligned}
\]
In view of the above, notice that
\[
\Pi^{\left(n\right)}_{u}\left(t,x\right)=u^{\left(n\right)}\left(t,x\right)\cdot\left(\sum^{n-1}_{m=1}\tilde{\nabla}^{m}\widetilde{u^{\left(m\right)}}^{m}\left(t,\left(\phi^{\left(m\right)}\right)^{-1}\left(t,x\right)\right)\right).
\]
Using Propositions \ref{prop:bounds gradu} and \ref{prop:form of the velocity},
equations \eqref{eq:Boussinesq Bn(t)}, \eqref{eq:Boussinesq an(t) bn(t)}, and \eqref{eq:Boussinesq Mn},
Lemma \ref{lem:estimate sum superexponential} and Corollary \ref{cor:bound for an(t). bn(t)},
one can deduce that
\begin{equation}
\begin{aligned}\left|\left|\Pi^{\left(n\right)}_{u}\left(t,\cdot\right)\right|\right|_{L^{\infty}\left(\mathbb{R}^{2};\mathbb{R}^{2}\right)} & \lesssim_{\varphi}B_{n}\left(t\right)\max\left\{ a_{n}\left(t\right),b_{n}\left(t\right)\right\} \sum^{n-1}_{m=1}YC^{\delta\left(\frac{1}{1-\gamma}\right)^{m}}\lesssim_{\delta}\\
 & \lesssim_{\varphi,\delta}\frac{2M_{n}}{a_{n}\left(t\right)^{2}+b_{n}\left(t\right)^{2}}b_{n}\left(t\right)C^{2\mu\left(\frac{1}{1-\gamma}\right)^{n}}YC^{\delta\left(\frac{1}{1-\gamma}\right)^{n-1}}\lesssim\\
 & \lesssim_{\varphi,\delta}\frac{Y^{2}}{b_{n}\left(t\right)}C^{\left(2\delta+2\mu\right)\left(\frac{1}{1-\gamma}\right)^{n}}\leq Y^{2}C^{\left(-1+2\delta+3\mu\right)\left(\frac{1}{1-\gamma}\right)^{n}}.
\end{aligned}
\label{eq:U bound PI Lifnty}
\end{equation}
To compute $\left|\left|\Pi^{\left(n\right)}_{u}\left(t,\cdot\right)\right|\right|_{\dot{C}^{\alpha}\left(\mathbb{R}^{2};\mathbb{R}^{2}\right)}$,
we will resort to equations \eqref{eq:property Calpha composition} and \eqref{eq:Boussinesq jacobian inverse}.
Thereby, applying equation \eqref{eq:property Calpha multiplication},
Propositions \ref{prop:bounds gradu} and \ref{prop:form of the velocity},
equations \eqref{eq:Boussinesq Bn(t)}, \eqref{eq:Boussinesq an(t) bn(t)}, \eqref{eq:Boussinesq def kmax}, and \eqref{eq:Boussinesq Mn},
Lemma \ref{lem:estimate sum superexponential} and Corollary \ref{cor:bound for an(t). bn(t)},
\begin{equation}
\begin{aligned}\left|\left|\Pi^{\left(n\right)}_{u}\left(t,\cdot\right)\right|\right|_{\dot{C}^{\alpha}\left(\mathbb{R}^{2};\mathbb{R}^{2}\right)} & \lesssim_{\varphi}B_{n}\left(t\right)\max\left\{ a_{n}\left(t\right)^{1+\alpha},b_{n}\left(t\right)^{1+\alpha}\right\} \sum^{n-1}_{m=1}YC^{\delta\left(\frac{1}{1-\gamma}\right)^{m}}+B_{n}\left(t\right)\max\left\{ a_{n}\left(t\right),b_{n}\left(t\right)\right\} \cdot\\
 & \quad\cdot\sum^{n-1}_{m=1}\left|\left|\left(\phi^{\left(m\right)}\right)^{-1}\left(t,\cdot\right)\right|\right|^{\alpha}_{\dot{C}^{1}\left(\mathbb{R}^{2};\mathbb{R}^{2}\right)}YC^{\left(\alpha\left(1+k_{\max}\right)+\delta+3\mu\right)\left(\frac{1}{1-\gamma}\right)^{m}}\lesssim_{\delta}\\
 & \lesssim_{\varphi,\delta}B_{n}\left(t\right)\max\left\{ a_{n}\left(t\right)^{1+\alpha},b_{n}\left(t\right)^{1+\alpha}\right\} YC^{\delta\left(\frac{1}{1-\gamma}\right)^{n-1}}+\\
 & \quad+B_{n}\left(t\right)\max\left\{ a_{n}\left(t\right),b_{n}\left(t\right)\right\} \sum^{n-1}_{m=1}\max\left\{ a_{m}\left(t\right)^{\alpha},b_{m}\left(t\right)^{\alpha}\right\} YC^{\left(\alpha\left(1+k_{\max}\right)+\delta+3\mu\right)\left(\frac{1}{1-\gamma}\right)^{m}}\lesssim\\
 & \lesssim_{\varphi,\delta}\frac{2M_{n}}{a_{n}\left(t\right)^{2}+b_{n}\left(t\right)^{2}}b_{n}\left(t\right)^{1+\alpha}YC^{\left(4\mu+\delta\right)\left(\frac{1}{1-\gamma}\right)^{n}}+\\
 & \quad+\frac{2M_{n}}{a_{n}\left(t\right)^{2}+b_{n}\left(t\right)^{2}}b_{n}\left(t\right)C^{2\mu\left(\frac{1}{1-\gamma}\right)^{n}}\sum^{n-1}_{m=1}YC^{\left[2\alpha\left(1+k_{\max}\right)+4\mu+\delta\right]\left(\frac{1}{1-\gamma}\right)^{m}}\lesssim\\
 & \lesssim_{\varphi,\delta}Y^{2}\frac{C^{\left(4\mu+2\delta\right)\left(\frac{1}{1-\gamma}\right)^{n}}}{b_{n}\left(t\right)^{1-\alpha}}+Y^{2}\frac{C^{\left[\delta+2\mu+\left(2\alpha\left(1+k_{\max}\right)+4\mu+\delta\right)\left(1-\gamma\right)\right]\left(\frac{1}{1-\gamma}\right)^{n}}}{b_{n}\left(t\right)}\lesssim\\
 & \lesssim_{\varphi,\delta}Y^{2}C^{\left[-\left(1-\alpha\right)+5\mu+2\delta\right]\left(\frac{1}{1-\gamma}\right)^{n}}+Y^{2}C^{\left[-1+\delta+3\mu+\left(2\alpha\left(1+k_{\max}\right)+4\mu+\delta\right)\left(1-\gamma\right)\right]\left(\frac{1}{1-\gamma}\right)^{n}}.
\end{aligned}
\label{eq:U bound PI Calpha}
\end{equation}

Finally, we shall bound the transport term
\[
\widetilde{T^{\left(n\right)}_{u}}^{n}\left(t,x\right)\coloneqq\left(\widetilde{U^{\left(n-1\right)}}^{n}\left(t,x\right)-\widetilde{U^{\left(n-1\right)}}^{n}\left(t,0\right)-\mathrm{J}\widetilde{U^{\left(n-1\right)}}^{n}\left(t,0\right)\cdot\left(\begin{array}{c}
x_{1}\\
x_{2}
\end{array}\right)\right)\cdot\tilde{\nabla}^{n}\widetilde{u^{\left(n\right)}}^{n}\left(t,x\right).
\]
 To accomplish this, we shall make use of Propositions \ref{prop:bounds gradu}
and \ref{prop:bounds for Taylor development of transport}. Clearly,
\[
\begin{aligned}\left|\left|T^{\left(n\right)}_{u}\left(t,\cdot\right)\right|\right|_{L^{\infty}\left(\mathbb{R}^{2};\mathbb{R}^{2}\right)} & \lesssim_{\delta,\varphi}YC^{\left[-2\left(1-\Lambda-k_{\max}\right)+2\mu+\left(1+\delta\right)\left(1-\gamma\right)\right]\left(\frac{1}{1-\gamma}\right)^{n}}YC^{\delta\left(\frac{1}{1-\gamma}\right)^{n}}=\\
 & \lesssim_{\delta,\varphi}Y^{2}C^{\left[-2\left(1-\Lambda-k_{\max}\right)+2\mu+2\delta+\left(1-\gamma\right)\right]\left(\frac{1}{1-\gamma}\right)^{n}}.
\end{aligned}
\]
Moreover, applying equation \eqref{eq:property Calpha multiplication},
Propositions \ref{prop:bounds gradu} and \ref{prop:bounds for Taylor development of transport},
we obtain
\[
\begin{aligned}\left|\left|T^{\left(n\right)}_{u}\left(t,\cdot\right)\right|\right|_{\dot{C}^{\alpha}\left(\mathbb{R}^{2};\mathbb{R}^{2}\right)} & \lesssim_{\delta,\varphi}YC^{\left[-2\left(1-\Lambda-k_{\max}\right)+2\mu+\left(1+\delta\right)\left(1-\gamma\right)\right]\left(\frac{1}{1-\gamma}\right)^{n}}YC^{\left(\alpha\left(1+k_{\max}\right)+\delta+\mu\right)\left(\frac{1}{1-\gamma}\right)^{n}}+\\
 & \quad+YC^{\left[-2\left(1-\Lambda-k_{\max}\right)+\max\left\{ \alpha\left(1-k_{\max}\right)-\Lambda,0\right\} +2\mu+\left(2+\delta\right)\left(1-\gamma\right)\right]\left(\frac{1}{1-\gamma}\right)^{n}}YC^{\delta\left(\frac{1}{1-\gamma}\right)^{n}}.
\end{aligned}
\]
Using equations \eqref{eq:Boussinesq identity Lambda kmax} and the
fact that $\alpha\leq1$, we may bound
\begin{equation}
\begin{aligned}\left|\left|T^{\left(n\right)}_{u}\left(t,\cdot\right)\right|\right|_{L^{\infty}\left(\mathbb{R}^{2};\mathbb{R}^{2}\right)} & \lesssim_{\delta,\varphi}Y^{2}C^{\left[-\frac{4}{3}+2\mu+2\delta+\left(1-\gamma\right)\right]\left(\frac{1}{1-\gamma}\right)^{n}},\\
\left|\left|T^{\left(n\right)}_{u}\left(t,\cdot\right)\right|\right|_{\dot{C}^{\alpha}\left(\mathbb{R}^{2};\mathbb{R}^{2}\right)} & \lesssim_{\delta,\varphi}Y^{2}C^{\left[2\Lambda+3k_{\max}-1+3\mu+2\delta+\left(1-\gamma\right)\right]\left(\frac{1}{1-\gamma}\right)^{n}}+\\
 & \quad+Y^{2}C^{\left[\Lambda+k_{\max}-1+2\mu+2\delta+2\left(1-\gamma\right)\right]\left(\frac{1}{1-\gamma}\right)^{n}}\leq\\
 & \lesssim_{\delta,\varphi}Y^{2}C^{\left[-\Lambda+3\mu+2\delta+\left(1-\gamma\right)\right]\left(\frac{1}{1-\gamma}\right)^{n}}+Y^{2}C^{\left[-\frac{2}{3}+2\mu+2\delta+2\left(1-\gamma\right)\right]\left(\frac{1}{1-\gamma}\right)^{n}}.
\end{aligned}
\label{eq:U bound transport}
\end{equation}

Combining equations \eqref{eq:U bound quadratic Linfty}, \eqref{eq:U bound quadratic Calpha},
\eqref{eq:U bound dudt}, \eqref{eq:U bound PI Lifnty}, \eqref{eq:U bound PI Calpha}
and \eqref{eq:U bound transport}, we deduce that
\[
\sup_{t\in\left[0,1\right]}\left|\left|\mathcal{U}^{\left(n\right)}\left(t,\cdot\right)-\mathcal{U}^{\left(n-1\right)}\left(t,\cdot\right)\right|\right|_{C^{\alpha}\left(\mathbb{R}^{2}\right)}
\]
decreases superexponentially in $n\in\mathbb{N}$ as long as $\mu$
and $\delta$ are chosen sufficiently small with respect to $1-\alpha$
and provided that $\gamma$ is close enough to one (the choice of
$\gamma$ can be made independent of $\alpha$). Next, we will show
that $\mathcal{U}^{\left(n\right)}$ is convergent in $C^{0}_{t}C^{\alpha}_{x}\left(\left[0,1\right]\times\mathbb{R}^{2}\right)$.
As $C^{0}_{t}C^{\alpha}_{x}\left(\left[0,1\right]\times\mathbb{R}^{2}\right)$
is complete, it will be enough to show that it is a Cauchy sequence.
First of all, notice that, $\forall n\in\mathbb{N}$, $\mathcal{U}^{\left(n\right)}$
is nothing but a finite sum of compositions of smooth functions in
space and time. Consequently, $\mathcal{U}^{\left(n\right)}\in C^{0}_{t}C^{\alpha}_{x}\left(\left[0,1\right]\times\mathbb{R}^{2}\right)$
$\forall n\in\mathbb{N}$. Moreover, by the telescopic nature of $\mathcal{U}^{\left(n\right)}$,
$\forall l,m\in\mathbb{N}$ with $l\ge m$, we have
\[
\begin{aligned}\left|\left|\mathcal{U}^{\left(l\right)}-\mathcal{U}^{\left(m\right)}\right|\right|_{C^{0}_{t}C^{\alpha}_{x}\left(\left[0,1\right]\times\mathbb{R}^{2}\right)} & =\left|\left|\sum^{l}_{n=m+1}\left(\mathcal{U}^{\left(n\right)}-\mathcal{U}^{\left(n-1\right)}\right)\right|\right|_{C^{0}_{t}C^{\alpha}_{x}\left(\left[0,1\right]\times\mathbb{R}^{2}\right)}\leq\\
 & \leq\sum^{\infty}_{n=m+1}\underbrace{\sup_{t\in\left[0,1\right]}\left|\left|\mathcal{U}^{\left(n\right)}\left(t,\cdot\right)-\mathcal{U}^{\left(n-1\right)}\left(t,\cdot\right)\right|\right|_{C^{\alpha}\left(\mathbb{R}^{2}\right)}}_{\left(*\right)}
\end{aligned}
\]
and we have seen that the marked term is superexponentially decreasing
in $n\in\mathbb{N}$. In this way, in the bound above, we have the
tail of a convergent series. Consequently, $\left|\left|\mathcal{U}^{\left(l\right)}-\mathcal{U}^{\left(m\right)}\right|\right|_{C^{0}_{t}C^{\alpha}_{x}\left(\left[0,1\right]\times\mathbb{R}^{2}\right)}\xrightarrow[m\to\infty]{}0$
and, therefore, $\left(\mathcal{U}^{\left(n\right)}\right)_{n\in\mathbb{N}}$
is a Cauchy sequence in $C^{0}_{t}C^{\alpha}_{x}\left(\left[0,1\right]\times\mathbb{R}^{2}\right)$.

With the above, we have already proven that $\left(\frac{\partial u}{\partial t}+u\cdot\nabla u\right)\left(t,\cdot\right)$
admits an extension $\mathcal{U}$ to time $t=1$ such that $\frac{\partial u}{\partial t}+u\cdot\nabla u\in C^{0}_{t}C^{\alpha}_{x}\left(\left[0,1\right]\times\mathbb{R}^{2}\right)$.
We just need to prove the uniqueness. Let $\mathcal{V}\left(t,x\right)$
be any other extension, i.e., such that $\mathcal{V}\left(t,\cdot\right)=\left(\frac{\partial u}{\partial t}+u\cdot\nabla u\right)\left(t,\cdot\right)=\mathcal{U}\left(t,\cdot\right)$
$\forall t\in\left[0,1\right)$. Then, by the uniqueness of the limit
in $C^{\alpha}\left(\mathbb{R}^{2}\right)$, we conclude that $\lim_{t\to1^{-}}\mathcal{V}\left(t,\cdot\right)=\lim_{t\to1^{-}}\mathcal{U}\left(t,\cdot\right)$
in $C^{\alpha}\left(\mathbb{R}^{2}\right)$. Consequently, $\mathcal{U}=\mathcal{V}$
in $C^{0}_{t}C^{\alpha}_{x}\left(\left[0,1\right]\times\mathbb{R}^{2}\right)$.
\end{proof}

\begin{prop}
\label{prop:oddness and evenness U}Let $\left(\rho,\omega,f_{\rho},f_{\omega}\right)$
be the solution of the forced Boussinesq system presented in \cite{Articulo Boussinesq}.
$u_{2}\left(t,x\right)$ is odd in $x_{2}$ and $u_{1}\left(t,x\right)$
is even in $x_{2}$. Furthermore, $\left(\frac{\partial u}{\partial t}+u\cdot\nabla u\right)_{2}\left(t,x\right)$
is odd in $x_{2}$ and $\left(\frac{\partial u}{\partial t}+u\cdot\nabla u\right)_{1}\left(t,x\right)$
is even in $x_{2}$.
\end{prop}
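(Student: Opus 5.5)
We argue layer by layer, exactly as for $\rho$ in Proposition \ref{prop:evenness rho}, and then pass to the limit. By Proposition \ref{prop:computations vorticity}, in the moving frame of layer $n$ the first component $\widetilde{u^{(n)}_{1}}^{n}(t,x)$ consists of terms $\varphi(\lambda_n x_2)\cos(x_2)$ and $\varphi'(\lambda_n x_2)\sin(x_2)$ (times functions of $x_1$). Since $\varphi$ is even, $\varphi'$ is odd, so both terms are even in $x_2$. The second component consists of $\varphi(\lambda_n x_2)\sin(x_2)$, which is odd in $x_2$.

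Because $\phi^{(n)}_2(t,x)=x_2/b_n(t)$ is linear in $x_2$ and $\phi^{(n)}_1$ does not depend on $x_2$ (equation \eqref{eq:Boussinesq phi} with $\phi^{(n)}_2(t,0)=0$), composition with $(\phi^{(n)})^{-1}$ preserves parity in $x_2$. Hence $u^{(n)}_1$ is even and $u^{(n)}_2$ is odd in $x_2$. The translation in Choice \ref{choice:blow-up origin} acts only on $x_1$, so it does not affect this. For $t<1$ the sum defining $u$ is finite, so $u_1$ is even and $u_2$ is odd in $x_2$ for every $t\in[0,1)$.

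For the material derivative we write $\mathcal{R}$ for the reflection $(x_1,x_2)\mapsto(x_1,-x_2)$ and set, on $[0,1)$, $u=(u_1,u_2)$ with $u(t,\mathcal{R}x)=\mathcal{R}u(t,x)$. This is exactly the parity statement just proved. Differentiating in $t$ shows that $\partial_t u$ has the same symmetry. For the convective term, the chain rule gives
\[
\nabla u(t,\mathcal{R}x)=\mathcal{R}\,\nabla u(t,x)\,\mathcal{R},
\]
so
\[
(u\cdot\nabla u)(t,\mathcal{R}x)=\mathcal{R}\,(u\cdot\nabla u)(t,x).
\]
A direct check confirms this componentwise. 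In $u_1\partial_1u_1+u_2\partial_2u_1$ we have $u_1\partial_1u_1$ even times even and $u_2\partial_2u_1$ odd times odd, so the first component is even. In $u_1\partial_1u_2+u_2\partial_2u_2$ we have even times odd and odd times even, so the second component is odd. Thus $\partial_tu+u\cdot\nabla u$ has first component even and second component odd in $x_2$ for every $t<1$.

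The only step needing care is $t=1$, where the material derivative is defined through the unique extension of Proposition \ref{prop:bound material derivative u}. Since that extension is the limit in $C^{\alpha}(\mathbb{R}^2)$, and in particular pointwise, of its values as $t\to1^-$ (or equivalently of the approximants $\mathcal{U}^{(n)}$, each of which has the correct parity by the same computation applied to the finite sums $U^{(n)}$), and parity is preserved under pointwise limits, the claimed parities hold at $t=1$ as well. No estimate is needed anywhere; the main point is just tracking the symmetry through the change of variables and the limit.
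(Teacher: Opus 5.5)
Your proposal is correct and follows essentially the paper's proof: read off the $x_2$-parity of each layer in the moving frame from Proposition \ref{prop:computations vorticity}, note that it survives the change of variables because $\phi^{(n)}_2$ is linear in $x_2$, sum the layers, and check the parity of $\partial_t u + u\cdot\nabla u$ component by component. Your explicit treatment of $t=1$, via the unique $C^0_tC^\alpha_x$ extension in Proposition \ref{prop:bound material derivative u}, is a slightly more careful version of the paper's one-line remark that limits of even (respectively odd) functions remain even (respectively odd).
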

\begin{proof}
Looking at the expression for the velocity given in Proposition \ref{prop:computations vorticity},
it is clear that $\widetilde{u^{\left(n\right)}_{2}}^{n}\left(t,x\right)$
is odd in $x_{2}$ and that $\widetilde{u^{\left(n\right)}_{1}}^{n}\left(t,x\right)$
is even in $x_{2}$. Since the change of variables $\phi^{\left(n\right)}_{2}\left(t,x\right)$
is linear in $x_{2}$ (see equation \eqref{eq:Boussinesq phi}), $u^{\left(n\right)}_{2}\left(t,x\right)$
must also be odd in $x_{2}$ and $u^{\left(n\right)}_{1}\left(t,x\right)$
must be even in $x_{2}$. As the sum of odd functions is odd, the
limit of odd functions is odd, the sum of even functions is even and
the limit of even functions is even, we conclude that $u_{2}\left(t,x\right)$
is odd in $x_{2}$ and that $u_{1}\left(t,x\right)$ is even in $x_{2}$.
Then, $\frac{\partial u_{2}}{\partial t}\left(t,x\right)$, $\frac{\partial u_{2}}{\partial x_{1}}\left(t,x\right)$
and $\frac{\partial u_{1}}{\partial x_{2}}\left(t,x\right)$ must
also be odd in $x_{2}$, while $\frac{\partial u_{2}}{\partial x_{2}}\left(t,x\right)$,
$\frac{\partial u_{1}}{\partial t}\left(t,x\right)$ and $\frac{\partial u_{1}}{\partial x_{1}}\left(t,x\right)$
will be even in $x_{2}$. In this way,
\[
\left(\frac{\partial u}{\partial t}+u\cdot\nabla u\right)_{2}\left(t,x\right)=\underbrace{\frac{\partial u_{2}}{\partial t}\left(t,x\right)}_{\text{odd in }x_{2}}+\underbrace{u_{1}\left(t,x\right)}_{\text{even in }x_{2}}\underbrace{\frac{\partial u_{2}}{\partial x_{1}}\left(t,x\right)}_{\text{odd in }x_{2}}+\underbrace{u_{2}\left(t,x\right)}_{\text{odd in }x_{2}}\underbrace{\frac{\partial u_{2}}{\partial x_{2}}\left(t,x\right)}_{\text{even in }x_{2}}
\]
will be odd in $x_{2}$, whereas
\[
\left(\frac{\partial u}{\partial t}+u\cdot\nabla u\right)_{1}\left(t,x\right)=\underbrace{\frac{\partial u_{1}}{\partial t}\left(t,x\right)}_{\text{even in }x_{2}}+\underbrace{u_{1}\left(t,x\right)}_{\text{even in }x_{2}}\underbrace{\frac{\partial u_{1}}{\partial x_{1}}\left(t,x\right)}_{\text{even in }x_{2}}+\underbrace{u_{2}\left(t,x\right)}_{\text{odd in }x_{2}}\underbrace{\frac{\partial u_{1}}{\partial x_{2}}\left(t,x\right)}_{\text{odd in }x_{2}}
\]
will be even in $x_{2}$.
\end{proof}

\subsection{Bounds for the bad terms}

In this subsection, we will prove that, even though $\frac{\partial\rho}{\partial x_{2}}$
and $\frac{\partial\rho}{\partial x_{1}}$ blow up in $\left|\left|\cdot\right|\right|_{\dot{C}^{\alpha}\left(\mathbb{R}^{2}\right)}$
(for high enough $\alpha$ in the case of $\frac{\partial\rho}{\partial x_{1}}$),
they retain some $L^{p}\left(\mathbb{R}^{2}\right)$ regularity at
the time of the blow-up. This is why we refer to these terms as ``bad''
terms.
\begin{prop}[Bounds for $\frac{\partial\rho}{\partial x_{2}}\left(t,\cdot\right)$]
\label{prop:bad bound drhodx2}$\forall\alpha\in\left(0,1\right)$,
uniformly in time $t\in\left[0,1\right]$, we have
\[
\begin{aligned}\left|\left|\frac{\partial\rho}{\partial x_{2}}\left(t,\cdot\right)\right|\right|_{L^{\infty}\left(\mathbb{R}^{2}\setminus B\left(0;r\right)\right)} & \lesssim_{\delta,\varphi,\mu,Y}\frac{1}{r^{\frac{2\delta+2\mu}{\frac{2}{3}-\mu}}},\\
\left|\left|\frac{\partial\rho}{\partial x_{2}}\left(t,\cdot\right)\right|\right|_{\dot{C}^{\alpha}\left(\mathbb{R}^{2}\setminus B\left(0;r\right)\right)} & \lesssim_{\delta,\varphi,\mu,Y}\frac{1}{r^{\frac{\alpha\left(1+k_{\max}\right)+2\delta+3\mu}{\frac{2}{3}-\mu}}}.
\end{aligned}
\]

As a consequence, as long as $\delta$ and $\mu$ are taken sufficiently
small (the smallness depends on the value of $p_{0}$), we can ensure
that $\frac{\partial\rho}{\partial x_{2}}\in C^{0}_{t}L^{p}_{x}\left(\left[0,1\right]\times\mathbb{R}^{2}\right)$
$\forall p\in\left[1,p_{0}\right]$. Furthermore, $\frac{\partial\rho}{\partial x_{2}}\left(1,\cdot\right)\equiv0$
in $L^{p}\left(\mathbb{R}^{2}\right)$ $\forall p\in\left[1,p_{0}\right]$.
\end{prop}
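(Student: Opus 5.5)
The plan is to exploit the layer structure: for each $t\in[0,1)$ exactly one density layer is active, namely the $n$ with $h^{\left(n\right)}(t)\neq0$, so $t\in[t_n,t_{n+1}]$. I would first bound $\frac{\partial\rho^{(n)}}{\partial x_2}$ in terms of $n$, and then convert the index $n$ into a power of the distance $r$ from the blow-up point.

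\textbf{Step 1: per-layer bounds.} By the chain rule and \eqref{eq:Boussinesq jacobian inverse}, $\frac{\partial\rho^{(n)}}{\partial x_2}=b_n(t)\frac{\partial\widetilde{\rho^{(n)}}^n}{\partial x_2}\circ(\phi^{(n)})^{-1}$. The explicit formula used in the proof of Proposition \ref{prop:bounded drhodx1} applies here too: I would write $\widetilde{\rho^{(n)}}^n$ in terms of $M_n$, $h^{(n)}$ and $\int^1_{t_n}h^{(n)}b_n$, differentiate in $x_2$, and use Lemma \ref{lem:estimate of integral hn bn}, \eqref{eq:Boussinesq Mn} and Corollary \ref{cor:bound for an(t). bn(t)} with $b_n(t)\le C^{(1+\overline{k}_n+\mu)(\frac{1}{1-\gamma})^n}$ and $\overline{k}_n\le k_{\max}$. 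This should give
\[
\left|\left|\frac{\partial\rho^{(n)}}{\partial x_2}(t,\cdot)\right|\right|_{L^\infty}\lesssim C^{(2\delta+2\mu)(\frac{1}{1-\gamma})^n}.
\]
The composition rule \eqref{eq:property Calpha composition} then costs an extra factor $\max\{a_n,b_n\}^\alpha\le C^{\alpha(1+k_{\max}+\mu)(\frac{1}{1-\gamma})^n}$ for the $\dot C^\alpha$ seminorm. Because only one layer is active, these are bounds for $\frac{\partial\rho}{\partial x_2}(t,\cdot)$ itself.

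\textbf{Step 2: locating the support.} The active layer is supported in $\phi^{(n)}(t,[-\frac{16\pi}{\lambda_n},\frac{16\pi}{\lambda_n}]^2)$. This is a rectangle centred at $(\phi^{(n)}_1(t,0),0)$ with half-widths $\frac{16\pi}{\lambda_na_n(t)}$ and $\frac{16\pi}{\lambda_nb_n(t)}$. Using $a_n\ge C^{(1-k_{\max}-\mu)(\frac{1}{1-\gamma})^n}$, \eqref{eq:Boussinesq lambda_n} and \eqref{eq:Boussinesq identity Lambda kmax}, both half-widths are $\lesssim C^{-(\frac23-\mu)(\frac{1}{1-\gamma})^n}$. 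I then need $|\phi^{(n)}_1(t,0)|$ to be of the same size or smaller for $t\in[t_n,t_{n+1}]$. By Choice \ref{choice:blow-up origin} and Lemma \ref{lem:blow-up point}, $|\phi^{(n)}_1(1,0)|\le8\pi C^{-(\frac{1}{1-\gamma})^n}$, so it remains to control the drift $|\phi^{(n)}_1(t,0)-\phi^{(n)}_1(1,0)|$.

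I expect this drift estimate to be the main obstacle. I would first try integrating the ODE \eqref{eq:Boussinesq dynamics}, writing $\phi^{(n)}_1=\sum_{m\le n}\Xi^{(m)}$. Each $\Xi^{(m)}$ moves by at most $\lesssim\frac{8\pi}{a_{m-1}(1)}$, as in Proposition \ref{prop:layer center never leaves cutoff area}. However, the bound actually needed is $|\Xi^{(m)}(t)-\Xi^{(m)}(1)|$ for $m\le n$ and $t\ge t_n$. For $m<n$ this should follow because the pendulum $\Xi^{(m)}$ has essentially converged by time $t_{m+1}$, which is the cosh-profile decay. For the current term $m=n$, the crude bound $\frac{8\pi}{a_{n-1}(1)}$ is too large, so I would combine it with $\frac{1}{a_n}$-scale control.

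If this is too delicate, my fallback is to replace $C^{(\frac{1}{1-\gamma})^n}$ by $C^{(\frac{1}{1-\gamma})^{n-1}}$. That changes the final exponents only by a factor absorbed into the $\lesssim$ once $\gamma$ is fixed. Either way, the conclusion of Step 2 is that if the support meets $\mathbb{R}^2\setminus B(0;r)$, then $r\lesssim C^{-(\frac23-\mu)(\frac{1}{1-\gamma})^n}$, that is, $C^{(\frac{1}{1-\gamma})^n}\lesssim r^{-1/(\frac23-\mu)}$. Substituting this into Step 1 gives both displayed bounds.

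\textbf{Step 3: the $L^p$ statement.} I would bound
\[
\left|\left|\frac{\partial\rho}{\partial x_2}(t,\cdot)\right|\right|_{L^p}\le\|\cdot\|_{L^\infty}\,|\mathrm{supp}|^{1/p}\lesssim C^{(2\delta+2\mu)(\frac{1}{1-\gamma})^n}\bigl(\lambda_n^{-2}a_n^{-1}b_n^{-1}\bigr)^{1/p}.
\]
Since $a_nb_n=C^{2(\frac{1}{1-\gamma})^n}$, the right-hand side is $C^{(2\delta+2\mu-\frac{2}{p}(1-\Lambda))(\frac{1}{1-\gamma})^n}$. This tends to $0$ as $n\to\infty$ uniformly for $p\le p_0$, provided $\delta,\mu$ are small relative to $\frac{1-\Lambda}{p_0}$. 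On each $[0,1-\varepsilon]$ the function is smooth, and the bound shows that $\|\frac{\partial\rho}{\partial x_2}(t,\cdot)\|_{L^p}\to0$ as $t\to1^-$. Setting the value $0$ at $t=1$ therefore gives $C^0_tL^p_x$ continuity.
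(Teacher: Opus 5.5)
Your Step 1 is sound: it is the paper's per-layer bound, and restricting to the single active layer is a legitimate simplification of the paper's sum over $m\le n-1$. Step 2, however, has a genuine gap that neither of your routes closes.

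For the term $m=n$: since $(1-t_{n+1})/(1-t_n)\approx\frac{1}{1-\gamma}C^{-\delta\gamma(\frac{1}{1-\gamma})^n}$, the active interval $[t_n,t_{n+1}]$ contains essentially the whole half-swing of $a_{n-1}(1)\Xi^{(n)}$ from $\approx0$ to $\approx\pi$. So $|\Xi^{(n)}(t)-\Xi^{(n)}(1)|$ is of order $1/a_{n-1}(1)=C^{-(\frac{1}{1-\gamma})^{n-1}}$ on most of that interval, and there is no $1/a_n$-scale control to be had.

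Nor do the earlier pendula freeze. In \eqref{eq:Boussinesq dynamics}, look at the $m=1$ summand $B_1b_1\sin(a_1(\phi^{(n)}_1-\phi^{(1)}_1))$. Its angle differs from $a_1\Xi^{(2)}$ by at most $8\pi a_1/a_2$ (Proposition \ref{prop:layer center never leaves cutoff area}), so it stays close to $\pi$ but never reaches it. Hence this summand is $\gtrsim B_1b_1C^{-k_{\max}(\frac{1}{1-\gamma})^2}$, while the summands $m\ge2$ are much smaller in absolute value. Therefore $\partial_t\phi^{(n)}_1(t,0)\ge c_0>0$ on $[t_3,1]$, uniformly in $n\ge3$. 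Combined with Lemma \ref{lem:blow-up point}, this gives $|\phi^{(n)}_1(t,0)|\ge c_0(1-t)-8\pi C^{-(\frac{1}{1-\gamma})^n}$. For $t\le t_{n+1}$ the right-hand side is $\gtrsim C^{-\delta(\frac{1}{1-\gamma})^n}$, far larger than both $C^{-(\frac23-\mu)(\frac{1}{1-\gamma})^n}$ and $C^{-(\frac{1}{1-\gamma})^{n-1}}$. In other words, the singular structure is advected at nonzero speed and reaches the origin only at $t=1$.

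Your fallback fails independently. Since $C^{(\frac{1}{1-\gamma})^n}=\bigl(C^{(\frac{1}{1-\gamma})^{n-1}}\bigr)^{\frac{1}{1-\gamma}}$, shifting the index multiplies the exponent of $r$ by $\frac{1}{1-\gamma}$. That changes the statement, not the implicit constant.

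So the localization you need is false, and with it the displayed bounds around the fixed blow-up point. Choose $t^*\in[t_n,t_{n+1}]$ maximizing $h^{(n)}b_n$. Evaluating $b_n\partial_{x_2}\widetilde{\rho^{(n)}}^n$ at $x=(\frac{\pi}{2},\frac{\pi}{2})$ gives $\|\frac{\partial\rho}{\partial x_2}(t^*,\cdot)\|_{L^\infty}\ge 2M_n/(t_{n+1}-t_n)\ge 2M_n$. By the drift bound, for $n$ large the support of $\rho(t^*,\cdot)$ misses $B(0;r)$ with $r=\frac{c_0}{2}(1-t_{n+1})$. By \eqref{eq:Boussinesq time scale} and \eqref{eq:Boussinesq Mn}, $M_n(1-t_{n+1})=\mathrm{arccosh}\bigl(C^{k_{\max}(\frac{1}{1-\gamma})^{n+1}}\bigr)\ge1$. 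Hence the left-hand side of the first estimate is $\ge c_0/r$ along a sequence $r\to0$, which is incompatible with $r^{-\frac{2\delta+2\mu}{2/3-\mu}}$.

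The paper's proof does not get around this either. It obtains the localization by applying Lemma \ref{lem:blow-up point}, which is a statement about $\phi^{(n)}_1(1,0)$, to $\phi^{(n)}_1(t,0)$ at every $t$. That is exactly the step you were right to distrust. What your Steps 1--2 actually prove is the same pair of estimates with $B(0;r)$ replaced by balls centred at the moving centre $(\phi^{(n)}_1(t,0),0)$, for $t\in[t_n,t_{n+1}]$.

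Your Step 3 is correct and independent of location. The bound $\|\partial_{x_2}\rho\|_{L^\infty}|\mathrm{supp}\,\rho^{(n)}|^{1/p}$, together with $a_nb_n=C^{2(\frac{1}{1-\gamma})^n}$, gives the $C^0_tL^p_x$ claim and the vanishing at $t=1$. It is also more robust than the paper's route, which integrates $|x|^{-\epsilon}$ over $B(0;r(t))$ and so inherits the faulty localization.
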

\begin{proof}
Eliminating a ball of radius $r$ around the blow-up point amounts
to keeping a finite number of layers. Since, by equations \eqref{eq:Boussinesq psi rho} and \eqref{eq:Boussinesq phi},
every density layer $\rho^{\left(n\right)}$ has support
\[
\text{supp}\rho^{\left(n\right)}\left(t,\cdot\right)\subseteq\left[\phi^{\left(n\right)}_{1}\left(t,0\right)-\frac{16\pi}{\lambda_{n}a_{n}\left(t\right)},\phi^{\left(n\right)}_{1}\left(t,0\right)+\frac{16\pi}{\lambda_{n}a_{n}\left(t\right)}\right]\times\left[-\frac{16\pi}{\lambda_{n}b_{n}\left(t\right)},\frac{16\pi}{\lambda_{n}b_{n}\left(t\right)}\right],
\]
using equations \eqref{eq:Boussinesq lambda_n} and \eqref{eq:Boussinesq def kmax}
and Corollary \ref{cor:bound for an(t). bn(t)}, we may bound 
\[
\begin{aligned}\text{supp}\rho^{\left(n\right)}\left(t,\cdot\right) & \subseteq\left[\phi^{\left(n\right)}_{1}\left(t,0\right)-\frac{16\pi}{C^{\left(1-\Lambda-k_{\max}-\mu\right)\left(\frac{1}{1-\gamma}\right)^{n}}},\phi^{\left(n\right)}_{1}\left(t,0\right)+\frac{16\pi}{C^{\left(1-\Lambda-k_{\max}-\mu\right)\left(\frac{1}{1-\gamma}\right)^{n}}}\right]\times\\
 & \quad\times\left[-\frac{16\pi}{C^{\left(1-\Lambda-\mu\right)\left(\frac{1}{1-\gamma}\right)^{n}}},\frac{16\pi}{C^{\left(1-\Lambda-\mu\right)\left(\frac{1}{1-\gamma}\right)^{n}}}\right].
\end{aligned}
\]
Using Lemma \ref{lem:blow-up point} and Choice \ref{choice:blow-up origin},
we can write
\[
\begin{aligned}\text{supp}\rho^{\left(n\right)}\left(t,\cdot\right) & \subseteq\left[-\frac{16\pi}{C^{\left(1-\Lambda-k_{\max}-\mu\right)\left(\frac{1}{1-\gamma}\right)^{n}}}-8\pi C^{-\left(\frac{1}{1-\gamma}\right)^{n}},\frac{16\pi}{C^{\left(1-\Lambda-k_{\max}-\mu\right)\left(\frac{1}{1-\gamma}\right)^{n}}}+8\pi C^{-\left(\frac{1}{1-\gamma}\right)^{n}}\right]\times\\
 & \quad\times\left[-\frac{16\pi}{C^{\left(1-\Lambda-\mu\right)\left(\frac{1}{1-\gamma}\right)^{n}}},\frac{16\pi}{C^{\left(1-\Lambda-\mu\right)\left(\frac{1}{1-\gamma}\right)^{n}}}\right].
\end{aligned}
\]
Clearly, the slowest decreasing component is the summand $\frac{16\pi}{C^{\left(1-\Lambda-k_{\max}-\mu\right)\left(\frac{1}{1-\gamma}\right)^{n}}}$.
Thus, by equation \eqref{eq:Boussinesq identity Lambda kmax}, if
we take
\begin{equation}
r\geq17\pi C^{-\left(\frac{2}{3}-\mu\right)\left(\frac{1}{1-\gamma}\right)^{n}}\label{eq:Lp choice r}
\end{equation}
we can guarantee that the support of layer $n$ does not intersect
$\mathbb{R}^{2}\setminus B\left(0;r\right)$. Then, with this condition
on $r$, we have
\begin{equation}
\left.\frac{\partial\rho}{\partial x_{2}}\left(t,\cdot\right)\right|_{\mathbb{R}^{2}\setminus B\left(0;r\right)}=\sum^{n-1}_{m=1}\frac{\partial\rho^{\left(m\right)}}{\partial x_{2}}\left(t,\cdot\right).\label{eq:Lp r reduction}
\end{equation}
On the one hand, by equations \eqref{eq:Boussinesq psi rho} and \eqref{eq:Boussinesq def nabla_tilde},
\begin{equation}
\begin{aligned}\left(0,1\right)\cdot\tilde{\nabla}^{n}\widetilde{\rho^{\left(n\right)}}^{n}\left(t,x\right) & =\frac{\mathrm{d}}{\mathrm{d}t}\left[B_{n}\left(t\right)\left(a_{n}\left(t\right)^{2}+b_{n}\left(t\right)^{2}\right)\right]\varphi\left(\lambda_{n}x_{1}\right)\varphi\left(\lambda_{n}x_{2}\right)\sin\left(x_{1}\right)\sin\left(x_{2}\right)+\\
 & \quad-\lambda_{n}\frac{\mathrm{d}}{\mathrm{d}t}\left[B_{n}\left(t\right)\left(a_{n}\left(t\right)^{2}+b_{n}\left(t\right)^{2}\right)\right]\varphi\left(\lambda_{n}x_{1}\right)\varphi'\left(\lambda_{n}x_{2}\right)\sin\left(x_{1}\right)\cos\left(x_{2}\right).
\end{aligned}
\label{eq:Lp one hand}
\end{equation}
On the other hand, by the definition of $\tilde{\nabla}^{n}$ (see
equation \eqref{eq:Boussinesq def nabla_tilde}), we know that
\begin{equation}
\begin{aligned}\left(0,1\right)\cdot\tilde{\nabla}^{n}\widetilde{\rho^{\left(n\right)}}^{n}\left(t,x\right) & =b_{n}\left(t\right)\frac{\partial\widetilde{\rho^{\left(n\right)}}^{n}}{\partial x_{2}}\left(t,x\right)=b_{n}\left(t\right)\frac{\partial}{\partial x_{2}}\left(\rho^{\left(n\right)}\left(t,\phi^{\left(n\right)}\left(t,x\right)\right)\right)=\\
 & =b_{n}\left(t\right)\frac{\partial\phi^{\left(n\right)}}{\partial x_{2}}\cdot\nabla\rho^{\left(n\right)}\left(t,\phi^{\left(n\right)}\left(t,x\right)\right)=\frac{\partial\rho^{\left(n\right)}}{\partial x_{2}}\left(t,\phi^{\left(n\right)}\left(t,x\right)\right).
\end{aligned}
\label{eq:Lp other hand}
\end{equation}
Combining equations \eqref{eq:Lp r reduction}, \eqref{eq:Lp one hand}
and \eqref{eq:Lp other hand} with equations \eqref{eq:property Calpha multiplication}
and \eqref{eq:property Calpha composition}, equations \eqref{eq:Boussinesq psi rho}, \eqref{eq:Boussinesq Bn(t)}, \eqref{eq:Boussinesq an(t) bn(t)}, \eqref{eq:Boussinesq Mn}, and \eqref{eq:Boussinesq jacobian inverse},
we deduce that
\[
\begin{aligned}\left|\left|\frac{\partial\rho}{\partial x_{2}}\left(t,\cdot\right)\right|\right|_{L^{\infty}\left(\mathbb{R}^{2}\setminus B\left(0;r\right)\right)} & \lesssim_{\delta,\varphi}\sum^{n-1}_{m=1}YC^{\delta\left(\frac{1}{1-\gamma}\right)^{m}}\frac{h^{\left(m\right)}\left(t\right)b_{m}\left(t\right)}{\int^{1}_{t_{m}}h^{\left(m\right)}\left(s\right)b_{m}\left(s\right)\mathrm{d}s},\\
\left|\left|\frac{\partial\rho}{\partial x_{2}}\left(t,\cdot\right)\right|\right|_{\dot{C}^{\alpha}\left(\mathbb{R}^{2}\setminus B\left(0;r\right)\right)} & \lesssim_{\delta,\varphi}\sum^{n-1}_{m=1}YC^{\delta\left(\frac{1}{1-\gamma}\right)^{m}}\max\left\{ a_{m}\left(t\right),b_{m}\left(t\right)\right\} ^{\alpha}\frac{h^{\left(m\right)}\left(t\right)b_{m}\left(t\right)}{\int^{1}_{t_{m}}h^{\left(m\right)}\left(s\right)b_{m}\left(s\right)\mathrm{d}s}.
\end{aligned}
\]
Lemma \ref{lem:estimate of integral hn bn}, equations \eqref{eq:Boussinesq an(t) bn(t)} and \eqref{eq:Boussinesq def kmax}
and Corollary \ref{cor:bound for an(t). bn(t)} provide
\[
\begin{aligned}\left|\left|\frac{\partial\rho}{\partial x_{2}}\left(t,\cdot\right)\right|\right|_{L^{\infty}\left(\mathbb{R}^{2}\setminus B\left(0;r\right)\right)} & \lesssim_{\delta,\varphi,\mu}\sum^{n-1}_{m=1}Y^{2}C^{\delta\left(\frac{1}{1-\gamma}\right)^{m}}\frac{C^{\left(1+k_{\max}+\mu\right)\left(\frac{1}{1-\gamma}\right)^{m}}}{C^{\left(1+k_{\max}-\mu-\delta\left(1-\gamma\right)\right)\left(\frac{1}{1-\gamma}\right)^{m}}},\\
\left|\left|\frac{\partial\rho}{\partial x_{2}}\left(t,\cdot\right)\right|\right|_{\dot{C}^{\alpha}\left(\mathbb{R}^{2}\setminus B\left(0;r\right)\right)} & \lesssim_{\delta,\varphi,\mu}\sum^{n-1}_{m=1}Y^{2}C^{\delta\left(\frac{1}{1-\gamma}\right)^{m}}\frac{C^{\left(1+\alpha\right)\left(1+k_{\max}+\mu\right)\left(\frac{1}{1-\gamma}\right)^{m}}}{C^{\left(1+k_{\max}-\mu-\delta\left(1-\gamma\right)\right)\left(\frac{1}{1-\gamma}\right)^{m}}}.
\end{aligned}
\]
Finally, Lemma \ref{lem:estimate sum superexponential} lets us write
\[
\begin{aligned}\left|\left|\frac{\partial\rho}{\partial x_{2}}\left(t,\cdot\right)\right|\right|_{L^{\infty}\left(\mathbb{R}^{2}\setminus B\left(0;r\right)\right)} & \lesssim_{\delta,\varphi,\mu}Y^{2}C^{\left(2\mu+2\delta\right)\left(\frac{1}{1-\gamma}\right)^{n-1}},\\
\left|\left|\frac{\partial\rho}{\partial x_{2}}\left(t,\cdot\right)\right|\right|_{\dot{C}^{\alpha}\left(\mathbb{R}^{2}\setminus B\left(0;r\right)\right)} & \lesssim_{\delta,\varphi,\mu}Y^{2}C^{\left(\alpha\left(1+k_{\max}\right)+2\delta+3\mu\right)\left(\frac{1}{1-\gamma}\right)^{n-1}}.
\end{aligned}
\]
From equation \eqref{eq:Lp choice r} follows that the bounds above
are valid $\forall r\geq17\pi C^{-\left(\frac{2}{3}-\mu\right)\left(\frac{1}{1-\gamma}\right)^{n}}$.
Now, for which values of $r$ is the bound above optimal in $n\in\mathbb{N}$?
Again, by equation \eqref{eq:Lp choice r}, we know that, if we had
$r\geq17\pi C^{-\left(\frac{2}{3}-\mu\right)\left(\frac{1}{1-\gamma}\right)^{n-1}}$,
we could change $n-1$ by $n-2$ in the bounds above, so the optimal
interval for the bounds above is
\[
17\pi C^{-\left(\frac{2}{3}-\mu\right)\left(\frac{1}{1-\gamma}\right)^{n}}\leq r\leq17\pi C^{-\left(\frac{2}{3}-\mu\right)\left(\frac{1}{1-\gamma}\right)^{n-1}}.
\]
Thereby, we can write $C^{\left(\frac{1}{1-\gamma}\right)^{n-1}}$
in terms of $r$ as follows
\begin{equation}
r\leq17\pi\frac{1}{\left[C^{\left(\frac{1}{1-\gamma}\right)^{n-1}}\right]^{\left(\frac{2}{3}-\mu\right)}}\iff C^{\left(\frac{1}{1-\gamma}\right)^{n-1}}\leq\left(17\pi\right)^{\frac{1}{\frac{2}{3}-\mu}}\frac{1}{r^{\frac{1}{\frac{2}{3}-\mu}}},\label{eq:relation r C}
\end{equation}
leading to
\[
\begin{aligned}\left|\left|\frac{\partial\rho}{\partial x_{2}}\left(t,\cdot\right)\right|\right|_{L^{\infty}\left(\mathbb{R}^{2}\setminus B\left(0;r\right)\right)} & \lesssim_{\delta,\varphi,\mu,Y}\frac{1}{r^{\frac{2\delta+2\mu}{\frac{2}{3}-\mu}}},\\
\left|\left|\frac{\partial\rho}{\partial x_{2}}\left(t,\cdot\right)\right|\right|_{\dot{C}^{\alpha}\left(\mathbb{R}^{2}\setminus B\left(0;r\right)\right)} & \lesssim_{\delta,\varphi,\mu,Y}\frac{1}{r^{\frac{\alpha\left(1+k_{\max}\right)+2\delta+3\mu}{\frac{2}{3}-\mu}}}.
\end{aligned}
\]

Let us proceed with the $L^{p}\left(\mathbb{R}^{2}\right)$ statement.
First of all, notice that $\forall t\in\left[0,1\right)$, $\frac{\partial\rho}{\partial x_{2}}\left(t,\cdot\right)$
is nothing but a finite sum of compositions of smooth functions in
space and time. Thus, $\frac{\partial\rho}{\partial x_{2}}\in C^{0}_{t}L^{p}_{x}\left(\left[0,1-\varepsilon\right]\times\mathbb{R}^{2}\right)$
$\forall\varepsilon>0$. Let us study $\left|\left|\frac{\partial\rho}{\partial x_{2}}\left(t,\cdot\right)\right|\right|^{p}_{L^{p}\left(\mathbb{R}^{2}\right)}$
as $t\to1$. To do this, we first have to compute how the support
of $\rho^{\left(n\right)}$ diminishes in time. For this, let us try
to solve for $C^{-\left(\frac{1}{1-\gamma}\right)^{n}}$ as a function
of $1-t_{n+1}$ using equation \eqref{eq:Boussinesq time scale}:
\[
1-t_{n+1}=\frac{1}{Y}C^{-\delta\left(\frac{1}{1-\gamma}\right)^{n}}\mathrm{arccosh}\left(C^{k_{\max}\left(\frac{1}{1-\gamma}\right)^{n+1}}\right)\iff C^{-\delta\left(\frac{1}{1-\gamma}\right)^{n}}=\frac{Y\left(1-t_{n+1}\right)}{\mathrm{arccosh}\left(C^{k_{\max}\left(\frac{1}{1-\gamma}\right)^{n+1}}\right)}.
\]
Since $k_{\max}\ge\frac{1}{100}$ and $1-\gamma\leq\frac{1}{2}$ by
subsection \ref{subsec:summary Boussinesq}, $n\ge1$ and $\text{arccosh}$
is an increasing function, we may bound
\[
C^{-\delta\left(\frac{1}{1-\gamma}\right)^{n}}\lesssim_{Y}\left(1-t_{n+1}\right)\implies C^{-\left(\frac{1}{1-\gamma}\right)^{n}}\lesssim_{Y,\delta}\left(1-t_{n+1}\right)^{\frac{1}{\delta}}.
\]
Recall that, from our choice of $r$ in \eqref{eq:Lp choice r}, we
know that
\[
\text{supp}\rho^{\left(n\right)}\subseteq B\left(0;r\right),
\]
and we can bound this $r$ by
\[
r\lesssim C^{-\left(\frac{2}{3}-\mu\right)\left(\frac{1}{1-\gamma}\right)^{n}}\lesssim_{Y,\delta}\left(1-t_{n+1}\right)^{\frac{1}{\delta}\left(\frac{2}{3}-\mu\right)}.
\]
Since there is only one active density layer for each given time (see
item \ref{item:Boussinesq active layer} of the summary), we may ensure
that, $\forall t\in\left[t_{n},t_{n+1}\right]$,
\[
\text{supp}\rho\left(t,\cdot\right)\subseteq B\left(0;r\right),\quad r\lesssim_{Y,\delta}\left(1-t_{n+1}\right)^{\frac{1}{\delta}\left(\frac{2}{3}-\mu\right)}\leq\left(1-t\right)^{\frac{1}{\delta}\left(\frac{2}{3}-\mu\right)}.
\]
Thus, actually
\begin{equation}
\text{supp}\rho\left(t,\cdot\right)\subseteq B\left(0;r\left(t\right)\right),\quad r\left(t\right)\lesssim_{Y,\delta}\left(1-t\right)^{\frac{1}{\delta}\left(\frac{2}{3}-\mu\right)}\quad\forall t\in\left[0,1\right].\label{eq:time-bound r}
\end{equation}
Thereby, the first part of the statement shows that
\[
\begin{aligned}\left|\left|\frac{\partial\rho}{\partial x_{2}}\left(t,\cdot\right)\right|\right|^{p}_{L^{p}\left(\mathbb{R}^{2}\right)} & =\int_{\mathbb{R}^{2}}\left|\frac{\partial\rho}{\partial x_{2}}\left(t,x\right)\right|^{p}\mathrm{d}x=\int_{\text{supp}\rho\left(t,\cdot\right)}\left|\frac{\partial\rho}{\partial x_{2}}\left(t,x\right)\right|^{p}\mathrm{d}x\lesssim_{\delta,\varphi,\mu,Y}\\
 & \lesssim_{\delta,\varphi,\mu,Y}\int_{\text{supp}\rho\left(t,\cdot\right)}\frac{1}{\left|\left|x\right|\right|^{p\frac{2\delta+2\mu}{\frac{2}{3}-\mu}}_{2}}\mathrm{d}x.
\end{aligned}
\]
Changing into polar coordinates, we arrive at
\[
\left|\left|\frac{\partial\rho}{\partial x_{2}}\left(t,\cdot\right)\right|\right|^{p}_{L^{p}\left(\mathbb{R}^{2}\right)}\lesssim_{\delta,\varphi,\mu,Y}\int^{r\left(t\right)}_{0}R^{1-p\frac{2\delta+2\mu}{\frac{2}{3}-\mu}}\mathrm{d}R=\frac{r\left(t\right)^{2-p\frac{2\delta+2\mu}{\frac{2}{3}-\mu}}}{2-p\frac{2\delta+2\mu}{\frac{2}{3}-\mu}}
\]
provided that $2-p\frac{\left(2\delta+2\mu\right)}{\frac{2}{3}-\mu}>0$.
It is clear that we can take $\delta$ and $\mu$ sufficiently small
(depending on $p_{0}$) so that $2-p\frac{\left(2\delta+2\mu\right)}{\frac{2}{3}-\mu}>0$
$\forall p\in\left[1,p_{0}\right]$. In view of equation \eqref{eq:time-bound r},
this ensures that
\[
\left|\left|\frac{\partial\rho}{\partial x_{2}}\left(t,\cdot\right)\right|\right|^{p}_{L^{p}\left(\mathbb{R}^{2}\right)}\lesssim_{\delta,\varphi,\mu,Y}r\left(t\right)^{\beta}\lesssim_{Y,\delta}\left(1-t\right)^{\frac{1}{\delta}\left(\frac{2}{3}-\mu\right)\beta}
\]
for some $\beta>0$. In this manner,
\[
\left|\left|\frac{\partial\rho}{\partial x_{2}}\left(t,\cdot\right)\right|\right|_{L^{p}\left(\mathbb{R}^{2}\right)}\xrightarrow[t\to1^{-}]{}0\quad\forall p\in\left[1,p_{0}\right].
\]
This limit simultaneously proves that $\frac{\partial\rho}{\partial x_{2}}\left(1,\cdot\right)\equiv0$
in $L^{p}\left(\mathbb{R}^{2}\right)$ and that $\frac{\partial\rho}{\partial x_{2}}:\left[0,1\right]\to L^{p}\left(\mathbb{R}^{2}\right)$
is continuous at $t=1$. Hence, we may finally conclude that $\frac{\partial\rho}{\partial x_{2}}\in C^{0}_{t}L^{p}_{x}\left(\left[0,1\right]\times\mathbb{R}^{2}\right)$
$\forall p\in\left[1,p_{0}\right]$.
\end{proof}

\begin{prop}
\label{prop:bad bound drhodx1}Let $\alpha\in\left[\frac{\alpha_{*}}{1+\alpha_{*}},1\right)$.
Uniformly in time $t\in\left[0,1\right]$, we have
\[
\left|\left|\frac{\partial\rho}{\partial x_{1}}\left(t,\cdot\right)\right|\right|_{\dot{C}^{\alpha}\left(\mathbb{R}^{2}\setminus B\left(0;r\right)\right)}\lesssim_{\varphi,\mu,Y,\delta}\frac{1}{r^{\frac{-k_{\max}+\alpha\left(1+k_{\max}\right)+2\delta+3\mu}{\frac{2}{3}-\mu}}}.
\]
\end{prop}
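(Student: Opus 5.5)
The proof follows the one for Proposition \ref{prop:bad bound drhodx2}, with the per-layer bounds for $\frac{\partial\rho^{(m)}}{\partial x_{1}}$ already obtained in \eqref{eq:bound drhodx1 n} in place of those for $\frac{\partial\rho^{(m)}}{\partial x_{2}}$.

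First, fix $r$ and pick $n$ so that $17\pi C^{-\left(\frac{2}{3}-\mu\right)\left(\frac{1}{1-\gamma}\right)^{n}}\le r\le17\pi C^{-\left(\frac{2}{3}-\mu\right)\left(\frac{1}{1-\gamma}\right)^{n-1}}$. The support analysis in the proof of Proposition \ref{prop:bad bound drhodx2} shows that every layer $m\ge n$ is supported inside $B(0;r)$. Here we use Lemma \ref{lem:blow-up point}, Choice \ref{choice:blow-up origin}, Corollary \ref{cor:bound for an(t). bn(t)} and the identity $1-\Lambda-k_{\max}=\frac{2}{3}$. Hence, exactly as in \eqref{eq:Lp r reduction},
\[
\left.\frac{\partial\rho}{\partial x_{1}}\left(t,\cdot\right)\right|_{\mathbb{R}^{2}\setminus B\left(0;r\right)}=\sum^{n-1}_{m=1}\frac{\partial\rho^{\left(m\right)}}{\partial x_{1}}\left(t,\cdot\right).
\]

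Second, bound the Hölder seminorm of each summand on all of $\mathbb{R}^{2}$ by \eqref{eq:bound drhodx1 n}:
\[
\left|\left|\frac{\partial\rho^{\left(m\right)}}{\partial x_{1}}\left(t,\cdot\right)\right|\right|_{\dot{C}^{\alpha}\left(\mathbb{R}^{2}\right)}\lesssim_{\varphi,\mu,Y}C^{\left(-k_{\max}+\alpha\left(1+k_{\max}\right)+2\delta+3\mu\right)\left(\frac{1}{1-\gamma}\right)^{m}}.
\]
That derivation did not use the restriction $\alpha<\frac{\alpha_{*}}{1+\alpha_{*}}$; the restriction entered only when summing the series. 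In the present range $\alpha\ge\frac{\alpha_{*}}{1+\alpha_{*}}$ the exponent $e:=-k_{\max}+\alpha(1+k_{\max})+2\delta+3\mu$ is strictly positive (since $k_{\max}=\alpha_{*}$ and $\delta,\mu>0$), so the terms grow superexponentially in $m$. Taking the seminorm on the smaller set $\mathbb{R}^{2}\setminus B(0;r)$ only decreases it. Summing over $m\le n-1$, Lemma \ref{lem:estimate sum superexponential} with exponent $e$ in place of $\delta$ gives
\[
\left|\left|\frac{\partial\rho}{\partial x_{1}}\left(t,\cdot\right)\right|\right|_{\dot{C}^{\alpha}\left(\mathbb{R}^{2}\setminus B\left(0;r\right)\right)}\lesssim_{\varphi,\mu,Y,\delta}C^{e\left(\frac{1}{1-\gamma}\right)^{n-1}},
\]
where the implicit constant depends on $e$, hence on $\alpha$ as well.

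Third, convert this to a power of $r$ using \eqref{eq:relation r C}, that is, $C^{\left(\frac{1}{1-\gamma}\right)^{n-1}}\le(17\pi)^{\frac{1}{\frac{2}{3}-\mu}}r^{-\frac{1}{\frac{2}{3}-\mu}}$. Raising to the power $e>0$ yields exactly the claimed bound $r^{-e/(\frac{2}{3}-\mu)}$.

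All estimates are uniform in $t\in[0,1]$: the per-layer bounds were uniform, and $t=1$ is covered by the limit object constructed in Proposition \ref{prop:bounded density}. For $r$ larger than the first threshold $17\pi C^{-\left(\frac{2}{3}-\mu\right)\left(\frac{1}{1-\gamma}\right)}$, only finitely many fixed layers (none or the first) contribute, and the bound is trivial after adjusting constants.

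The main obstacle is making sure Lemma \ref{lem:estimate sum superexponential} applies with the exponent $e$ rather than $\delta$. This needs $e>0$, which holds because of the lower limit of the $\alpha$ range together with $\delta,\mu>0$. It also needs $C$ large enough, depending on $e$; I would absorb that dependence into the implicit constant. At the endpoint $\alpha=\frac{\alpha_{*}}{1+\alpha_{*}}$ one still has $e=2\delta+3\mu>0$, so the lemma applies there too.
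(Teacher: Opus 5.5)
Your proposal is the paper's proof step for step: reduction to $\sum_{m\le n-1}$, the per-layer bound \eqref{eq:bound drhodx1 n}, Lemma \ref{lem:estimate sum superexponential} with exponent $e>0$, then \eqref{eq:relation r C}. Your side remarks are correct: that per-layer bound never used $\alpha<\frac{\alpha_{*}}{1+\alpha_{*}}$, and the implicit constant does depend on $e$, hence on $\alpha$.

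The step that fails is the first one, which you import from the proof of Proposition \ref{prop:bad bound drhodx2}, where it is not justified either. To place every layer $m\ge n$ inside $B(0;r)$ for all $t$, with $r\approx17\pi C^{-(\frac{2}{3}-\mu)(\frac{1}{1-\gamma})^{n}}$, you need $|\phi^{(m)}_{1}(t,0)|\lesssim C^{-(\frac{2}{3}-\mu)(\frac{1}{1-\gamma})^{m}}$ at every $t\in(t_{m},t_{m+1})$. Those are the only times at which $\rho^{(m)}(t,\cdot)\neq0$, but Lemma \ref{lem:blow-up point} only locates $\phi^{(m)}_{1}(1,0)$.

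For $t<1$ the layer center is still being transported. By item 4 of subsection \ref{subsec:summary Boussinesq}, with $\Xi^{(m)}=\phi^{(m)}_{1}-\phi^{(m-1)}_{1}$, the quantity $a_{m-1}(1)\Xi^{(m)}$ runs from $\approx0$ at $t_{m}$ to $\approx\pi$ at $t=1$. It crosses $\frac{\pi}{2}$ at $\hat{\hat{t}}=\frac{1}{2}$. That is exactly where $b_{m}$ peaks, and Lemma \ref{lem:estimate of integral hn bn} forces $h^{(m)}$ to be non-negligible near there.

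At such times the center sits at distance roughly $1/a_{m-1}(1)=C^{-(1-\gamma)(\frac{1}{1-\gamma})^{m}}$ from the origin, up to $C^{O(\mu+\delta)(\frac{1}{1-\gamma})^{m}}$ factors. Since $1-\gamma<\frac{1}{2}<\frac{2}{3}-\mu$, this is vastly larger than your radius. So the identity $\partial_{x_1}\rho|_{\mathbb{R}^2\setminus B(0;r)}=\sum_{m\le n-1}\partial_{x_1}\rho^{(m)}$ fails, and \eqref{eq:relation r C} is not the right link between $r$ and the number of contributing layers.

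This is not a bookkeeping slip. With a threshold $r$ of order $C^{-(\frac{1}{1-\gamma})^{n-1}}$ or larger, the same summation gives at best an exponent of order $e/(1-\gamma)$ rather than $e/(\frac{2}{3}-\mu)$. For $\alpha$ near $1$ the stated bound in fact seems to fail. Here is the heuristic:
\begin{itemize}
\item Take $t$ near $\hat{\hat{t}}=\frac{1}{2}$ for layer $n$ with $h^{(n)}(t)$ non-negligible. Take $r$ a small multiple of the center's distance to the origin.
\item Then $\rho(t,\cdot)=\rho^{(n)}(t,\cdot)$ lies entirely outside $B(0;r)$.
\item Its oscillation in $x_{2}$, together with $a_nb_n=C^{2(\frac{1}{1-\gamma})^{n}}$, gives $\|\partial_{x_1}\rho^{(n)}(t,\cdot)\|_{\dot{C}^{\alpha}}\gtrsim a_{n}b_{n}^{\alpha}M_{n}h^{(n)}(t)/\int_{t_n}^{1}h^{(n)}b_{n}\gtrsim C^{(-2k_{\max}+\alpha(1+k_{\max})-O(\mu+\delta))(\frac{1}{1-\gamma})^{n}}$.
\item This is $r^{-\kappa}$ with $\kappa\approx\frac{-2k_{\max}+\alpha(1+k_{\max})}{1-\gamma}$.
\item For $\alpha$ close to $1$, $\kappa$ is about $\frac{1-k_{\max}}{1-\gamma}>2(1-k_{\max})\approx1.69$. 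The claimed exponent is then only about $\frac{3}{2}$.
\end{itemize}
What is missing, in your proposal and in the paper alike, is control of $\phi^{(n)}_{1}(t,0)$ over the whole active window of $\rho^{(n)}$ at the scale $C^{-(\frac{2}{3}-\mu)(\frac{1}{1-\gamma})^{n}}$. Such control is not available.
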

\begin{proof}
As we argued in the proof of Proposition \ref{prop:bad bound drhodx2},
the $n$-th density layer is contained in the ball $B\left(0;r\right)$
with
\[
r\geq17\pi C^{-\left(\frac{2}{3}-\mu\right)\left(\frac{1}{1-\gamma}\right)^{n}}.
\]
Then, with this condition on $r$, we have
\[
\left.\frac{\partial\rho}{\partial x_{1}}\left(t,\cdot\right)\right|_{\mathbb{R}^{2}\setminus B\left(0;r\right)}=\sum^{n-1}_{m=1}\frac{\partial\rho^{\left(m\right)}}{\partial x_{1}}\left(t,\cdot\right).
\]
Making use of equation \eqref{eq:bound drhodx1 n}, we arrive at
\[
\left|\left|\frac{\partial\rho}{\partial x_{1}}\left(t,\cdot\right)\right|\right|_{\dot{C}^{\alpha}\left(\mathbb{R}^{2}\setminus B\left(0;r\right)\right)}\lesssim_{\varphi,\mu,Y}\sum^{n-1}_{m=1}C^{\left(-k_{\max}+\alpha\left(1+k_{\max}\right)+2\delta+3\mu\right)\left(\frac{1}{1-\gamma}\right)^{m}}.
\]
Since $\alpha\geq\frac{\alpha_{*}}{1+\alpha_{*}}$ and $k_{\max}=\alpha_{*}$,
we have
\[
-k_{\max}+\alpha\left(1+k_{\max}\right)+2\delta+3\mu\geq2\delta+3\mu>0.
\]
Then, Lemma \ref{lem:estimate sum superexponential} ensures that
\[
\left|\left|\frac{\partial\rho}{\partial x_{1}}\left(t,\cdot\right)\right|\right|_{\dot{C}^{\alpha}\left(\mathbb{R}^{2}\setminus B\left(0;r\right)\right)}\lesssim_{\varphi,\mu,Y,\delta}C^{\left(-k_{\max}+\alpha\left(1+k_{\max}\right)+2\delta+3\mu\right)\left(\frac{1}{1-\gamma}\right)^{n-1}}.
\]
Using the relationship between $r$ and $C^{\left(\frac{1}{1-\gamma}\right)^{n-1}}$
that we found in the proof of Proposition \ref{prop:bad bound drhodx2}
(see equation \eqref{eq:relation r C}), we infer that
\[
\left|\left|\frac{\partial\rho}{\partial x_{1}}\left(t,\cdot\right)\right|\right|_{\dot{C}^{\alpha}\left(\mathbb{R}^{2}\setminus B\left(0;r\right)\right)}\lesssim_{\varphi,\mu,Y,\delta}\frac{1}{r^{\frac{-k_{\max}+\alpha\left(1+k_{\max}\right)+2\delta+3\mu}{\frac{2}{3}-\mu}}}.
\]
\end{proof}

\subsection{Lower bounds for the blow-up}

In Lemma \ref{lem:monotone convergence kn} and Corollary 1 of \cite{Articulo Boussinesq},
we partially studied the convergence of $\overline{k}_{n}\left(t_{n}+\left(1-t_{n}\right)\hat{\hat{t}}\right)$
to the profile $k_{\max}\left(1-\left|1-2\hat{\hat{t}}\right|\right)$.
We showed that this convergence was monotonically decreasing, but
we did not study the rate of convergence. To prove that the Boussinesq
solution given in \cite{Articulo Boussinesq} also satisfies the blow-up
criterion for non-homogeneous Euler, we will need to study this rate
of convergence. This is the objective of the next Lemma.
\begin{lem}
\label{lem:convergence of knbar to limit}Let $\left(\rho,\omega,f_{\rho},f_{\omega}\right)$
be the solution of the forced Boussinesq system presented in \cite{Articulo Boussinesq}.
$\forall\hat{\hat{t}}\in\left[0,1\right]$,
\[
\overline{k}_{n}\left(t_{n}+\left(1-t_{n}\right)\hat{\hat{t}}\right)-k_{\max}\left(1-\left|1-2\hat{\hat{t}}\right|\right)\leq\frac{2\ln\left(2\right)\min\left\{ \hat{\hat{t}},1-\hat{\hat{t}}\right\} +C^{-2\left|1-2\hat{\hat{t}}\right|k_{\max}\left(\frac{1}{1-\gamma}\right)^{n}}}{\ln\left(C\right)\left(\frac{1}{1-\gamma}\right)^{n}}.
\]
In particular,
\[
\overline{k}_{n}\left(t_{n}+\left(1-t_{n}\right)\hat{\hat{t}}\right)-k_{\max}\left(1-\left|1-2\hat{\hat{t}}\right|\right)\leq\frac{1+\ln\left(2\right)}{\ln\left(C\right)}\left(1-\gamma\right)^{n}.
\]
\end{lem}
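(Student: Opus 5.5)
The plan is to start from the exact formula \eqref{eq:Boussinesq kn exact}. Set $N\coloneqq\ln\left(C\right)\left(\frac{1}{1-\gamma}\right)^{n}$, $A\coloneqq\mathrm{arccosh}\left(C^{k_{\max}\left(\frac{1}{1-\gamma}\right)^{n}}\right)$ and $s\coloneqq\left|1-2\hat{\hat{t}}\right|\in\left[0,1\right]$. Since $\cosh$ is even, the difference to be bounded is
\[
D\coloneqq k_{\max}-\frac{1}{N}\ln\left(\cosh\left(As\right)\right)-k_{\max}\left(1-s\right)=\frac{1}{N}\left(k_{\max}sN-\ln\cosh\left(As\right)\right).
\]
Observe that $k_{\max}N=\ln\left(C^{k_{\max}\left(\frac{1}{1-\gamma}\right)^{n}}\right)$. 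The whole task is therefore to compare $sk_{\max}N$ with $\ln\cosh\left(As\right)$.

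\emph{First step: lower bound on $A$.} Write $X\coloneqq C^{k_{\max}\left(\frac{1}{1-\gamma}\right)^{n}}\ge1$. Then $A=\ln\left(X+\sqrt{X^{2}-1}\right)\ge\ln X$, i.e. $A\ge k_{\max}N$. This is the reverse of Lemma \ref{lem:arccosh by ln}, and it follows from $\sqrt{X^{2}-1}\ge0$.

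\emph{Second step: lower bound on $\ln\cosh$.} Use $\cosh y\ge\frac{1}{2}e^{y}$. This gives $\ln\cosh\left(As\right)\ge As-\ln2\ge sk_{\max}N-\ln2$, and hence $ND\le\ln2$. However, the statement wants the sharper numerator $2\ln\left(2\right)\min\left\{ \hat{\hat{t}},1-\hat{\hat{t}}\right\} +X^{-2s}$, which tends to $0$ near the endpoints and stays small in the middle only through $X^{-2s}$. So I will refine using the exact identity
\[
\ln\cosh\left(y\right)=y-\ln2+\ln\left(1+e^{-2y}\right).
\]
With $y=As$, this gives
\[
ND=s\left(k_{\max}N-A\right)+\ln2-\ln\left(1+e^{-2As}\right).
\]
The first term is $\le0$ by the first step. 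For the remaining part, $\ln2-\ln\left(1+e^{-2As}\right)$, I split into two regimes:
\begin{itemize}
\item Concavity of $y\mapsto\ln\left(1+e^{-2y}\right)$ (or convexity estimates) should bound it by $\ln2\cdot\left(1-s\right)+\left(\text{something}\right)$. Note $1-s=2\min\left\{ \hat{\hat{t}},1-\hat{\hat{t}}\right\}$, and that is exactly where $2\ln\left(2\right)\min\left\{ \hat{\hat{t}},1-\hat{\hat{t}}\right\}$ comes from.
\item Since $A\ge k_{\max}N$, we have $e^{-2As}\le X^{-2s}$. Combined with $\ln\left(1+z\right)\ge z-z^{2}/2$ or a similar elementary inequality, this should produce the $X^{-2s}$ term.
\end{itemize}
Concretely, I expect to show that $g\left(s\right)\coloneqq\ln2-\ln\left(1+e^{-2As}\right)-\ln2\left(1-s\right)$ satisfies $g\left(s\right)\le e^{-2As}$ on $\left[0,1\right]$. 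At $s=0$ both sides are compatible ($g\left(0\right)=0\le1$), and at $s=1$, $g\left(1\right)=\ln2-\ln\left(1+e^{-2A}\right)$, which is trivially small relative to $\ln 2$. The hard part is the interior. There I would use the bound $\ln2-\ln\left(1+e^{-2As}\right)\le\ln2\cdot\min\left\{ 1,2As\right\}$ together with the fact that $e^{-2As}$ is close to $1$ when $As$ is small. If this direct route is messy, I would simply bound $\ln2\le\ln2\left(1-s\right)+\ln2\cdot s$ and accept handling $s\ln2$ against $e^{-2As}$ case-by-case. This elementary inequality is the main obstacle, and I would verify it by a one-variable calculus argument in $y=As$.

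\emph{Final step: the ``in particular'' bound.} Bound $2\min\left\{ \hat{\hat{t}},1-\hat{\hat{t}}\right\} \le1$ and $X^{-2s}\le1$. This gives a numerator of at most $1+\ln2$, and dividing by $N$ yields $\frac{1+\ln2}{\ln C}\left(1-\gamma\right)^{n}$.
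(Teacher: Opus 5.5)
There is a genuine gap in your refinement step. After writing $ND=s(k_{\max}N-A)+\ln 2-\ln(1+e^{-2As})$, you discard the first term as merely $\le 0$. You then aim to prove $g(s)\le e^{-2As}$ with $g(s)=s\ln 2-\ln(1+e^{-2As})$, but that inequality is false.

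At $s=1$ (i.e.\ $\hat{\hat{t}}\in\{0,1\}$) you get $g(1)=\ln 2-\ln(1+e^{-2A})$. Contrary to what you state, this is close to $\ln 2$, not small, while $e^{-2A}\le X^{-2}$ is tiny. More generally, for any $s$ bounded away from $0$, $g(s)\approx s\ln 2$, because $A\ge k_{\max}N$ is large.

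Once the first term is thrown away, no elementary inequality can recover the target $(1-s)\ln 2+X^{-2s}$. The remaining bound $\ln 2-\ln(1+e^{-2As})$ already exceeds the target by roughly $s\ln 2$.

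The discarded term is not negligible: $k_{\max}N-A=\ln X-\mathrm{arccosh}(X)=-\ln(1+\sqrt{1-X^{-2}})\approx-\ln 2$. It is exactly what cancels the $s\ln 2$. At $s=1$ the cancellation is exact, reflecting $\cosh(A)=X$, i.e.\ $\overline{k}_n(t_n)=0$. In other words, your first-step bound $A\ge\ln X$ is too crude by an additive $\ln 2$.

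The repair is short and keeps your direct route. Use $\cosh y\ge\frac{1}{2}e^{y}$ with $y=As$, but keep $A$ exact:
$$ND\le\ln 2-s\ln(1+\sqrt{1-X^{-2}})=(1-s)\ln 2+s\left[\ln 2-\ln(1+\sqrt{1-X^{-2}})\right].$$
Put $w=X^{-2}\in(0,1]$. Since $\sqrt{1-w}\ge 1-w$, the bracket is at most $-\ln(1-w/2)\le w$. Hence
$$ND\le(1-s)\ln 2+sX^{-2}\le(1-s)\ln 2+X^{-2s}.$$
Since $1-s=2\min\{\hat{\hat{t}},1-\hat{\hat{t}}\}$, this is the first claim. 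Your final step for the ``in particular'' bound is fine.

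For comparison, the paper does not manipulate $\ln\cosh$ directly. It compares the $\hat{\hat{t}}$-derivatives of $\overline{k}_n$ and of $k_{\max}(1-|1-2\hat{\hat{t}}|)$ on $[0,\frac{1}{2}]$, bounding their difference by $\frac{2\ln 2}{N}+4k_{\max}X^{-2(1-2\hat{\hat{t}})}$ via $\mathrm{arccosh}(X)\le\ln(2X)$ and $1-\tanh x\le 2e^{-2x}$. It then integrates from $\hat{\hat{t}}=0$, where both functions vanish. That vanishing at the endpoint is precisely the information your decomposition lost when you dropped $s(k_{\max}N-A)$.
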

\begin{proof}
On the one hand, by equation \eqref{eq:Boussinesq kn exact}, we have
\[
\overline{k}_{n}\left(t_{n}+\left(1-t_{n}\right)\hat{\hat{t}}\right)=k_{\max}+\frac{1}{\ln\left(C\right)\left(\frac{1}{1-\gamma}\right)^{n}}\ln\left(\frac{1}{\cosh\left(\mathrm{arccosh}\left(C^{k_{\max}\left(\frac{1}{1-\gamma}\right)^{n}}\right)\left(1-2\hat{\hat{t}}\right)\right)}\right).
\]
Differentiating with respect to $\hat{\hat{t}}$, we obtain
\begin{equation}
\frac{\partial}{\partial\hat{\hat{t}}}\left(\overline{k}_{n}\left(t_{n}+\left(1-t_{n}\right)\hat{\hat{t}}\right)\right)=2\frac{\mathrm{arccosh}\left(C^{k_{\max}\left(\frac{1}{1-\gamma}\right)^{n}}\right)}{\ln\left(C\right)\left(\frac{1}{1-\gamma}\right)^{n}}\tanh\left(\mathrm{arccosh}\left(C^{k_{\max}\left(\frac{1}{1-\gamma}\right)^{n}}\right)\left(1-2\hat{\hat{t}}\right)\right).\label{eq:knbar real derivative}
\end{equation}
On the other hand,
\begin{equation}
\frac{\partial}{\partial\hat{\hat{t}}}\left(k_{\max}\left(1-\left|1-2\hat{\hat{t}}\right|\right)\right)=2k_{\max}\text{sign}\left(1-2\hat{\hat{t}}\right).\label{eq:knbar limite derivative}
\end{equation}
Both functions $\overline{k}_{n}\left(t_{n}+\left(1-t_{n}\right)\hat{\hat{t}}\right)$
and $k_{\max}\left(1-\left|1-2\hat{\hat{t}}\right|\right)$ are even
with respect to $\hat{\hat{t}}=\frac{1}{2}$ (see equation \eqref{eq:Boussinesq kn exact}),
so we can restrict our analysis to the interval $\hat{\hat{t}}\in\left[0,\frac{1}{2}\right]$.
Recalling that $\mathrm{arccosh}\left(x\right)=\ln\left(x+\sqrt{x^{2}-1}\right)$,
it is not difficult to see that
\begin{equation}
\begin{aligned}\frac{\mathrm{arccosh}\left(C^{k_{\max}\left(\frac{1}{1-\gamma}\right)^{n}}\right)}{\ln\left(C\right)\left(\frac{1}{1-\gamma}\right)^{n}} & \leq\frac{\ln\left(2C^{k_{\max}\left(\frac{1}{1-\gamma}\right)^{n}}\right)}{\ln\left(C\right)\left(\frac{1}{1-\gamma}\right)^{n}}=\frac{\ln\left(2\right)+k_{\max}\ln\left(C\right)\left(\frac{1}{1-\gamma}\right)^{n}}{\ln\left(C\right)\left(\frac{1}{1-\gamma}\right)^{n}}=\\
 & \leq k_{\max}+\frac{\ln\left(2\right)}{\ln\left(C\right)\left(\frac{1}{1-\gamma}\right)^{n}}.\\
\frac{\mathrm{arccosh}\left(C^{k_{\max}\left(\frac{1}{1-\gamma}\right)^{n}}\right)}{\ln\left(C\right)\left(\frac{1}{1-\gamma}\right)^{n}} & \geq\frac{\ln\left(C^{k_{\max}\left(\frac{1}{1-\gamma}\right)^{n}}\right)}{\ln\left(C\right)\left(\frac{1}{1-\gamma}\right)^{n}}=k_{\max}.
\end{aligned}
\label{eq:bounds arccosh}
\end{equation}
Furthermore, since
\[
1-\tanh\left(x\right)=1-\frac{\mathrm{e}^{x}-\mathrm{e}^{-x}}{\mathrm{e}^{x}+\mathrm{e}^{-x}}=\frac{2\mathrm{e}^{-x}}{\mathrm{e}^{x}+\mathrm{e}^{-x}}\leq2\mathrm{e}^{-2x}\quad\forall x\in\left[0,\infty\right),
\]
we deduce that
\begin{equation}
\begin{aligned} & \left|1-\tanh\left(\mathrm{arccosh}\left(C^{k_{\max}\left(\frac{1}{1-\gamma}\right)^{n}}\right)\left(1-2\hat{\hat{t}}\right)\right)\right|\leq\\
\leq & 2\exp\left(-2\mathrm{arccosh}\left(C^{k_{\max}\left(\frac{1}{1-\gamma}\right)^{n}}\right)\left(1-2\hat{\hat{t}}\right)\right)\leq2\exp\left(-2\ln\left(C^{k_{\max}\left(\frac{1}{1-\gamma}\right)^{n}}\right)\left(1-2\hat{\hat{t}}\right)\right)=\\
\leq & 2C^{-2\left(1-2\hat{\hat{t}}\right)k_{\max}\left(\frac{1}{1-\gamma}\right)^{n}}.
\end{aligned}
\label{eq:bounds tanh}
\end{equation}
Coming back to equations \eqref{eq:knbar real derivative}, \eqref{eq:knbar limite derivative}, \eqref{eq:bounds arccosh}, and \eqref{eq:bounds tanh},
we can ensure that, $\forall\hat{\hat{t}}\in\left[0,\frac{1}{2}\right]$,
\[
\begin{aligned} & \left|\frac{\partial}{\partial\hat{\hat{t}}}\left(\overline{k}_{n}\left(t_{n}+\left(1-t_{n}\right)\hat{\hat{t}}\right)\right)-\frac{\partial}{\partial\hat{\hat{t}}}\left(k_{\max}\left(1-\left|1-2\hat{\hat{t}}\right|\right)\right)\right|\leq\\
\leq & \left|2\frac{\mathrm{arccosh}\left(C^{k_{\max}\left(\frac{1}{1-\gamma}\right)^{n}}\right)}{\ln\left(C\right)\left(\frac{1}{1-\gamma}\right)^{n}}\tanh\left(\mathrm{arccosh}\left(C^{k_{\max}\left(\frac{1}{1-\gamma}\right)^{n}}\right)\left(1-2\hat{\hat{t}}\right)\right)-2k_{\max}\right|\leq\\
\leq & 2\left|\frac{\mathrm{arccosh}\left(C^{k_{\max}\left(\frac{1}{1-\gamma}\right)^{n}}\right)}{\ln\left(C\right)\left(\frac{1}{1-\gamma}\right)^{n}}-k_{\max}\right|\tanh\left(\mathrm{arccosh}\left(C^{k_{\max}\left(\frac{1}{1-\gamma}\right)^{n}}\right)\left(1-2\hat{\hat{t}}\right)\right)+\\
 & +2k_{\max}\left|\tanh\left(\mathrm{arccosh}\left(C^{k_{\max}\left(\frac{1}{1-\gamma}\right)^{n}}\right)\left(1-2\hat{\hat{t}}\right)\right)-1\right|\leq\\
\leq & \frac{2\ln\left(2\right)}{\ln\left(C\right)\left(\frac{1}{1-\gamma}\right)^{n}}+4k_{\max}C^{-2\left(1-2\hat{\hat{t}}\right)k_{\max}\left(\frac{1}{1-\gamma}\right)^{n}}.
\end{aligned}
\]
Using that $\frac{\partial\left|f\right|}{\partial\hat{\hat{t}}}\leq\left|\frac{\partial f}{\partial\hat{\hat{t}}}\right|$,
integrating from $\hat{\hat{t}}=0$ to a generic $\hat{\hat{t}}\in\left[0,\frac{1}{2}\right]$
and bearing in mind that $\overline{k}_{n}\left(t_{n}\right)=0=k_{\max}\left(1-\left|1-2\cdot0\right|\right)$,
we arrive at
\[
\begin{aligned} & \left|\overline{k}_{n}\left(t_{n}+\left(1-t_{n}\right)\hat{\hat{t}}\right)-k_{\max}\left(1-\left|1-2\hat{\hat{t}}\right|\right)\right|\leq\\
\leq & \frac{2\ln\left(2\right)}{\ln\left(C\right)\left(\frac{1}{1-\gamma}\right)^{n}}\hat{\hat{t}}+\frac{1}{\ln\left(C\right)\left(\frac{1}{1-\gamma}\right)^{n}}\left(C^{-2\left(1-2\hat{\hat{t}}\right)k_{\max}\left(\frac{1}{1-\gamma}\right)^{n}}-C^{-2k_{\max}\left(\frac{1}{1-\gamma}\right)^{n}}\right).
\end{aligned}
\]
Neglecting the negative factor in the second summand, recalling that
the convergence of $\overline{k}_{n}\left(t_{n}+\left(1-t_{n}\right)\hat{\hat{t}}\right)$
to $k_{\max}\left(1-\left|1-2\hat{\hat{t}}\right|\right)$ is monotonically
decreasing (see Lemma \ref{lem:monotone convergence kn}) and adapting
the equation above to the general case $\hat{\hat{t}}\in\left[0,1\right]$
by symmetry, we arrive at the first statement of the Lemma. Lastly,
uniformly in $\hat{\hat{t}}\in\left[0,\frac{1}{2}\right]$ we can
bound
\[
\left|\overline{k}_{n}\left(t_{n}+\left(1-t_{n}\right)\hat{\hat{t}}\right)-k_{\max}\left(1-\left|1-2\hat{\hat{t}}\right|\right)\right|\leq\frac{1+\ln\left(2\right)}{\ln\left(C\right)\left(\frac{1}{1-\gamma}\right)^{n}}.
\]
Again, the fact that the convergence of $\overline{k}_{n}\left(t_{n}+\left(1-t_{n}\right)\hat{\hat{t}}\right)$
to $k_{\max}\left(1-\left|1-2\hat{\hat{t}}\right|\right)$ is monotonically
decreasing (see Lemma \ref{lem:monotone convergence kn}) allows us
to remove the absolute value, leading to the second statement.
\end{proof}

\begin{prop}
\label{prop:blow-up criterion}Let $\left(\rho,\omega,f_{\rho},f_{\omega}\right)$
be the solution of the forced Boussinesq system presented in \cite{Articulo Boussinesq}.
Provided that $\delta$ and $\mu$ are sufficiently small, let us
say $\delta\leq\Upsilon_{1}$ and $\mu\leq\Upsilon_{2}$,
\[
\lim_{T\to1^{-}}\int^{T}_{0}\left|\left|\omega\left(t,\cdot\right)\right|\right|_{L^{\infty}\left(\mathbb{R}^{2}\right)}\mathrm{d}t=\infty.
\]
\end{prop}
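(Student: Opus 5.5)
We need a lower bound for $\|\omega(t,\cdot)\|_{L^\infty}$ that is not integrable near $t=1$. The plan is to evaluate the vorticity at the center of the currently active layer and show it is dominated by that layer's leading term, which is of size $M_n\, b_n(t)/a_n(t)$.

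\textbf{Step 1: single-layer contribution.} Fix $t\in[t_n,t_{n+1}]$. By Proposition \ref{prop:computations vorticity}, the leading part of $\widetilde{\omega^{(n)}}^{n}$ is
\[
B_n(t)\left(a_n(t)^2+b_n(t)^2\right)\varphi(\lambda_n x_1)\varphi(\lambda_n x_2)\sin(x_1)\sin(x_2).
\]
At $x=(\pi/2,\pi/2)$ the cutoffs equal one, so this part equals $B_n(a_n^2+b_n^2)\ge B_n b_n^2$. The $\lambda_n$-terms carry derivatives of $\varphi$, which vanish on $[-8\pi,8\pi]$, so they are zero there. By Proposition \ref{prop:time convergence}, Corollary \ref{cor:bound for an(t). bn(t)} and $a_nb_n$ constant in time, we have $B_nb_n^2\approx M_n\, b_n(t)/a_n(t)$, up to factors $C^{\pm O(\mu)(\frac{1}{1-\gamma})^n}$. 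For $t\ge t_{n+1}$, $B_n$ is essentially frozen at $B_n(1)$, so the layer contributes $\sim M_n\, b_n(t)/a_n(t)\ge M_n C^{-2\mu(\frac{1}{1-\gamma})^n}$. At times where $\overline{k}_n(t)$ is of order $k_{\max}$, it contributes $\sim C^{(2k_{\max}+\delta-O(\mu))(\frac{1}{1-\gamma})^n}$.

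\textbf{Step 2: other layers are negligible.} At the layer-$n$ center, the vorticity of layers $m<n$ is bounded by $\sum_{m<n}B_m(a_m^2+b_m^2)\lesssim\sum_{m<n}C^{(2k_{\max}+\delta+O(\mu))(\frac{1}{1-\gamma})^m}$. By Lemma \ref{lem:estimate sum superexponential}, this is at most $C^{(2k_{\max}+\delta+O(\mu))(\frac{1}{1-\gamma})^{n-1}}$, which is much smaller than the layer-$n$ size because of the extra power $\frac{1}{1-\gamma}$. Layer $n+1$ is problematic only while $B_{n+1}$ is growing. To avoid it, I would restrict to a subinterval of $[t_n,t_{n+1}]$ around the midpoint of the $n$-th cycle where $B_{n+1}$ is still small, or pick the spatial point so that layer $n+1$ is supported elsewhere. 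This uses the support bounds from the proof of Proposition \ref{prop:bad bound drhodx2}: layer $n+1$ has a much smaller support near the blow-up point, while $(\pi/2,\pi/2)$ mapped by $\phi^{(n)}$ lies at distance $\gtrsim 1/b_n$. Whether the layers are separated at these scales has to be checked; if not, I fall back on the time-window argument.

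\textbf{Step 3: integrating in time.} On the window $\hat{\hat t}\in[\frac14,\frac34]$ of $[t_n,1]$, Lemma \ref{lem:convergence of knbar to limit} together with Proposition \ref{prop:convergence kn to ideal model} gives $k_n(t)\ge k_{\max}/2-o(1)$. Hence
\[
\|\omega(t)\|_{L^\infty}\gtrsim M_n C^{(k_{\max}-O(\mu))(\frac{1}{1-\gamma})^n}.
\]
By \eqref{eq:Boussinesq time scale} the window has length
\[
\tfrac12(1-t_n)\sim \tfrac1Y C^{-\delta(\frac{1}{1-\gamma})^{n-1}}k_{\max}\ln C\,\left(\tfrac{1}{1-\gamma}\right)^n .
\]
The integral over window $n$ is therefore
\[
\gtrsim C^{(\delta+k_{\max}-O(\mu))(\frac{1}{1-\gamma})^n-\delta(\frac{1}{1-\gamma})^{n-1}}\to\infty.
\]
If the windows for distinct $n$ overlap, it suffices to take a single window for each large $n$: the integral of a nonnegative function over $[0,T]$ is at least its integral over any window contained in $[0,T]$, and these lower bounds diverge. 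This gives the claimed limit.

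\textbf{Main obstacle.} The delicate step is Step 2's handling of layer $n+1$ and of the cross terms, together with making the $\mu$, $\delta$ exponents line up. I expect the time-window restriction, where the active density layer $h^{(n+1)}$ is not yet switched on, to be the cleanest route, since the velocity layer $n+1$ is then still zero.
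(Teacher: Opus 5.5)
There is a genuine gap in Steps 1 and 3: the size you assign to the active layer is wrong.

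\textbf{Where the size estimate fails.} Since $a_nb_n$ is constant in time, $M_nb_n(t)/a_n(t)=B_n(1)b_n(t)^2$. So your claim $B_nb_n^2\approx M_nb_n/a_n$ amounts to $B_n(t)\approx B_n(1)$. Proposition \ref{prop:time convergence} gives this only for $t\in[t_{n+1},1]$, and there $k_n\approx0$, so $b_n/a_n\approx1$ and the size is again only about $M_n$. Your window $\hat{\hat{t}}\in[\frac14,\frac34]$ lies inside the active interval $[t_n,t_{n+1}]$, where \eqref{eq:Boussinesq Bn(t)} says exactly
\[
B_n(t)\left(a_n(t)^2+b_n(t)^2\right)=2M_nR_n(t),\qquad R_n(t)\coloneqq\frac{\int^{t}_{t_n}h^{(n)}(s)b_n(s)\,\mathrm{d}s}{\int^{1}_{t_n}h^{(n)}(s)b_n(s)\,\mathrm{d}s}\in[0,1].
\]
Hence $\|\omega^{(n)}(t,\cdot)\|_{L^\infty}\lesssim_\varphi 2M_n$ for all $t$, and there is no factor $C^{k_{\max}(\frac{1}{1-\gamma})^n}$. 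In mid-cycle, $B_n$ is roughly $C^{-2k_{\max}(\frac{1}{1-\gamma})^n}$ times smaller than $B_n(1)$.

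It is worse than a lost factor. The weight $b_n=C^{(1+k_n)(\frac{1}{1-\gamma})^n}$ is superexponentially concentrated near $\hat{\hat{t}}=\frac12$. Comparing the numerator of $R_n$ with Lemma \ref{lem:estimate of integral hn bn} shows that $R_n(t)$ is superexponentially small for $\hat{\hat{t}}\le\frac12-\epsilon$, for each fixed $\epsilon>0$. So on the first half of your window the layer-$n$ vorticity is negligible, and your pointwise lower bound fails outright.

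Your Step 2 comparison is calibrated against the inflated size. With the true size $M_n=YC^{\delta(\frac{1}{1-\gamma})^n}$, your bound $C^{(2k_{\max}+\delta+O(\mu))(\frac{1}{1-\gamma})^{n-1}}$ for the past layers need not be smaller. You need the sharp bound from the same identity:
\[
\sum_{m<n}B_m(a_m^2+b_m^2)\le\sum_{m<n}2M_m\lesssim_\delta YC^{\delta(\frac{1}{1-\gamma})^{n-1}}.
\]

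\textbf{The missing idea.} Work with $R_n$ and choose a window after the peak of $b_n$, for instance $[t_n+\frac34(1-t_n),t_{n+1}]$, and prove $R_n\ge\frac12$ there. This is the actual technical step. You show the tail $\int^1_{t_n+\frac34(1-t_n)}h^{(n)}b_n$ is a negligible fraction of the total. To do so, use $k_n\le\frac12k_{\max}+o(1)$ on that range (Proposition \ref{prop:convergence kn to ideal model} together with Lemma \ref{lem:convergence of knbar to limit}) against the lower bound of Lemma \ref{lem:estimate of integral hn bn}.

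Then the layer-$n$ contribution to the time integral is
\[
\gtrsim M_n(1-t_n)\sim C^{\delta\gamma(\frac{1}{1-\gamma})^n}\,\mathrm{arccosh}\bigl(C^{k_{\max}(\frac{1}{1-\gamma})^n}\bigr).
\]
The past layers contribute at most $(1-t_n)\sum_{m<n}2M_m\lesssim\mathrm{arccosh}(C^{k_{\max}(\frac{1}{1-\gamma})^n})$. So the divergence is driven by $\delta>0$, not by $k_{\max}$ as in your exponent.

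Finally, layer $n+1$ needs no special treatment. Since $h^{(n+1)}$ vanishes on $[0,t_{n+1}]$, $B_{n+1}\equiv0$ on $[t_n,t_{n+1}]$, so no spatial-separation argument is required.
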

\begin{proof}
First of all, $\forall n\in\mathbb{N}$, clearly,
\[
I\coloneqq\int^{1}_{0}\left|\left|\omega\left(t,\cdot\right)\right|\right|_{L^{\infty}\left(\mathbb{R}^{2}\right)}\mathrm{d}t\geq\int^{t_{n+1}}_{t_{n}}\left|\left|\omega\left(t,\cdot\right)\right|\right|_{L^{\infty}\left(\mathbb{R}^{2}\right)}\mathrm{d}t,
\]
where $t_{n}$ is the time instant when layer $n$ is introduced (see
equation \eqref{eq:Boussinesq time scale}). Since no layer with index
bigger than $n$ exists for times $t\in\left[t_{n},t_{n+1}\right]$,
we have
\begin{equation}
I\geq\underbrace{\int^{t_{n+1}}_{t_{n}}\left|\left|\omega^{\left(n\right)}\left(t,\cdot\right)\right|\right|_{L^{\infty}\left(\mathbb{R}^{2}\right)}\mathrm{d}t}_{\eqqcolon I_{+}}-\underbrace{\int^{t_{n+1}}_{t_{n}}\left|\left|\Omega^{\left(n-1\right)}\left(t,\cdot\right)\right|\right|_{L^{\infty}\left(\mathbb{R}^{2}\right)}\mathrm{d}t}_{\eqqcolon I_{-}},\label{eq:decomposition I}
\end{equation}
where $\Omega^{\left(n-1\right)}=\sum^{n-1}_{m=1}\omega^{\left(m\right)}$,
as in \cite{Articulo Boussinesq}.

We will begin bounding $I_{-}$. Since $\left|\left|\cdot\right|\right|_{L^{\infty}\left(\mathbb{R}^{2}\right)}$
is invariant under diffeomorphisms of the domain, in view of Proposition
\ref{prop:computations vorticity} and recalling that $\lambda_{n}\le1$
by equation \eqref{eq:Boussinesq lambda_n}, we deduce that
\[
\begin{aligned}I_{-} & \leq\sum^{n-1}_{m=1}\int^{1}_{t_{n}}\left|\left|\omega^{\left(m\right)}\left(t,\cdot\right)\right|\right|_{L^{\infty}\left(\mathbb{R}^{2}\right)}\mathrm{d}t\lesssim_{\varphi}\sum^{n-1}_{m=1}\int^{1}_{t_{n}}B_{m}\left(t\right)\max\left\{ a_{m}\left(t\right)^{2},b_{m}\left(t\right)^{2}\right\} \mathrm{d}t\leq\\
 & \lesssim_{\varphi}\sum^{n-1}_{m=1}\int^{1}_{t_{n}}B_{m}\left(t\right)\left(a_{m}\left(t\right)^{2}+b_{m}\left(t\right)^{2}\right)\mathrm{d}t.
\end{aligned}
\]
Equations \eqref{eq:Boussinesq Bn(t)}, \eqref{eq:Boussinesq Mn}, and \eqref{eq:Boussinesq time scale},
Lemmas \ref{lem:estimate sum superexponential} and \ref{lem:arccosh by ln}
provide
\begin{equation}
\begin{aligned}I_{-} & \lesssim_{\varphi}\sum^{n-1}_{m=1}\int^{1}_{t_{n}}2YC^{\delta\left(\frac{1}{1-\gamma}\right)^{m}}\mathrm{d}t\leq\left(1-t_{n}\right)\sum^{n-1}_{m=1}2YC^{\delta\left(\frac{1}{1-\gamma}\right)^{m}}\lesssim_{\varphi,\delta}\left(1-t_{n}\right)2YC^{\delta\left(\frac{1}{1-\gamma}\right)^{n-1}}\lesssim\\
 & \lesssim_{\varphi,\delta}\mathrm{arccosh}\left(C^{k_{\max}\left(\frac{1}{1-\gamma}\right)^{n}}\right)\le\ln\left(2\right)+\ln\left(C\right)k_{\max}\left(\frac{1}{1-\gamma}\right)^{n}.
\end{aligned}
\label{eq:bound I-}
\end{equation}

On the other hand, since $\left|\left|\cdot\right|\right|_{L^{\infty}\left(\mathbb{R}^{2}\right)}$
is invariant under diffeomorphisms of the domain,
\[
I_{+}\geq\int^{t_{n+1}}_{t_{n}}\left|\left|\widetilde{\omega^{\left(n\right)}}^{n}\left(t,\cdot\right)\right|\right|_{L^{\infty}\left(\mathbb{R}^{2}\right)}\mathrm{d}t.
\]
Looking at Proposition \ref{prop:computations vorticity} and evaluating
$\widetilde{\omega^{\left(n\right)}}^{n}\left(t,x\right)$ at $x=\left(\frac{\pi}{2},\frac{\pi}{2}\right)$,
which is part of the zone where the cut-off functions are identically
1 according to equations \eqref{eq:Boussinesq lambda_n} and \eqref{eq:Boussinesq psi rho},
we get
\[
I_{+}\geq\int^{t_{n+1}}_{t_{n}}B_{n}\left(t\right)\left(a_{n}\left(t\right)^{2}+b_{n}\left(t\right)^{2}\right)\mathrm{d}t.
\]
Equations \eqref{eq:Boussinesq Bn(t)} and \eqref{eq:Boussinesq Mn}
provide
\begin{equation}
I_{+}\geq\int^{t_{n+1}}_{t_{n}}2YC^{\delta\left(\frac{1}{1-\gamma}\right)^{n}}\frac{\int^{t}_{t_{n}}h^{\left(n\right)}\left(s\right)b_{n}\left(s\right)\mathrm{d}s}{\int^{1}_{t_{n}}h^{\left(n\right)}\left(s\right)b_{n}\left(s\right)\mathrm{d}s}\mathrm{d}t.\label{eq:bound I+ v0}
\end{equation}
To find a lower bound for the expression above, it will prove useful
to show that
\begin{equation}
\frac{\int^{t_{n}+\frac{3}{4}\left(1-t_{n}\right)}_{t_{n}}h^{\left(n\right)}\left(s\right)b_{n}\left(s\right)\mathrm{d}s}{\int^{1}_{t_{n}}h^{\left(n\right)}\left(s\right)b_{n}\left(s\right)\mathrm{d}s}\geq\frac{1}{2}\label{eq:needed for I+}
\end{equation}
as long as $n$ is large enough. In order to accomplish this task,
we will bound
\[
Q\coloneqq\frac{\int^{1}_{t_{n}+\frac{3}{4}\left(1-t_{n}\right)}h^{\left(n\right)}\left(s\right)b_{n}\left(s\right)\mathrm{d}s}{\int^{1}_{t_{n}}h^{\left(n\right)}\left(s\right)b_{n}\left(s\right)\mathrm{d}s}.
\]
Lemma \ref{lem:estimate of integral hn bn} and equation \eqref{eq:Boussinesq an(t) bn(t)}
imply that
\[
Q\lesssim_{\mu}Y\frac{\int^{1}_{t_{n}+\frac{3}{4}\left(1-t_{n}\right)}C^{\left(1+k_{n}\left(s\right)\right)\left(\frac{1}{1-\gamma}\right)^{n}}\mathrm{d}s}{C^{\left(1+k_{\max}-\mu-\delta\left(1-\gamma\right)\right)\left(\frac{1}{1-\gamma}\right)^{n}}}.
\]
Using Proposition \ref{prop:convergence kn to ideal model} (with
$\beta=\beta'=\frac{1}{2}$) and Lemma \ref{lem:convergence of knbar to limit},
we deduce that $\forall s\in\left[t_{n}+\frac{3}{4}\left(1-t_{n}\right),1\right]$,
we have
\[
\begin{aligned}k_{n}\left(s\right) & \leq k_{\max}\left(1-\left|1-2\frac{3}{4}\right|\right)+\frac{1+\ln\left(2\right)}{\ln\left(C\right)}\left(1-\gamma\right)^{n}+L\left(\delta\right)C^{-\frac{1}{8}\delta\gamma\left(\frac{1}{1-\gamma}\right)^{n-1}}+\\
 & \leq\frac{1}{2}k_{\max}+\frac{1+\ln\left(2\right)}{\ln\left(C\right)}\left(1-\gamma\right)^{n}+L\left(\delta\right)C^{-\frac{1}{8}\delta\gamma\left(\frac{1}{1-\gamma}\right)^{n-1}}
\end{aligned}
\]
for some $L\left(\delta\right)>0$. Multiplying by $\left(\frac{1}{1-\gamma}\right)^{n}$
and making use of Lemma \ref{lem:exponential superexponential bound}
with $a\leftarrow\frac{1}{1-\gamma}$, $b\leftarrow\frac{1}{8}\delta\gamma$,
$\beta\leftarrow\frac{1}{2}$ and $n\leftarrow n-1$ we arrive at
\[
k_{n}\left(s\right)\left(\frac{1}{1-\gamma}\right)^{n}\leq\frac{1}{2}k_{\max}\left(\frac{1}{1-\gamma}\right)^{n}+\frac{1+\ln\left(2\right)}{\ln\left(C\right)}+L_{1}\left(\delta,C\right)C^{-\frac{1}{16}\delta\gamma\left(\frac{1}{1-\gamma}\right)^{n-1}}
\]
for some $L_{1}\left(\delta,C\right)>0$. As long as $n$ is large
enough, we may write
\[
k_{n}\left(s\right)\left(\frac{1}{1-\gamma}\right)^{n}\leq\frac{1}{2}k_{\max}\left(\frac{1}{1-\gamma}\right)^{n}+\frac{1+\ln\left(2\right)}{\ln\left(C\right)}+\frac{1}{\ln\left(C\right)}.
\]
Substituting above, we obtain
\[
Q\lesssim_{\mu}Y\frac{\int^{1}_{t_{n}+\frac{3}{4}\left(1-t_{n}\right)}C^{\frac{1}{2}k_{\max}\left(\frac{1}{1-\gamma}\right)^{n}}\mathrm{e}^{\left(2+\ln\left(2\right)\right)}\mathrm{d}s}{C^{\left(k_{\max}-\mu-\delta\left(1-\gamma\right)\right)\left(\frac{1}{1-\gamma}\right)^{n}}}\lesssim_{\mu}\frac{Y\overbrace{\left(1-t_{n}\right)}^{\leq1}}{C^{\left(\frac{1}{2}k_{\max}-\mu-\delta\left(1-\gamma\right)\right)\left(\frac{1}{1-\gamma}\right)^{n}}}\leq\frac{Y}{C^{\left(\frac{1}{2}k_{\max}-\mu-\delta\left(1-\gamma\right)\right)\left(\frac{1}{1-\gamma}\right)^{n}}}.
\]
As long as $\mu$ and $\delta$ are sufficiently small, $\frac{1}{2}k_{\max}-\mu-\delta\left(1-\gamma\right)>0$
and, consequently, $Q$ can be made as small as we wish by taking
$n$ large enough. This, in particular, implies \eqref{eq:needed for I+}
for $n$ large enough. Substituting back into \eqref{eq:bound I+ v0},
as $\int^{t}_{t_{n}}h^{\left(n\right)}\left(s\right)b_{n}\left(s\right)\mathrm{d}s\geq\int^{t_{n}+\frac{3}{4}\left(1-t_{n}\right)}_{t_{n}}h^{\left(n\right)}\left(s\right)b_{n}\left(s\right)\mathrm{d}s$
$\forall t\in\left[t_{n}+\frac{3}{4}\left(1-t_{n}\right),t_{n+1}\right]$,
we find
\begin{equation}
I_{+}\geq\int^{t_{n+1}}_{t_{n}+\frac{3}{4}\left(1-t_{n}\right)}2YC^{\delta\left(\frac{1}{1-\gamma}\right)^{n}}\frac{\int^{t}_{t_{n}}h^{\left(n\right)}\left(s\right)b_{n}\left(s\right)\mathrm{d}s}{\int^{1}_{t_{n}}h^{\left(n\right)}\left(s\right)b_{n}\left(s\right)\mathrm{d}s}\mathrm{d}t\geq YC^{\delta\left(\frac{1}{1-\gamma}\right)^{n}}\int^{t_{n+1}}_{t_{n}+\frac{3}{4}\left(1-t_{n}\right)}\mathrm{d}t.\label{eq:bound I+ v1}
\end{equation}
Now,
\[
\begin{aligned} & t_{n+1}-\left(t_{n}+\frac{3}{4}\left(1-t_{n}\right)\right)=\\
= & t_{n+1}-1+1-\left(t_{n}+\frac{3}{4}\left(1-t_{n}\right)\right)=\frac{1}{4}\left(1-t_{n}\right)-\left(1-t_{n+1}\right).
\end{aligned}
\]
Equation \eqref{eq:Boussinesq time scale} provides
\[
t_{n+1}-\left(t_{n}+\frac{3}{4}\left(1-t_{n}\right)\right)=\frac{1}{Y}\left[\frac{1}{4}C^{-\delta\left(\frac{1}{1-\gamma}\right)^{n-1}}\mathrm{arccosh}\left(C^{k_{\max}\left(\frac{1}{1-\gamma}\right)^{n}}\right)-C^{-\delta\left(\frac{1}{1-\gamma}\right)^{n}}\mathrm{arccosh}\left(C^{k_{\max}\left(\frac{1}{1-\gamma}\right)^{n+1}}\right)\right].
\]
It is clear that the absolute value of the second summand decreases
much faster with $n$ than the first summand. This means that, provided
that $n$ is large enough we may ensure that
\[
t_{n+1}-\left(t_{n}+\frac{3}{4}\left(1-t_{n}\right)\right)\geq\frac{1}{8Y}C^{-\delta\left(\frac{1}{1-\gamma}\right)^{n-1}}\mathrm{arccosh}\left(C^{k_{\max}\left(\frac{1}{1-\gamma}\right)^{n}}\right).
\]
Going back to equation \eqref{eq:bound I+ v1}, we obtain
\begin{equation}
I_{+}\geq\frac{1}{8}C^{\delta\gamma\left(\frac{1}{1-\gamma}\right)^{n}}\mathrm{arccosh}\left(C^{k_{\max}\left(\frac{1}{1-\gamma}\right)^{n}}\right).\label{eq:bound I+ v2}
\end{equation}

Uniting equations \eqref{eq:decomposition I}, \eqref{eq:bound I-}, and \eqref{eq:bound I+ v2},
we infer that
\[
I\geq\frac{1}{8}C^{\delta\gamma\left(\frac{1}{1-\gamma}\right)^{n}}\mathrm{arccosh}\left(C^{k_{\max}\left(\frac{1}{1-\gamma}\right)^{n}}\right)-L_{2}\left(\varphi,\mu\right)\left[\ln\left(2\right)+\ln\left(C\right)k_{\max}\left(\frac{1}{1-\gamma}\right)^{n}\right]
\]
for some $L_{2}\left(\varphi,\mu\right)>0$. As the first factor grows
much more rapidly in $n\in\mathbb{N}$ than the second factor, taking
$n$ large enough guarantees that
\[
\int^{1}_{0}\left|\left|\omega\left(t,\cdot\right)\right|\right|_{L^{\infty}\left(\mathbb{R}^{2}\right)}\mathrm{d}t=I\geq\frac{1}{16}C^{\delta\gamma\left(\frac{1}{1-\gamma}\right)^{n}}\mathrm{arccosh}\left(C^{k_{\max}\left(\frac{1}{1-\gamma}\right)^{n}}\right).
\]
Taking the limit as $n\to\infty$ provides the result.
\end{proof}

\section{Known estimates for the Laplacian}

In this section, we will review some well-known estimates for the
Laplacian, focusing on the key differences in $\mathbb{R}^{2}$ that
make the theory a little trickier sometimes.
\begin{prop}
\label{prop:Laplacian estimate L^p}Consider the equation $\Delta u=f$,
where $f:\mathbb{R}^{2}\to\mathbb{R}$ and $u$ is computed via the
fundamental solution of the Laplace equation. Provided that
\begin{enumerate}
\item $\int_{\mathbb{R}^{2}}f\left(x\right)\mathrm{d}x=0$,
\item $f$ has compact support,
\item $f\in L^{p}\left(\mathbb{R}^{2}\right)$ with $p>2$,
\end{enumerate}
then $u\in L^{p}\left(\mathbb{R}^{2}\right)$ and, furthermore,
\[
\left|\left|u\right|\right|_{L^{p}\left(\mathbb{R}^{2}\right)}\lesssim_{p}\left[\mathrm{diam}\left(\mathrm{supp}f\right)\right]^{2}\left|\left|f\right|\right|_{L^{p}\left(\mathbb{R}^{2}\right)},
\]
where $\mathrm{diam}\left(\mathrm{supp}f\right)$ denotes the diameter
of the support of $f$.
\end{prop}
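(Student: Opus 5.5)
Assuming $u(x)=\frac{1}{2\pi}\int_{\mathbb{R}^{2}}\ln\left(\left|\left|x-y\right|\right|_{2}\right)f\left(y\right)\mathrm{d}y$, I would first reduce by translation and scaling to a support of unit size, then split $\mathbb{R}^{2}$ into a near region and a far region. Translations commute with the Laplacian and with the convolution formula. So we may assume $\mathrm{supp}f\subseteq\overline{B\left(0;R\right)}$ with $R=\mathrm{diam}\left(\mathrm{supp}f\right)$, since any point of the support can serve as the centre.

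For the scaling, let $f_{R}(x)\coloneqq f(Rx)$ and let $v$ be its logarithmic potential. Because $f$ has zero mean, the constant $\ln R$ produced by the change of variables drops out, and one gets exactly $u(x)=R^{2}v(x/R)$. Consequently
\[
\left|\left|u\right|\right|_{L^{p}\left(\mathbb{R}^{2}\right)}=R^{2+\frac{2}{p}}\left|\left|v\right|\right|_{L^{p}\left(\mathbb{R}^{2}\right)},\qquad\left|\left|f_{R}\right|\right|_{L^{p}\left(\mathbb{R}^{2}\right)}=R^{-\frac{2}{p}}\left|\left|f\right|\right|_{L^{p}\left(\mathbb{R}^{2}\right)}.
\]
Therefore it suffices to prove $\left|\left|v\right|\right|_{L^{p}}\lesssim_{p}\left|\left|f_{R}\right|\right|_{L^{p}}$ when the support lies in $\overline{B\left(0;1\right)}$, and the factor $R^{2}$ then appears automatically. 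From now on I assume $R=1$.

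\textbf{Near region} $\left|\left|x\right|\right|_{2}\le2$. Here $\left|\left|x-y\right|\right|_{2}\le3$ for all $y$ in the support. Hölder's inequality gives
\[
|u(x)|\lesssim\left|\left|\ln\left|\left|\cdot\right|\right|_{2}\right|\right|_{L^{p'}\left(B\left(0;3\right)\right)}\left|\left|f\right|\right|_{L^{p}}\lesssim_{p}\left|\left|f\right|\right|_{L^{p}},
\]
because the logarithm lies in every $L^{q}_{\mathrm{loc}}$. Since the near region has finite area, $\left|\left|u\right|\right|_{L^{p}\left(B\left(0;2\right)\right)}\lesssim_{p}\left|\left|f\right|\right|_{L^{p}}$.

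\textbf{Far region} $\left|\left|x\right|\right|_{2}>2$. This is the main obstacle: a logarithmic potential normally grows like $\ln\left|\left|x\right|\right|_{2}$ and is not in any $L^{p}$. The zero-mean hypothesis removes this growth. Write
\[
u(x)=\frac{1}{2\pi}\int_{B\left(0;1\right)}\left(\ln\left|\left|x-y\right|\right|_{2}-\ln\left|\left|x\right|\right|_{2}\right)f(y)\,\mathrm{d}y.
\]
Since $\left|\left|y\right|\right|_{2}\le1<\frac{1}{2}\left|\left|x\right|\right|_{2}$, the mean value theorem gives $\left|\ln\left|\left|x-y\right|\right|_{2}-\ln\left|\left|x\right|\right|_{2}\right|\lesssim\left|\left|y\right|\right|_{2}/\left|\left|x\right|\right|_{2}\le1/\left|\left|x\right|\right|_{2}$. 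Hence
\[
|u(x)|\lesssim\frac{\left|\left|f\right|\right|_{L^{1}}}{\left|\left|x\right|\right|_{2}}\lesssim\frac{\left|\left|f\right|\right|_{L^{p}}}{\left|\left|x\right|\right|_{2}},
\]
where the last step is Hölder's inequality on the unit ball. The integral $\int_{\left|\left|x\right|\right|_{2}>2}\left|\left|x\right|\right|_{2}^{-p}\mathrm{d}x$ is finite exactly when $p>2$, which is where that hypothesis enters. This gives $\left|\left|u\right|\right|_{L^{p}\left(\mathbb{R}^{2}\setminus B\left(0;2\right)\right)}\lesssim_{p}\left|\left|f\right|\right|_{L^{p}}$.

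Adding the near and far estimates and undoing the scaling yields $\left|\left|u\right|\right|_{L^{p}\left(\mathbb{R}^{2}\right)}\lesssim_{p}\left[\mathrm{diam}\left(\mathrm{supp}f\right)\right]^{2}\left|\left|f\right|\right|_{L^{p}\left(\mathbb{R}^{2}\right)}$.
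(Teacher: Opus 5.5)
Your proof is correct, and it reaches the estimate by a somewhat different route from the paper's.

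\textbf{What the paper does.} The paper also translates so that $0\in\mathrm{supp}f$ and uses the zero mean to replace the kernel by $\ln\|x-y\|_{2}-\ln\|x\|_{2}$. It does this for every $x$, not only far from the support. It then applies Jensen's inequality with the probability measure $\mathrm{d}y/|\mathrm{supp}f|$ (equivalently, H\"older) and Tonelli to obtain
\[
\|u\|_{L^{p}(\mathbb{R}^{2})}^{p}\lesssim|\mathrm{supp}f|^{p-1}\int_{\mathrm{supp}f}|f(y)|^{p}\int_{\mathbb{R}^{2}}\bigl|\ln\|x-y\|_{2}-\ln\|x\|_{2}\bigr|^{p}\,\mathrm{d}x\,\mathrm{d}y.
\]
The inner integral is computed by the dilation $z=x/\|y\|_{2}$, which shows it is $\lesssim_{p}\|y\|_{2}^{2}$. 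The hypothesis $p>2$ enters through the tail $\int_{\|z\|_{2}\ge2}\|z\|_{2}^{-p}\,\mathrm{d}z$, and the logarithmic singularities at $z=0$ and $\|z\|_{2}=1$ must be integrated by hand.

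\textbf{What you do differently.} You dilate the whole problem, using $\int f=0$ to remove the $\ln R$ constant. You then replace Jensen and Tonelli by pointwise bounds on $u$:
H\"older against $\ln\|\cdot\|_{2}\in L^{p'}_{\mathrm{loc}}$ near the support, where no cancellation is needed; and the mean-value decay $|u(x)|\lesssim\|f\|_{L^{1}}/\|x\|_{2}$ far away.

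\textbf{What each route buys.} Your version makes it transparent that both the zero-mean hypothesis and $p>2$ matter only at infinity. It also gives extra information for free: in your translated coordinates, $\|u\|_{L^{\infty}(B(0;2R))}\lesssim_{p}R^{2-\frac{2}{p}}\|f\|_{L^{p}}$, and $|u(x)|\lesssim R\|f\|_{L^{1}}/\|x\|_{2}$ for $\|x\|_{2}>2R$. The paper's route avoids any case distinction in $x$. It also produces the intermediate weighted bound above, which tracks $|\mathrm{supp}f|$ and $\|y\|_{2}$ rather than only the diameter.
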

\begin{proof}
We may restrict the proof to the case $0\in\text{supp}f$. Indeed,
if $0\notin\text{supp}f$, take $x_{0}\in\text{supp}f$ and consider
$\Delta v=g$, where $g\left(x\right)=f\left(x-x_{0}\right)$. In
this way, $v\left(x\right)=u\left(x-x_{0}\right)$. As $0\in\text{supp}g$,
we can apply the statement to obtain the desired bound for $v$ and
$g$. Since $\left|\left|u\right|\right|_{L^{p}\left(\mathbb{R}^{2}\right)}=\left|\left|v\right|\right|_{L^{p}\left(\mathbb{R}^{2}\right)}$,
$\left|\left|f\right|\right|_{L^{p}\left(\mathbb{R}^{2}\right)}=\left|\left|g\right|\right|_{L^{p}\left(\mathbb{R}^{2}\right)}$
and $\text{diam}\left(\text{supp}f\right)=\text{diam}\left(\text{supp}g\right)$,
the result must also be true for $u$ and $f$.

We know that $u$ is given by
\begin{equation}
u\left(x\right)=\frac{1}{2\pi}\int_{\mathbb{R}^{2}}\ln\left(\left|\left|x-y\right|\right|_{2}\right)f\left(y\right)\mathrm{d}y.\label{eq:u Laplace}
\end{equation}
Since $\int_{\mathbb{R}^{2}}f\left(x\right)\mathrm{d}x=0$, we can
write
\[
u\left(x\right)=\frac{1}{2\pi}\int_{\mathbb{R}^{2}}\left[\ln\left(\left|\left|x-y\right|\right|_{2}\right)-\ln\left(\left|\left|x\right|\right|_{2}\right)\right]f\left(y\right)\mathrm{d}y.
\]
Consequently,
\[
\left|\left|u\right|\right|^{p}_{L^{p}\left(\mathbb{R}^{2}\right)}=\frac{1}{\left(2\pi\right)^{p}}\int_{\mathbb{R}^{2}}\left|\int_{\mathbb{R}^{2}}\left[\ln\left(\left|\left|x-y\right|\right|_{2}\right)-\ln\left(\left|\left|x\right|\right|_{2}\right)\right]f\left(y\right)\mathrm{d}y\right|^{p}\mathrm{d}x.
\]
As $f$ has compact support, we have
\[
\left|\left|u\right|\right|^{p}_{L^{p}\left(\mathbb{R}^{2}\right)}=\frac{1}{\left(2\pi\right)^{p}}\int_{\mathbb{R}^{2}}\left|\int_{\text{supp}f}\left[\ln\left(\left|\left|x-y\right|\right|_{2}\right)-\ln\left(\left|\left|x\right|\right|_{2}\right)\right]f\left(y\right)\mathrm{d}y\right|^{p}\mathrm{d}x.
\]
Multiplying and dividing by the Lebesgue measure of the support of
$f$ (denoted by $\left|\text{supp}f\right|$) in the inner integral,
we obtain
\[
\left|\left|u\right|\right|^{p}_{L^{p}\left(\mathbb{R}^{2}\right)}=\frac{\left|\text{supp}f\right|^{p}}{\left(2\pi\right)^{p}}\int_{\mathbb{R}^{2}}\left|\int_{\text{supp}f}\left[\ln\left(\left|\left|x-y\right|\right|_{2}\right)-\ln\left(\left|\left|x\right|\right|_{2}\right)\right]f\left(y\right)\frac{\mathrm{d}y}{\left|\mathrm{supp}f\right|}\right|^{p}\mathrm{d}x.
\]
Clearly, $\frac{\mathrm{d}y}{\left|\mathrm{supp}f\right|}$ is a probability
measure over $\mathrm{supp}f$. Thereby, as $z\to\left|z\right|^{p}$
is convex $\forall p>1$, Jensen's inequality allows us to write
\[
\left|\left|u\right|\right|^{p}_{L^{p}\left(\mathbb{R}^{2}\right)}\leq\frac{\left|\text{supp}f\right|^{p}}{\left(2\pi\right)^{p}}\int_{\mathbb{R}^{2}}\int_{\mathrm{supp}f}\left|\ln\left(\left|\left|x-y\right|\right|_{2}\right)-\ln\left(\left|\left|x\right|\right|_{2}\right)\right|^{p}\left|f\left(y\right)\right|^{p}\frac{\mathrm{d}y}{\left|\mathrm{supp}f\right|}\mathrm{d}x.
\]
Now, Tonelli's Theorem guarantees that
\begin{equation}
\left|\left|u\right|\right|^{p}_{L^{p}\left(\mathbb{R}^{2}\right)}\leq\frac{\left|\text{supp}f\right|^{p-1}}{\left(2\pi\right)^{p}}\int_{\mathrm{supp}f}\left|f\left(y\right)\right|^{p}\underbrace{\int_{\mathbb{R}^{2}}\left|\ln\left(\left|\left|x-y\right|\right|_{2}\right)-\ln\left(\left|\left|x\right|\right|_{2}\right)\right|^{p}\mathrm{d}x}_{\eqqcolon I}\mathrm{d}y.\label{eq:Lp norm of u}
\end{equation}
In the inner integral $I$, we undertake the change of variables
\[
z=\frac{x}{\left|\left|y\right|\right|_{2}}\implies\left|\det\mathrm{J}_{x}z\right|=\frac{1}{\left|\left|y\right|\right|^{2}_{2}}.
\]
In this way,
\begin{equation}
I=\left|\left|y\right|\right|^{2}_{2}\int_{\mathbb{R}^{2}}\left|\ln\left(\frac{\left|\left|z\left|\left|y\right|\right|_{2}-y\right|\right|_{2}}{\left|\left|z\right|\right|_{2}\left|\left|y\right|\right|_{2}}\right)\right|^{p}\mathrm{d}z.\label{eq:I after change of variables}
\end{equation}
The triangular and reverse triangular inequalities provide us the
bounds
\[
\begin{aligned}\left|\left|z\left|\left|y\right|\right|_{2}-y\right|\right|_{2} & \leq\left|\left|z\right|\right|_{2}\left|\left|y\right|\right|_{2}+\left|\left|y\right|\right|_{2}\leq\left|\left|y\right|\right|_{2}\left(1+\left|\left|z\right|\right|_{2}\right),\\
\left|\left|z\left|\left|y\right|\right|_{2}-y\right|\right|_{2} & \geq\left|\left|\left|z\right|\right|_{2}\left|\left|y\right|\right|_{2}-\left|\left|y\right|\right|_{2}\right|=\left|\left|y\right|\right|_{2}\left|\left|\left|z\right|\right|_{2}-1\right|.
\end{aligned}
\]
Since $\ln$ is an increasing function, these bounds above ensure
that
\[
\ln\left|1-\frac{1}{\left|\left|z\right|\right|_{2}}\right|=\ln\left|\frac{\left|\left|z\right|\right|_{2}-1}{\left|\left|z\right|\right|_{2}}\right|\leq\ln\left(\frac{\left|\left|z\left|\left|y\right|\right|_{2}-y\right|\right|_{2}}{\left|\left|z\right|\right|_{2}\left|\left|y\right|\right|_{2}}\right)\leq\ln\left(\frac{1+\left|\left|z\right|\right|_{2}}{\left|\left|z\right|\right|_{2}}\right)=\ln\left(1+\frac{1}{\left|\left|z\right|\right|_{2}}\right).
\]
Thus,
\[
\left|\ln\left(\frac{\left|\left|z\left|\left|y\right|\right|_{2}-y\right|\right|_{2}}{\left|\left|z\right|\right|_{2}\left|\left|y\right|\right|_{2}}\right)\right|\leq\max\left\{ \left|\ln\left|1-\frac{1}{\left|\left|z\right|\right|_{2}}\right|\right|,\left|\ln\left(1+\frac{1}{\left|\left|z\right|\right|_{2}}\right)\right|\right\} 
\]
and, as a consequence, coming back to equation \eqref{eq:I after change of variables},
we deduce that
\[
I\leq\left|\left|y\right|\right|^{2}_{2}\left[\int_{\mathbb{R}^{2}}\left|\ln\left|1-\frac{1}{\left|\left|z\right|\right|_{2}}\right|\right|^{p}\mathrm{d}z+\int_{\mathbb{R}^{2}}\left|\ln\left(1+\frac{1}{\left|\left|z\right|\right|_{2}}\right)\right|^{p}\mathrm{d}z\right].
\]
To continue, we split the integration domain of $I$ in two parts.
For $I_{1}$, we will integrate in $\left\{ \left|\left|z\right|\right|_{2}\geq2\right\} $
and, for $I_{2}$, we will integrate in $\left\{ \left|\left|z\right|\right|_{2}<2\right\} $.

For $I_{1}$, using that $\left|\ln\left|1-v\right|\right|\leq2v$
$\forall v\in\left[0,\frac{1}{2}\right]$ and that $\ln\left(1+v\right)\leq v$
$\forall v\in\left[0,\infty\right)$, we can easily bound
\[
I_{1}\leq\left(2^{p}+1\right)\left|\left|y\right|\right|^{2}_{2}\int_{\left\{ \left|\left|z\right|\right|_{2}\geq2\right\} }\frac{1}{\left|\left|z\right|\right|^{p}_{2}}\mathrm{d}z\lesssim_{p}\left|\left|y\right|\right|^{2}_{2}\int^{\infty}_{2}\frac{1}{\rho^{p-1}}\mathrm{d}\rho.
\]
As long as $p>2$, the integral above is bounded, which means that
\begin{equation}
I_{1}\lesssim_{p}\left|\left|y\right|\right|^{2}_{2}.\label{eq:Laplace Lp I1}
\end{equation}

Now, for $I_{2}$, we treat each integral separately, i.e.,
\[
I_{2}=I_{2,1}+I_{2,2},\quad I_{2,1}\coloneqq\left|\left|y\right|\right|^{2}_{2}\int_{\left\{ \left|\left|z\right|\right|_{2}<2\right\} }\left|\ln\left|1-\frac{1}{\left|\left|z\right|\right|_{2}}\right|\right|^{p}\mathrm{d}z,\quad I_{2,2}\coloneqq\left|\left|y\right|\right|^{2}_{2}\int_{\left\{ \left|\left|z\right|\right|_{2}<2\right\} }\left|\ln\left(1+\frac{1}{\left|\left|z\right|\right|_{2}}\right)\right|^{p}\mathrm{d}z.
\]
For the first integral $I_{2,1}$, we pass to polar coordinates $\rho=\left|\left|z\right|\right|_{2}$,
$\theta=\text{atan2}\left(z_{2},z_{1}\right)$ and we do the change
of variables
\[
w=1-\frac{1}{\rho}\iff\rho=\frac{1}{1-w}\implies\mathrm{d}\rho=\frac{\mathrm{d}w}{\left(1-w\right)^{2}},\quad\left|\left|z\right|\right|_{2}\leq2\iff0\leq\rho\leq2\iff w\in\left(-\infty,\frac{1}{2}\right].
\]
In this manner, we arrive at
\[
I_{2,1}\lesssim\left|\left|y\right|\right|^{2}_{2}\int^{\frac{1}{2}}_{-\infty}\left|\ln\left|w\right|\right|^{p}\frac{\mathrm{d}w}{\left(1-w\right)^{3}}.
\]
Since any power of a logarithm is integrable at the origin and $\frac{1}{\left(1-w\right)^{3}}$
dominates at $-\infty$ and is integrable there, we conclude that
the integral above is bounded (with value depending on $p$), which
leads to
\begin{equation}
I_{2,1}\lesssim_{p}\left|\left|y\right|\right|^{2}_{2}.\label{eq:Laplace Lp I21}
\end{equation}
Similarly, for the second integral $I_{2,2}$, we also pass to polar
coordinates and undertake the change of variables
\[
w=1+\frac{1}{\rho}\iff\rho=\frac{1}{w-1}\implies\mathrm{d}\rho=-\frac{\mathrm{d}w}{\left(w-1\right)^{2}},\quad\left|\left|z\right|\right|_{2}\leq2\iff0\leq\rho\leq2\iff w\in\left[\frac{3}{2},\infty\right).
\]
This leads to
\begin{equation}
I_{2,2}\lesssim\left|\left|y\right|\right|^{2}_{2}\int^{\infty}_{\frac{3}{2}}\left|\ln w\right|^{p}\frac{\mathrm{d}w}{\left(w-1\right)^{3}}.\label{eq:Laplace Lp I22}
\end{equation}
As $\frac{1}{\left(w-1\right)^{3}}$ dominates at $\infty$ and is
integrable there, we infer that the integral above is bounded (with
value depending on $p$). Consequently,
\[
I_{2}=I_{2,1}+I_{2,2}\lesssim_{p}\left|\left|y\right|\right|^{2}_{2}.
\]
Substituting back into equation \eqref{eq:Lp norm of u}, taking into
account equations \eqref{eq:Laplace Lp I1}, \eqref{eq:Laplace Lp I21}, and \eqref{eq:Laplace Lp I22},
we obtain
\[
\left|\left|u\right|\right|^{p}_{L^{p}\left(\mathbb{R}^{2}\right)}\lesssim_{p}\left|\text{supp}f\right|^{p-1}\int_{\mathrm{supp}f}\left|f\left(y\right)\right|^{p}\left|\left|y\right|\right|^{2}_{2}\mathrm{d}y.
\]
Since $0\in\text{supp}f$, we may bound
\[
\left|\left|y\right|\right|_{2}\leq\text{diam}\left(\text{supp}f\right),\quad\left|\text{supp}f\right|\leq\pi\left[\text{diam}\left(\text{supp}f\right)\right]^{2}
\]
which provides
\[
\left|\left|u\right|\right|^{p}_{L^{p}\left(\mathbb{R}^{2}\right)}\lesssim_{p}\left[\mathrm{diam}\left(\mathrm{supp}f\right)\right]^{2p}\left|\left|f\right|\right|^{p}_{L^{p}\left(\mathbb{R}^{2}\right)}.
\]
This leads directly to the statement.
\end{proof}

\begin{prop}[Theorem 3 of Chapter 2 of \cite{Stein}]
\label{prop:Stein}Let $\Omega$ be homogeneous of degree $0$ and
suppose that $\Omega$ satisfies the cancellation property
\[
\int_{\mathbb{S}^{n-1}}\Omega\left(x\right)\mathrm{d}\sigma=0,
\]
and the smoothness property
\[
\int^{1}_{0}\frac{\omega\left(\delta\right)}{\delta}\mathrm{d}\delta<\infty,\quad\omega\left(\delta\right)\coloneqq\sup_{\substack{\left|\left|x-y\right|\right|_{2}\leq\delta\\
\left|\left|x\right|\right|_{2}=\left|\left|y\right|\right|_{2}=1
}
}\left|\Omega\left(x\right)-\Omega\left(y\right)\right|.
\]
For $1<p<\infty$ and $f\in L^{p}\left(\mathbb{R}^{n}\right)$, let
\[
T_{\varepsilon}\left(f\right)\left(x\right)=\int_{\left\{ \left|\left|y\right|\right|_{2}\geq\varepsilon\right\} }\frac{\Omega\left(y\right)}{\left|\left|y\right|\right|^{n}_{2}}f\left(x-y\right)\mathrm{d}y.
\]
Then,
\begin{enumerate}
\item ~
\[
\left|\left|T_{\varepsilon}\left(f\right)\right|\right|_{L^{p}\left(\mathbb{R}^{n}\right)}\lesssim_{n,p}\left|\left|f\right|\right|_{L^{p}\left(\mathbb{R}^{n}\right)},
\]
\item $T\left(f\right)\coloneqq\lim_{\varepsilon\to0}T_{\varepsilon}\left(f\right)$
exists in $L^{p}\left(\mathbb{R}^{n}\right)$ and
\[
\left|\left|T\left(f\right)\right|\right|_{L^{p}\left(\mathbb{R}^{n}\right)}\lesssim_{n,p}\left|\left|f\right|\right|_{L^{p}\left(\mathbb{R}^{n}\right)}.
\]
\end{enumerate}
\end{prop}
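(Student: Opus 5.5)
The plan is the classical Calderón--Zygmund scheme, applied to the truncated kernels $K_{\varepsilon}(y)=\mathbf{1}_{\{\left|\left|y\right|\right|_{2}\geq\varepsilon\}}\Omega(y)\left|\left|y\right|\right|_{2}^{-n}$. Every constant must be independent of $\varepsilon$. Point 2 will then follow from point 1 by a density argument.

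\emph{Step 1 ($L^{2}$ bound).} I would show that $\widehat{K_{\varepsilon}}$ is bounded uniformly in $\varepsilon$ and $\xi$; Plancherel then gives $\left|\left|T_{\varepsilon}f\right|\right|_{L^{2}}\lesssim\left|\left|f\right|\right|_{L^{2}}$.
\begin{itemize}
\item Write $\widehat{K_{\varepsilon}}(\xi)$ in polar coordinates as $\int_{\mathbb{S}^{n-1}}\Omega(\theta)\int_{\varepsilon}^{\infty}e^{-2\pi i r\theta\cdot\xi}\,\frac{\mathrm{d}r}{r}\,\mathrm{d}\sigma(\theta)$, which is legitimate after an additional truncation at a large radius $R\to\infty$.
\item Split the radial integral at $r=\left|\left|\xi\right|\right|_{2}^{-1}$. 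On the inner part, subtract $\int_{\varepsilon}^{1/\left|\xi\right|}\frac{\mathrm{d}r}{r}$, which is independent of $\theta$ and is killed by the cancellation property.
\item What remains is bounded by $\log\frac{1}{\left|\theta\cdot\xi'\right|}$ plus a constant, with $\xi'=\xi/\left|\xi\right|$.
\item The term $\Omega(\theta)\log\frac{1}{\left|\theta\cdot\xi'\right|}$ is integrable on the sphere. Here $\Omega$ is bounded, since it is continuous on the compact sphere by the Dini condition.
\end{itemize}

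\emph{Step 2 (Hörmander condition).} I would prove
\[
\sup_{\varepsilon>0}\sup_{y\neq0}\int_{\left|\left|x\right|\right|_{2}\geq2\left|\left|y\right|\right|_{2}}\left|K_{\varepsilon}(x-y)-K_{\varepsilon}(x)\right|\mathrm{d}x<\infty .
\]
\begin{itemize}
\item Split $\Omega(x-y)|x-y|^{-n}-\Omega(x)|x|^{-n}$ into the difference of the radial factors times $\Omega$, plus $\Omega(x/|x|)$ versus $\Omega((x-y)/|x-y|)$.
\item The first piece is $\lesssim\left|y\right|\left|x\right|^{-n-1}$, which is integrable on $\{|x|\ge2|y|\}$ with a bound independent of $y$.
\item In the second piece the angular distance is $\lesssim|y|/|x|$, so in radial coordinates $\delta=|y|/|x|$ it is controlled by $\int_{0}^{1}\omega(\delta)\,\frac{\mathrm{d}\delta}{\delta}$.
\item The truncation only adds an error near the sphere $\left|\left|x\right|\right|_{2}=\varepsilon$ (and $\left|\left|x-y\right|\right|_{2}=\varepsilon$). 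That error comes from a shell of width $\lesssim|y|$, where the kernel has size $\varepsilon^{-n}\lesssim|y|^{-n}$, so it is bounded uniformly.
\end{itemize}

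\emph{Step 3 (all $1<p<\infty$).}
\begin{itemize}
\item The Calderón--Zygmund decomposition at height $\lambda$, combined with Steps 1 and 2, gives the weak type $(1,1)$ bound for $T_{\varepsilon}$. The good part is handled by the $L^{2}$ bound; the bad part is handled by the mean-zero property on each cube together with Hörmander's condition away from the doubled cubes.
\item Marcinkiewicz interpolation then gives $1<p\le2$.
\item The adjoint of $T_{\varepsilon}$ has kernel $\Omega(-y)\left|\left|y\right|\right|_{2}^{-n}$, which satisfies the same hypotheses. Duality therefore gives $2\le p<\infty$, which is point 1.
\end{itemize}

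\emph{Step 4 (existence of the limit).} For $f\in C^{\infty}_{c}$, by cancellation
\[
T_{\varepsilon}f-T_{\varepsilon'}f=\int_{\varepsilon\le\left|\left|y\right|\right|_{2}<\varepsilon'}\frac{\Omega(y)}{\left|\left|y\right|\right|_{2}^{n}}\bigl(f(x-y)-f(x)\bigr)\mathrm{d}y .
\]
This is pointwise $\lesssim\varepsilon'\left|\left|\nabla f\right|\right|_{L^{\infty}}$ and is supported in a fixed compact set, so $(T_{\varepsilon}f)$ is Cauchy in $L^{p}$. Density of $C^{\infty}_{c}$ in $L^{p}$ plus the uniform bound from point 1 (an $\varepsilon/3$ argument) extends the convergence to all $f\in L^{p}$. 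Passing to the limit in point 1 gives the bound for $T$.

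I expect the main obstacle to be Step 1, namely the uniform bound on the multiplier. It is the only place where the cancellation of $\Omega$ is used essentially, and the logarithmic singularity must be shown to be integrable on the sphere.
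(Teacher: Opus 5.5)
Your proposal is correct and is essentially the standard proof of Stein's Theorem 3 in Chapter II, which the paper cites without reproducing. That proof has the same steps as yours: a uniform bound on the truncated multiplier via the cancellation property, Hörmander's condition for $K_{\varepsilon}$ uniformly in $\varepsilon$, Calder\'on--Zygmund with Marcinkiewicz interpolation and duality, and a density argument on $C^{\infty}_{c}$ for the limit. The one thing to state more carefully is the truncation error in Step 2: on that shell one has $|y|<\varepsilon$, and the bound comes from the shell volume $\lesssim\varepsilon^{n-1}|y|$ times the kernel size $\varepsilon^{-n}$, giving $\lesssim |y|/\varepsilon\le 1$. Replacing $\varepsilon^{-n}$ by $|y|^{-n}$, as your wording suggests, would not give a uniform bound.
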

\begin{cor}
\label{cor:Stein}Consider the equation $\Delta u=f$, where $f:\mathbb{R}^{2}\to\mathbb{R}$
and $u$ is computed via the fundamental solution of the Laplace equation.
Let $p\in\left(1,\infty\right)$. Then,
\[
\left|\left|\mathrm{D}^{2}u\right|\right|_{L^{p}\left(\mathbb{R}^{2}\right)}\lesssim_{p}\left|\left|f\right|\right|_{L^{p}\left(\mathbb{R}^{2}\right)}.
\]
\end{cor}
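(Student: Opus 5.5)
We will deduce Corollary \ref{cor:Stein} from Proposition \ref{prop:Stein} by writing each second derivative of $u$ as a Calderón--Zygmund singular integral of $f$ plus a multiple of $f$. Since $u=\frac{1}{2\pi}\ln\left|\left|\cdot\right|\right|_{2}*f$, it is convenient to argue first for $f\in C^{\infty}_{c}\left(\mathbb{R}^{2}\right)$ and then extend by density. For such $f$ we may differentiate under the integral: $\frac{\partial u}{\partial x_{j}}=\frac{1}{2\pi}\frac{x_{j}}{\left|\left|x\right|\right|_{2}^{2}}*f$, whose kernel is locally integrable. Differentiating once more, we pass the derivative onto $f$ and excise a ball $\left\{ \left|\left|y\right|\right|_{2}<\varepsilon\right\} $. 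An integration by parts on $\left\{ \left|\left|y\right|\right|_{2}\ge\varepsilon\right\} $ then gives
\[
\frac{\partial^{2}u}{\partial x_{i}\partial x_{j}}(x)=\lim_{\varepsilon\to0}\int_{\left\{ \left|\left|y\right|\right|_{2}\geq\varepsilon\right\} }\frac{\Omega_{ij}(y)}{\left|\left|y\right|\right|_{2}^{2}}f(x-y)\mathrm{d}y+\frac{\delta_{ij}}{2}f(x),
\]
where
\[
\Omega_{ij}(y)=\frac{1}{2\pi}\frac{\delta_{ij}\left|\left|y\right|\right|_{2}^{2}-2y_{i}y_{j}}{\left|\left|y\right|\right|_{2}^{2}}.
\]
The constant $\frac{\delta_{ij}}{2}$ comes from the boundary term on the circle of radius $\varepsilon$: the average of $\frac{y_{i}y_{j}}{\left|\left|y\right|\right|_{2}^{2}}$ over the circle is $\frac{\delta_{ij}}{2}$, and this boundary term converges to $\frac{\delta_{ij}}{2}f(x)$ by continuity of $f$. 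The exact value of the constant is irrelevant, since it only contributes a bounded multiple of $f$.

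Next we check that $\Omega_{ij}$ meets the hypotheses of Proposition \ref{prop:Stein}. It is homogeneous of degree $0$. It is smooth, indeed Lipschitz, on $\mathbb{S}^{1}$, so $\omega(\delta)\lesssim\delta$ and the Dini condition $\int_{0}^{1}\frac{\omega(\delta)}{\delta}\mathrm{d}\delta<\infty$ holds. The cancellation property follows from $\int_{\mathbb{S}^{1}}y_{i}y_{j}\mathrm{d}\sigma=\pi\delta_{ij}$, which gives $\int_{\mathbb{S}^{1}}\left(\delta_{ij}-2y_{i}y_{j}\right)\mathrm{d}\sigma=2\pi\delta_{ij}-2\pi\delta_{ij}=0$. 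Proposition \ref{prop:Stein} therefore shows that the principal value term is bounded by $\lesssim_{p}\left|\left|f\right|\right|_{L^{p}\left(\mathbb{R}^{2}\right)}$ in $L^{p}$. Adding the local term, we obtain $\left|\left|\mathrm{D}^{2}u\right|\right|_{L^{p}\left(\mathbb{R}^{2}\right)}\lesssim_{p}\left|\left|f\right|\right|_{L^{p}\left(\mathbb{R}^{2}\right)}$ for every $f\in C^{\infty}_{c}\left(\mathbb{R}^{2}\right)$.

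The main obstacle is extending to general $f\in L^{p}\left(\mathbb{R}^{2}\right)$. In two dimensions the logarithmic convolution need not converge absolutely, so $u$ itself may only be defined up to the usual ambiguity. To handle this, take $f_{k}\in C^{\infty}_{c}\left(\mathbb{R}^{2}\right)$ with $f_{k}\to f$ in $L^{p}$. The estimate shows that $\mathrm{D}^{2}u_{k}$ is Cauchy in $L^{p}$. We then identify its limit with $\mathrm{D}^{2}u$ in the distributional sense whenever the convolution defining $u$ makes sense, for instance under the compact-support hypotheses used in Proposition \ref{prop:Laplacian estimate L^p}. In that case $u_{k}\to u$ locally in $L^{1}$, and limits of distributional derivatives coincide. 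The bound then passes to the limit.
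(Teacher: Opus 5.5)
Your proof is correct and follows the paper's route: you write $\mathrm{D}^{2}u$ as a singular integral with kernel $\Omega_{ij}(y)/\left|\left|y\right|\right|_{2}^{2}$, check homogeneity, the Dini condition ($\omega(\delta)\lesssim\delta$) and the cancellation $\int_{\mathbb{S}^{1}}\Omega_{ij}\,\mathrm{d}\sigma=0$, and then invoke Proposition \ref{prop:Stein}. Your version is more careful than the paper's, which ``differentiates under the integral sign twice'' and drops the local term $\frac{\delta_{ij}}{2}f$ (without that term the trace would give $\Delta u=0$); the term is harmless for the $L^{p}$ bound, and in your density step you only need to take the $f_{k}$ to be mollifications of $f$, so that their supports stay in a fixed compact set and $u_{k}\to u$ locally is justified.
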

\begin{proof}
Differentiating under the integral sign twice in \eqref{eq:u Laplace},
we arrive at
\[
\mathrm{D}^{2}u\left(x\right)=\frac{1}{2\pi}\int_{\mathbb{R}^{2}}\frac{\text{id}_{2\times2}-2\frac{y\otimes y}{\left|\left|y\right|\right|^{2}_{2}}}{\left|\left|y\right|\right|^{2}_{2}}f\left(x-y\right)\mathrm{d}y.
\]
Clearly, $\Omega_{ij}\left(y\right)=\left(\text{id}_{2\times2}-2\frac{y\otimes y}{\left|\left|y\right|\right|^{2}_{2}}\right)_{ij}$
is smooth except at the origin $\forall\left(i,j\right)\in\left\{ 1,2\right\} ^{2}$.
Since the straight line that joins two points in the definition of
$\omega\left(\delta\right)$ never crosses the origin if $\delta\leq1$,
we will have $\omega\left(\delta\right)\lesssim\delta$, which makes
the smoothness property trivial. Furthermore, an easy trigonometric
exercise shows that $\int_{\mathbb{S}^{1}}\Omega_{ij}\left(y\right)\mathrm{d}\sigma=0$
$\forall\left(i,j\right)\in\left\{ 1,2\right\} ^{2}$. Thus, Proposition
\ref{prop:Stein} is applicable, which provides the result.
\end{proof}

\begin{prop}[Gagliardo-Nirenberg]
\label{prop:Gagliardo-Nirenberg} \cite{Gagliardo-Nirenberg} Let
$r,q\in\left[1,\infty\right]$, $p\in\left[1,\infty\right)$, $j,m\in\mathbb{N}_{0}$
such that $j<m$ and $\theta\in\left[0,1\right]$ so that
\[
\frac{1}{p}=\frac{j}{n}+\theta\left(\frac{1}{r}-\frac{m}{n}\right)+\frac{1-\theta}{q}.
\]
Then,
\[
\left|\left|\mathrm{D}^{j}u\right|\right|_{L^{p}\left(\mathbb{R}^{n}\right)}\lesssim_{j,m,n,q,r,\theta}\left|\left|\mathrm{D}^{m}u\right|\right|^{\theta}_{L^{r}\left(\mathbb{R}^{n}\right)}\left|\left|u\right|\right|^{1-\theta}_{L^{q}\left(\mathbb{R}^{n}\right)}
\]
for any $u\in L^{q}\left(\mathbb{R}^{n}\right)$ such that $\mathrm{D}^{m}u\in L^{r}\left(\mathbb{R}^{n}\right)$,
with two exceptional cases:
\begin{enumerate}
\item if $j=0$ (with the understanding that $\mathrm{D}^{0}u=u$), $q=\infty$
and $rm<n$, then an additional assumption is needed: either $u$
tends to $0$ at infinity or $u\in L^{s}\left(\mathbb{R}^{n}\right)$
for some finite value of $s$;
\item if $r>1$ and $m-j-\frac{n}{r}\geq0$, then the additional assumption
$\frac{j}{m}\leq\theta<1$ (notice the strict inequality) is needed. 
\end{enumerate}
\end{prop}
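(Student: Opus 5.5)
Since this is the classical Gagliardo--Nirenberg inequality, quoted from \cite{Gagliardo-Nirenberg}, I would reconstruct the standard proof rather than attempt anything new. The first step is \emph{scaling}. For $u_{\lambda}(x)=u(\lambda x)$ the left-hand side scales like $\lambda^{j-\frac{n}{p}}$ and the right-hand side like $\lambda^{\theta\left(m-\frac{n}{r}\right)-\left(1-\theta\right)\frac{n}{q}}$. The relation between $p,q,r,\theta$ in the statement is exactly the condition that these exponents agree. Hence it suffices to prove the additive form
\[
\left|\left|\mathrm{D}^{j}u\right|\right|_{L^{p}}\lesssim\left|\left|\mathrm{D}^{m}u\right|\right|_{L^{r}}+\left|\left|u\right|\right|_{L^{q}},
\]
and then optimise over $\lambda$ to recover the multiplicative form.

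Second, I would prove two endpoint families and combine them.
\begin{itemize}
\item The case $\theta=1$, $j=0$, $m=1$ is Sobolev's inequality $\left|\left|u\right|\right|_{L^{\frac{nr}{n-r}}}\lesssim\left|\left|\nabla u\right|\right|_{L^{r}}$ for $r<n$. I would prove it via the Gagliardo $L^{1}$ argument (the product of one-dimensional integrals and the generalised Hölder/Loomis--Whitney inequality). The $L^{r}$ version then follows by applying the $L^{1}$ result to $|u|^{s}$.
\item The case $j=1$, $m=2$ is the interpolation $\left|\left|\nabla u\right|\right|_{L^{p}}^{2}\lesssim\left|\left|\mathrm{D}^{2}u\right|\right|_{L^{r}}\left|\left|u\right|\right|_{L^{q}}$ with $\frac{2}{p}=\frac{1}{r}+\frac{1}{q}$. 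In one dimension I would prove it by integrating by parts in $\int|u'|^{p}=\int u'\,|u'|^{p-2}u'$ and applying Hölder, and in $n$ dimensions by doing this coordinatewise. If needed, a version with a cut-off on an interval of length comparable to the local scale handles the boundary terms.
\end{itemize}
Sobolev embedding covers $\theta=1$. The case $\theta=j/m$ with $\frac{1}{p}=\frac{j/m}{r}+\frac{1-j/m}{q}$ (no Sobolev loss) follows from the $j=1,m=2$ step by induction on $m$, using log-convexity of $k\mapsto\left|\left|\mathrm{D}^{k}u\right|\right|_{L^{p_{k}}}$. The general $\theta\in\left[\frac{j}{m},1\right]$ then follows by Hölder interpolation between these two families. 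For $\theta<j/m$, which is allowed only when $rm<n$ and similar non-exceptional configurations, I would instead chain Sobolev embeddings before interpolating.

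The main obstacle is the bookkeeping of the exceptional cases, which is where the extra hypotheses in the statement enter. In exception 1 ($j=0$, $q=\infty$, $rm<n$) a nonzero constant satisfies $\mathrm{D}^{m}u=0$ while $u\notin L^{p}$, so decay at infinity or $u\in L^{s}$ is needed to kill polynomials of degree below $m$. I would use it to justify the Sobolev step for truncations and pass to the limit. In exception 2 ($m-j-\frac{n}{r}\in\mathbb{N}_{0}$, $r>1$) the endpoint $\theta=1$ fails because $W^{m,r}\not\hookrightarrow C^{j}$ in the borderline case, e.g.\ $W^{1,n}\not\hookrightarrow L^{\infty}$. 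So only $\theta<1$ is allowed, and I would prove it by interpolating between $\theta=j/m$ and a $\theta$ arbitrarily close to $1$ obtained from a Sobolev embedding into $L^{\tilde{p}}$ with $\tilde{p}<\infty$. Since the paper only uses the inequality as a black box, citing \cite{Gagliardo-Nirenberg} for these routine verifications is the intended justification.
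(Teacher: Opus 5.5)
The paper gives no proof of this proposition; it only quotes \cite{Gagliardo-Nirenberg}. For $\frac{j}{m}\le\theta\le1$ your outline is essentially Nirenberg's own proof: Sobolev at $\theta=1$, the integration-by-parts lemma for $j=1$, $m=2$, induction up to $\theta=\frac{j}{m}$, and H\"older interpolation in between. One piece is missing there: for $(m-j)r\ge n$ the $\theta=1$ endpoint is not an $L^{p}$ bound with $p<\infty$, so the upper endpoint of your interpolation must be Nirenberg's separate $L^{\infty}$ estimate (or a limiting argument in exception~2), and your two families do not supply it.

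The genuine gap is your last case, $\theta<\frac{j}{m}$. No chain of Sobolev embeddings can give it, because the inequality is \emph{false} there for every $j\ge1$, whatever $n$, $r$, $m$ are. Take $u_{N}(x)=\phi(x)\cos(Nx_{1})$ with $0\neq\phi\in C^{\infty}_{c}(\mathbb{R}^{n})$. Then $\|\mathrm{D}^{j}u_{N}\|_{L^{p}}\gtrsim N^{j}$, $\|\mathrm{D}^{m}u_{N}\|_{L^{r}}\lesssim N^{m}$ and $\|u_{N}\|_{L^{q}}\lesssim1$. The inequality would therefore force $N^{j}\lesssim N^{m\theta}$ as $N\to\infty$, i.e.\ $\theta\ge\frac{j}{m}$. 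Sobolev chains only produce $\theta=1$, and interpolation never leaves the segment between its endpoints, so this case cannot be reached.

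In Nirenberg's theorem, $\frac{j}{m}\le\theta\le1$ is a standing hypothesis. The statement as printed allows any $\theta\in[0,1]$ and mentions $\frac{j}{m}\le\theta$ only inside exception~2. You tried to prove that misstatement instead of detecting it. The correct plan is to add $\frac{j}{m}\le\theta$ to the hypotheses and delete your last case.

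This is not cosmetic for the paper. In point~4 of Corollary~\ref{cor:estimates Laplacian} the proposition is used with $n=2$, $j=1$, $m=2$ and $\theta=\frac{1}{(r-1)\frac{q}{r}+1}$. This $\theta$ is below $\frac{1}{2}=\frac{j}{m}$ whenever $q>\frac{r}{r-1}$, and in particular for the choice $r=\frac{3}{2}$, $q=8$ made in Proposition~\ref{prop:MAP wins regularity 2}, which gives $\theta=\frac{3}{11}$. This is exactly the regime where your sketch would have ``proved'' a false inequality.

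The conclusion of that point survives. Taking $r\le\frac{q}{q-1}$ (e.g.\ $r=\frac{8}{7}$ for $q=8$) gives $\theta\ge\frac{1}{2}$, and the diameter exponent equals $2-\frac{2}{q}$ independently of $r$, as scaling forces. For $r=\frac{q}{q-1}$ the estimate is simply $\|\nabla u\|_{L^{2}}^{2}=-\int uf\le\|u\|_{L^{q}}\|f\|_{L^{q/(q-1)}}$, where the boundary terms vanish because $f$ is compactly supported with zero mean. Combine this with Proposition~\ref{prop:Laplacian estimate L^p} and H\"older on $\mathrm{supp}f$.
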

\begin{cor}
\label{cor:estimates Laplacian}Consider the same assumptions as in
Proposition \ref{prop:Laplacian estimate L^p}. Then,
\begin{enumerate}
\item ~
\[
\left|\left|\nabla u\right|\right|_{L^{p}\left(\mathbb{R}^{2}\right)}\lesssim_{p}\mathrm{diam}\left(\mathrm{supp}f\right)\left|\left|f\right|\right|_{L^{p}\left(\mathbb{R}^{2}\right)},
\]
\item ~
\[
\left|\left|\nabla u\right|\right|_{W^{1,p}\left(\mathbb{R}^{2}\right)}\lesssim_{p}\left[1+\mathrm{diam}\left(\mathrm{supp}f\right)\right]\left|\left|f\right|\right|_{L^{p}\left(\mathbb{R}^{2}\right)},
\]
\item ~
\[
\left|\left|\nabla u\right|\right|_{C^{1-\frac{2}{p}}\left(\mathbb{R}^{2}\right)}\lesssim_{p}\left[1+\mathrm{diam}\left(\mathrm{supp}f\right)\right]\left|\left|f\right|\right|_{L^{p}\left(\mathbb{R}^{2}\right)},
\]
\item ~
\[
\left|\left|\nabla u\right|\right|_{L^{2}\left(\mathbb{R}^{2}\right)}\lesssim_{r,q}\left[\mathrm{diam}\left(\mathrm{supp}f\right)\right]^{2\frac{\left(r-1\right)\frac{q}{r}+\frac{1}{r}-\frac{1}{q}}{\left(r-1\right)\frac{q}{r}+1}}\left|\left|f\right|\right|_{L^{q}\left(\mathbb{R}^{2}\right)}
\]
$\forall q>2$ and $\forall r\in\left(1,2\right)$.
\end{enumerate}
\end{cor}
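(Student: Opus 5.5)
We prove the four estimates in order, each building on the previous one and on Proposition \ref{prop:Laplacian estimate L^p}, Corollary \ref{cor:Stein} and Proposition \ref{prop:Gagliardo-Nirenberg} with $n=2$. Throughout, write $d\coloneqq\mathrm{diam}\left(\mathrm{supp}f\right)$.

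\textbf{Point 1.} We apply Gagliardo--Nirenberg with $j=1$, $m=2$, $r=q=p$ and $\theta=\frac{1}{2}$. The exponent relation $\frac{1}{p}=\frac{1}{2}+\frac{1}{2}\left(\frac{1}{p}-1\right)+\frac{1}{2p}$ holds. The exceptional case 2 needs $\frac{j}{m}=\frac{1}{2}\le\theta<1$, which is satisfied. Combining Proposition \ref{prop:Laplacian estimate L^p}, which gives $\left|\left|u\right|\right|_{L^{p}}\lesssim_{p}d^{2}\left|\left|f\right|\right|_{L^{p}}$, with Corollary \ref{cor:Stein}, which gives $\left|\left|\mathrm{D}^{2}u\right|\right|_{L^{p}}\lesssim_{p}\left|\left|f\right|\right|_{L^{p}}$, we obtain
\[
\left|\left|\nabla u\right|\right|_{L^{p}}\lesssim_{p}\left|\left|f\right|\right|^{\frac{1}{2}}_{L^{p}}\left(d^{2}\left|\left|f\right|\right|_{L^{p}}\right)^{\frac{1}{2}}=d\left|\left|f\right|\right|_{L^{p}}.
\]

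\textbf{Point 2.} This follows by adding point 1 to the Calderón--Zygmund bound $\left|\left|\mathrm{D}^{2}u\right|\right|_{L^{p}}\lesssim_{p}\left|\left|f\right|\right|_{L^{p}}$.

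\textbf{Point 3.} Since $p>2$, Morrey's embedding $W^{1,p}\left(\mathbb{R}^{2}\right)\hookrightarrow C^{1-\frac{2}{p}}\left(\mathbb{R}^{2}\right)$ holds, with a constant depending only on $p$ (the bound is scale-free because we use the full inhomogeneous norm). Applied to $\nabla u$ and combined with point 2, this gives the claim.

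\textbf{Point 4.} This is the delicate step. The left side is in $L^{2}$, but $q>2$ only controls $\nabla u$ in $L^{q}$, and the decay of $\nabla u$ at infinity must be used. The plan is to interpolate $\nabla u$ between $L^{q}$ and some $L^{s}$ with $s<2$. First, $\int f=0$ makes $\nabla u$ decay like $\left|\left|x\right|\right|^{-2}_{2}$, so $\nabla u\in L^{s}$ for every $s>1$. Second, by Hölder on the compact support, $\left|\left|f\right|\right|_{L^{r}}\le\left|\mathrm{supp}f\right|^{\frac{1}{r}-\frac{1}{q}}\left|\left|f\right|\right|_{L^{q}}\lesssim d^{2\left(\frac{1}{r}-\frac{1}{q}\right)}\left|\left|f\right|\right|_{L^{q}}$.

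To control $\nabla u$ in $L^{s}$, we use Hardy--Littlewood--Sobolev: $\nabla u$ is the convolution of $f$ with a kernel of order $\left|x\right|^{-1}$, so $\left|\left|\nabla u\right|\right|_{L^{r^{*}}}\lesssim\left|\left|f\right|\right|_{L^{r}}$ with $\frac{1}{r^{*}}=\frac{1}{r}-\frac{1}{2}$, and $r^{*}>2$ for $r\in\left(1,2\right)$. That exponent is too large for the interpolation, so instead we redo the moment argument of Proposition \ref{prop:Laplacian estimate L^p} for $\nabla u$ in $L^{s}$, $s\in\left(1,2\right)$. Using $\int f=0$ to subtract the kernel at $x$, the far-field part is bounded by $d\,\left|\left|x\right|\right|^{-2}_{2}\left|\left|f\right|\right|_{L^{1}}$, and the near field is handled as in point 1. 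Then $\left|\left|\nabla u\right|\right|_{L^{2}}\le\left|\left|\nabla u\right|\right|^{\vartheta}_{L^{s}}\left|\left|\nabla u\right|\right|^{1-\vartheta}_{L^{q}}$. Matching the stated exponent of $d$ will fix the auxiliary exponents; I expect the specific power $2\frac{\left(r-1\right)\frac{q}{r}+\frac{1}{r}-\frac{1}{q}}{\left(r-1\right)\frac{q}{r}+1}$ to come from interpolating between an $L^{r}$-type bound carrying the factor $d^{2\left(\frac{1}{r}-\frac{1}{q}\right)}$ and the $L^{q}$-type bound carrying $d$ from point 1.

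The main obstacle is this last bookkeeping: getting a clean $L^{s}$ bound for $\nabla u$ with $s<2$ with the correct power of $d$, and reconciling the interpolation exponents with the formula in the statement. If the direct route fails, I would fall back on the dyadic decomposition $\left\{ 2^{k}d\le\left|\left|x\right|\right|_{2}<2^{k+1}d\right\}$ and sum the $\left|\left|x\right|\right|^{-2}_{2}$ tail in $L^{2}$ directly, accepting any admissible power of $d$.
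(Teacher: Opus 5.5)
Your Points 1--3 are exactly the paper's argument. Point 4, however, is not proved. You leave both the $L^{s}$ bound and the exponent matching open, and your fallback explicitly settles for ``any admissible power of $d$'', which is not the stated inequality.

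The mechanism you expect to produce the exponent also cannot work as described. Pairing an $L^{r}$ bound on second derivatives ($r<2$) with $\|\nabla u\|_{L^{q}}\lesssim d\|f\|_{L^{q}}$ means applying Gagliardo--Nirenberg to $\nabla u$ with $j=0$, $m=1$. The resulting $\theta$ exceeds $1$ for every $r>1$. This is the same obstruction you met with Hardy--Littlewood--Sobolev: $\mathrm{D}\nabla u\in L^{r}$ only reaches $L^{r^{*}}$ with $r^{*}>2$.

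The idea you are missing is to go one level down and interpolate against $u$ itself. Proposition \ref{prop:Gagliardo-Nirenberg} with $j=1$, $m=2$, $n=2$, $p\leftarrow2$ gives
\[
\|\nabla u\|_{L^{2}}\lesssim_{r,q}\|\mathrm{D}^{2}u\|^{\theta}_{L^{r}}\|u\|^{1-\theta}_{L^{q}},\qquad\theta=\frac{1}{(r-1)\frac{q}{r}+1},
\]
with no exceptional case, since $1-\frac{2}{r}<0$. Then combine three bounds: Corollary \ref{cor:Stein} at exponent $r$; H\"older, $\|f\|_{L^{r}}\lesssim d^{2(\frac{1}{r}-\frac{1}{q})}\|f\|_{L^{q}}$; and Proposition \ref{prop:Laplacian estimate L^p} at exponent $q$. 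Together they give exactly the stated power. The cancellation $\int f=0$ enters only through the $L^{q}$ bound on $u$, and that is what lets a sub-$2$ exponent on $\mathrm{D}^{2}u$ be combined with $q>2$.

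That said, your own near/far route closes easily once you notice that the stated exponent is just the scaling one. Since $(r-1)\frac{q}{r}\cdot\frac{1}{q}=1-\frac{1}{r}$, one checks that $2\frac{(r-1)\frac{q}{r}+\frac{1}{r}-\frac{1}{q}}{(r-1)\frac{q}{r}+1}=2-\frac{2}{q}$ for every $r$. So no auxiliary exponent $s$ and no interpolation are needed.

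Translate so that $0\in\mathrm{supp}f$. On $B(0;2d)$, H\"older and Point 1 (with $p=q$) give $\|\nabla u\|_{L^{2}(B(0;2d))}\lesssim d^{1-\frac{2}{q}}\|\nabla u\|_{L^{q}}\lesssim_{q}d^{2-\frac{2}{q}}\|f\|_{L^{q}}$. For $\|x\|_{2}\geq2d$, the mean-zero subtraction gives $|\nabla u(x)|\lesssim d\|x\|^{-2}_{2}\|f\|_{L^{1}}$. Its $L^{2}$ norm on $\{\|x\|_{2}\geq2d\}$ is $\lesssim\|f\|_{L^{1}}\lesssim d^{2-\frac{2}{q}}\|f\|_{L^{q}}$.

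This is precisely your dyadic fallback, and it yields the stated power. Compared with the paper, it avoids Gagliardo--Nirenberg and the Calder\'on--Zygmund bound at exponent $r<2$. The cost is redoing by hand the cancellation argument that Proposition \ref{prop:Laplacian estimate L^p} already packages.
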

\begin{proof}
~
\begin{enumerate}
\item Take $p\leftarrow p$, $r\leftarrow p$, $q\leftarrow p$, $n\leftarrow2$,
$j\leftarrow1$ and $m\leftarrow2$ in Proposition \ref{prop:Gagliardo-Nirenberg}.
These choices lead to
\[
\frac{1}{p}=\frac{1}{2}+\theta\left(\frac{1}{p}-1\right)+\frac{1-\theta}{p}\iff0=\frac{1}{2}-\theta\iff\theta=\frac{1}{2},
\]
\[
\left|\left|\nabla u\right|\right|_{L^{p}\left(\mathbb{R}^{2}\right)}\lesssim_{p}\left|\left|\mathrm{D}^{2}u\right|\right|^{\frac{1}{2}}_{L^{p}\left(\mathbb{R}^{2}\right)}\left|\left|u\right|\right|^{\frac{1}{2}}_{L^{p}\left(\mathbb{R}^{2}\right)}.
\]
Notice that $m-j-\frac{n}{r}\leftarrow2-1-\frac{2}{p}=1-\frac{2}{p}>0$
since $p>2$, so the additional assumption $\frac{j}{m}\leq\theta<1$
is needed. In our case, we have $\frac{1}{2}\leq\frac{1}{2}<1$, which
is true. Using Proposition \ref{prop:Laplacian estimate L^p} and
Corollary \ref{cor:Stein}, we arrive at the statement.
\item By Point 1,
\[
\left|\left|\nabla u\right|\right|_{L^{p}\left(\mathbb{R}^{2}\right)}\lesssim_{p}\mathrm{diam}\left(\mathrm{supp}f\right)\left|\left|f\right|\right|_{L^{p}\left(\mathbb{R}^{2}\right)}.
\]
And, by Corollary \ref{cor:Stein},
\[
\left|\left|\mathrm{D}^{2}u\right|\right|_{L^{p}\left(\mathbb{R}^{2}\right)}\lesssim_{p}\left|\left|f\right|\right|_{L^{p}\left(\mathbb{R}^{2}\right)}.
\]
From these, the statement clearly follows.
\item This follows immediately from Point 2 by applying Morrey's inequality.
\item Take $p\leftarrow2$, $r\leftarrow r$, $q\leftarrow q$, $n\leftarrow2$,
$j\leftarrow1$ and $m\leftarrow2$ in Proposition \ref{prop:Gagliardo-Nirenberg}.
These choices lead to
\[
\frac{1}{2}=\frac{1}{2}+\theta\left(\frac{1}{r}-1\right)+\frac{1-\theta}{q}\iff\theta\left(\frac{1}{r}-\frac{1}{q}-1\right)+\frac{1}{q}=0\iff\theta=\frac{\frac{1}{q}}{1+\frac{1}{q}-\frac{1}{r}}=\frac{1}{\left(r-1\right)\frac{q}{r}+1},
\]
\[
\left|\left|\nabla u\right|\right|_{L^{2}\left(\mathbb{R}^{2}\right)}\lesssim_{r,q}\left|\left|\mathrm{D}^{2}u\right|\right|^{\frac{1}{\left(r-1\right)\frac{q}{r}+1}}_{L^{r}\left(\mathbb{R}^{2}\right)}\left|\left|u\right|\right|^{\frac{\left(r-1\right)\frac{q}{r}}{\left(r-1\right)\frac{q}{r}+1}}_{L^{q}\left(\mathbb{R}^{2}\right)}.
\]
Notice that $m-j-\frac{n}{r}\leftarrow2-1-\frac{2}{r}<0$ because
$r\in\left(1,2\right)$, so no additional assumption is needed. Using
Proposition \ref{prop:Laplacian estimate L^p} and Corollary \ref{cor:Stein},
we arrive at
\[
\left|\left|\nabla u\right|\right|_{L^{2}\left(\mathbb{R}^{2}\right)}\lesssim_{r,q}\left[\mathrm{diam}\left(\mathrm{supp}f\right)\right]^{2\frac{\left(r-1\right)\frac{q}{r}}{\left(r-1\right)\frac{q}{r}+1}}\left|\left|f\right|\right|^{\frac{1}{\left(r-1\right)\frac{q}{r}+1}}_{L^{r}\left(\mathbb{R}^{2}\right)}\left|\left|f\right|\right|^{\frac{\left(r-1\right)\frac{q}{r}}{\left(r-1\right)\frac{q}{r}+1}}_{L^{q}\left(\mathbb{R}^{2}\right)}.
\]
Since $f$ has compact support (see assumptions of Proposition \ref{prop:Laplacian estimate L^p}),
Hölder's inequality provides
\[
\left|\left|f\right|\right|_{L^{r}\left(\mathbb{R}^{2}\right)}\leq\left|\text{supp}f\right|^{\left(1-\frac{r}{q}\right)\frac{1}{r}}\left|\left|f\right|\right|_{L^{q}\left(\mathbb{R}^{2}\right)}\lesssim_{r,q}\left[\mathrm{diam}\left(\mathrm{supp}f\right)\right]^{2\left(\frac{1}{r}-\frac{1}{q}\right)}\left|\left|f\right|\right|_{L^{q}\left(\mathbb{R}^{2}\right)}.
\]
And, consequently,
\[
\left|\left|\nabla u\right|\right|_{L^{2}\left(\mathbb{R}^{2}\right)}\lesssim_{r,q}\left[\text{diam}\left(\text{supp}f\right)\right]^{2\frac{\left(r-1\right)\frac{q}{r}+\frac{1}{r}-\frac{1}{q}}{\left(r-1\right)\frac{q}{r}+1}}\left|\left|f\right|\right|_{L^{q}\left(\mathbb{R}^{2}\right)}.
\]
\end{enumerate}
\end{proof}

\begin{prop}
\label{prop:Laplace D2u Holder}Let $\alpha\in\left(0,1\right)$.
Consider the equation $\Delta u=f$, where $f\in C^{\alpha}\left(\mathbb{R}^{2}\right)$
and $u$ is computed via the fundamental solution of the Laplace equation.
Then,
\[
\left|\left|\mathrm{D}^{2}u\right|\right|_{\dot{C}^{\alpha}\left(\mathbb{R}^{2}\right)}\lesssim_{\alpha}\left|\left|f\right|\right|_{\dot{C}^{\alpha}\left(\mathbb{R}^{2}\right)}.
\]
Furthermore, if $f\in L^{p}\left(\mathbb{R}^{2}\right)$ for some
$p>1$,
\[
\left|\left|\mathrm{D}^{2}u\right|\right|_{L^{\infty}\left(\mathbb{R}^{2}\right)}\lesssim\frac{1}{\alpha}\left|\left|f\right|\right|_{\dot{C}^{\alpha}\left(\mathbb{R}^{2}\right)}+\left(\frac{p-1}{2}\right)^{\frac{p-1}{p}}\left|\left|f\right|\right|_{L^{p}\left(\mathbb{R}^{2}\right)}.
\]
\end{prop}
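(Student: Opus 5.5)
We will work directly with the singular-integral representation of $\mathrm{D}^{2}u$. Differentiating the fundamental solution twice, as in the proof of Corollary \ref{cor:Stein}, gives
\[
\mathrm{D}^{2}u\left(x\right)=\frac{1}{2\pi}\,\mathrm{p.v.}\int_{\mathbb{R}^{2}}K\left(y\right)f\left(x-y\right)\mathrm{d}y+\frac{1}{2}f\left(x\right)\mathrm{id}_{2\times2},\quad K\left(y\right)=\frac{\mathrm{id}_{2\times2}-2\frac{y\otimes y}{\left|\left|y\right|\right|^{2}_{2}}}{\left|\left|y\right|\right|^{2}_{2}}.
\]
The local term $\frac{1}{2}f\,\mathrm{id}$ arises because $\Delta u=f$ and the p.v.\ part is trace-free. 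It is trivially controlled in $\dot{C}^{\alpha}$ by $\left|\left|f\right|\right|_{\dot{C}^{\alpha}}$, and in $L^{\infty}$ by the arguments below applied to $\Delta u$ itself. Alternatively, we can avoid the delta term by expressing $\partial_{ij}u$ through the Riesz transforms, $\partial_{ij}u=-R_{i}R_{j}f$. Either way, what has to be estimated is a Calderón–Zygmund operator with kernel $K$, which is homogeneous of degree $-2$, smooth away from $0$, satisfies $\left|\nabla K\left(y\right)\right|\lesssim\left|\left|y\right|\right|^{-3}_{2}$, and has zero mean on every circle. This last fact was already checked in Corollary \ref{cor:Stein}.

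For the $\dot{C}^{\alpha}$ bound we follow the classical Hölder–Korn argument. Fix $x\neq x'$ and set $h=\left|\left|x-x'\right|\right|_{2}$. The cancellation of $K$ on annuli lets us write
\[
T f\left(x\right)=\int_{\mathbb{R}^{2}}K\left(x-y\right)\left(f\left(y\right)-f\left(x\right)\right)\mathrm{d}y,
\]
which is absolutely convergent near $y=x$ and, as an improper integral over balls, also at infinity. We then split the difference $Tf\left(x\right)-Tf\left(x'\right)$ into the near region $B\left(x;2h\right)$ and the far region. In the near region each term is bounded by $\left|\left|f\right|\right|_{\dot{C}^{\alpha}}\int_{B\left(0;3h\right)}\left|\left|z\right|\right|^{\alpha-2}_{2}\mathrm{d}z\lesssim\frac{1}{\alpha}\left|\left|f\right|\right|_{\dot{C}^{\alpha}}h^{\alpha}$. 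In the far region we write
\[
\left(K\left(x-y\right)-K\left(x'-y\right)\right)\left(f\left(y\right)-f\left(x\right)\right)+K\left(x'-y\right)\left(f\left(x'\right)-f\left(x\right)\right).
\]
The first piece is bounded using the gradient bound on $K$ by $\left|\left|f\right|\right|_{\dot{C}^{\alpha}}\int_{\left|\left|z\right|\right|\geq2h}h\left|\left|z\right|\right|^{\alpha-3}_{2}\mathrm{d}z\lesssim\frac{1}{1-\alpha}\left|\left|f\right|\right|_{\dot{C}^{\alpha}}h^{\alpha}$. The second piece is an integral of $K\left(x'-\cdot\right)$ over the complement of a ball centred at $x$ rather than at $x'$. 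By the cancellation property, this equals an integral over the symmetric difference of two balls, which is of size $O\left(1\right)$. Hence it contributes at most $\left|\left|f\right|\right|_{\dot{C}^{\alpha}}h^{\alpha}$. Combining the near and far bounds gives the first statement.

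For the $L^{\infty}$ bound we fix $x$ and split the integral at radius $1$. The inner part $\int_{B\left(x;1\right)}K\left(x-y\right)\left(f\left(y\right)-f\left(x\right)\right)\mathrm{d}y$ is bounded by $\left|\left|f\right|\right|_{\dot{C}^{\alpha}}\int_{B\left(0;1\right)}\left|\left|z\right|\right|^{\alpha-2}_{2}\mathrm{d}z=\frac{2\pi}{\alpha}\left|\left|f\right|\right|_{\dot{C}^{\alpha}}$, which produces the $\frac{1}{\alpha}$ factor in the statement. On the outer part the subtracted constant $f\left(x\right)$ integrates to zero by cancellation, so only $\int_{\left|\left|z\right|\right|\geq1}K\left(z\right)f\left(x-z\right)\mathrm{d}z$ remains. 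Hölder's inequality bounds it by $\left|\left|f\right|\right|_{L^{p}}\left|\left|K\right|\right|_{L^{p'}\left(\left\{ \left|\left|z\right|\right|\geq1\right\} \right)}$. Since $\left|K\left(z\right)\right|\lesssim\left|\left|z\right|\right|^{-2}_{2}$, we get $\left|\left|K\right|\right|^{p'}_{L^{p'}}\lesssim2\pi\int_{1}^{\infty}\rho^{1-2p'}\mathrm{d}\rho=\frac{\pi}{p'-1}=\pi\left(p-1\right)$. Taking the power $1/p'=\frac{p-1}{p}$ yields the factor $\left(\frac{p-1}{2}\right)^{\frac{p-1}{p}}$ up to absolute constants, and tracking $\left|K\right|\le\left|\left|z\right|\right|^{-2}_{2}$ exactly reproduces it. The local delta term $\frac{1}{2}f\left(x\right)$ is handled in the same way, or it disappears if one uses the Riesz representation.

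The main obstacle is the justification of the representation with $f$ only in $C^{\alpha}$, and possibly not decaying. The definition of $u$ via the fundamental solution and the differentiation under the integral sign have to be made rigorous. We plan to prove both estimates first for $f\in C^{\infty}_{c}$ and then pass to general $f$ by mollification and truncation. Since the statement only concerns $\mathrm{D}^{2}u$, defined modulo affine functions, we interpret it as the p.v.\ integral above, where the improper integral at infinity converges thanks to cancellation on annuli.
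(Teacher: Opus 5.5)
For the $L^{\infty}$ estimate you use the paper's argument: subtract $f(x)$ using the zero circular mean of the kernel, split at radius $1$, bound the inner part by $\frac{1}{\alpha}\|f\|_{\dot{C}^{\alpha}}$, and bound the outer part by H\"older against $\|K\|_{L^{p'}(\{\|z\|_{2}\ge1\})}$. For the $\dot{C}^{\alpha}$ estimate the paper only cites Theorem 3.4.1 of Krylov, whereas you run the classical H\"older--Korn argument yourself. Your version is correct: near ball $B(x;2h)$, gradient bound on the far field, and an $O(1)$ boundary term from the symmetric difference of the two balls. It makes the proof self-contained and gives an explicit constant $\lesssim\frac{1}{\alpha}+\frac{1}{1-\alpha}$.

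The one substantive difference is the local term. Your representation keeps $\frac{1}{2}f\,\mathrm{id}_{2\times2}$, which the paper's displayed formula for $\mathrm{D}^{2}u$ leaves out. Taken literally, the paper's formula has trace $0$ rather than $f$, since $\Omega$ is trace-free. You are right to keep the term, but then the second estimate needs a bound on $\|f\|_{L^{\infty}}$, and neither mechanism you propose supplies one.

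Applying your argument to $\Delta u=\mathrm{tr}\,\mathrm{D}^{2}u$ is circular, because the principal-value part contributes nothing to the trace. The Riesz representation does not remove the term either. The operator $-R_{i}R_{j}$ has multiplier $\xi_{i}\xi_{j}/\|\xi\|_{2}^{2}$, and its circular mean $\frac{1}{2}\delta_{ij}$ is exactly this identity component of the kernel. The missing line is an averaging bound,
\[
|f(x)|\le\frac{1}{\pi}\int_{B(x;1)}|f(x)-f(y)|\,\mathrm{d}y+\frac{1}{\pi}\int_{B(x;1)}|f(y)|\,\mathrm{d}y\le\|f\|_{\dot{C}^{\alpha}}+\pi^{-\frac{1}{p}}\|f\|_{L^{p}}.
\]
Combine it with $1\le\frac{1}{\alpha}$ and with $\left(\frac{p-1}{2}\right)^{\frac{p-1}{p}}\ge\mathrm{e}^{-2/\mathrm{e}}$ for all $p>1$. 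Then $\frac{1}{2}\|f\|_{L^{\infty}}$ is absorbed into the right-hand side with an absolute constant.

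Finally, your remark that for a general $f\in C^{\alpha}$ the far-field integral converges by cancellation on annuli is false. Cancellation only removes constants. Take a bounded Lipschitz $f$ equal to $-\cos2\theta$ for $\|y\|_{2}\ge2$; then $\int_{2<\|z\|_{2}<R}K_{11}(z)f(-z)\,\mathrm{d}z=\pi\ln\frac{R}{2}$, which diverges. This does not damage your proof. In your H\"older--Korn argument only the difference $Tf(x)-Tf(x')$ is needed, and it converges absolutely. In the paper's applications $f$ is compactly supported, so the issue does not arise there.
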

\begin{proof}
The first statement is exactly Theorem 3.4.1 of \cite{krylov}.

Now, for the proof of the second statement, differentiating under
the integral sign twice in \eqref{eq:u Laplace}, we arrive at
\[
\mathrm{D}^{2}u\left(x\right)=\int_{\mathbb{R}^{2}}\frac{\Omega\left(x-z\right)}{\left|\left|x-z\right|\right|^{2}_{2}}f\left(z\right)\mathrm{d}z,\quad\Omega\left(w\right)\coloneqq\frac{1}{2\pi}\left[\text{id}_{2\times2}-2\frac{w\otimes w}{\left|\left|w\right|\right|^{2}_{2}}\right].
\]
As argued in the proof of Corollary \ref{cor:Stein}, we have $\int_{\mathbb{S}^{1}}\Omega\left(w\right)\mathrm{d}\sigma=0$.
Thus,
\[
\mathrm{D}^{2}u\left(x\right)=\int_{\mathbb{R}^{2}}\frac{\Omega\left(x-z\right)}{\left|\left|x-z\right|\right|^{2}_{2}}\left(f\left(z\right)-f\left(x\right)\right)\mathrm{d}z.
\]
We split the domain of integration in $\mathbb{R}^{2}=\left\{ \left|\left|x-z\right|\right|_{2}\leq1\right\} \cup\left\{ \left|\left|x-z\right|\right|_{2}>1\right\} .$
In this way,
\[
\mathrm{D}^{2}u\left(x\right)=\underbrace{\int_{\left\{ \left|\left|x-z\right|\right|_{2}\leq1\right\} }\frac{\Omega\left(x-z\right)}{\left|\left|x-z\right|\right|^{2}_{2}}\left(f\left(z\right)-f\left(x\right)\right)\mathrm{d}z}_{\eqqcolon I_{1}}+\underbrace{\int_{\left\{ \left|\left|x-z\right|\right|_{2}>1\right\} }\frac{\Omega\left(x-z\right)}{\left|\left|x-z\right|\right|^{2}_{2}}\left(f\left(z\right)-f\left(x\right)\right)\mathrm{d}z}_{\eqqcolon I_{2}}.
\]
On the one hand, to bound $I_{1}$, we use that $f$ is Hölder-continuous.
Since $\left|\left|\Omega\right|\right|_{L^{\infty}\left(\mathbb{R}^{2}\right)}\lesssim1$,
\[
\left|\left|I_{1}\right|\right|_{\infty}\lesssim\left|\left|f\right|\right|_{\dot{C}^{\alpha}\left(\mathbb{R}^{2}\right)}\int_{\left\{ \left|\left|x-z\right|\right|_{2}\leq1\right\} }\frac{\mathrm{d}z}{\left|\left|x-z\right|\right|^{2-\alpha}_{2}}\lesssim\left|\left|f\right|\right|_{\dot{C}^{\alpha}\left(\mathbb{R}^{2}\right)}\int^{1}_{0}\frac{\mathrm{d}\rho}{\rho^{1-\alpha}}=\frac{1}{\alpha}\left|\left|f\right|\right|_{\dot{C}^{\alpha}\left(\mathbb{R}^{2}\right)}.
\]
On the other hand, to bound $I_{2}$, we use
\begin{enumerate}
\item $\int_{\mathbb{S}^{1}}\Omega\left(w\right)\mathrm{d}\sigma=0$ to
justify that $\int_{\left\{ \left|\left|x-z\right|\right|_{2}>1\right\} }\frac{\Omega\left(x-z\right)}{\left|\left|x-z\right|\right|^{2}_{2}}f\left(x\right)\mathrm{d}z=0$,
\item $\left|\left|\Omega\left(z\right)\right|\right|_{\infty}\lesssim1$,
\item the fact that $f\in L^{p}\left(\mathbb{R}^{2}\right)$ and Hölder's
inequality.
\end{enumerate}
In this way, we obtain
\[
\begin{aligned}\left|\left|I_{2}\right|\right|_{\infty} & \lesssim\left(\int_{\left\{ \left|\left|x-z\right|\right|_{2}>1\right\} }\left(\frac{1}{\left|\left|x-z\right|\right|^{2}_{2}}\right)^{\frac{p}{p-1}}\mathrm{d}z\right)^{1-\frac{1}{p}}\left|\left|f\right|\right|_{L^{p}\left(\mathbb{R}^{2}\right)}\lesssim\\
 & \lesssim\left(\int^{\infty}_{1}\frac{\mathrm{d}\rho}{\rho^{2\frac{p}{p-1}-1}}\right)^{1-\frac{1}{p}}\left|\left|f\right|\right|_{L^{p}\left(\mathbb{R}^{2}\right)}=\left(\frac{1}{\frac{2p}{p-1}-2}\right)^{1-\frac{1}{p}}\left|\left|f\right|\right|_{L^{p}\left(\mathbb{R}^{2}\right)}=\left(\frac{p-1}{2}\right)^{\frac{p-1}{p}}\left|\left|f\right|\right|_{L^{p}\left(\mathbb{R}^{2}\right)}.
\end{aligned}
\]
These bounds directly lead to the statement.
\end{proof}

\section{\label{sec:fixed-point argument}The fixed-point argument}

\subsection{Definition of the map}

To carry out the fixed-point argument, we will define a map where
we replace $\nabla^{\perp}\varPsi$ in \eqref{eq:vorticity equation final first attempt} and \eqref{eq:g1 first attempt}
by a function $a\left(t,x\right)$.
\begin{defn}
\label{def:THE MAP}Let $\beta\in\left[0,1\right)$. We consider the
map
\[
\begin{matrix}T: & C^{0}_{t}C^{\beta}_{x}\left(\left[0,1\right]\times\mathbb{R}^{2};\mathbb{R}^{2}\right) & \longrightarrow & T\left(C^{0}_{t}C^{\beta}_{x}\left(\left[0,1\right]\times\mathbb{R}^{2};\mathbb{R}^{2}\right)\right)\\
 & a & \longrightarrow & \nabla^{\perp}\varPsi,
\end{matrix}
\]
where $\varPsi$ is obtained from solving the equation
\begin{equation}
\begin{aligned}\frac{\partial\rho_{B}}{\partial x_{2}}\left(1+\frac{1}{\overline{\rho}+\rho_{B}}\left(g_{1}\left(t\right)\varPhi\left(x\right)+a_{1}\left(t,x\right)-\left(\frac{\partial u_{B}}{\partial t}+u_{B}\cdot\nabla u_{B}\right)_{1}\right)\right)+\\
-\frac{1}{\overline{\rho}+\rho_{B}}\frac{\partial\rho_{B}}{\partial x_{1}}\left(g_{2}\left(t\right)\varPhi\left(x\right)+a_{2}\left(t,x\right)-\left(\frac{\partial u_{B}}{\partial t}+u_{B}\cdot\nabla u_{B}\right)_{2}\right)+\\
+f_{\omega_{B}}-g\left(t\right)\cdot\nabla^{\perp}\varPhi\left(x\right) & =\Delta\varPsi.
\end{aligned}
\label{eq:varpsi}
\end{equation}
via a convolution with the fundamental solution of the Laplace equation
and where $g\left(t\right)$ is given by
\begin{equation}
g\left(t\right)=\left(\frac{\partial u_{B}}{\partial t}+u_{B}\cdot\nabla u_{B}\right)\left(t,0\right)-a\left(t,0\right)-\left(\overline{\rho}+\rho_{B}\left(t,0\right)\right)\hat{e}_{1}.\label{eq:g}
\end{equation}
\end{defn}

\subsection{Regularity improvements}

In this section, we will see that $T$ actually improves the regularity,
i.e., $T\left(a\right)$ will be more regular than $a$.
\begin{prop}
\label{prop:MAP wins regularity 1}Provided that $\overline{\rho}>\left|\left|\rho_{B}\right|\right|_{C^{0}_{t}C^{0}_{x}\left(\left[0,1\right]\times\mathbb{R}^{2}\right)}$,
$\delta\leq\Upsilon_{\delta},$ $\mu\leq\Upsilon_{\mu}$ and $1-\gamma\leq\Upsilon_{\gamma}$,
where $\delta$, $\mu$ and $\gamma$ are the parameters of the solution
of Boussinesq defined in \cite{Articulo Boussinesq}, we have
\[
\begin{matrix}T: &  & \left\{ a\in C^{0}_{t}C^{0}_{x}\left(\left[0,1\right]\times\mathbb{R}^{2};\mathbb{R}^{2}\right):a_{1}\text{ even in }x_{2},\;a_{2}\text{ odd in }x_{2}\right\}  & \longrightarrow\\
 & \longrightarrow & \left\{ a\in C^{0}_{t}C^{\frac{3}{4}}_{x}\left(\left[0,1\right]\times\mathbb{R}^{2};\mathbb{R}^{2}\right):a_{1}\text{ even in }x_{2},\;a_{2}\text{ odd in }x_{2}\right\} .
\end{matrix}
\]
Moreover, the function $g\left(t\right)$ given by equation \eqref{eq:g}
is $C\left(\left[0,1\right]\right)$.
\end{prop}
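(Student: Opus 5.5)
The plan is to show that the right-hand side $F$ of \eqref{eq:varpsi} lies in $C^{0}_{t}L^{p}_{x}$ for some large $p$ (say $p=8$, so $1-\frac{2}{p}=\frac{3}{4}$) with compact support uniform in time, and then to apply Corollary \ref{cor:estimates Laplacian} point 3 to $\nabla\varPsi$. First, the continuity of $g$. In \eqref{eq:g}, $\left(\frac{\partial u_{B}}{\partial t}+u_{B}\cdot\nabla u_{B}\right)(t,0)$ is continuous on $[0,1]$ by Proposition \ref{prop:bound material derivative u}, $\rho_{B}(t,0)$ is continuous by Proposition \ref{prop:bounded density}, and $a(t,0)$ is continuous because $a\in C^{0}_{t}C^{0}_{x}$. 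Hence $g\in C([0,1])$.

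Next I bound each term of $F$ in $L^{p}$, uniformly in $t$. All terms are compactly supported in a fixed ball, since $\nabla\rho_{B}$, $f_{\omega_{B}}$ and $\varPhi$ are. The denominator satisfies $\overline{\rho}+\rho_{B}\geq\overline{\rho}-\|\rho_{B}\|_{\infty}>0$.
\begin{itemize}
\item The term $\frac{\partial\rho_{B}}{\partial x_{2}}$ times a bounded continuous factor lies in $C^{0}_{t}L^{p}_{x}$ by Proposition \ref{prop:bad bound drhodx2}, with $p_{0}=p$ and $\delta,\mu$ small.
\item The term $\frac{\partial\rho_{B}}{\partial x_{1}}$ times a bounded factor is even better, being bounded by Proposition \ref{prop:bounded drhodx1}.
\item The terms $f_{\omega_{B}}$ and $g\cdot\nabla^{\perp}\varPhi$ are bounded.
\end{itemize}
Continuity in time of $F$ in $L^{p}$ follows from products of functions continuous in $C^{0}_{x}$ with $\frac{\partial\rho_{B}}{\partial x_{i}}\in C^{0}_{t}L^{p}_{x}$. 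For $i=2$ this is Proposition \ref{prop:bad bound drhodx2}; for $i=1$ it follows from the $C^{0}_{t}C^{\alpha}_{x}$ statement together with the uniform compact support.

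The main obstacle is that Proposition \ref{prop:Laplacian estimate L^p} requires $\int F=0$, which is not obvious. I would handle this by symmetry, which is exactly why the parity hypotheses on $a$ are imposed. The functions $\rho_{B}$ and the first component of $\frac{\partial u_{B}}{\partial t}+u_{B}\cdot\nabla u_{B}$ are even in $x_{2}$, while the second component is odd (Propositions \ref{prop:evenness rho}, \ref{prop:oddness and evenness U}). Also $a_{1}$ is even, $a_{2}$ is odd, and $\varPhi$ is even. I must make sure $g_{2}=0$: $a_{2}(t,0)=0$ and the second component of the material derivative vanishes at $x_{2}=0$ by oddness, so this holds. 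Then $\frac{\partial\rho_{B}}{\partial x_{2}}$ is odd and its coefficient is even, $\frac{\partial\rho_{B}}{\partial x_{1}}$ is even and its coefficient is odd, $f_{\omega_{B}}$ is odd, and $g_{1}\frac{\partial\varPhi}{\partial x_{2}}$ is odd. Hence $F$ is odd in $x_{2}$ and $\int F=0$.

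If Corollary \ref{cor:estimates Laplacian} is instead applied to a truncated problem, I would rely on the representation formula directly. The Morrey estimate then gives $\nabla\varPsi\in C^{0}_{t}C^{3/4}_{x}$. Continuity in time follows by linearity: apply the estimate to $F(t)-F(s)$, which is still odd and compactly supported. Finally, the parities pass to $\varPsi$. Since $F$ is odd, $\varPsi$ is odd in $x_{2}$, because the kernel $\ln|x-y|$ is reflection-invariant. Hence $\partial_{x_{2}}\varPsi$ is even, so $-\partial_{x_{2}}\varPsi$ is even, and $\partial_{x_{1}}\varPsi$ is odd. Thus $\nabla^{\perp}\varPsi$ has its first component even and its second component odd, which proves the claim.
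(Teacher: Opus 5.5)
Your proposal is correct and follows essentially the same route as the paper. Both arguments show that $g\in C([0,1])$ and that the left-hand side of \eqref{eq:varpsi} lies in $C^{0}_{t}L^{8}_{x}$, is compactly supported uniformly in time, and has zero mean because it is odd in $x_{2}$ (using $g_{2}\equiv0$); both then apply point 3 of Corollary \ref{cor:estimates Laplacian} with $p=8$, use linearity for continuity in time, and transfer the parities to $\nabla^{\perp}\varPsi$.
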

\begin{proof}
Notice that, in view of Corollary \ref{cor:estimates Laplacian} and
the linearity of the Laplace equation, to obtain the regularity improvement
it is enough to prove that the left-hand-side of \eqref{eq:varpsi}
is $C^{0}_{t}L^{8}_{x}\left(\left[0,1\right]\times\mathbb{R}^{2}\right)$,
is compactly supported uniformly in time and has zero mean. We will
choose $\delta\leq\Upsilon_{\delta,1}$, $\mu\leq\Upsilon_{\mu,1}$
and $1-\gamma\leq\Upsilon_{\gamma,1}$ sufficiently small so that
$\rho\in C^{0}_{t}C^{0}_{x}\left(\left[0,1\right]\times\mathbb{R}^{2}\right)$
(see Proposition \ref{prop:bounded density}), $\frac{\partial u_{B}}{\partial t}+u_{B}\cdot\nabla u_{B}\in C^{0}_{t}C^{0}_{x}\left(\left[0,1\right]\times\mathbb{R}^{2}\right)$
(see Proposition \ref{prop:bound material derivative u}), $\frac{\partial\rho_{B}}{\partial x_{1}}\in C^{0}_{t}C^{0}_{x}\left(\left[0,1\right]\times\mathbb{R}^{2}\right)$
(see Proposition \ref{prop:bounded drhodx1}) and $\frac{\partial\rho_{B}}{\partial x_{2}}\in C^{0}_{t}L^{8}_{x}\left(\left[0,1\right]\times\mathbb{R}^{2}\right)$
(see Proposition \ref{prop:bad bound drhodx2}).
\begin{enumerate}
\item \label{enu:LHS odd}The left-hand-side of \eqref{eq:varpsi} is odd
in $x_{2}$. First, notice that, since $a_{2}\left(t,x\right)$ is
odd in $x_{2}$ and $\left(\frac{\partial u_{B}}{\partial t}+u_{B}\cdot\nabla u_{B}\right)_{2}\left(t,x\right)$
is also odd in $x_{2}$ by Proposition \ref{prop:oddness and evenness U},
equation \eqref{eq:g} tells us that $g_{2}\left(t\right)\equiv0$.
Then,
\[
-g\left(t\right)\cdot\nabla^{\perp}\varPhi\left(x\right)=g_{1}\left(t\right)\frac{\partial\varPhi}{\partial x_{2}}\left(x\right),
\]
which is odd in $x_{2}$ because $\varPhi$ is even in $x_{2}$ (see
Choice \ref{choice: fu}). Moreover, $f_{\omega_{B}}$ is odd in $x_{2}$
because of the Main Theorem of \cite{Articulo Boussinesq} (see Theorem
\ref{thm:MAIN THM Boussinesq}). Besides, since $g_{2}\left(t\right)\equiv0$,
$a_{2}\left(t,x\right)$ is odd in $x_{2}$ and $\left(\frac{\partial u_{B}}{\partial t}+u_{B}\cdot\nabla u_{B}\right)_{2}$
is also odd in $x_{2}$ by Proposition \ref{prop:oddness and evenness U},
the multiplicative factor to the right of $\frac{\partial\rho_{B}}{\partial x_{1}}$
is odd in $x_{2}$. $\frac{\partial\rho_{B}}{\partial x_{1}}$ itself
is even in $x_{2}$ because $\rho_{B}$ is even in $x_{2}$ according
to Proposition \ref{prop:evenness rho}. By this result, we also deduce
that $\frac{1}{\overline{\rho}+\rho_{B}}$ is even in $x_{2}$. Consequently,
the second summand of the left-hand-side of \eqref{eq:varpsi} is
odd in $x_{2}$. Concerning the first summand, $\frac{\partial\rho_{B}}{\partial x_{2}}$
is odd in $x_{2}$ because $\rho_{B}$ is even in $x_{2}$ according
to Proposition \ref{prop:evenness rho}. In addition, the factor that
multiplies $\frac{\partial\rho_{B}}{\partial x_{2}}$ is even in $x_{2}$
because $1$ is even in $x_{2}$, $\rho_{B}$ is even in $x_{2}$
(see Proposition \ref{prop:evenness rho}), $\varPhi$ is even in
$x_{2}$ (see Choice \ref{choice: fu}), $a_{1}\left(t,x\right)$
is even in $x_{2}$ and $\left(\frac{\partial u_{B}}{\partial t}+u_{B}\cdot\nabla u_{B}\right)_{1}$
is even in $x_{2}$ thanks to Proposition \ref{prop:oddness and evenness U}.
In conclusion, the first summand is odd in $x_{2}$, as are all other
summands, which makes the left-hand-side of \eqref{eq:varpsi} odd
in $x_{2}$.
\item \label{enu:LHS compact support}The left-hand-side of \eqref{eq:varpsi}
is compactly supported uniformly in time. By the Main Theorem of \cite{Articulo Boussinesq}
(see Theorem \ref{thm:MAIN THM Boussinesq}), $f_{\omega_{B}}$ is
compactly supported uniformly in time. Likewise, by subsection \ref{subsec:summary Boussinesq}
(or Proposition 24 of \cite{Articulo Boussinesq}), $\rho_{B}$ is
compactly supported uniformly in time (and so are its derivatives).
Since $\varPhi$ is also compactly supported and $\varPhi$ does not
depend on time, the left-hand-side of \eqref{eq:varpsi} must be compactly
supported uniformly in time as well.
\item \label{enu:LHS mean-free}The left-hand-side of \eqref{eq:varpsi}
has zero mean, because it is odd in $x_{2}$ (see point \ref{enu:LHS odd})
and it is compactly supported (see point \ref{enu:LHS compact support}).
\item \label{enu:evenness and oddness T}$T\left(a\right)_{1}$ is even
in $x_{2}$ and $T\left(a\right)_{2}$ is odd in $x_{2}$. Since the
left-hand-side of \eqref{eq:varpsi} is odd in $x_{2}$ by point \ref{enu:LHS odd},
so is $\varPsi\left(t,x\right)$. As a consequence, $T\left(a\right)_{1}\left(t,x\right)=-\frac{\partial\varPsi}{\partial x_{2}}\left(t,x\right)$
is even in $x_{2}$ and $T\left(a\right)_{2}\left(t,x\right)=\frac{\partial\varPsi}{\partial x_{1}}\left(t,x\right)$
is odd in $x_{2}$, as required.
\item \label{enu:g continuous in time}$g\in C\left(\left[0,1\right]\right)$.
By Proposition \ref{prop:bound material derivative u}, $\left(\frac{\partial u_{B}}{\partial t}+u_{B}\cdot\nabla u_{B}\right)\left(t,0\right)\in C\left(\left[0,1\right]\right)$;
by Proposition \ref{prop:bounded density}, $\rho_{B}\left(t,0\right)\in C\left(\left[0,1\right]\right)$;
and, by hypothesis, $a\left(t,0\right)\in C\left(\left[0,1\right]\right)$.
As a result, looking at equation \eqref{eq:g}, $g\in C\left(\left[0,1\right]\right)$.
\item \label{enu:LHS L8}The left-hand-side of \eqref{eq:varpsi} is $C^{0}_{t}L^{8}_{x}\left(\left[0,1\right]\times\mathbb{R}^{2}\right)$. 
\begin{itemize}
\item $f_{\omega_{B}}\in C^{0}_{t}C^{\alpha}_{x,c}\left(\left[0,1\right]\times\mathbb{R}^{2}\right)$
by the Main Theorem of \cite{Articulo Boussinesq} (see Theorem \ref{thm:MAIN THM Boussinesq}),
so $f_{\omega_{B}}\in C^{0}_{t}L^{8}_{x}\left(\left[0,1\right]\times\mathbb{R}^{2}\right)$.
\item $g\in C\left(\left[0,1\right]\right)$ by point \ref{enu:g continuous in time}
and $\varPhi\in C^{\infty}_{c}\left(\mathbb{R}^{2}\right)$ by Choice
\ref{choice: fu}, so $g\left(t\right)\cdot\nabla^{\perp}\varPhi\left(x\right)\in C^{0}_{t}L^{8}_{x}\left(\left[0,1\right]\times\mathbb{R}^{2}\right)$.
\item The factor that multiplies $\frac{\partial\rho_{B}}{\partial x_{2}}$
is $C^{0}_{t}C^{0}_{x}\left(\left[0,1\right]\times\mathbb{R}^{2}\right)$
because $\frac{1}{\overline{\rho}+\rho_{B}}\in C^{0}_{t}C^{0}_{x}\left(\left[0,1\right]\times\mathbb{R}^{2}\right)$
by Lemma \ref{lem:inverse Holder} since $\rho_{B}\in C^{0}_{t}C^{0}_{x}\left(\left[0,1\right]\times\mathbb{R}^{2}\right)$
by Proposition \ref{prop:bounded density} and $\overline{\rho}>\left|\left|\rho_{B}\right|\right|_{C^{0}_{t}C^{0}_{x}\left(\left[0,1\right]\times\mathbb{R}^{2}\right)}$
by hypothesis, $g_{1}\left(t\right)\varPhi\left(x\right)\in C^{0}_{t}C^{0}_{x}\left(\left[0,1\right]\times\mathbb{R}^{2}\right)$
as $g\in C\left(\left[0,1\right]\right)$ by point \ref{enu:g continuous in time}
and $\varPhi\in C^{\infty}\left(\mathbb{R}^{2}\right)$ (see Choice
\ref{choice: fu}), $a\left(t,x\right)\in C^{0}_{t}C^{0}_{x}\left(\left[0,1\right]\times\mathbb{R}^{2}\right)$
by hypothesis and $\left(\frac{\partial u_{B}}{\partial t}+u_{B}\cdot\nabla u_{B}\right)_{1}\in C^{0}_{t}C^{0}_{x}\left(\left[0,1\right]\times\mathbb{R}^{2}\right)$
by Proposition \ref{prop:bound material derivative u}.
\item Likewise, the factor that multiplies $\frac{\partial\rho_{B}}{\partial x_{1}}\left(t,x\right)$
is $C^{0}_{t}C^{0}_{x}\left(\left[0,1\right]\times\mathbb{R}^{2}\right)$
because $\frac{1}{\overline{\rho}+\rho_{B}}\in C^{0}_{t}C^{0}_{x}\left(\left[0,1\right]\times\mathbb{R}^{2}\right)$
by Lemma \ref{lem:inverse Holder} since $\rho_{B}\in C^{0}_{t}C^{0}_{x}\left(\left[0,1\right]\times\mathbb{R}^{2}\right)$
by Proposition \ref{prop:bounded density} and $\overline{\rho}>\left|\left|\rho_{B}\right|\right|_{C^{0}_{t}C^{0}_{x}\left(\left[0,1\right]\times\mathbb{R}^{2}\right)}$
by hypothesis, $g_{2}\left(t\right)\varPhi\left(x\right)\in C^{0}_{t}C^{0}_{x}\left(\left[0,1\right]\times\mathbb{R}^{2}\right)$
as $g\in C\left(\left[0,1\right]\right)$ by point \ref{enu:g continuous in time}
and $\varPhi\in C^{\infty}\left(\mathbb{R}^{2}\right)$ (see Choice
\ref{choice: fu}), $a\left(t,x\right)\in C^{0}_{t}C^{0}_{x}\left(\left[0,1\right]\times\mathbb{R}^{2}\right)$
by hypothesis and $\left(\frac{\partial u_{B}}{\partial t}+u_{B}\cdot\nabla u_{B}\right)_{2}\in C^{0}_{t}C^{0}_{x}\left(\left[0,1\right]\times\mathbb{R}^{2}\right)$
by Proposition \ref{prop:bound material derivative u}.
\item We have seen in the last two points that the factors that multiply
$\frac{\partial\rho_{B}}{\partial x_{1}}$ and $\frac{\partial\rho_{B}}{\partial x_{2}}$
are $C^{0}_{t}C^{0}_{x}\left(\left[0,1\right]\times\mathbb{R}^{2}\right)$.
Now, by Proposition \ref{prop:bounded drhodx1}, $\frac{\partial\rho}{\partial x_{1}}\in C^{0}_{t}C^{0}_{x}\left(\left[0,1\right]\times\mathbb{R}^{2}\right)$;
and, by subsection \ref{subsec:summary Boussinesq} (or by Proposition
24 of \cite{Articulo Boussinesq}), it is also compactly supported
uniformly in time. Hence, $\frac{\partial\rho_{B}}{\partial x_{1}}\in C^{0}_{t}L^{8}_{x}\left(\left[0,1\right]\times\mathbb{R}^{2}\right)$.
Moreover, by Proposition \ref{prop:bad bound drhodx2}, $\frac{\partial\rho_{B}}{\partial x_{2}}\in C^{0}_{t}L^{8}_{x}\left(\left[0,1\right]\times\mathbb{R}^{2}\right)$.
Since the product of a function in $C^{0}_{t}L^{8}_{x}\left(\left[0,1\right]\times\mathbb{R}^{2}\right)$
times a function in $C^{0}_{t}L^{\infty}_{x}\left(\left[0,1\right]\times\mathbb{R}^{2}\right)$
belongs to $C^{0}_{t}L^{8}_{x}\left(\left[0,1\right]\times\mathbb{R}^{2}\right)$,
we conclude that the first two summands of \eqref{eq:varpsi} are
$C^{0}_{t}L^{8}_{x}\left(\left[0,1\right]\times\mathbb{R}^{2}\right)$.
\end{itemize}
\end{enumerate}
\end{proof}

\begin{prop}
\label{prop:MAP wins regularity 2}Let $\alpha\in\left(0,\alpha_{*}\right)$
be the regularity parameter associated to the Boussinesq solution
$\left(\rho_{B},\omega_{B},f_{\rho_{B}},f_{\omega_{B}}\right)$ given
in \cite{Articulo Boussinesq}. Provided that $\overline{\rho}>\left|\left|\rho_{B}\right|\right|_{C^{0}_{t}C^{0}_{x}\left(\left[0,1\right]\times\mathbb{R}^{2}\right)}$,
$\delta\leq\Upsilon_{\delta},$ $\mu\leq\Upsilon_{\mu}$ and $1-\gamma\leq\Upsilon_{\gamma}$,
where $\delta$, $\mu$ and $\gamma$ are the parameters of the solution
of Boussinesq defined in \cite{Articulo Boussinesq},
\[
\begin{matrix}T: &  & \left\{ a\in C^{0}_{t}C^{\frac{3}{4}}_{x}\left(\left[0,1\right]\times\mathbb{R}^{2};\mathbb{R}^{2}\right):a_{1}\text{ even in }x_{2},\;a_{2}\text{ odd in }x_{2}\right\}  & \longrightarrow\\
 & \longrightarrow & \left\{ a\in C^{0}_{t}C^{1,\alpha}_{x}\left(\left[0,1\right]\times\mathbb{R}^{2};\mathbb{R}^{2}\right):a_{1}\text{ even in }x_{2},\;a_{2}\text{ odd in }x_{2}\right\} \cap C^{0}_{t}L^{2}_{x}\left(\left[0,1\right]\times\mathbb{R}^{2};\mathbb{R}^{2}\right).
\end{matrix}
\]
\end{prop}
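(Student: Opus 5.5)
Given $a\in C^{0}_{t}C^{3/4}_{x}$ with the stated parities, the plan is to show that the left-hand side of \eqref{eq:varpsi}, which we call $F(t,x)$, belongs to $C^{0}_{t}C^{\alpha}_{x}\cap C^{0}_{t}L^{8}_{x}$ uniformly in time. By the proof of Proposition \ref{prop:MAP wins regularity 1}, $F$ is already odd in $x_{2}$, mean-free and compactly supported uniformly in time, and the parities of $T(a)$ follow exactly as there. Once $F(t,\cdot)$ is bounded in $C^{\alpha}$, Proposition \ref{prop:Laplace D2u Holder} gives $\mathrm{D}^{2}\varPsi\in C^{\alpha}\cap L^{\infty}$, using the $L^{8}$ bound for the $L^{\infty}$ part. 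Corollary \ref{cor:estimates Laplacian} (points 3 and 4, with $q=8$ and the uniform bound on the support diameter) then gives $\nabla\varPsi\in L^{\infty}\cap L^{2}$. Hence $T(a)=\nabla^{\perp}\varPsi\in C^{1,\alpha}\cap L^{2}$. Continuity in time follows by applying the same linear estimates to $F(t,\cdot)-F(s,\cdot)$, together with continuity of each ingredient in the corresponding space.

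The terms $f_{\omega_{B}}$ and $g(t)\cdot\nabla^{\perp}\varPhi$ are harmless: Theorem \ref{thm:MAIN THM Boussinesq} covers the first, and $g\in C([0,1])$ covers the second. The $S_{2}$-type term is $\frac{1}{\overline{\rho}+\rho_{B}}\partial_{1}\rho_{B}\,(g_{2}\varPhi+a_{2}-\mathcal{U}_{2})$, where $\mathcal{U}=\partial_{t}u_{B}+u_{B}\cdot\nabla u_{B}$. We control it with \eqref{eq:property Calpha multiplication}, Lemma \ref{lem:inverse Holder}, Proposition \ref{prop:bounded density}, Proposition \ref{prop:bounded drhodx1} (using $\alpha<\frac{\alpha_{*}}{1+\alpha_{*}}$ if needed) and Proposition \ref{prop:bound material derivative u}. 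One subtlety is that Proposition \ref{prop:bounded drhodx1} requires $\alpha<\alpha_{*}/(1+\alpha_{*})$, which is smaller than $\alpha_{*}$. If the target $\alpha$ is larger, we instead treat this term like $S_{1}$ below, using Proposition \ref{prop:bad bound drhodx1}. The multiplier vanishes at the origin because $a_{2}$ and $\mathcal{U}_{2}$ are odd in $x_{2}$, so Proposition \ref{prop:dyadic decomposition H=0000F6lder} applies.

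The main obstacle is the term
\[
S_{1}=\partial_{2}\rho_{B}\cdot m,\quad m:=1+\frac{g_{1}\varPhi+a_{1}-\mathcal{U}_{1}}{\overline{\rho}+\rho_{B}}.
\]
Here $\partial_{2}\rho_{B}$ is unbounded near the origin. The choice \eqref{eq:g} of $g$, together with $\varPhi(0)=1$, gives exactly $m(t,0)=0$. We also have $m\in C^{0}_{t}C^{\beta}_{x}$ with $\beta=3/4$, since $a\in C^{3/4}$, $\rho_{B},\mathcal{U}\in C^{3/4}$ and $\frac{1}{\overline{\rho}+\rho_{B}}\in C^{3/4}$. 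We then apply Proposition \ref{prop:dyadic decomposition H=0000F6lder} with $f=m(t,\cdot)$ and $g=\partial_{2}\rho_{B}(t,\cdot)$, reading $K_{0}$, $\sigma$, $K_{\beta-\sigma}$ and $\eta$ off Proposition \ref{prop:bad bound drhodx2}. This gives
\[
\sigma=\frac{2\delta+2\mu}{\frac{2}{3}-\mu},\qquad \eta=\frac{(\beta-\sigma)(1+k_{\max})+2\delta+3\mu}{\frac{2}{3}-\mu}.
\]
We need $\sigma<\beta$, which holds for small $\delta,\mu$. We also need $\eta<\beta$, which for small $\delta,\mu$ roughly reads $\frac{3}{2}(\beta-\sigma)(1+\alpha_{*})<\beta$. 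This fails for $\beta=3/4$, so we first lower the Hölder exponent. Since $m\in C^{3/4}\subset C^{\beta'}_{\mathrm{loc}}$ for every $\beta'\le3/4$, and $m$ is bounded and compactly modified, we may take the smaller exponent $\beta'$, with $\beta'-\sigma=\alpha$. The requirement then becomes
\[
\alpha\cdot\tfrac{3}{2}(1+\alpha_{*})<\alpha+\sigma+\text{small},
\]
which fails for small $\delta$. So we must instead use the freedom that Proposition \ref{prop:dyadic decomposition H=0000F6lder} only needs $f\in C^{\beta}$ with $\beta$ close to $1$. The $C^{3/4}$ bound alone may be insufficient, so we bootstrap. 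A first pass gives $T(a)\in C^{1,\alpha'}$ for some small $\alpha'$. A second pass, with $a\in C^{1-}$ (and $\rho_{B},\mathcal{U}\in C^{1-}$ by Propositions \ref{prop:bounded density} and \ref{prop:bound material derivative u}), gives $m\in C^{\beta}$ with $\beta$ near $1$. The condition $\eta<\beta$ then becomes $\frac{3}{2}\alpha(1+\alpha_{*})<1$ up to small errors, i.e. $\alpha<\frac{2}{3(1+\alpha_{*})}$. Since $\alpha_{*}(1+\alpha_{*})\cdot\frac{3}{2}=\frac{1}{2}\cdot\frac{3}{2}\cdot\frac{2}{3}\cdot\ldots$, we expect this to hold for $\alpha<\alpha_{*}$ by the identity \eqref{eq:Boussinesq identity Lambda kmax}. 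Checking precisely that this exponent bookkeeping closes for every $\alpha<\alpha_{*}$ is the step we expect to be delicate. If it does not close directly, we will iterate the bootstrap, or restrict $\delta,\mu$ further in terms of $\alpha$.
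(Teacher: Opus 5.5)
There is a genuine gap in the $S_1$ estimate, which is the heart of the statement. You feed Proposition~\ref{prop:dyadic decomposition H=0000F6lder} the sharp exponent $\sigma_0=\frac{2\delta+2\mu}{\frac23-\mu}$. The output index $\beta-\sigma_0$ is then essentially $\beta$, so $\eta\approx\frac32\beta(1+\alpha_*)>\beta$, and you conclude that $C^{3/4}$ is insufficient.

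The missing observation is that hypothesis~2 there only requires an \emph{upper} bound $K_0r^{-\sigma}$ on $(0,R]$. With $R=1$, any $\sigma\ge\sigma_0$ is admissible. Enlarging $\sigma$ lowers the output index $\beta-\sigma$. Since $\eta$ is read off Proposition~\ref{prop:bad bound drhodx2} at the H\"older index $\beta-\sigma$, it lowers $\eta$ as well.

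The paper takes $\beta=\frac34$, $\sigma=\frac12$, $R=1$. Then $\eta=\frac{\frac14(1+k_{\max})+2\delta+3\mu}{\frac23-\mu}\approx\frac38(1+\alpha_*)<\frac34$, because $\alpha_*<\frac14$. This gives $S_1(t,\cdot)\in C^{1/4}$ uniformly in $t$, using nothing beyond $a\in C^{3/4}$. The $\partial_1\rho_B$ term is handled the same way: take $K_0=M$ from $\|\partial_1\rho_B\|_{L^\infty}\le M\le Mr^{-1/2}$, and use Proposition~\ref{prop:bad bound drhodx1} at index $\frac14>\frac{\alpha_*}{1+\alpha_*}$.

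Your bootstrap cannot replace this, for three reasons.
\begin{itemize}
\item $m$ contains $a_1$, and $a$ is an arbitrary element of the domain with only $C^{3/4}$ regularity. So $m$ cannot be upgraded to $C^{1-}$. That upgrade would at best apply to a fixed point $a=T(a)$, which is not what the proposition asserts.
\item Your ``first pass'' giving $T(a)\in C^{1,\alpha'}$ already needs the step you had just declared to fail.
\item Your final condition $\frac32\alpha(1+\alpha_*)<1$ implicitly takes $\sigma=\beta-\alpha$. That is exactly the freedom that makes the bootstrap unnecessary.
\end{itemize}
As written, the key estimate is left at ``we expect''.

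A second, smaller gap is time continuity. The dyadic estimate only yields bounds uniform in $t$. Proposition~\ref{prop:bad bound drhodx2} says nothing about $\partial_2\rho_B(t)-\partial_2\rho_B(s)$ in the weighted norms you would need, and $\partial_2\rho_B$ blows up in $L^\infty$ as $t\to1$. So ``continuity of each ingredient in the corresponding space'' is not available as stated.

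The paper sidesteps this by splitting $T=T^{(1)}+T^{(2)}$.
\begin{itemize}
\item $T^{(1)}(a)$, coming from $f_{\omega_B}-g\cdot\nabla^{\perp}\varPhi$, lies directly in $C^0_tC^{1,\alpha}_x$.
\item $T^{(2)}(a)$ lies in $L^\infty_tC^{1,\alpha_*}_x$ by the dyadic argument, and in $C^0_tC^{3/4}_x$ by the $C^0_tL^8_x$ route of Proposition~\ref{prop:MAP wins regularity 1}.
\item Point~3 of Proposition~\ref{prop:hard interpolation H=0000F6lder}, applied to $T^{(2)}(a)(t,\cdot)-T^{(2)}(a)(s,\cdot)$, then gives $C^0_tC^{1,\alpha}_x$. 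This is where the strict inequality $\alpha<\alpha_*$ is used.
\end{itemize}

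The rest of your outline agrees with the paper: parities, mean zero, compact support, the $L^2$ bound via point~4 of Corollary~\ref{cor:estimates Laplacian} with $q=8$, and the treatment of the harmless forcing terms.
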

\begin{proof}
We will choose $\delta\leq\Upsilon_{\delta,1}$, $\mu\leq\Upsilon_{\mu,1}$
and $1-\gamma\leq\Upsilon_{\gamma,1}$ sufficiently small so that
$\rho\in C^{0}_{t}C^{\frac{3}{4}}_{x}\left(\left[0,1\right]\times\mathbb{R}^{2}\right)$
(see Proposition \ref{prop:bounded density}), $\frac{\partial u_{B}}{\partial t}+u_{B}\cdot\nabla u_{B}\in C^{0}_{t}C^{\frac{3}{4}}_{x}\left(\left[0,1\right]\times\mathbb{R}^{2}\right)$
(see Proposition \ref{prop:bound material derivative u}) and $\frac{\partial\rho_{B}}{\partial x_{1}}\in C^{0}_{t}C^{0}_{x}$
(see Proposition \ref{prop:bounded drhodx1}).
\begin{enumerate}
\item \label{enu:last Proposition}Notice that, $a\in C^{0}_{t}C^{\frac{3}{4}}_{x}\left(\left[0,1\right]\times\mathbb{R}^{2};\mathbb{R}^{2}\right)\subseteq C^{0}_{t}C^{0}_{x}\left(\left[0,1\right]\times\mathbb{R}^{2};\mathbb{R}^{2}\right)$,
so we may apply Proposition \ref{prop:MAP wins regularity 1} and
the arguments in its proof to ensure that:
\begin{enumerate}
\item \label{enu:last Proposition evenness and oddness}the required evenness
and oddness are satisfied (see Points \ref{enu:LHS odd} and \ref{enu:evenness and oddness T}),
\item \label{enu:last Proposition odd mean-free}the left-hand-side of \eqref{eq:varpsi}
is compactly supported uniformly in time and mean-free (see Points
\ref{enu:LHS compact support} and \ref{enu:LHS mean-free}),
\item \label{enu:last Proposition Lp}the left-hand-side of \eqref{eq:varpsi}
is $C^{0}_{t}L^{8}_{x}\left(\left[0,1\right]\times\mathbb{R}^{2}\right)$
(see Point \ref{enu:LHS L8}),
\item \label{enu:last Proposition T}$T\left(a\right)\in C^{0}_{t}C^{\frac{3}{4}}_{x}\left(\left[0,1\right]\times\mathbb{R}^{2};\mathbb{R}^{2}\right)$.
\item \label{enu:last Proposition g}$g\left(t\right)\in C\left(\left[0,1\right]\right).$
\end{enumerate}
Moreover, by inspection of the proof of Proposition \ref{prop:MAP wins regularity 1},
Points \ref{enu:last Proposition odd mean-free}, \ref{enu:last Proposition Lp}, and \ref{enu:last Proposition T}
are also true summand-wise in the left-hand-side of equation \eqref{eq:varpsi}.
\item $T\left(a\right)\in C^{0}_{t}L^{2}_{x}\left(\left[0,1\right]\times\mathbb{R}^{2};\mathbb{R}^{2}\right)$.
By Point 4 of Corollary \ref{cor:estimates Laplacian} (taking $q\leftarrow8$
and $r\leftarrow\frac{3}{2}$) and the linearity of the Laplace equation,
we need to prove that the left-hand-side of \eqref{eq:varpsi} is
$C^{0}_{t}L^{8}_{x}\left(\left[0,1\right]\times\mathbb{R}^{2}\right)$,
is compactly supported uniformly in time and has zero mean. We satisfy
all these conditions by Point \ref{enu:last Proposition}.
\item For the rest of the proof, making use of the linearity of the Laplacian,
we will split $T$ into two operators $T=T^{\left(1\right)}+T^{\left(2\right)}$,
where $T^{\left(1\right)}\left(a\right)=\nabla^{\perp}\varPsi^{\left(1\right)}$,
$T^{\left(2\right)}\left(a\right)=\nabla^{\perp}\varPsi^{\left(2\right)}$
and
\[
\begin{aligned}\Delta\varPsi^{\left(1\right)} & =f_{\omega_{B}}-g\left(t\right)\cdot\nabla^{\perp}\varPhi\left(x\right),\\
\Delta\varPsi^{\left(2\right)} & =+\frac{\partial\rho_{B}}{\partial x_{2}}\left(1+\frac{1}{\overline{\rho}+\rho_{B}}\left(g_{1}\left(t\right)\varPhi\left(x\right)+a_{1}\left(t,x\right)-\left(\frac{\partial u_{B}}{\partial t}+u_{B}\cdot\nabla u_{B}\right)_{1}\right)\right)+\\
 & \quad-\frac{1}{\overline{\rho}+\rho_{B}}\frac{\partial\rho_{B}}{\partial x_{1}}\left(g_{2}\left(t\right)\varPhi\left(x\right)+a_{2}\left(t,x\right)-\left(\frac{\partial u_{B}}{\partial t}+u_{B}\cdot\nabla u_{B}\right)_{2}\right).
\end{aligned}
\]
\item \label{enu:LHS1 Calpha}$\Delta\varPsi^{\left(1\right)}\in C^{0}_{t}C^{\alpha}_{x}\left(\left[0,1\right]\times\mathbb{R}^{2}\right)$.
\begin{enumerate}
\item By the Main Theorem of \cite{Articulo Boussinesq} (see Theorem \ref{thm:MAIN THM Boussinesq}),
$f_{\omega_{B}}\in C^{0}_{t}C^{\alpha}_{x}\left(\left[0,1\right]\times\mathbb{R}^{2}\right)$.
\item Moreover, since $\varPhi\in C^{\infty}\left(\mathbb{R}^{2}\right)$
(see Choice \ref{choice: fu}) and $g\in C\left(\left[0,1\right]\right)$
by point \ref{enu:last Proposition g}, $g\left(t\right)\cdot\nabla^{\perp}\varPhi\left(x\right)\in C^{0}_{t}C^{\alpha}_{x}\left(\left[0,1\right]\times\mathbb{R}^{2}\right)$.
\end{enumerate}
\item \label{enu:T1 C1alpha}$T^{\left(1\right)}\left(a\right)\in C^{0}_{t}C^{1,\alpha}_{x}\left(\left[0,1\right]\times\mathbb{R}^{2};\mathbb{R}^{2}\right)$.
By Corollary \ref{cor:estimates Laplacian} and the linearity of the
Laplacian, we need $\Delta\varPsi^{\left(1\right)}\in C^{0}_{t}L^{p}_{x}\left(\left[0,1\right]\times\mathbb{R}^{2}\right)$
for some $p>2$, compactly supported uniformly in time and mean-free
in order to have $T^{\left(1\right)}\left(a\right)\in C^{0}_{t}C^{0}_{x}\left(\left[0,1\right]\times\mathbb{R}^{2};\mathbb{R}^{2}\right)$.
These conditions are met because of Point \ref{enu:last Proposition}.
Now, by Proposition \ref{prop:Laplace D2u Holder} and the linearity
of the Laplacian, we require $\Delta\varPsi^{\left(1\right)}\in C^{0}_{t}\dot{C}^{\varepsilon}_{x}\left(\left[0,1\right]\times\mathbb{R}^{2}\right)\cap C^{0}_{t}L^{p}_{x}\left(\left[0,1\right]\times\mathbb{R}^{2}\right)$
for some $\varepsilon>0$ and $p>1$ so as to have $T^{\left(1\right)}\left(a\right)\in C^{0}_{t}\dot{C}^{1}_{x}\left(\left[0,1\right]\times\mathbb{R}^{2};\mathbb{R}^{2}\right)$.
The required conditions are satisfied because of Points \ref{enu:last Proposition} and \ref{enu:LHS1 Calpha}.
Lastly, also by Proposition \ref{prop:Laplace D2u Holder}, we need
$\Delta\varPsi^{\left(1\right)}\in C^{0}_{t}\dot{C}^{\alpha}_{x}\left(\left[0,1\right]\times\mathbb{R}^{2}\right)$
in order for $T^{\left(1\right)}\left(a\right)$ to be $C^{0}_{t}\dot{C}^{1,\alpha}_{x}\left(\left[0,1\right]\times\mathbb{R}^{2};\mathbb{R}^{2}\right)$.
As we have seen in Point \ref{enu:LHS1 Calpha}, this condition is
also satisfied. In conclusion, we may ensure that $T^{\left(1\right)}\left(a\right)\in C^{0}_{t}C^{1,\alpha}_{x}\left(\left[0,1\right]\times\mathbb{R}^{2};\mathbb{R}^{2}\right)$.
\item \label{enu:LHS2 Calpha*}$\Delta\varPsi^{\left(2\right)}\in L^{\infty}_{t}C^{\alpha_{*}}_{x}\left(\left[0,1\right]\times\mathbb{R}^{2}\right)$. 
\begin{enumerate}
\item \label{enu:LHS2 Calpha* drhodx2}On the one hand, by Proposition \ref{prop:bad bound drhodx2},
taking $\delta\leq\Upsilon_{\delta,2}$ and $\mu\leq\Upsilon_{\mu,2}$
sufficiently small, we can ensure that
\[
\begin{aligned}\left|\left|\frac{\partial\rho}{\partial x_{2}}\left(t,\cdot\right)\right|\right|_{L^{\infty}\left(\mathbb{R}^{2}\setminus B\left(0;r\right)\right)} & \lesssim_{\delta,\varphi,\mu,Y}\frac{1}{r^{\frac{1}{2}}},\\
\left|\left|\frac{\partial\rho}{\partial x_{2}}\left(t,\cdot\right)\right|\right|_{\dot{C}^{\frac{1}{4}}\left(\mathbb{R}^{2}\setminus B\left(0;r\right)\right)} & \lesssim_{\delta,\varphi,\mu,Y}\frac{1}{r^{2\left(\left(\frac{3}{4}-\frac{1}{2}\right)\left(1+k_{\max}\right)+\frac{1}{16}\right)}}
\end{aligned}
\]
$\forall r\in\left(0,1\right]$ and $\forall t\in\left[0,1\right]$.
On the other hand, the factor that multiplies $\frac{\partial\rho_{B}}{\partial x_{2}}$
is $C^{0}_{t}C^{\frac{3}{4}}_{x}\left(\left[0,1\right]\times\mathbb{R}^{2}\right)$
because $\frac{1}{\overline{\rho}+\rho_{B}}\in C^{0}_{t}C^{\frac{3}{4}}_{x}\left(\left[0,1\right]\times\mathbb{R}^{2}\right)$
by Lemma \ref{lem:inverse Holder} since $\rho_{B}\in C^{0}_{t}C^{\frac{3}{4}}_{x}\left(\left[0,1\right]\times\mathbb{R}^{2}\right)$
by Proposition \ref{prop:bounded density} and $\overline{\rho}>\left|\left|\rho_{B}\right|\right|_{C^{0}_{t}C^{0}_{x}\left(\left[0,1\right]\times\mathbb{R}^{2}\right)}$
by hypothesis, $g_{1}\left(t\right)\varPhi\left(x\right)\in C^{0}_{t}C^{\frac{3}{4}}_{x}\left(\left[0,1\right]\times\mathbb{R}^{2}\right)$
as $g\in C\left(\left[0,1\right]\right)$ by point \ref{enu:last Proposition g}
and $\varPhi\in C^{\infty}\left(\mathbb{R}^{2}\right)$ (see Choice
\ref{choice: fu}), $a\left(t,x\right)\in C^{0}_{t}C^{\frac{3}{4}}_{x}\left(\left[0,1\right]\times\mathbb{R}^{2}\right)$
by hypothesis and $\left(\frac{\partial u_{B}}{\partial t}+u_{B}\cdot\nabla u_{B}\right)_{1}\in C^{0}_{t}C^{\frac{3}{4}}_{x}\left(\left[0,1\right]\times\mathbb{R}^{2}\right)$
by Proposition \ref{prop:bound material derivative u}. Furthermore,
by the definition of $g_{1}\left(t\right)$ given in \eqref{eq:g},
it is evident that this factor vanishes at $x=0$ $\forall t\in\left[0,1\right]$
since $\varPhi\left(0\right)=1$ (see Choice \ref{choice: fu}). Then,
we are in a position to apply Proposition \ref{prop:dyadic decomposition H=0000F6lder}
for each $t\in\left[0,1\right]$ with
\[
\beta\leftarrow\frac{3}{4},\quad\sigma\leftarrow\frac{1}{2},\quad R\leftarrow1,\quad K_{0}\leftarrow K_{0}\left(\delta,\varphi,\mu,Y\right),\quad g\leftarrow\frac{\partial\rho_{B}}{\partial x_{2}}\left(t,x\right),
\]
\[
f\leftarrow1+\frac{1}{\overline{\rho}+\rho_{B}\left(t,x\right)}\left(g_{1}\left(t\right)\varPhi\left(x\right)+a_{1}\left(t,x\right)-\left(\frac{\partial u_{B}}{\partial t}+u_{B}\cdot\nabla u_{B}\right)_{1}\left(t,x\right)\right),
\]
\[
K_{\beta-\sigma}\leftarrow K_{\frac{1}{4}}\left(\delta,\varphi,\mu,Y\right),\quad\eta\leftarrow2\left(\left(\frac{3}{4}-\frac{1}{2}\right)\left(1+k_{\max}\right)+\frac{1}{16}\right)<\frac{3}{4},
\]
where the last inequality is due to the fact that 
\[
2\left(\left(\frac{3}{4}-\frac{1}{2}\right)\left(1+\underbrace{k_{\max}}_{=\alpha_{*}<\frac{1}{4}}\right)+\frac{1}{16}\right)<2\left(\frac{1}{4}\frac{5}{4}+\frac{1}{16}\right)=\frac{3}{4},
\]
$\alpha_{*}$ being the regularity parameter defined in \cite{Articulo Boussinesq}.
Thereby, Proposition \ref{prop:dyadic decomposition H=0000F6lder}
ensures that the first summand of the left-hand-side of equation \eqref{eq:varpsi}
is $L^{\infty}_{t}C^{\frac{1}{4}}_{x}\left(\left[0,1\right]\times\mathbb{R}^{2}\right)\subseteq L^{\infty}_{t}C^{\alpha_{*}}_{x}\left(\left[0,1\right]\times\mathbb{R}^{2}\right)$,
since $\alpha_{*}<\frac{1}{4}$.
\item On the one hand, Proposition \ref{prop:bounded drhodx1} ensures that
$\frac{\partial\rho_{B}}{\partial x_{1}}\in C^{0}_{t}C^{0}_{x}\left(\left[0,1\right]\times\mathbb{R}^{2}\right)$.
In particular, this means that there is $M\left(\varphi,\mu,Y\right)>0$
such that
\[
\left|\left|\frac{\partial\rho_{B}}{\partial x_{1}}\left(t,\cdot\right)\right|\right|_{L^{\infty}\left(\mathbb{R}^{2}\right)}\leq M\leq\frac{M}{r^{\frac{1}{2}}}
\]
$\forall r\in\left(0,1\right]$ and $\forall t\in\left[0,1\right]$.
Besides, taking $\delta\leq\Upsilon_{\delta,3}$ and $\mu\leq\Upsilon_{\mu,3}$,
Proposition \ref{prop:bad bound drhodx1} (which we can apply because
$\frac{1}{4}>\frac{\alpha_{*}}{1+\alpha_{*}}$) implies that
\[
\left|\left|\frac{\partial\rho}{\partial x_{1}}\left(t,\cdot\right)\right|\right|_{\dot{C}^{\frac{1}{4}}\left(\mathbb{R}^{2}\setminus B\left(0;r\right)\right)}\lesssim_{\varphi,\mu,Y,\delta}\frac{1}{r^{2\left(\left(\frac{3}{4}-\frac{1}{2}\right)\left(1+k_{\max}\right)+\frac{1}{16}\right)}}
\]
$\forall r\in\left(0,1\right]$ and $\forall t\in\left[0,1\right]$.
On the other hand, the factor that multiplies $\frac{\partial\rho_{B}}{\partial x_{1}}$
is $C^{0}_{t}C^{\frac{3}{4}}_{x}\left(\left[0,1\right]\times\mathbb{R}^{2}\right)$
because $\frac{1}{\overline{\rho}+\rho_{B}}\in C^{0}_{t}C^{\frac{3}{4}}_{x}\left(\left[0,1\right]\times\mathbb{R}^{2}\right)$
by Lemma \ref{lem:inverse Holder} since $\rho_{B}\in C^{0}_{t}C^{\frac{3}{4}}_{x}\left(\left[0,1\right]\times\mathbb{R}^{2}\right)$
by Proposition \ref{prop:bounded density} and $\overline{\rho}>\left|\left|\rho_{B}\right|\right|_{C^{0}_{t}C^{0}_{x}\left(\left[0,1\right]\times\mathbb{R}^{2}\right)}$
by hypothesis, $g_{2}\left(t\right)\varPhi\left(x\right)\in C^{0}_{t}C^{\frac{3}{4}}_{x}\left(\left[0,1\right]\times\mathbb{R}^{2}\right)$
as $g\in C\left(\left[0,1\right]\right)$ by point \ref{enu:last Proposition g}
and $\varPhi\in C^{\infty}\left(\mathbb{R}^{2}\right)$ (see Choice
\ref{choice: fu}), $a\left(t,x\right)\in C^{0}_{t}C^{\frac{3}{4}}_{x}\left(\left[0,1\right]\times\mathbb{R}^{2}\right)$
by hypothesis and $\left(\frac{\partial u_{B}}{\partial t}+u_{B}\cdot\nabla u_{B}\right)_{2}\in C^{0}_{t}C^{\frac{3}{4}}_{x}\left(\left[0,1\right]\times\mathbb{R}^{2}\right)$
by Proposition \ref{prop:bound material derivative u}. Furthermore,
by the definition of $g_{2}\left(t\right)$ given in \eqref{eq:g}
and the symmetries discussed in point \ref{enu:last Proposition evenness and oddness},
it is evident that this factor vanishes at $x=0$ $\forall t\in\left[0,1\right]$
since $\varPhi\left(0\right)=1$ (see Choice \ref{choice: fu}). Then,
we are in a position to apply Proposition \ref{prop:dyadic decomposition H=0000F6lder}
for each $t\in\left[0,1\right]$ with
\[
\beta\leftarrow\frac{3}{4},\quad\sigma\leftarrow\frac{1}{2},\quad R\leftarrow1,\quad K_{0}\leftarrow M,\quad g\leftarrow\frac{\partial\rho_{B}}{\partial x_{1}}\left(t,x\right),
\]
\[
f\leftarrow\frac{1}{\overline{\rho}+\rho_{B}\left(t,x\right)}\left(g_{2}\left(t\right)\varPhi\left(x\right)+a_{2}\left(t,x\right)-\left(\frac{\partial u_{B}}{\partial t}+u_{B}\cdot\nabla u_{B}\right)_{2}\left(t,x\right)\right),
\]
\[
K_{\beta-\sigma}\leftarrow K_{\frac{1}{4}}\left(\delta,\varphi,\mu,Y\right),\quad\eta\leftarrow2\left(\left(\frac{3}{4}-\frac{1}{2}\right)\left(1+k_{\max}\right)+\frac{1}{16}\right)<\frac{3}{4}.
\]
where the last inequality is due to the same fact as in Point \ref{enu:LHS2 Calpha* drhodx2}.
Thereby, Proposition \ref{prop:dyadic decomposition H=0000F6lder}
ensures that the second summand of the left-hand-side of equation
\eqref{eq:varpsi} is $L^{\infty}_{t}C^{\frac{1}{4}}_{x}\left(\left[0,1\right]\times\mathbb{R}^{2}\right)\subseteq L^{\infty}_{t}C^{\alpha_{*}}_{x}\left(\left[0,1\right]\times\mathbb{R}^{2}\right)$,
since $\alpha_{*}<\frac{1}{4}$.
\end{enumerate}
\item \label{enu:T2 LinftyC1alpha*}$T^{\left(2\right)}\left(a\right)\in L^{\infty}_{t}C^{1,\alpha_{*}}_{x}\left(\left[0,1\right]\times\mathbb{R}^{2};\mathbb{R}^{2}\right)$.
By Corollary \ref{cor:estimates Laplacian}, we need $\Delta\varPsi^{\left(2\right)}\in L^{\infty}_{t}L^{p}_{x}\left(\left[0,1\right]\times\mathbb{R}^{2}\right)$
for some $p>2$, compactly supported uniformly in time and mean-free
in order to have $T^{\left(2\right)}\left(a\right)\in L^{\infty}_{t}C^{0}_{x}\left(\left[0,1\right]\times\mathbb{R}^{2};\mathbb{R}^{2}\right)$.
These conditions are met because of Point \ref{enu:last Proposition}.
Now, by Proposition \ref{prop:Laplace D2u Holder}, we require $\Delta\varPsi^{\left(2\right)}\in L^{\infty}_{t}\dot{C}^{\varepsilon}_{x}\left(\left[0,1\right]\times\mathbb{R}^{2}\right)\cap L^{\infty}_{t}L^{p}_{x}\left(\left[0,1\right]\times\mathbb{R}^{2}\right)$
for some $\varepsilon>0$ and some $p>2$ so as to have $T^{\left(2\right)}\left(a\right)\in L^{\infty}_{t}\dot{C}^{1}_{x}\left(\left[0,1\right]\times\mathbb{R}^{2};\mathbb{R}^{2}\right)$.
Again, the required conditions are satisfied because of Points \ref{enu:last Proposition} and \ref{enu:LHS2 Calpha*}.
Lastly, also by Proposition \ref{prop:Laplace D2u Holder}, we need
$\Delta\varPsi^{\left(2\right)}\in L^{\infty}_{t}\dot{C}^{\alpha_{*}}_{x}\left(\left[0,1\right]\times\mathbb{R}^{2}\right)$
in order for $T^{\left(2\right)}\left(a\right)$ to be $L^{\infty}_{t}\dot{C}^{1,\alpha_{*}}_{x}\left(\left[0,1\right]\times\mathbb{R}^{2};\mathbb{R}^{2}\right)$.
As we have seen in Point \ref{enu:LHS2 Calpha*}, this condition is
also satisfied. In conclusion, we may ensure that $T^{\left(2\right)}\left(a\right)\in L^{\infty}_{t}C^{1,\alpha_{*}}_{x}\left(\left[0,1\right]\times\mathbb{R}^{2};\mathbb{R}^{2}\right)$.
\item \label{enu:interpolation T2}$T^{\left(2\right)}\left(a\right)\in C^{0}_{t}C^{1,\alpha}_{x}\left(\left[0,1\right]\times\mathbb{R}^{2};\mathbb{R}^{2}\right)$.
Since $\alpha<\alpha_{*}$ and $T^{\left(2\right)}\left(a\right)\in C^{0}_{t}C^{\frac{3}{4}}_{x}\left(\left[0,1\right]\times\mathbb{R}^{2};\mathbb{R}^{2}\right)\cap L^{\infty}_{t}C^{1,\alpha_{*}}_{x}\left(\left[0,1\right]\times\mathbb{R}^{2};\mathbb{R}^{2}\right)$
because of Points \ref{enu:last Proposition} and \ref{enu:T2 LinftyC1alpha*},
we may apply Point 3 of Proposition \ref{prop:hard interpolation H=0000F6lder}
(to differences $T^{\left(2\right)}\left(a\right)\left(t,\cdot\right)-T^{\left(2\right)}\left(a\right)\left(s,\cdot\right)$
with $t,s\in\left[0,1\right]$) to obtain that $T^{\left(2\right)}\left(a\right)\in C^{0}_{t}C^{1,\alpha}_{x}\left(\left[0,1\right]\times\mathbb{R}^{2};\mathbb{R}^{2}\right)$.
\item Combining Points \ref{enu:T1 C1alpha} and \ref{enu:interpolation T2},
we conclude the truth of the result.
\end{enumerate}
\end{proof}

\subsection{Contractivity}

It turns out that, in order for $T$ to be contractive, not every
choice of $\varPhi$ is valid. In the following Lemma, we hand-craft
a $\varPhi$ with the required properties.
\begin{lem}
\label{lem:construction varPhi}$\forall\varepsilon\in\left(0,1\right)$,
there exists $\varPhi\in C^{\infty}_{c}\left(\mathbb{R}^{2}\right)$
such that
\begin{enumerate}
\item $\varPhi$ is even in $x_{2}$ ,
\item $\varPhi\left(0\right)=1$,
\item ~
\[
\left|\left|\nabla^{\perp}\Delta^{-1}\frac{\partial}{\partial x_{2}}\varPhi\right|\right|_{L^{\infty}\left(\mathbb{R}^{2}\right)}\leq\frac{1}{2}\left(1+\varepsilon\right).
\]
\end{enumerate}
\end{lem}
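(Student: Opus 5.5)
The plan is to take $\varPhi$ very elongated in the $x_{2}$ direction. Then $\frac{\partial\varPhi}{\partial x_{2}}$ is tiny, and the operator $\nabla^{\perp}\Delta^{-1}\frac{\partial}{\partial x_{2}}$ (of order zero) produces something small in $L^{\infty}$. In fact this should give a bound that tends to $0$, which is stronger than $\frac{1}{2}\left(1+\varepsilon\right)$.

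\textbf{Construction.} Fix an even $\chi\in C^{\infty}_{c}\left(\mathbb{R}\right)$ with $\chi\left(0\right)=1$. For a small parameter $\lambda\in\left(0,1\right)$, set
\[
\varPhi_{\lambda}\left(x\right)=\chi\left(x_{1}\right)\chi\left(\lambda x_{2}\right).
\]
Then $\varPhi_{\lambda}\in C^{\infty}_{c}\left(\mathbb{R}^{2}\right)$. It is even in $x_{2}$, and $\varPhi_{\lambda}\left(0\right)=1$, so properties 1 and 2 hold for every $\lambda$. It remains to show that the $L^{\infty}$ norm in property 3 tends to $0$ as $\lambda\to0$; we then fix $\lambda$ small enough depending on $\varepsilon$.

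\textbf{Fourier-side estimate.} The two components of $\nabla^{\perp}\Delta^{-1}\frac{\partial}{\partial x_{2}}\varPhi$ are $-\frac{\partial^{2}}{\partial x_{2}^{2}}\Delta^{-1}\varPhi$ and $\frac{\partial^{2}}{\partial x_{1}\partial x_{2}}\Delta^{-1}\varPhi$. Up to sign they have Fourier multipliers
\[
m_{1}\left(\xi\right)=\frac{\xi_{2}^{2}}{\left|\xi\right|^{2}},\qquad m_{2}\left(\xi\right)=\frac{\xi_{1}\xi_{2}}{\left|\xi\right|^{2}},
\]
both bounded by $1$. We bound the sup norm by the $L^{1}$ norm of the Fourier transform:
\[
\left\Vert m_{i}\left(D\right)\varPhi_{\lambda}\right\Vert _{L^{\infty}}\le\left(2\pi\right)^{-2}\int_{\mathbb{R}^{2}}\left|m_{i}\left(\xi\right)\right|\left|\hat{\chi}\left(\xi_{1}\right)\right|\lambda^{-1}\left|\hat{\chi}\left(\xi_{2}/\lambda\right)\right|\mathrm{d}\xi.
\]
This is legitimate because $\widehat{\varPhi_{\lambda}}\in L^{1}$ and $\left|m_{i}\right|\le1$. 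Substituting $\xi_{2}=\lambda\eta$ gives
\[
\int_{\mathbb{R}}\left|\hat{\chi}\left(\eta\right)\right|\left(\int_{\mathbb{R}}\left|m_{i}\left(\xi_{1},\lambda\eta\right)\right|\left|\hat{\chi}\left(\xi_{1}\right)\right|\mathrm{d}\xi_{1}\right)\mathrm{d}\eta .
\]

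\textbf{Bounding the inner integral.} For fixed $\eta$, use $\left|\hat{\chi}\right|\lesssim\left(1+\left|\xi_{1}\right|\right)^{-3}$.
\begin{itemize}
\item For $i=1$: $\int\frac{\lambda^{2}\eta^{2}}{\xi_{1}^{2}+\lambda^{2}\eta^{2}}\mathrm{d}\xi_{1}=\pi\lambda\left|\eta\right|$, so the inner integral is $\lesssim\lambda\left|\eta\right|$.
\item For $i=2$: split $\left|\xi_{1}\right|\le\lambda\left|\eta\right|$, where the multiplier is $\le1$, from $\lambda\left|\eta\right|<\left|\xi_{1}\right|$, where it is $\le\lambda\left|\eta\right|/\left|\xi_{1}\right|$. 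The inner integral is then $\lesssim\lambda\left|\eta\right|\left(1+\log^{+}\frac{1}{\lambda\left|\eta\right|}\right)$.
\end{itemize}
Since $\hat{\chi}$ is Schwartz, multiplying by $\left|\hat{\chi}\left(\eta\right)\right|$ and integrating in $\eta$ gives a total of $O\left(\lambda\log\frac{1}{\lambda}\right)\to0$. Alternatively, dominated convergence applies directly, because the integrand tends to $0$ pointwise and is dominated by $\left|\hat{\chi}\left(\xi_{1}\right)\hat{\chi}\left(\eta\right)\right|$.

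Choosing $\lambda$ with this quantity at most $\frac{1}{2}$ yields property 3. The only mildly delicate point is the logarithm in the mixed multiplier near $\xi_{1}=0$, and it is harmless. If one prefers not to rely on the lack of singularity of $\Delta^{-1}$ acting on $\frac{\partial}{\partial x_{2}}\varPhi$, note that $\frac{\partial}{\partial x_{2}}\varPhi$ has zero mean, so $\Delta^{-1}\frac{\partial}{\partial x_{2}}\varPhi$ computed via the fundamental solution agrees with the Fourier definition.
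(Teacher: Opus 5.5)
Your proof is correct, and it takes a genuinely different route from the paper's. It also proves more than the lemma asks: with $\varPhi_\lambda(x)=\chi(x_1)\chi(\lambda x_2)$, the norm $\|\nabla^{\perp}\Delta^{-1}\partial_{x_2}\varPhi_\lambda\|_{L^\infty}$ is $O(\lambda\log\frac{1}{\lambda})$, so it can be made arbitrarily small rather than just $\le\frac12(1+\varepsilon)$. The dominated-convergence version is the cleanest way to close the estimate. For the identification with the paper's $\Delta^{-1}$ (convolution with $\frac{1}{2\pi}\log|x|$), the precise justification is as follows: $\frac{x}{2\pi|x|^2}\in L^1+L^\infty$ has Fourier transform $-i\xi/|\xi|^2$, with no term supported at the origin. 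Hence $\nabla\Delta^{-1}\partial_{x_2}\varPhi$ has transform $\frac{\xi\,\xi_2}{|\xi|^2}\widehat{\varPhi}\in L^1$, and it equals your inverse-Fourier integral pointwise.

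The paper's proof uses no Fourier analysis. It sets $\psi=A\varphi(\mu x_1)\varphi(\mu x_2)\cos x_1\sin x_2$ and defines $\varPhi$ as the $x_2$-primitive of $\Delta\psi$, which is compactly supported because the error term is odd in $x_2$. Then $\nabla^{\perp}\Delta^{-1}\partial_{x_2}\varPhi=\nabla^{\perp}\psi$ exactly, its sup norm is read off as $|A|(1+O(\mu))$, and the normalization $\varPhi(0)=1$ forces $A\approx\frac12$. That construction is fully explicit and needs only $\Delta^{-1}\Delta\psi=\psi$ for compactly supported $\psi$. However, its isotropic oscillation pins the constant at about $\frac12$.

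Your anisotropic choice instead uses the fact that the multipliers $\xi_2^2/|\xi|^2$ and $\xi_1\xi_2/|\xi|^2$ nearly annihilate functions varying slowly in $x_2$; the cost is a standard harmonic-analysis step. Proposition \ref{prop:MAP contractive} only needs a constant below $1$, so either lemma suffices, and yours would make $T^{(1)}$ as contractive as desired.
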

\begin{proof}
Consider
\begin{equation}
\psi\left(x\right)=A\varphi\left(\mu x_{1}\right)\varphi\left(\mu x_{2}\right)\cos\left(x_{1}\right)\sin\left(x_{2}\right),\label{eq:const varphi def psi}
\end{equation}
where $A\in\mathbb{R}$ and $\mu>0$ are parameters whose values will
be fixed later, $\varphi\in C^{\infty}_{c}\left(\mathbb{R}\right)$,
$\left.\varphi\right|_{\left[-1,1\right]}=1$, $\varphi$ is even
and $\left|\left|\varphi\right|\right|_{L^{\infty}\left(\mathbb{R}\right)}\leq1$.
A direct computation shows that
\[
\begin{aligned}\Delta\psi & =-2A\varphi\left(\mu x_{1}\right)\varphi\left(\mu x_{2}\right)\cos\left(x_{1}\right)\sin\left(x_{2}\right)+\\
 & \quad-2A\mu\varphi'\left(\mu x_{1}\right)\varphi\left(\mu x_{2}\right)\sin\left(x_{1}\right)\sin\left(x_{2}\right)+\\
 & \quad+2A\mu\varphi\left(\mu x_{1}\right)\varphi'\left(\mu x_{2}\right)\cos\left(x_{1}\right)\cos\left(x_{2}\right)+\\
 & \quad+A\mu^{2}\varphi''\left(\mu x_{1}\right)\varphi\left(\mu x_{2}\right)\cos\left(x_{1}\right)\sin\left(x_{2}\right)+\\
 & \quad+A\mu^{2}\varphi\left(\mu x_{1}\right)\varphi''\left(\mu x_{2}\right)\cos\left(x_{1}\right)\sin\left(x_{2}\right)=\\
 & =+2A\varphi\left(\mu x_{1}\right)\cos\left(x_{1}\right)\frac{\partial}{\partial x_{2}}\left[\varphi\left(\mu x_{2}\right)\cos\left(x_{2}\right)\right]+\\
 & \quad-2A\mu\varphi'\left(\mu x_{1}\right)\varphi\left(\mu x_{2}\right)\sin\left(x_{1}\right)\sin\left(x_{2}\right)+\\
 & \quad+A\mu^{2}\varphi''\left(\mu x_{1}\right)\varphi\left(\mu x_{2}\right)\cos\left(x_{1}\right)\sin\left(x_{2}\right)+\\
 & \quad+A\mu^{2}\varphi\left(\mu x_{1}\right)\varphi''\left(\mu x_{2}\right)\cos\left(x_{1}\right)\sin\left(x_{2}\right)=\\
 & \eqqcolon+2A\varphi\left(\mu x_{1}\right)\cos\left(x_{1}\right)\frac{\partial}{\partial x_{2}}\left[\varphi\left(\mu x_{2}\right)\cos\left(x_{2}\right)\right]+E\left(x\right).
\end{aligned}
\]
Let us define $\varPhi$ by
\begin{equation}
\varPhi\left(x\right)\coloneqq\int^{x_{2}}_{-\infty}\Delta\psi\left(x_{1},s\right)\mathrm{d}s=2A\varphi\left(\mu x_{1}\right)\varphi\left(\mu x_{2}\right)\cos\left(x_{1}\right)\cos\left(x_{2}\right)+\int^{x_{2}}_{-\infty}E\left(x_{1},s\right)\mathrm{d}s.\label{eq:const varphi def varphi}
\end{equation}
By looking at the definition of $E$, we can see that it is odd in
$x_{2}$ and compactly supported. As the leading order term of $\varPhi$
is compactly supported as well, we deduce that $\varPhi$ is compactly
supported, too. Furthermore, it is obvious that $\varPhi\in C^{\infty}\left(\mathbb{R}^{2}\right)$.
Moreover, since $E$ is odd in $x_{2}$, $\int^{x_{2}}_{-\infty}E\left(x_{1},s\right)\mathrm{d}s$
must be even in $x_{2}$, as is the leading order term of $\varPhi$,
so $\varPhi$ is even in $x_{2}$. 

As for the value in the origin, we have
\begin{equation}
\varPhi\left(0\right)=2A+\int^{0}_{-\infty}E\left(0,s\right)\mathrm{d}s=2A+\int^{0}_{-\infty}A\mu^{2}\varphi''\left(\mu s\right)\sin\left(s\right)\mathrm{d}s.\label{eq:const varphi value at origin}
\end{equation}
The change of variables
\[
\mu s=v\iff s=\frac{v}{\mu}
\]
allows us to bound
\begin{equation}
\left|\int^{0}_{-\infty}A\mu^{2}\varphi''\left(\mu s\right)\sin\left(s\right)\mathrm{d}s\right|\leq\left|A\right|\left|\mu^{2}\int^{0}_{-\infty}\varphi''\left(v\right)\sin\left(\frac{v}{\mu}\right)\frac{1}{\mu}\mathrm{d}v\right|\leq\left|A\right|\mu\int^{0}_{-\infty}\left|\varphi''\left(v\right)\right|\mathrm{d}v.\label{eq:const varphi bound E}
\end{equation}

Moreover,
\[
\nabla^{\perp}\Delta^{-1}\frac{\partial}{\partial x_{2}}\varPhi=\nabla^{\perp}\psi=A\varphi\left(\mu x_{1}\right)\varphi\left(\mu x_{2}\right)\left(\begin{matrix}-\cos\left(x_{1}\right)\cos\left(x_{2}\right)\\
-\sin\left(x_{1}\right)\sin\left(x_{2}\right)
\end{matrix}\right)+A\mu\cos\left(x_{1}\right)\sin\left(x_{2}\right)\left(\begin{matrix}-\varphi\left(\mu x_{1}\right)\varphi'\left(\mu x_{2}\right)\\
\varphi'\left(\mu x_{1}\right)\varphi\left(\mu x_{2}\right)
\end{matrix}\right).
\]
Clearly, as $\left|\left|\varphi\right|\right|_{L^{\infty}\left(\mathbb{R}\right)}\leq1$,
\begin{equation}
\left|\left|\nabla^{\perp}\Delta^{-1}\frac{\partial}{\partial x_{2}}\varPhi\right|\right|_{L^{\infty}\left(\mathbb{R}^{2}\right)}\leq\left|A\right|\left(1+\mu\left|\left|\varphi\right|\right|_{\dot{C}^{1}\left(\mathbb{R}\right)}\right).\label{eq:const varphi bound Lip}
\end{equation}

First fixing $\mu$ small enough so that
\[
\mu\leq\frac{\varepsilon}{4}\frac{1}{\max\left\{ \int^{0}_{-\infty}\left|\varphi''\left(v\right)\right|\mathrm{d}v,\left|\left|\varphi\right|\right|_{\dot{C}^{1}\left(\mathbb{R}\right)}\right\} },
\]
and then taking
\[
1=2A+A\int^{0}_{-\infty}\mu^{2}\varphi''\left(\mu s\right)\sin\left(s\right)\mathrm{d}s\iff A=\frac{1}{2+\int^{0}_{-\infty}\mu^{2}\varphi''\left(\mu s\right)\sin\left(s\right)\mathrm{d}s},
\]
in view of equations \eqref{eq:const varphi value at origin}, \eqref{eq:const varphi bound E}, and \eqref{eq:const varphi bound Lip},
ensures that 
\[
\varPhi\left(0\right)=1,\quad A\leq\frac{1}{2-\frac{\varepsilon}{4}},\quad\left|\left|\nabla^{\perp}\Delta^{-1}\frac{\partial}{\partial x_{2}}\varPhi\right|\right|_{L^{\infty}\left(\mathbb{R}^{2}\right)}\leq\left|A\right|\left(1+\frac{\varepsilon}{4}\right).
\]
 Since $\varepsilon<1$ and $\frac{1}{2-z}\leq\frac{1}{2}\left(1+z\right)$
$\forall z\in\left[0,1\right]$, we deduce that
\[
A\leq\frac{1}{2}\left(1+\frac{\varepsilon}{4}\right),\quad\left|\left|\nabla^{\perp}\Delta^{-1}\frac{\partial}{\partial x_{2}}\varPhi\right|\right|_{L^{\infty}\left(\mathbb{R}^{2}\right)}\leq\frac{1}{2}\left(1+\frac{\varepsilon}{4}\right)^{2}\leq\frac{1}{2}\left(1+\frac{1}{16}\varepsilon^{2}+\frac{1}{2}\varepsilon\right)<\frac{1}{2}\left(1+\varepsilon\right)
\]
as $\varepsilon<1$.
\end{proof}

\begin{choice}
\label{choice:varPhi}We choose $\varPhi$ such that $\varPhi\in C^{\infty}_{c}\left(\mathbb{R}^{2}\right)$,
$\varPhi$ is even in $x_{2}$, $\varPhi\left(0\right)=1$ and $\left|\left|\nabla^{\perp}\Delta^{-1}\frac{\partial}{\partial x_{2}}\varPhi\right|\right|_{L^{\infty}\left(\mathbb{R}^{2}\right)}\leq\frac{3}{4}$.
(Notice that such a $\varPhi$ exists thanks to Lemma \ref{lem:construction varPhi}
with $\varepsilon=\frac{1}{2}$ and that it satisfies the conditions
of Choice \ref{choice: fu}).
\end{choice}
\begin{prop}
\label{prop:MAP contractive}Provided that $\overline{\rho}$ is large
enough, i.e., 
\[
\overline{\rho}\geq\max\left\{ 2\left|\left|\rho_{B}\right|\right|_{C^{0}_{t}C^{0}_{x}\left(\left[0,1\right]\times\mathbb{R}^{2}\right)},\Upsilon\left(\mathrm{\text{sol of Boussinesq}},\varPhi\right)\right\} ,
\]
the map
\[
\begin{matrix}T: &  & \left\{ b\in C^{0}_{t}C^{0}_{x}\left(\left[0,1\right]\times\mathbb{R}^{2};\mathbb{R}^{2}\right):b_{1}\text{ even in }x_{2},\;b_{2}\text{ odd in }x_{2}\right\}  & \longrightarrow\\
 & \longrightarrow & \left\{ b\in C^{0}_{t}C^{0}_{x}\left(\left[0,1\right]\times\mathbb{R}^{2};\mathbb{R}^{2}\right):b_{1}\text{ even in }x_{2},\;b_{2}\text{ odd in }x_{2}\right\} 
\end{matrix}
\]
is contractive. In fact,
\[
\left|\left|T\left(a\right)-T\left(b\right)\right|\right|_{C^{0}_{t}C^{0}_{x}\left(\left[0,1\right]\times\mathbb{R}^{2}\right)}\le\frac{9}{10}\left|\left|a-b\right|\right|_{C^{0}_{t}C^{0}_{x}\left(\left[0,1\right]\times\mathbb{R}^{2}\right)}.
\]
\end{prop}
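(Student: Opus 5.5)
The plan is to exploit the fact that the map $T$ of Definition \ref{def:THE MAP} is affine in $a$, so that $T(a)-T(b)=\nabla^{\perp}\Delta^{-1}F$ with $F$ linear in $c\coloneqq a-b$. Proposition \ref{prop:MAP wins regularity 1} already shows that $T$ maps the symmetric subspace of $C^{0}_{t}C^{0}_{x}$ into itself, so only the Lipschitz estimate remains.

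First, I compute the difference of the forcing vectors using \eqref{eq:g}: $g^{a}(t)-g^{b}(t)=-c(t,0)$. Because $c_{2}$ is odd in $x_{2}$, we have $c_{2}(t,0)=0$, so only the first component survives. Subtracting the two copies of \eqref{eq:varpsi} then gives
\[
F=\frac{\partial_{2}\rho_{B}}{\overline{\rho}+\rho_{B}}\bigl(c_{1}(t,x)-c_{1}(t,0)\varPhi(x)\bigr)-\frac{\partial_{1}\rho_{B}}{\overline{\rho}+\rho_{B}}\,c_{2}(t,x)-c_{1}(t,0)\,\frac{\partial\varPhi}{\partial x_{2}}(x),
\]
where the last term comes from $-(g^{a}-g^{b})\cdot\nabla^{\perp}\varPhi=c(t,0)\cdot\nabla^{\perp}\varPhi$.

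Second, I treat the $\varPhi$-term exactly. Its contribution is $-c_{1}(t,0)\nabla^{\perp}\Delta^{-1}\partial_{2}\varPhi$, and by Choice \ref{choice:varPhi} its sup norm is at most $\frac{3}{4}|c_{1}(t,0)|\le\frac{3}{4}\|c\|_{C^{0}_{t}C^{0}_{x}}$. This is where the specific construction of Lemma \ref{lem:construction varPhi} is needed: this term carries no small factor $1/\overline{\rho}$, so its constant must already be strictly below $1$.

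Third, I show the remaining terms are small for large $\overline{\rho}$. Call their sum $F_{\rm rest}$. Since $\overline{\rho}\ge2\|\rho_{B}\|_{C^{0}_{t}C^{0}_{x}}$, we have $\frac{1}{\overline{\rho}+\rho_{B}}\le\frac{2}{\overline{\rho}}$ pointwise. Hence
\[
\|F_{\rm rest}(t)\|_{L^{8}}\le\frac{2}{\overline{\rho}}\bigl(1+\|\varPhi\|_{L^{\infty}}\bigr)\,\|c\|_{C^{0}_{t}C^{0}_{x}}\,\sup_{t}\|\nabla\rho_{B}(t)\|_{L^{8}}.
\]
The factor $\sup_{t}\|\nabla\rho_{B}(t)\|_{L^{8}}$ is finite by Propositions \ref{prop:bad bound drhodx2} and \ref{prop:bounded drhodx1}, using the uniform compact support and choosing $\delta,\mu$ small.

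To convert this into a sup bound on $\nabla^{\perp}\Delta^{-1}F_{\rm rest}$, I invoke Point 3 of Corollary \ref{cor:estimates Laplacian} with $p=8$. Its hypotheses hold as follows:
\begin{itemize}
\item $F_{\rm rest}$ is supported in a fixed compact set $K$ for all $t$, since $\nabla\rho_{B}$ is.
\item $F_{\rm rest}$ is odd in $x_{2}$, by the same parity bookkeeping as in Proposition \ref{prop:MAP wins regularity 1}, and therefore has zero mean.
\end{itemize}
This yields $\|\nabla^{\perp}\Delta^{-1}F_{\rm rest}\|_{L^{\infty}}\le K_{1}/\overline{\rho}\,\|c\|$, where $K_{1}$ depends only on the Boussinesq solution, $\varPhi$ and $\mathrm{diam}(K)$.

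Finally, combining the two bounds uniformly in $t$ gives
\[
\|T(a)-T(b)\|_{C^{0}_{t}C^{0}_{x}}\le\Bigl(\frac{3}{4}+\frac{K_{1}}{\overline{\rho}}\Bigr)\|a-b\|_{C^{0}_{t}C^{0}_{x}}.
\]
Taking $\overline{\rho}\ge\Upsilon\coloneqq20K_{1}/3$ makes the constant at most $\frac{9}{10}$.

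The main obstacle is the second step: isolating the $\varPhi$-term, which is not small in $\overline{\rho}$, and ensuring its constant is below one. A secondary point is splitting $F$ so that each piece is separately mean-free, which is needed before applying the Laplacian estimates; the $\varPhi$-term is mean-free since it is $\partial_{2}$ of a compactly supported function.
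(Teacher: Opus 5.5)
Your proposal is correct and follows the paper's proof essentially step for step. You use the same splitting of $\Delta(\varPsi_a-\varPsi_b)$ into the $\varPhi$-part, bounded by $\frac{3}{4}$ via Choice \ref{choice:varPhi} after using $c_2(t,0)=0$, and the remainder, bounded in $L^{8}$ with the factor $2/\overline{\rho}$ and then in sup norm via Point 3 of Corollary \ref{cor:estimates Laplacian}. You also spell out the parity, mean-free and uniform-support checks that the paper only refers back to Proposition \ref{prop:MAP wins regularity 1}.
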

\begin{proof}
We will denote $T\left(a\right)=\nabla^{\perp}\varPsi_{a}$ and $T\left(b\right)=\nabla^{\perp}\varPsi_{b}$.
Let us find the equation that $\varPsi_{a}-\varPsi_{b}$ satisfies.
On the one hand, looking at equation \eqref{eq:varpsi}, we deduce
that
\[
\begin{aligned}\Delta\left(\varPsi_{a}-\varPsi_{b}\right)\left(t,x\right) & =-\left(g_{a}\left(t\right)-g_{b}\left(t\right)\right)\cdot\nabla^{\perp}\varPhi\left(x\right)+\\
 & \quad+\frac{1}{\overline{\rho}+\rho_{B}}\frac{\partial\rho_{B}}{\partial x_{2}}\left[\left(g_{a,1}\left(t\right)-g_{b,1}\left(t\right)\right)\varPhi\left(x\right)+a_{1}\left(t,x\right)-b_{1}\left(t,x\right)\right]+\\
 & \quad-\frac{1}{\overline{\rho}+\rho_{B}}\frac{\partial\rho_{B}}{\partial x_{1}}\left[\left(g_{a,2}\left(t\right)-g_{b,2}\left(t\right)\right)\varPhi\left(x\right)+a_{2}\left(t,x\right)-b_{2}\left(t,x\right)\right].
\end{aligned}
\]
On the other hand, looking at equation \eqref{eq:g}, we see that
\[
g_{a}\left(t\right)-g_{b}\left(t\right)=\left[-a\left(t,0\right)+b\left(t,0\right)\right].
\]
Consequently,
\begin{equation}
\begin{aligned}\Delta\left(\varPsi_{a}-\varPsi_{b}\right)\left(t,x\right) & =+\left(a\left(t,0\right)-b\left(t,0\right)\right)\cdot\nabla^{\perp}\varPhi\left(x\right)+\\
 & \quad+\frac{1}{\overline{\rho}+\rho_{B}}\frac{\partial\rho_{B}}{\partial x_{2}}\left[a_{1}\left(t,x\right)-b_{1}\left(t,x\right)-\left(a_{1}\left(t,0\right)-b_{1}\left(t,0\right)\right)\varPhi\left(x\right)\right]+\\
 & \quad-\frac{1}{\overline{\rho}+\rho_{B}}\frac{\partial\rho_{B}}{\partial x_{1}}\left[a_{2}\left(t,x\right)-b_{2}\left(t,x\right)-\left(a_{2}\left(t,0\right)-b_{2}\left(t,0\right)\right)\varPhi\left(x\right)\right].
\end{aligned}
\label{eq:varpsi diference}
\end{equation}
To continue, we use the linearity of the equation to split it into
two. Let $\varPsi^{\left(1\right)}_{a}-\varPsi^{\left(1\right)}_{b}$
and $\varPsi^{\left(2\right)}_{a}-\varPsi^{\left(2\right)}_{b}$ solve
\[
\begin{aligned}\Delta\left(\varPsi^{\left(1\right)}_{a}-\varPsi^{\left(1\right)}_{b}\right)\left(t,x\right) & =\left(a\left(t,0\right)-b\left(t,0\right)\right)\cdot\nabla^{\perp}\varPhi\left(x\right),\\
\Delta\left(\varPsi^{\left(2\right)}_{a}-\varPsi^{\left(2\right)}_{b}\right)\left(t,x\right) & =+\frac{1}{\overline{\rho}+\rho_{B}}\frac{\partial\rho_{B}}{\partial x_{2}}\left[a_{1}\left(t,x\right)-b_{1}\left(t,x\right)-\left(a_{1}\left(t,0\right)-b_{1}\left(t,0\right)\right)\varPhi\left(x\right)\right]+\\
 & \quad-\frac{1}{\overline{\rho}+\rho_{B}}\frac{\partial\rho_{B}}{\partial x_{1}}\left[a_{2}\left(t,x\right)-b_{2}\left(t,x\right)-\left(a_{2}\left(t,0\right)-b_{2}\left(t,0\right)\right)\varPhi\left(x\right)\right].
\end{aligned}
\]
We will also denote $T^{\left(1\right)}\left(a\right)=\nabla^{\perp}\varPsi^{\left(1\right)}_{a}$,
$T^{\left(2\right)}\left(a\right)=\nabla^{\perp}\varPsi^{\left(2\right)}_{a}$,
$T^{\left(1\right)}\left(b\right)=\nabla^{\perp}\varPsi^{\left(1\right)}_{b}$
and $T^{\left(2\right)}\left(b\right)=\nabla^{\perp}\varPsi^{\left(2\right)}_{b}$.
Notice that, under these definitions,
\[
T\left(a\right)-T\left(b\right)=T^{\left(1\right)}\left(a\right)-T^{\left(1\right)}\left(b\right)+T^{\left(2\right)}\left(a\right)-T^{\left(2\right)}\left(b\right).
\]

On the one hand, since $a_{2}\left(t,x\right)$ and $b_{2}\left(t,x\right)$
are odd in $x_{2}$ by hypothesis, we deduce that $a_{2}\left(t,0\right)=0=b_{2}\left(t,0\right)$.
Thus,
\[
\Delta\left(\varPsi^{\left(1\right)}_{a}-\varPsi^{\left(1\right)}_{b}\right)\left(t,x\right)=+\left(a\left(t,0\right)-b\left(t,0\right)\right)\cdot\nabla^{\perp}\varPhi\left(x\right)=-\left(a_{1}\left(t,0\right)-b_{1}\left(t,0\right)\right)\frac{\partial\varPhi}{\partial x_{2}}\left(x\right).
\]
By Choice \ref{choice:varPhi}, we infer that
\[
\left|\left|\left(T^{\left(1\right)}\left(a\right)-T^{\left(1\right)}\left(b\right)\right)\left(t,\cdot\right)\right|\right|_{C^{0}\left(\mathbb{R}^{2}\right)}=\left|\left|\nabla^{\perp}\Delta^{-1}\frac{\partial}{\partial x_{2}}\left[-\left(a_{1}\left(t,0\right)-b_{1}\left(t,0\right)\right)\varPhi\right]\right|\right|_{C^{0}\left(\mathbb{R}^{2}\right)}\leq\frac{3}{4}\left|a_{1}\left(t,0\right)-b_{1}\left(t,0\right)\right|.
\]
Taking suprema in $t\in\left[0,1\right]$, we easily arrive at
\[
\left|\left|T^{\left(1\right)}\left(a\right)-T^{\left(1\right)}\left(b\right)\right|\right|_{C^{0}_{t}C^{0}_{x}\left(\left[0,1\right]\times\mathbb{R}^{2}\right)}\leq\frac{3}{4}\left|\left|a-b\right|\right|_{C^{0}_{t}C^{0}_{x}\left(\left[0,1\right]\times\mathbb{R}^{2}\right)}.
\]

On the other hand, in view of Corollary \ref{cor:estimates Laplacian},
if we wish to have $T^{\left(2\right)}\left(a\right)-T^{\left(2\right)}\left(b\right)\in C^{0}_{t}C^{0}_{x}\left(\left[0,1\right]\times\mathbb{R}^{2};\mathbb{R}^{2}\right)$,
we need $\Delta\left(\varPsi^{\left(2\right)}_{a}-\varPsi^{\left(2\right)}_{b}\right)\in C^{0}_{t}L^{2+\varepsilon}_{x}\left(\left[0,1\right]\times\mathbb{R}^{2}\right)$
for some $\varepsilon>0$, compactly supported uniformly in time and
mean-free. To see these last two properties, we may argue exactly
as we have done in the proof of Proposition \ref{prop:MAP wins regularity 1},
so we will skip the details. Now, concerning the bounds in $L^{8}_{x}$,
clearly,
\[
\begin{aligned}\left|\left|\Delta\left(\varPsi^{\left(2\right)}_{a}-\varPsi^{\left(2\right)}_{b}\right)\right|\right|_{C^{0}_{t}L^{8}_{x}\left(\left[0,1\right]\times\mathbb{R}^{2}\right)} & \leq+\frac{2}{\overline{\rho}}\left|\left|a-b\right|\right|_{C^{0}_{t}C^{0}_{x}\left(\left[0,1\right]\times\mathbb{R}^{2}\right)}\left(1+\left|\left|\varPhi\right|\right|_{L^{\infty}\left(\mathbb{R}^{2}\right)}\right)\left|\left|\frac{\partial\rho_{B}}{\partial x_{2}}\right|\right|_{C^{0}_{t}L^{8}_{x}\left(\left[0,1\right]\times\mathbb{R}^{2}\right)}+\\
 & \quad+\frac{2}{\overline{\rho}}\left|\left|a-b\right|\right|_{C^{0}_{t}C^{0}_{x}\left(\left[0,1\right]\times\mathbb{R}^{2}\right)}\left(1+\left|\left|\varPhi\right|\right|_{L^{\infty}\left(\mathbb{R}^{2}\right)}\right)\left|\left|\frac{\partial\rho_{B}}{\partial x_{1}}\right|\right|_{C^{0}_{t}L^{8}_{x}\left(\left[0,1\right]\times\mathbb{R}^{2}\right)},
\end{aligned}
\]
where we have used that $\overline{\rho}\geq2\left|\left|\rho_{B}\right|\right|_{C^{0}_{t}C^{0}_{x}\left(\left[0,1\right]\times\mathbb{R}^{2}\right)}$
by hypothesis. Now, point 3 of Corollary \ref{cor:estimates Laplacian}
and the linearity of the Laplacian ensure that
\[
\begin{aligned}\left|\left|T^{\left(2\right)}\left(a\right)-T^{\left(2\right)}\left(b\right)\right|\right|_{C^{0}_{t}C^{\frac{3}{4}}_{x}\left(\mathbb{R}^{2}\right)} & \lesssim\frac{1}{\overline{\rho}}\left(1+\mathrm{diam}\left(\bigcup_{t\in\left[0,1\right]}\mathrm{supp}\left(\rho_{B}\left(t,\cdot\right)\right)\right)\right)\left(1+\left|\left|\varPhi\right|\right|_{L^{\infty}\left(\mathbb{R}^{2}\right)}\right)\cdot\\
 & \quad\cdot\left(\left|\left|\frac{\partial\rho_{B}}{\partial x_{1}}\right|\right|_{C^{0}_{t}L^{8}_{x}\left(\left[0,1\right]\times\mathbb{R}^{2}\right)}+\left|\left|\frac{\partial\rho_{B}}{\partial x_{2}}\right|\right|_{C^{0}_{t}L^{8}_{x}\left(\left[0,1\right]\times\mathbb{R}^{2}\right)}\right)\left|\left|a-b\right|\right|_{C^{0}_{t}C^{0}_{x}\left(\left[0,1\right]\times\mathbb{R}^{2}\right)}.
\end{aligned}
\]
Thereby, taking $\overline{\rho}$ large enough (depending on $\left|\left|\varPhi\right|\right|_{L^{\infty}\left(\mathbb{R}^{2}\right)}$
and the support and $L^{8}$ norms of $\frac{\partial\rho_{B}}{\partial x_{1}}$
and $\frac{\partial\rho_{B}}{\partial x_{2}}$), we can make the factor
that precedes $\left|\left|a-b\right|\right|_{C^{0}_{t}C^{0}_{x}\left(\left[0,1\right]\times\mathbb{R}^{2}\right)}$
as small as we wish. As $T^{\left(1\right)}$ is contractive and we
can make $T^{\left(2\right)}$ as contractive as needed by fixing
the value of $\overline{\rho}$, we arrive at the result.
\end{proof}

\section{Proof of the main Theorem}

The proof of the Main Theorem is split in two parts: Theorem \ref{thm:HALF-MAIN}
and Corollary \ref{cor:SCREENED}.

\subsection{\label{subsec:Finite-time-singularity}Finite-time singularity}
\begin{thm}
\label{thm:HALF-MAIN}Let $\alpha\in\left(0,\alpha_{*}\right)$, where
$\alpha_{*}=\sqrt{\frac{4}{3}}-1$. Let $\overline{\rho}\in\mathbb{R}_{+}$
be sufficiently large. Then, there is a solution $\left(\rho,u,f_{\rho},f_{u}\right)$
in $\left[0,1\right)\times\mathbb{R}^{2}$ of the forced incompressible
non-homogeneous Euler equations \eqref{eq:inhomogeneous 2D Euler}
that satisfies:
\begin{enumerate}
\item There is a finite time singularity at time $t=1$, i.e.,
\[
\lim_{T\to1^{-}}\int^{T}_{0}\left|\left|\omega\left(s,\cdot\right)\right|\right|_{L^{\infty}\left(\mathbb{R}^{2}\right)}\mathrm{d}s=\infty.
\]
(See Theorem 2 of \cite{Danchin-Fanelli}).
\item ~
\[
f_{\rho}\in C^{0}_{t}C^{1,\alpha}_{x,c}\left(\left[0,1\right]\times\mathbb{R}^{2}\right),\quad f_{u}\in C^{0}_{t}C^{1,\alpha}_{x}\left(\left[0,1\right]\times\mathbb{R}^{2};\mathbb{R}^{2}\right)\cap C^{0}_{t}L^{2}_{x}\left(\left[0,1\right]\times\mathbb{R}^{2};\mathbb{R}^{2}\right).
\]
\item The solution has the following structure:
\[
\rho=\overline{\rho}+\rho_{B},\quad u=u_{B},
\]
where $\left(\rho_{B},u_{B}\right)$ is one of the solutions of the
forced Boussinesq system constructed in \cite{Articulo Boussinesq},
which also develops a singularity at time $t=1$\@.
\item $\forall\varepsilon>0$, $\rho,u\in C^{\infty}_{t}C^{\infty}_{x}\left(\left[0,1-\varepsilon\right]\times\mathbb{R}^{2}\right)$
and $\nabla\rho$ and $u$ are compactly supported uniformly in time.
\end{enumerate}
\end{thm}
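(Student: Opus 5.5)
The proof assembles the pieces already established. The plan is to obtain $f_u$ from a fixed point of the map $T$ of Definition \ref{def:THE MAP} and then verify the items of the statement one by one.

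\emph{Step 1: the fixed point.} First fix the Boussinesq solution $(\rho_B,u_B,f_{\rho_B},f_{\omega_B})$ from Theorem \ref{thm:MAIN THM Boussinesq}. We take $\delta,\mu,1-\gamma$ small enough that Propositions \ref{prop:MAP wins regularity 1}, \ref{prop:MAP wins regularity 2} and \ref{prop:blow-up criterion} all apply. By Choice \ref{choice:blow-up origin} the singularity sits at the origin, and $\varPhi$ is fixed as in Choice \ref{choice:varPhi}. We then take $\overline{\rho}$ as large as Proposition \ref{prop:MAP contractive} requires; in particular $\overline{\rho}\ge 2\|\rho_B\|_{C^0_tC^0_x}$, so $\rho\ge\overline{\rho}/2>0$.

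Let $X$ be the closed subspace of $C^0_tC^0_x$ consisting of the fields with $a_1$ even and $a_2$ odd in $x_2$. The space $X$ is complete, and by Proposition \ref{prop:MAP contractive} the map $T$ is a $\frac{9}{10}$-contraction on it. Banach's theorem then gives a unique $a^*\in X$ with $T(a^*)=a^*$. Since $a^*=T(a^*)$, Proposition \ref{prop:MAP wins regularity 1} places $a^*$ in $C^0_tC^{3/4}_x$. Applying $T$ once more, Proposition \ref{prop:MAP wins regularity 2} gives $a^*\in C^0_tC^{1,\alpha}_x\cap C^0_tL^2_x$. This bootstrap is where the real difficulty lies, and it has already been handled by the previous propositions.

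\emph{Step 2: the solution.} Set $\nabla^\perp\varPsi:=a^*$, let $g$ be given by \eqref{eq:g}, and define
\[
f_u=g(t)\varPhi+\nabla^\perp\varPsi,\qquad \rho=\overline{\rho}+\rho_B,\qquad u=u_B,\qquad f_\rho=f_{\rho_B}.
\]
The mass equation holds directly. The fixed-point identity is exactly \eqref{eq:vorticity equation final first attempt}, and undoing the substitutions of Section \ref{sec:ideas of the proof} turns it into the vorticity equation \eqref{eq:vorticity equation}.

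We then recover the momentum equation from \eqref{eq:vorticity equation}. Set $F:=\partial_t u+u\cdot\nabla u-f_u$. The vorticity equation says exactly that $\nabla^\perp\cdot(\rho F)=0$ on $[0,1)$. Since $\mathbb{R}^2$ is simply connected, $\rho F=-\nabla P$ for some pressure $P$, at each time with $t<1$, where every quantity is smooth. Hence the full system \eqref{eq:inhomogeneous 2D Euler} holds on $[0,1)\times\mathbb{R}^2$.

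\emph{Step 3: the listed properties.} For regularity, $f_\rho\in C^0_tC^{1,\alpha}_{x,c}$ by Theorem \ref{thm:MAIN THM Boussinesq}. The force $f_u$ is the sum of $a^*\in C^0_tC^{1,\alpha}_x\cap C^0_tL^2_x$ and $g\varPhi$. The second term is in the same space because $g\in C([0,1])$ by Proposition \ref{prop:MAP wins regularity 1} and $\varPhi\in C^\infty_c$.

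The structure claim (point 3) holds by construction. Point 4 follows from point 1 of Theorem \ref{thm:MAIN THM Boussinesq} together with the uniform-in-time compact supports noted in subsection \ref{subsec:summary Boussinesq}.

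Finally, the blow-up (point 1) is Proposition \ref{prop:blow-up criterion}, since $\omega=\omega_B$. The only point needing some care is Step 2, namely that the pressure recovered from the vorticity equation is well defined; this is straightforward because everything is smooth before $t=1$.
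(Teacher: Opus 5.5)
Your proposal is correct and follows the paper's proof essentially step by step: a Banach fixed point for $T$ on the closed symmetric subspace of $C^0_tC^0_x$, the bootstrap through Propositions \ref{prop:MAP wins regularity 1} and \ref{prop:MAP wins regularity 2}, and then reading off the remaining properties from the Boussinesq solution and Proposition \ref{prop:blow-up criterion}. Your explicit recovery of the pressure from $\nabla^\perp\cdot(\rho F)=0$ is a step the paper leaves implicit; it only needs $f_u(t,\cdot)\in C^{1}$ for $t<1$, which you have, and not full smoothness of $f_u$ before $t=1$, which the paper does not prove.
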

\begin{proof}
~
\begin{enumerate}
\item Constructing a Boussinesq solution $\left(\rho_{B},u_{B},f_{\rho_{B}},f_{\omega_{B}}\right)$.
The Boussinesq solution must be built as explained in \cite{Articulo Boussinesq}
for the value of $\alpha$ of the statement with the additional requirements
on the parameters $\delta$, $\mu$ and $\gamma$ specified by Propositions
\ref{prop:MAP wins regularity 1}, \ref{prop:MAP wins regularity 2}, \ref{prop:MAP contractive}, and \ref{prop:blow-up criterion}.
These pose no issue because every condition only involves a single
parameter and the Boussinesq construction allows to choose $\delta$
and $\mu$ as small as desired and $\gamma$ as close to one as wished
for.
\item Take
\[
\begin{aligned}f_{u}\left(t,x\right) & =g\left(t\right)\varPhi\left(x\right)+\nabla^{\perp}\varPsi\left(t,x\right),\\
\rho\left(t,x\right) & =\overline{\rho}+\rho_{B}\left(t,x\right),\\
u\left(t,x\right) & =u_{B}\left(t,x\right)
\end{aligned}
\]
as dictated by Choices \ref{choice: fu} and \ref{choice:varPhi} in
equations \eqref{eq:inhomogeneous 2D Euler} and \eqref{eq:vorticity equation}.
As remarked in section \ref{sec:ideas of the proof}, this leads to
$f_{\rho}\left(t,x\right)=f_{\rho_{B}}\left(t,x\right)$ and equation
\eqref{eq:vorticity equation final first attempt}. Moreover, $\rho,u\in C^{\infty}_{t}C^{\infty}_{x}\left(\left[0,1-\varepsilon\right]\times\mathbb{R}^{2}\right)$
$\forall\varepsilon>0$ because $\rho_{B},u_{B}\in C^{\infty}_{t}C^{\infty}_{x}\left(\left[0,1-\varepsilon\right]\times\mathbb{R}^{2}\right)$
$\forall\varepsilon>0$ according to the Main Theorem of \cite{Articulo Boussinesq}
(see Theorem \ref{thm:MAIN THM Boussinesq}) and $\nabla\rho$ and
$u$ are compactly supported uniformly in time because $\rho_{B}$
and $u_{B}$ are (see subsection \ref{subsec:summary Boussinesq}
or Proposition 24 of \cite{Articulo Boussinesq}).
\item Consider the map $T$ defined in definition \ref{def:THE MAP}. Notice
that fixed-points of this map are solutions of equation \eqref{eq:vorticity equation final first attempt}.
We take $\overline{\rho}$ as large as Proposition \ref{prop:MAP contractive}
demands it. Then,
\[
\begin{matrix}T: &  & \left\{ b\in C^{0}_{t}C^{0}_{x}\left(\left[0,1\right]\times\mathbb{R}^{2};\mathbb{R}^{2}\right):b_{1}\text{ even in }x_{2},\;b_{2}\text{ odd in }x_{2}\right\}  & \longrightarrow\\
 & \longrightarrow & \left\{ b\in C^{0}_{t}C^{0}_{x}\left(\left[0,1\right]\times\mathbb{R}^{2};\mathbb{R}^{2}\right):b_{1}\text{ even in }x_{2},\;b_{2}\text{ odd in }x_{2}\right\} 
\end{matrix}
\]
is contractive. Since the domain (and image) of $T$ is a closed subspace
of a Banach space, it is a complete metric space. Thus, Banach's Fixed-Point
Theorem guarantees the existence of a (unique) fixed point $T\left(a\right)=a\in C^{0}_{t}C^{0}_{x}\left(\left[0,1\right]\times\mathbb{R}^{2};\mathbb{R}^{2}\right)$.
Then, Propositions \ref{prop:MAP wins regularity 1} and \ref{prop:MAP wins regularity 2}
allow us to win regularity. Indeed, Proposition \ref{prop:MAP wins regularity 1}
ensures that $a=T\left(a\right)\in C^{0}_{t}C^{\frac{3}{4}}_{x}\left(\left[0,1\right]\times\mathbb{R}^{2};\mathbb{R}^{2}\right)$
and $g\left(t\right)\in C\left(\left[0,1\right]\right)$ and then
Proposition \ref{prop:MAP wins regularity 2} gives us $a=T\left(a\right)\in C^{0}_{t}C^{1,\alpha}_{x}\left(\left[0,1\right]\times\mathbb{R}^{2};\mathbb{R}^{2}\right)\cap C^{0}_{t}L^{2}_{x}\left(\left[0,1\right]\times\mathbb{R}^{2};\mathbb{R}^{2}\right)$.
\item Thereby,
\[
\rho=\overline{\rho}+\rho_{B},\quad u=u_{B},\quad f_{\rho}=f_{\rho_{B}},\quad f_{u}=g_{a}\left(t\right)\varPhi\left(x\right)+a,
\]
where $g_{a}\left(t\right)\in C\left(\left[0,1\right]\right)$ can
be computed from $a$ by equation \eqref{eq:g}, is a solution of
system \eqref{eq:inhomogeneous 2D Euler}. Furthermore, by the Main
Theorem of \cite{Articulo Boussinesq} (see Theorem \ref{thm:MAIN THM Boussinesq})
and Choices \ref{choice: fu} and \ref{choice:varPhi}, we can ensure
that
\[
f_{\rho}\in C^{0}_{t}C^{1,\alpha}_{x,c}\left(\left[0,1\right]\times\mathbb{R}^{2}\right),\quad f_{u}\in C^{0}_{t}C^{1,\alpha}_{x}\left(\left[0,1\right]\times\mathbb{R}^{2};\mathbb{R}^{2}\right)\cap C^{0}_{t}L^{2}_{x}\left(\left[0,1\right]\times\mathbb{R}^{2};\mathbb{R}^{2}\right).
\]
\item There is a finite-time singularity at $t=1$. This is a direct consequence
of Proposition \ref{prop:blow-up criterion} since $u=u_{B}$.
\end{enumerate}
\end{proof}

\subsection{Screened singularity}
\begin{prop}
\label{prop:necessary condition unscreened}Assume $\left(\rho,u,f_{\rho},f_{u}\right)$,
the solution constructed in Theorem \ref{thm:HALF-MAIN}, is an unscreened
singularity. Then,
\[
\begin{aligned}-\frac{\partial\psi}{\partial x_{2}}\left(1,0\right) & =\left(\frac{\partial u_{B}}{\partial t}+u_{B}\cdot\nabla u_{B}\right)_{1}\left(1,0\right)-\left(\overline{\rho}+\rho_{B}\left(1,0\right)\right),\\
\Delta\psi\left(1,\cdot\right) & =f_{\omega_{B}}\left(1,\cdot\right),
\end{aligned}
\]
where $f_{u}=\frac{\nabla\phi}{\rho}+\nabla^{\perp}\psi$ is the canonical
decomposition given in equation \eqref{eq:inhomogeneous canonical decomposition fu}.
\end{prop}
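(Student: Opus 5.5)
We argue by writing the vorticity equation \eqref{eq:vorticity equation} for the canonical decomposition $f_{u}=\frac{\nabla\phi}{\rho}+\nabla^{\perp}\psi$, in which the gradient part drops out. With $\rho=\overline{\rho}+\rho_{B}$, $u=u_{B}$ and the Boussinesq equations, this is exactly the computation leading to \eqref{eq:vorticity equation first attempt}, now with $f_{u}$ replaced by $\nabla^{\perp}\psi$:
\[
\frac{\partial\rho_{B}}{\partial x_{2}}\left(1+\frac{-\frac{\partial\psi}{\partial x_{2}}-\mathcal{U}_{1}}{\overline{\rho}+\rho_{B}}\right)-\frac{1}{\overline{\rho}+\rho_{B}}\frac{\partial\rho_{B}}{\partial x_{1}}\left(\frac{\partial\psi}{\partial x_{1}}-\mathcal{U}_{2}\right)+f_{\omega_{B}}=\Delta\psi,
\]
valid for $t<1$. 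Here $\mathcal{U}$ denotes the extension of $\frac{\partial u_{B}}{\partial t}+u_{B}\cdot\nabla u_{B}$ given by Proposition \ref{prop:bound material derivative u}.

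Unscreenedness means $\nabla^{\perp}\psi$ has the same regularity as $f_{u}$, namely $C^{0}_{t}C^{1,\alpha}_{x}\cap C^{0}_{t}L^{2}_{x}$. In particular $\Delta\psi\in C^{0}_{t}C^{\alpha}_{x}$ is bounded uniformly on $[0,1]$, and so is $f_{\omega_{B}}$. Since $\frac{\partial\rho_{B}}{\partial x_{1}}$, $\mathcal{U}$, $\rho_{B}$ and $\nabla\psi$ are all continuous on $[0,1]\times\mathbb{R}^{2}$, the second summand is bounded as well. Hence the product
\[
\frac{\partial\rho_{B}}{\partial x_{2}}\cdot F,\qquad F\coloneqq 1+\frac{-\partial_{2}\psi-\mathcal{U}_{1}}{\overline{\rho}+\rho_{B}},
\]
is bounded in $L^{\infty}$ up to $t=1$, where $F\in C^{0}_{t}C^{0}_{x}$.

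Now apply Proposition \ref{prop:dyadic decomposition f=00003D0 necessary} in contrapositive form with $f\leftarrow F$ and $g\leftarrow\frac{\partial\rho_{B}}{\partial x_{2}}$. The hypotheses on $g$ hold: it is bounded for $t\le 1-r$ by smoothness before blow-up, and it is bounded away from $B(0;r)$ by Proposition \ref{prop:bad bound drhodx2}. We still need $\int_{0}^{1}\|\partial_{2}\rho_{B}\|_{L^{\infty}}\,\mathrm{d}t=\infty$, and \textbf{this is the main obstacle}, since it is not literally stated earlier. Theorem \ref{thm:MAIN THM Boussinesq} gives divergence of $\int\|\nabla\rho_{B}\|_{L^{\infty}}$, and $\partial_{1}\rho_{B}$ is bounded by Proposition \ref{prop:bounded drhodx1}, so the divergence must come from $\partial_{2}\rho_{B}$. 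Alternatively, one can redo the layer lower bound of Proposition \ref{prop:blow-up criterion}, using $\partial_{2}\rho^{(n)}\sim\frac{M_{n}h^{(n)}b_{n}}{\int h^{(n)}b_{n}}$ and integrating over $[t_{n},t_{n+1}]$. The proposition then forces $F(1,0)=0$. Because $\rho_{B}(1,0)$ is finite and $\overline{\rho}+\rho_{B}>0$, this condition rearranges to the first identity of the statement.

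For the second identity, evaluate the equation at $t=1$. Every summand except $f_{\omega_{B}}$ carries a factor $\partial_{j}\rho_{B}(t,\cdot)$, which tends to $0$ in $L^{p}$ as $t\to1$ by Proposition \ref{prop:bad bound drhodx2}. For $j=1$ we use the $L^{p}$ control from the same support-shrinking argument, with $\partial_{1}\rho_{B}$ bounded and $|\mathrm{supp}\,\rho_{B}(t,\cdot)|\to0$. These summands multiply bounded continuous factors, so they vanish in $L^{p}$ as $t\to1$. On the other hand, $\Delta\psi(t)\to\Delta\psi(1)$ and $f_{\omega_{B}}(t)\to f_{\omega_{B}}(1)$ in $C^{0}$ by continuity in time. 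Passing to the limit in $L^{p}_{\mathrm{loc}}$ therefore gives $\Delta\psi(1,\cdot)=f_{\omega_{B}}(1,\cdot)$.
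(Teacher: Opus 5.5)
Your proposal is correct. For the first identity it follows the paper's argument: drop the gradient part of $f_{u}$, use unscreenedness to see that $\frac{\partial\rho_{B}}{\partial x_{2}}F$ stays bounded up to $t=1$, and apply Proposition~\ref{prop:dyadic decomposition f=00003D0 necessary}. The step you flag as the main obstacle is settled in the paper by exactly your first remark: Theorem~\ref{thm:MAIN THM Boussinesq} gives divergence of $\int\|\nabla\rho_{B}\|_{L^{\infty}}$, and $\frac{\partial\rho_{B}}{\partial x_{1}}$ is bounded by Proposition~\ref{prop:bounded drhodx1}. So the layer-by-layer lower bound is not needed.

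For the second identity your route genuinely differs. The paper ``evaluates at $t=1$'' by invoking the vanishing at the blow-up point of two coefficients: that of $\frac{\partial\rho_{B}}{\partial x_{2}}$ (via the first identity) and that of $\frac{\partial\rho_{B}}{\partial x_{1}}$ (via $x_{2}$-parity). You instead let $t\to1^{-}$ and use $\left|\left|\frac{\partial\rho_{B}}{\partial x_{j}}\left(t,\cdot\right)\right|\right|_{L^{p}}\to0$ against uniformly bounded coefficients, together with uniform convergence of $\Delta\psi$ and $f_{\omega_{B}}$. Your version needs neither the first identity nor any parity cancellation. It also makes precise what ``evaluating at $t=1$'' means for a product whose factor $\frac{\partial\rho_{B}}{\partial x_{2}}$ has no pointwise value at $\left(1,0\right)$, a step the paper leaves implicit.

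An even more elementary variant is available. By \eqref{eq:time-bound r}, for each fixed $x\neq0$ the support of $\rho_{B}\left(t,\cdot\right)$ leaves a neighbourhood of $x$ for $t$ near $1$, so there the equation reads $\Delta\psi=f_{\omega_{B}}$. Let $t\to1$ and conclude at $x=0$ by continuity.
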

\begin{proof}
We start from equation \eqref{eq:vorticity equation}, substituting
$f_{u}=\frac{\nabla\phi}{\rho}+\nabla^{\perp}\psi$, the canonical
decomposition given in equation \eqref{eq:inhomogeneous canonical decomposition fu}.
As noted in subsection \ref{subsec:Canonical-force-decomposition},
$\frac{\nabla\phi}{\rho}$ disappears from the equation, leading us
to
\[
\frac{\partial\omega}{\partial t}+u\cdot\nabla\omega=\left[\nabla^{\perp}\psi-\frac{\partial u}{\partial t}-u\cdot\nabla u\right]\cdot\frac{\nabla^{\perp}\rho}{\rho}+\Delta\psi.
\]
Substituting $u=u_{B}$ and $\rho=\overline{\rho}+\rho_{B}$ (see
Choice \ref{choice: fu}) provides
\[
\frac{\partial\omega_{B}}{\partial t}+u_{B}\cdot\nabla\omega_{B}=\left[\nabla^{\perp}\psi-\frac{\partial u_{B}}{\partial t}-u_{B}\cdot\nabla u_{B}\right]\cdot\frac{\nabla^{\perp}\rho_{B}}{\overline{\rho}+\rho_{B}}+\Delta\psi.
\]
Using the Boussinesq equation (see equation \eqref{eq:forced Boussinesq})
we arrive at
\begin{equation}
\begin{aligned}\Delta\psi= & +\frac{\partial\rho_{B}}{\partial x_{2}}\left(1+\frac{1}{\overline{\rho}+\rho_{B}}\left(-\frac{\partial\psi}{\partial x_{2}}-\left(\frac{\partial u_{B}}{\partial t}+u_{B}\cdot\nabla u_{B}\right)_{1}\right)\right)+\\
 & -\frac{1}{\overline{\rho}+\rho_{B}}\frac{\partial\rho_{B}}{\partial x_{1}}\left(\frac{\partial\psi}{\partial x_{1}}-\left(\frac{\partial u_{B}}{\partial t}+u_{B}\cdot\nabla u_{B}\right)_{2}\right)+f_{\omega_{B}}.
\end{aligned}
\label{eq:vorticity equation dynamic part of force}
\end{equation}
Since we are assuming that $\left(\rho,u,f_{\rho},f_{u}\right)$ is
an unscreened singularity, $\nabla^{\perp}\psi$ must have the same
regularity as $f_{u}$. By Theorem \ref{thm:HALF-MAIN}, we must have
$\nabla^{\perp}\psi\in C^{0}_{t}C^{1}_{x}$, i.e., $\Delta\psi\in C^{0}_{t}C^{0}_{x}$.
By the Main Theorem of \cite{Articulo Boussinesq} (see Theorem \ref{thm:MAIN THM Boussinesq}),
we know that $f_{\omega_{B}}\in C^{0}_{t}C^{0}_{x}$. The second summand
is also $C^{0}_{t}C^{0}_{x}$ because of Propositions \ref{prop:bounded density}, \ref{prop:bounded drhodx1}, and \ref{prop:bound material derivative u},
Lemma \ref{lem:inverse Holder} and because $\overline{\rho}$ has
been taken large enough so that $\overline{\rho}>\left|\left|\rho_{B}\right|\right|_{C^{0}_{t}C^{0}_{x}\left(\left[0,1\right]\times\mathbb{R}^{2}\right)}$
(otherwise, the requirements of Propositions \ref{prop:MAP wins regularity 1}, \ref{prop:MAP wins regularity 2}, and \ref{prop:MAP contractive}
could not be met). In conclusion, the first summand must also be $C^{0}_{t}C^{0}_{x}$.
Since $\int^{1}_{0}\left|\left|\frac{\partial\rho_{B}}{\partial x_{2}}\left(t,\cdot\right)\right|\right|_{L^{\infty}\left(\mathbb{R}^{2}\right)}\mathrm{d}t=\infty$,
$\frac{\partial\rho_{B}}{\partial x_{2}}\in C^{\infty}_{t}C^{\infty}_{x}\left(\left[0,1-r\right]\times\mathbb{R}^{2}\right)$
$\forall r>0$, $\frac{\partial\rho_{B}}{\partial x_{2}}\in L^{\infty}_{t}L^{\infty}_{x}\left(\left[0,1\right]\times\left(\mathbb{R}^{2}\setminus B\left(0;r\right)\right)\right)$
$\forall r>0$ (see Main Theorem of \cite{Articulo Boussinesq} {[}i.e.,
Theorem \ref{thm:MAIN THM Boussinesq}{]} and Propositions \ref{prop:bounded drhodx1} and \ref{prop:bad bound drhodx2}),
Proposition \ref{prop:dyadic decomposition f=00003D0 necessary} tells
us that, necessarily, 
\[
1+\frac{1}{\overline{\rho}+\rho_{B}\left(1,0\right)}\left(-\frac{\partial\psi}{\partial x_{2}}\left(1,0\right)-\left(\frac{\partial u_{B}}{\partial t}+u_{B}\cdot\nabla u_{B}\right)_{1}\left(1,0\right)\right)=0.
\]
This is equivalent to the first equality of the statement.

For the second claim, evaluate \eqref{eq:vorticity equation dynamic part of force}
at $t=1$, bearing in mind the cancellations that occur in the factors
that multiply $\frac{\partial\rho_{B}}{\partial x_{2}}$ (because
of the first equation of the statement) and $\frac{\partial\rho_{B}}{\partial x_{1}}$
(because of the oddness of $\left(\frac{\partial u_{B}}{\partial t}+u_{B}\cdot\nabla u_{B}\right)_{2}$
and of $\frac{\partial\psi}{\partial x_{1}}$ with respect to $x_{2}$).
\end{proof}

\begin{cor}
\label{cor:SCREENED}By taking a higher value of $\overline{\rho}$
if necessary in the proof of Theorem \ref{thm:HALF-MAIN}, we may
ensure that the solution constructed in Theorem \ref{thm:HALF-MAIN}
is a screened singularity.
\end{cor}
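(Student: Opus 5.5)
We argue by contradiction, using Proposition \ref{prop:necessary condition unscreened}: if the solution were unscreened, then
\[
-\frac{\partial\psi}{\partial x_{2}}\left(1,0\right)=\left(\frac{\partial u_{B}}{\partial t}+u_{B}\cdot\nabla u_{B}\right)_{1}\left(1,0\right)-\left(\overline{\rho}+\rho_{B}\left(1,0\right)\right)
\quad\text{and}\quad
\Delta\psi\left(1,\cdot\right)=f_{\omega_{B}}\left(1,\cdot\right).
\]
The key observation is that the second identity determines $\nabla^{\perp}\psi\left(1,\cdot\right)$ purely from Boussinesq data, independently of $\overline{\rho}$. The first identity, however, forces $\frac{\partial\psi}{\partial x_{2}}\left(1,0\right)$ to grow linearly in $\overline{\rho}$. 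Taking $\overline{\rho}$ large therefore makes the two incompatible.

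\textbf{Step 1: $\nabla^{\perp}\psi(1,\cdot)$ is determined by $f_{\omega_{B}}(1,\cdot)$.} The canonical decomposition \eqref{eq:decomposition force inhomogeneous} requires $\nabla\psi\in L^{2}\left(\mathbb{R}^{2}\right)$. By Theorem \ref{thm:MAIN THM Boussinesq}, $f_{\omega_{B}}(1,\cdot)\in C^{\alpha}$, it is compactly supported, and it is odd in $x_{2}$, hence mean-free. So $\nabla\psi(1,\cdot)$ equals $\nabla\Delta^{-1}f_{\omega_{B}}(1,\cdot)$ up to an $L^{2}$ harmonic gradient, and Liouville's theorem forces that harmonic gradient to vanish. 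By Corollary \ref{cor:estimates Laplacian} (point 3) and Proposition \ref{prop:Laplace D2u Holder}, $\nabla\psi(1,\cdot)$ is continuous, and
\[
\left|\frac{\partial\psi}{\partial x_{2}}\left(1,0\right)\right|\le K,
\]
where $K$ depends only on the Boussinesq solution and not on $\overline{\rho}$.

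\textbf{Step 2: contradiction for large $\overline{\rho}$.} The quantities $\left(\frac{\partial u_{B}}{\partial t}+u_{B}\cdot\nabla u_{B}\right)_{1}(1,0)$ and $\rho_{B}(1,0)$ are finite numbers independent of $\overline{\rho}$, by Propositions \ref{prop:bound material derivative u} and \ref{prop:bounded density}. The first identity therefore gives
\[
\overline{\rho}\le K+\left|\left(\frac{\partial u_{B}}{\partial t}+u_{B}\cdot\nabla u_{B}\right)_{1}(1,0)\right|+\left|\rho_{B}(1,0)\right|.
\]
Choosing $\overline{\rho}$ strictly larger than this bound, and also as large as Proposition \ref{prop:MAP contractive} requires, contradicts the assumption. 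So the solution is screened.

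\textbf{Main obstacle.} Enlarging $\overline{\rho}$ must not change the Boussinesq data. This holds because the Boussinesq solution is fixed first, as in step 1 of the proof of Theorem \ref{thm:HALF-MAIN}, and $\overline{\rho}$ enters only afterwards in $T$. Increasing $\overline{\rho}$ only strengthens the contractivity and the positivity of $\rho$, so Theorem \ref{thm:HALF-MAIN} still holds. The one delicate point is uniqueness of $\psi$ in Step 1, i.e. ruling out a nonzero $L^{2}$ harmonic gradient; Liouville's theorem handles it.
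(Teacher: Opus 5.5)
Your proposal is correct and follows the paper's argument. Both use Proposition \ref{prop:necessary condition unscreened} in the same way: the first identity forces $\frac{\partial\psi}{\partial x_{2}}(1,0)$ to grow like $\overline{\rho}$, while the second pins it down through $f_{\omega_{B}}(1,\cdot)$, independently of $\overline{\rho}$. Your version is a little more explicit than the paper's sequence-based contradiction: it gives a direct threshold for $\overline{\rho}$, and it spells out, via the $L^{2}$ condition on $\nabla\psi$ (at $t=1$ this comes from the unscreened assumption $\nabla^{\perp}\psi\in C^{0}_{t}L^{2}_{x}$) and a Liouville argument, why $\Delta\psi(1,\cdot)$ determines $\nabla\psi(1,\cdot)$, a step the paper leaves implicit.
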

\begin{proof}
We will prove the result by showing that an unscreened singularity
is only possible for, at most, a bounded set of values of $\overline{\rho}$.
Let $\left(\rho_{;\overline{\rho}},u_{;\overline{\rho}},f_{\rho;\overline{\rho}},f_{u;\overline{\rho}}\right)$
denote the solution constructed with the procedure of Theorem \ref{thm:HALF-MAIN}
for the specified value of $\overline{\rho}$. Because of the nature
of the construction of Theorem \ref{thm:HALF-MAIN}, if it works for
some $\overline{\rho}=\overline{\rho}_{0}\in\left(0,\infty\right)$,
it also works $\forall\overline{\rho}\in\left[\overline{\rho}_{0},\infty\right)$,
i.e., the set of values of $\overline{\rho}$ for which the construction
of Theorem \ref{thm:HALF-MAIN} works has no upper bound. Now, let
$U\subseteq\left[0,\infty\right)$ be the set of $\overline{\rho}$
for which the construction of Theorem \ref{thm:HALF-MAIN} works and
provides an unscreened singularity. We will show that $U$ has an
upper bound. Notice that this will imply the result.

We will prove the claim by contradiction. In this way, assume the
contrary of the claim, i.e., that there exists an increasing sequence
$\left(\overline{\rho}_{n}\right)_{n\in\mathbb{N}}\subseteq U$ such
that $\lim_{n\to\infty}\overline{\rho}_{n}=\infty$. On the one hand,
by Proposition \ref{prop:necessary condition unscreened}, since $\frac{\partial u_{B}}{\partial t}+u_{B}\cdot\nabla u_{B}$
and $\rho_{B}$ do not change with $\overline{\rho}$, we must have
\[
\left|\frac{\partial\psi_{;\overline{\rho}_{n}}}{\partial x_{2}}\left(1,0\right)\right|\approx\overline{\rho}_{n}
\]
for $n$ sufficiently large (which corresponds with $\overline{\rho}_{n}$
sufficiently large). On the other hand, also because of Proposition
\ref{prop:necessary condition unscreened}, we have
\[
f_{\omega_{B}}\left(1,\cdot\right)=\Delta\psi_{;\overline{\rho}_{n}}\left(1,\cdot\right),
\]
which does not scale with $\overline{\rho}_{n}$, so neither can $\frac{\partial\psi_{;\overline{\rho}_{n}}}{\partial x_{2}}\left(1,0\right)$,
a contradiction.
\end{proof}

\begin{rem}
The real problem behind constructing an unscreened singularity with
our method is not the obstacle shown in Proposition \ref{prop:necessary condition unscreened}.
This Proposition is merely an easy necessary condition that we use
to prove that the solution built is a screened singularity, but we
strongly believe that, even if the conclusions of Proposition \ref{prop:necessary condition unscreened}
were simultaneously satisfied, our solution would still be a screened
singularity. Actually, by playing with the scaling of the solutions
of the Boussinesq system, it is possible to set up a similar construction
to the one presented in this article giving a singularity with $g\left(1\right)=0$.
In this case, $\psi\left(1,\cdot\right)=\varPsi\left(1,\cdot\right)$
and both conclusions of Proposition \ref{prop:necessary condition unscreened}
are satisfied. This, indeed, would provide a force $f_{u}$ such that
$\nabla^{\perp}\psi\left(1,\cdot\right)$ is as regular as $f_{u}\left(1,\cdot\right)$
(since they would be equal). Even so, we are strongly convinced that
$\left|\left|\nabla^{\perp}\psi\left(t,\cdot\right)\right|\right|_{\dot{C}^{1}\left(\mathbb{R}^{2}\right)}$
would blow up as $t\to1^{-}$. Indeed, by Remark \ref{rem:first-class singularity},
nothing can be won in the search for an unscreened singularity by
assuming $\phi\neq0$. In other words, we should not only think about
imposing $g\left(1\right)=0$, but about achieving $g\left(t\right)\equiv0$.
Then, the problem reduces to ensuring that, at least,
\[
\frac{\partial\rho_{B}}{\partial x_{2}}\left(1+\frac{1}{\overline{\rho}+\rho_{B}}\left(-\frac{\partial\psi}{\partial x_{2}}-\left(\frac{\partial u_{B}}{\partial t}+u_{B}\cdot\nabla u_{B}\right)_{1}\right)\right)\in L^{\infty}_{t}L^{\infty}_{x}.
\]
Since we only have $\overline{\rho}$ and the scaling parameter of
the Boussinesq solution at our disposal, the best we can do is to
make the factor that multiplies $\frac{\partial\rho_{B}}{\partial x_{2}}$
vanish at the blow-up time and at the blow-up point. As one might
think, using Proposition \ref{prop:dyadic decomposition H=0000F6lder},
this gives good properties for the force at the blow-up time. However,
in order to extend these good properties to times strictly less than
the blow-up time, we would need $\left|\left|\frac{\partial\rho_{B}}{\partial x_{2}}\left(t,\cdot\right)\right|\right|_{L^{\infty}\left(\mathbb{R}^{2}\right)}$
to blow up sufficiently slowly in time. By the blow-up criterion of
the Boussinesq equation (see Main Theorem of \cite{Articulo Boussinesq},
i.e., Theorem \ref{thm:MAIN THM Boussinesq}), $\left|\left|\frac{\partial\rho_{B}}{\partial x_{2}}\left(t,\cdot\right)\right|\right|_{L^{\infty}\left(\mathbb{R}^{2}\right)}$
cannot be integrable in time, i.e., $\left|\left|\frac{\partial\rho_{B}}{\partial x_{2}}\left(t,\cdot\right)\right|\right|_{L^{\infty}\left(\mathbb{R}^{2}\right)}\gtrsim\frac{1}{1-t}$.
Under this condition, we would need $\left|1+\frac{1}{\overline{\rho}+\rho_{B}}\left(-\frac{\partial\psi}{\partial x_{2}}-\left(\frac{\partial u_{B}}{\partial t}+u_{B}\cdot\nabla u_{B}\right)_{1}\right)\right|\lesssim\left(1-t\right)^{1+\varepsilon}$
for some $\varepsilon>0$, or, at least, $\frac{\partial}{\partial t}\left(1+\frac{1}{\overline{\rho}+\rho_{B}}\left(-\frac{\partial\psi}{\partial x_{2}}-\left(\frac{\partial u_{B}}{\partial t}+u_{B}\cdot\nabla u_{B}\right)_{1}\right)\right)\left(1,0\right)=0$,
to have any chance of achieving the wanted regularity and, in our
case, this time derivative does not even exist, so no progress can
be made following this direction. Thus, we believe a different approach
is needed to construct an unscreened singularity.
\end{rem}
\newpage{}

\section*{Acknowledgements}

This work is supported in part by the Spanish Ministry of Science
and Innovation, through the “Severo Ochoa Programme for Centres of
Excellence in R\&D (CEX2023-001347-S)” and PID2023-152878NB-I00. We
were also partially supported by the ERC Advanced Grant 788250, and
by the SNF grant FLUTURA: Fluids, Turbulence, Advection No. 212573.

Furthermore, the authors express their gratitude to J.-Y. Chemin,
R. Danchin and F. Fanelli for useful and insightful remarks during
preliminary versions of this work.

\bibliographystyle{alpha}

\end{document}